\DeclareRobustCommand{\gobblefive}[5]{}
\newcommand*{\SkipTocEntry}{\addtocontents{toc}{\gobblefive}}
\newtheorem{theorem}[equation]{Theorem}
\newtheorem{lemma}[equation]{Lemma}
\newtheorem{proposition}[equation]{Proposition}
\newtheorem{corollary}[equation]{Corollary}
\newtheorem{claim1}{Claim}
\newtheorem{claim3}{Claim}
\theoremstyle{definition}
\theoremstyle{remark}
\newtheorem{remark}[equation]{Remark}
\numberwithin{equation}{subsection}
\newcommand{\FF}{\mathbb{F}}
\newcommand{\ZZ}{\mathbb{Z}}
\newcommand{\QQ}{\mathbb{Q}}
\newcommand{\NN}{\mathbb{N}}
\newcommand{\TT}{\mathbb{T}}
\newcommand{\GG}{\mathbb{G}}
\newcommand{\LL}{\mathbb{L}}
\newcommand{\KK}{\mathbb{K}}
\newcommand{\bbb}{\mathbf{b}}
\newcommand{\bA}{\mathbf{A}}
\newcommand{\bBB}{\mathbf{B}}
\newcommand{\bE}{\mathbf{E}}
\newcommand{\be}{\mathbf{e}}
\newcommand{\bg}{\mathbf{g}}
\newcommand{\bh}{\mathbf{h}}
\newcommand{\bk}{\mathbf{k}}
\newcommand{\bn}{\mathbf{n}}
\newcommand{\bs}{\mathbf{s}}
\newcommand{\bP}{\mathbf{P}}
\newcommand{\bT}{\mathbf{T}}
\newcommand{\bu}{\mathbf{u}}
\newcommand{\bv}{\mathbf{v}}
\newcommand{\bw}{\mathbf{w}}
\newcommand{\bx}{\mathbf{x}}
\newcommand{\bY}{\mathbf{Y}}
\newcommand{\bz}{\mathbf{z}}
\newcommand{\bZ}{\mathbf{Z}}
\newcommand{\bX}{\mathbf{X}}
\newcommand{\bK}{\mathbf{K}}
\newcommand{\bcG}{\mathbf{\mathcal{G}}}
\newcommand{\cC}{\mathcal{C}}
\newcommand{\cM}{\mathcal{M}}
\newcommand{\cN}{\mathcal{N}}
\newcommand{\cP}{\mathcal{P}}
\newcommand{\cR}{\mathcal{R}}
\newcommand{\cT}{\mathcal{T}}
\newcommand{\cG}{\mathcal{G}}
\newcommand{\rB}{\mathrm{B}}
\newcommand{\rE}{\mathrm{E}}
\newcommand{\rF}{\mathrm{F}}
\newcommand{\rJ}{\mathrm{J}}
\newcommand{\rH}{\mathrm{H}}
\newcommand{\rd}{\mathrm{d}}
\newcommand{\rP}{\mathrm{P}}
\newcommand{\rp}{\mathrm{p}}
\newcommand{\rR}{\mathrm{R}}
\newcommand{\rpr}{\mathrm{pr}}
\newcommand{\fp}{\mathfrak{p}}
\newcommand{\bsy}{\boldsymbol{y}}
\newcommand{\bse}{\boldsymbol{e}}
\newcommand{\bsm}{\boldsymbol{m}}
\newcommand{\bss}{\boldsymbol{s}}
\newcommand{\bsu}{\boldsymbol{u}}
\newcommand{\bsw}{\boldsymbol{w}}
\newcommand{\bsx}{\boldsymbol{x}}
\newcommand{\bsz}{\boldsymbol{z}}
\newcommand{\bsD}{\boldsymbol{D}}
\DeclareMathOperator{\Aut}{Aut}
\DeclareMathOperator{\Der}{Der}
\DeclareMathOperator{\diag}{diag}
\DeclareMathOperator{\DR}{DR}
\DeclareMathOperator{\Exp}{Exp}
\DeclareMathOperator{\Ker}{Ker}
\DeclareMathOperator{\GL}{GL}
\DeclareMathOperator{\Lie}{Lie}
\DeclareMathOperator{\Mat}{Mat}
\DeclareMathOperator{\Cent}{Cent}
\DeclareMathOperator{\End}{End}
\DeclareMathOperator{\Span}{Span}
\DeclareMathOperator{\Gal}{Gal}
\DeclareMathOperator{\Hom}{Hom}
\DeclareMathOperator{\rank}{rank}
\DeclareMathOperator{\Rep}{\mathbf{Rep}}
\DeclareMathOperator{\Ext}{\mathbf{Ext}}
\DeclareMathOperator{\vect}{\mathbf{vec}}
\DeclareMathOperator{\im}{Im}
\DeclareMathOperator{\trdeg}{tr.deg}
\DeclareMathOperator{\Iden}{Id}
\newcommand{\ok}{\overline{k}}
\newcommand{\oK}{\mkern2.5mu\overline{\mkern-2.5mu K}}
\newcommand{\inn}{\mathrm{in}}
\newcommand{\per}{\mathrm{per}}
\newcommand{\si}{\mathrm{si}}
\newcommand{\sep}{\mathrm{sep}}
\newcommand{\power}[2]{{#1 [\![ #2 ]\!]}}
\newcommand{\laurent}[2]{{#1 (\!( #2 )\!)}}
\newcommand{\inorm}[1]{{\lvert #1 \rvert}_{\infty}}
\newcommand{\pd}{\partial}
\newcommand{\tr}{{\mathsf{T}}}
\newenvironment{psmallmatrix}
{\left(\begin{smallmatrix}}
	{\end{smallmatrix}\right)}
\begin{document}
	
	\title[HYPERDERIVATIVES OF PERIODS AND LOGARITHMS]{ALGEBRAIC RELATIONS AMONG HYPERDERIVATIVES OF PERIODS AND LOGARITHMS OF DRINFELD MODULES}
	
	\author{Changningphaabi Namoijam}
	\address{Department of Mathematics, Colby College, Waterville, Maine 04901, USA}
	\email{cnamoijam@gmail.com}

	\thanks{This project was supported by MoST Grant 110-2811-M-007-517 and  partially by NSF Grant DMS-1501362}
	
	\subjclass[2020]{Primary 11J93; Secondary 11G09}
	
	\date{June 9, 2024}
	
	\begin{abstract}
		We determine all algebraic relations among all hyperderivatives of the periods, quasi-periods, logarithms, and quasi-logarithms of Drinfeld modules defined over a separable closure of the rational function field. In particular, for periods and logarithms that are linearly independent over the endomorphism ring of the Drinfeld module, we prove the algebraic independence of their hyperderivatives and the hyperderivatives of the corresponding quasi-periods and quasi-logarithms. 
	\end{abstract}
	
	\keywords{Drinfeld modules, Anderson $t$-modules, periods, logarithms, hyperderivatives, algebraic independence}
	
	\maketitle
	
	
	\section{Introduction} \label{S:Intro}
	
The objects of study in the present paper are inspired by elliptic curves in the classical setting. Let $E$ be an elliptic curve defined over $\overline{\QQ}$. The period conjecture states that the transcendence degree over $\overline{\QQ}$ of the two periods $\{\omega_1, \omega_2\}$ and the two corresponding quasi-periods $\{\eta_1, \eta_2\}$ of $E$ is $2$ when $E$ has complex multiplication (CM), and $4$ otherwise. The CM case was confirmed to be true by Chudnovsky, while the non-CM case is still open. With regards to logarithms of $E$, one can expect logarithms of algebraic numbers that are linearly independent over $\End(E)$ to be algebraically independent over $\overline{\QQ}$. Although linear independence over $\overline{\QQ}$ of  these logarithms is known due to Masser (for the CM case), Bertrand-Masser (for the non-CM case), and as a consequence of W\"{u}stholz's analytic subgroup theorem, algebraic independence of these logarithms
is still fully open. See \cite{BakWus07}, \cite{Wald} for details. 

In the function field setting, Drinfeld \cite{Drinfeld} introduced ``elliptic modules," now called Drinfeld modules, as an analogue of elliptic curves. Later, Anderson \cite{And86} defined higher dimensional generalizations of Drinfeld modules, called $t$-modules. One can ask analogous questions regarding algebraic independence of periods, quasi-periods, logarithms, and quasi-logarithms of Drinfeld modules and Anderson $t$-modules defined over algebraic function fields. Yu proved the sub-$t$-module theorem \cite{Yu97}, a remarkable result regarding linear independence among logarithms of Anderson $t$-modules, which is an analogue of W\"{u}stholz's analytic subgroup theorem, and proved the complete transcendence results concerning periods and logarithms of Drinfeld modules \cite{Yu86}, \cite{Yu90}. Thiery \cite{Thiery} proved algebraic independence results among periods and quasi-periods of rank $2$ Drinfeld modules with complex multiplication. Chang and Papanikolas \cite{CP11}, \cite{CP12} proved algebraic independence of periods, quasi-periods, logarithms, and quasi-logarithms of Drinfeld modules of arbitrary rank. The goal of the present paper is to generalize completely under separability hypothesis this work of Chang and Papanikolas \cite{CP11}, \cite[Thm.~3.5.4, Thm.~5.1.5, and Cor.~5.1.6]{CP12} to include all hyperderivatives, which are defined below.

Let $\mathbb{F}_q$ be a finite field, where $q$ is a positive power of a prime number $p$, and let $\theta$ be an indeterminate. For the rational function field $\FF_q(\theta)$, the $j$-th hyperderivative $\pd_\theta^j: \FF_q(\theta) \rightarrow \FF_q(\theta)$ is defined by ${\pd_\theta^j(\theta^m) := \binom{m}{j} \theta^{m-j}}$, where $j \geq 0$. Taking the completion $\laurent{\FF_q}{1/\theta}$ of $\FF_q(\theta)$ with respect to its $\infty$-adic absolute value $\inorm{\,\cdot\,}$, $\smash{\pd_\theta^j(\cdot)}$ extends uniquely to $\laurent{\FF_q}{1/\theta}^\sep$. Note that hyperderivatives play the analogous role of formal derivatives in the classical case.  Unlike in the classical setting of elliptic curves, one can take hyperderivatives of periods and logarithms of Anderson $t$-modules defined over $\FF_q(\theta)^{\sep}$. Moreover, many interpretations of objects of interest in terms of logarithms of Anderson $t$-modules involve hyperderivatives. The entries of periods of the $d$-th tensor power $\mathfrak{C}^{\otimes d}$ of the Carlitz module $\mathfrak{C}$ (rank $1$ Drinfeld module) are obtained using hyperderivatives \cite[Lem.~8.3]{Maurischat18} of Anderson-Thakur functions \cite[$\S$2.5]{AndThak90}. Also, Carlitz zeta values \cite{Thakur} appear in the last coordinate of a logarithm of $\mathfrak{C}^{\otimes d}$ \cite[Thm.~3.8.3]{AndThak90}. Generalizing this, Chang, Green, and Mishiba \cite{ChangGreenMishiba19} showed that multizeta values \cite{Thakur} also appear as coordinates of logarithms of a particular Anderson $t$-module and further showed that its periods and logarithms are obtained using hyperderivatives. There are also logarithmic interpretations of special values of Goss $L$-functions attached to Drinfeld modules in terms of logarithms of an Anderson $t$-module, where hyperderivatives play a crucial role \cite{GezmisN21}. These interpretations further motivate interest in determining algebraic independence of hyperderivatives of periods and logarithms of Anderson $t$-modules.

Algebraic independence among hyperderivatives of the fundamental period of the Carlitz module were proved by Denis \cite{Denis93}, \cite{Denis95}, \cite{Denis00} and Maurischat \cite{Maurischat18}, \cite{Maurischat21}. Further work in this direction was also done in unpublished work by Brownawell and van der Poorten. Utilizing Yu's sub-$t$-module theorem, Brownawell and Denis \cite{Brownawell99}, \cite{BrownawellDenis00}, and Brownawell \cite{Brownawell01} investigated linear independence of hyperderivatives of logarithms and quasi-logarithms of Drinfeld modules. In the present paper, we determine all algebraic independence results among all hyperderivatives of periods, quasi-periods, logarithms, and quasi-logarithms of Drinfeld modules of arbitrary rank under the hypothesis of separability. 

	\subsection{Hyperderivatives of Periods and Logarithms}\label{S:IntroHyperPer}

	For a finite field $\mathbb{F}_q$, where $q$ is a positive power of a prime number $p$, we set $A:=\mathbb{F}_q[\theta]$, $k:=\mathbb{F}_q(\theta)$ and $k_{\infty}:=\mathbb{F}_q((1/\theta))$, the completion of $k$ at its infinite place. We further set $\KK$ to be the completion of an algebraic closure of $k_{\infty}$, and let $\ok$ and $k^\sep$ be the algebraic closure and the separable closure  respectively of $k$ inside $\KK$. For a variable $t$ independent from $\theta$, we further define $\bA:= \mathbb{F}_q[t]$ and $\bk:=\mathbb{F}_q(t)$.\par
	
	For $n \in \ZZ$, we define the \emph{Frobenius twist} $\tau^n:\laurent{\KK}{t} \to \laurent{\KK}{t}$ by setting for $f = \sum_ia_it^i$,
	\begin{equation}\label{E:Frob}
		\tau^n(f):= f^{(n)} = \sum_ia_i^{q^n}t^i. 
	\end{equation}
	For a field $K \subseteq \KK$, we define the twisted power series ring $\power{K}{\tau}$ subject to the condition $\tau c = c^q\tau$ for all $c\in K$. Then, we define the twisted polynomial ring $K[\tau]$ as the subring of $\power{K}{\tau}$, where $K[\tau]$ is viewed as a subalgebra of the $\FF_q$-linear endomorphisms of the additive group of $K$. \par

		For a field $k \subseteq K \subseteq \KK$, a {\emph{Drinfeld $\bA$-module of rank $r$ defined over $K$}} is an $\FF_q$-algebra homomorphism $\rho : \bA \rightarrow K[\tau]$ uniquely determined by 
	\[\rho_t = \theta + \kappa_1\tau + \dots + \kappa_r \tau^r\] 
	such that $\kappa_r \neq 0$. The \emph{exponential function} associated to~$\rho$ is given by $\Exp_\rho(z) = \smash{z + \sum_{h \geq 1} \alpha_h z^{q^h}} \in \power{K}{z}$ and it satisfies the functional equation $\Exp_\rho(\theta z) = \rho_t(\Exp_\rho(z))$. The {\emph{period lattice}} of $\rho$ is the kernel $\Lambda_\rho$ of $\Exp_\rho$, which is a free discrete $\bA$-submodule of rank $r$ inside $\KK$. 
	
	The de Rham cohomology theory for Drinfeld $\bA$-modules was developed by Anderson, Deligne, Gekeler, and Yu \cite{Gekeler89}, \cite{Yu90}. A \emph{$\rho$-biderivation} is an $\FF_q$-linear map $\delta: \bA \to \KK[\tau] \tau$ satisfying, for all $a, b \in \bA$,
	\[
	\delta_{ab}= a(\theta) \delta_b + \delta_a \rho_b.
	\]
Let $u \in \KK[\tau]$. Then, the $\rho$-biderivation $\delta^{(u)}$ defined by $\delta^{(u)}_a = u \rho_a - a(\theta)u$ for all $a \in \bA$ is called an \emph{inner biderivation}. If $u \in \KK[\tau]\tau$, then $\delta^{(u)}$ is said to be \emph{strictly inner}. The set of $\rho$-biderivations $\Der(\rho)$ forms a $\KK$-vector space.  The set of inner biderivations $\Der_{\inn}(\rho)$ and the set of strictly inner biderivations $\Der_{\si}(\rho)$  are also $\KK$-vector subspaces of $\Der(\rho)$. We define the \emph{de Rham module for $\rho$} to be $\rH^1_{\DR}(\rho) := \Der(\rho) / \Der_{\si}(\rho)$, which is an $r$-dimensional $\KK$-vector space. The de Rham module $\rH^1_{\DR}(\rho)$ parametrizes the extensions of $\rho$ by $\GG_a$.\par
	
	For each $\delta \in \Der(\rho)$ there is a unique $\FF_q$-linear and entire power series $\rF_{\delta}(z) = \sum_{i\geq 1} c_i z^{(i)} \in \power{\KK}{z}$ such that, for all $a \in \bA$,
	\begin{equation} \label{E:Fdeltafneq}
		\rF_{\delta}( a(\theta)z) = a(\theta) \rF_{\delta}(z) + \delta_a (\Exp_\rho(z)).
	\end{equation}
	
	\noindent We call $\rF_{\delta}$ the \emph{quasi-periodic function} associated to $\delta$. For $\lambda \in \Lambda_\rho$, the value $\rF_\delta(\lambda)$ is called a \emph{quasi-period of $\rho$}. For $u\in \KK$, the value $\rF_\delta(u)$, which is a coordinate of logarithms of quasi-periodic extensions, is called a {\emph{quasi-logarithm of $\rho$}} (see \cite{BP02}, \cite{NPapanikolas21}).

		A $\KK$-basis of $\rH^1_{\DR}(\rho)$ is represented by $\{\delta_1, \dots, \delta_r\}$, where $\delta_1$ is the inner biderivation such that $(\delta_1)_t = \rho_t-\theta$, and $\delta_j(t) = \tau^{j-1}$ for $2 \leq j \leq r$. We see that $\rF_{\delta_{1}}(z) = \Exp_\phi(z) - z$, and so $\rF_{\delta_{1}}(\lambda) = -\lambda$ for all $\lambda \in \Lambda_\rho$. If we take $\{\lambda_1, \dots, \lambda_r\}$ to be an $\bA$-basis of $\Lambda_\rho$ and we set $\rF_{\tau^{j-1}}(z) := \rF_{\delta_j}(z)$ for $2 \leq j \leq r$, then we define the \emph{period matrix of $\rho$} to be
	\begin{equation*}\label{periodmatrix}
		\bP_\rho := \begin{pmatrix} \lambda_1 & \rF_\tau(\lambda_1) & \dots & \rF_{\tau^{r-1}}(\lambda_1)\\
			\lambda_2 & \rF_\tau(\lambda_2) & \dots & \rF_{\tau^{r-1}}(\lambda_2)\\
			\vdots & \vdots & & \vdots \\
			\lambda_r & \rF_\tau(\lambda_r) & \dots & \rF_{\tau^{r-1}}(\lambda_r)\\
		\end{pmatrix},
	\end{equation*}
which accounts for all periods and quasi-periods of $\rho$. The de Rham cohomology theory for Drinfeld $\bA$-modules runs in parallel to the theory of elliptic functions such that the periods and quasi-periods summarized above play the role of periods and quasi-periods of the Weierstrass $\wp$-functions. 

	If the Drinfeld $\bA$-module $\rho$ is defined over $k^{\sep}$, Denis \cite[p.~6]{Denis95} showed that, for a $\rho$-biderivation $\delta$ defined over $k^\sep$, if $u\in \KK$ such that $\Exp_\rho(u) \in k^{\sep}$, then $u \in k_\infty^\sep$ and $\rF_{\delta}(u)\in k_\infty^\sep$ (see also \cite[Lem. 4.1.22]{NPapanikolas21}). Therefore, for $n \geq 0$ we can consider $\pd_\theta^n(u)$ and $\pd_\theta^n(\rF_\delta(u))$. Let $\pd_{\theta}^n(\bP_{\rho})$ be the matrix formed by entry-wise action of  $\pd_{\theta}^n(\cdot)$ on $\bP_{\rho}$.

	We define $\End(\rho):= \{x \in \KK \mid x\Lambda_\rho \subseteq \Lambda_\rho\}$ and let $K_\rho$ be its fraction field. Our first main result is as follows (restated as Theorem~\ref{T:Main1}):
	
	\begin{theorem}\label{T:Main01}
		Let $\rho$ be a Drinfeld $\bA$-module of rank $r$ defined over $k^{\textup{sep}}$ and suppose that $K_\rho$ is separable over $k$. If $\bss = [K_\rho:k]$, then for $n \geq 1$ we have
		
		\[
		\trdeg_{\ok} \ok \left( \bP_\rho, \pd_\theta^1(\bP_\rho), \dots, \pd_\theta^n(\bP_\rho)\right) 
		= (n+1) \cdot r^2/\bss.\]
	\end{theorem}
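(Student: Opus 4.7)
The plan is to reduce the theorem to a dimension computation for a motivic Galois group by combining the Chang--Papanikolas transcendence theorem (which handles $n=0$) with the theory of prolongations of dual $t$-motives, in the spirit of Maurischat's work on hyperderivatives of the Carlitz period.

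First, I would attach to $\rho$ a dual $t$-motive $\overline{M}_\rho$ equipped with a rigid analytic trivialization $\Psi_\rho$ whose specialization at $t=\theta$ recovers (up to known factors) the entries of $\bP_\rho$ and their quasi-periodic companions. This is the Anderson dual $t$-motive construction enhanced by biderivations, as developed in \cite{CP12} and \cite{NPapanikolas21}. By the main theorem of Chang--Papanikolas, the motivic Galois group $\Gamma_{\overline{M}_\rho}$ is precisely the centralizer of $K_\rho$ in $\GL_r$, which has dimension $r^2/\bss$, and this dimension equals the transcendence degree $\trdeg_{\ok} \ok(\bP_\rho)$.

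Second, to incorporate the hyperderivatives $\pd_\theta^j(\bP_\rho)$ for $1\leq j\leq n$, I would form the $n$-th prolongation $P^n \overline{M}_\rho$ in the sense of Maurischat. Its rigid analytic trivialization is the block-Toeplitz matrix built from $\Psi_\rho$ and the first $n$ hyperderivatives $\pd_t^j(\Psi_\rho)$, and specialization at $t=\theta$ together with Denis's result on $\pd_\theta^j$ of quasi-logarithms produces exactly the data $\bP_\rho, \pd_\theta^1(\bP_\rho), \dots, \pd_\theta^n(\bP_\rho)$. Papanikolas's Galois theory then identifies the transcendence degree in the theorem with $\dim \Gamma_{P^n \overline{M}_\rho}$, so the problem reduces to computing this dimension and showing it equals $(n+1) r^2/\bss$.

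Third, for the dimension computation, the general theory gives an inclusion
\[
\Gamma_{P^n \overline{M}_\rho} \hookrightarrow P^n \Gamma_{\overline{M}_\rho},
\]
where $P^n G$ denotes the $n$-th jet (prolongation) group, a smooth affine group scheme of dimension $(n+1)\dim G$. This yields the upper bound $(n+1)r^2/\bss$ directly from the base case. The heart of the proof, and the step I expect to be the main obstacle, is the matching lower bound: one must show that no ``unexpected'' algebraic relations appear among the hyperderivative entries. I would establish this by verifying the hypotheses of Maurischat's prolongation theorem for $\overline{M}_\rho$ --- concretely, by producing enough elements in the Lie algebra of $\Gamma_{P^n\overline{M}_\rho}$ to exhaust $\Lie P^n\Gamma_{\overline{M}_\rho}$. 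The separability assumption on $K_\rho/k$ is what allows one to trivialize the obstruction here, since the defining equations of the centralizer $\Gamma_{\overline{M}_\rho}$ can be smoothly prolonged; the non-separable case would introduce inseparability obstructions in the jet scheme that would shrink the dimension. Once the lower bound is in hand, combining with Papanikolas's theorem yields the claimed transcendence degree.
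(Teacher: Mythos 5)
Your reduction to computing $\dim\Gamma_{\rP_n M_\rho}$ via prolongations and Papanikolas's theorem matches the paper's framing exactly, and you correctly identify the lower bound as the hard part. However, the two key steps you describe both have serious gaps, and the role of the separability hypothesis is misattributed.

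For the upper bound, you assert as ``general theory'' that $\Gamma_{\rP_n M_\rho}$ embeds in the naive jet group $P^n\Gamma_{M_\rho}$. This is not in the literature, and it is not even true without additional input: an element of $\Gamma_{\rP_n M_\rho}(\rR)$ is block upper triangular Toeplitz with blocks $\gamma_0,\ldots,\gamma_n$, and commuting with $d_{t,n+1}[f]$ for $f\in\bK_\rho$ forces (for $n=1$) $[\gamma_1,f]=[\pd_t^1(f),\gamma_0]$, not $[\gamma_1,f]=0$. Since $f$ generates a nontrivial extension of $\FF_q(t)$, $\pd_t^1(f)$ is generally nonzero, so the constraint on $\gamma_1$ is a twisted one, not the naive jet condition. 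This twisted constraint does reduce to the naive one precisely when $\bK_\rho$ is stable under $\pd_t$, which holds if and only if $K_\rho/k$ is separable. So separability is what makes your upper bound argument go, and you have misplaced it in the lower bound. The paper proves the upper bound (Theorem~\ref{T:1LeqGamma}) by a different mechanism: it establishes the block Toeplitz structure of elements (Theorem~\ref{T:Element}), identifies the kernel $Q_n$ of each projection $\Gamma_{\rP_n M_\rho}\to\Gamma_{\rP_{n-1} M_\rho}$ inside $\Cent_{\Mat_r/\bk}(\bK_\rho)$, and uses separability in Lemma~\ref{L:defined} to show $Q_n$ is $\bk$-smooth so that the inductive dimension count closes.

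For the lower bound you say one should ``produce enough elements in the Lie algebra,'' but give no mechanism. Lie algebra arguments are what the paper uses for the \emph{upper} bound (the tangent map surjectivity in Lemma~\ref{L:defined}). The paper's lower bound (Theorem~\ref{T:GeqGamma}) is a genuinely different and more elaborate argument that your proposal never touches: it transfers the $\fp$-adic Tate module Galois representation $\varphi_\fp$ to the prolongation $t$-module via $\epsilon\mapsto d_{t,n+1}[g_\epsilon]$ (Theorem~\ref{T:GrHom}), observes that the image is a Kolchin-closed set (because it consists of $d$-matrices of the base-case image), and then computes the Zariski closure of this image by passing through the Kolchin topology in positive characteristic (Appendix~\ref{DAG}, Lemma~\ref{L:ZariskiKolchin}). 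This differential-algebraic detour is the technical heart of the paper and requires no separability; in fact the lower bound $\dim\Gamma_{\rP_n M_\rho}\geq (n+1)r^2/\bss$ holds unconditionally. Your invocation of Maurischat's theorem does not supply a substitute: that result handles the Carlitz module by direct computation and does not furnish a general mechanism for computing the Zariski closure of the jetted Tate-module image for arbitrary $K_\rho$.
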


	Building on Theorem~\ref{T:Main01}, we prove algebraic independence among hyperderivatives of logarithms and quasi-logarithms of Drinfeld $\bA$-modules. Our second main result is as follows (restated as Theorem~\ref{T:Main2}): 
	\begin{theorem}\label{T:Main02}
		Let $\rho$ be a Drinfeld $\bA$-module of rank $r$ defined over $k^\sep$ and suppose that $K_\rho$ is separable over $k$. Let $u_1, \dots, u_w \in \KK$ with $\Exp_\rho(u_i) = \alpha_i \in k^\sep$ for each $1\leq i \leq w$ and suppose that $\dim_{K_\rho} \Span_{K_\rho}(\lambda_1, \dots, \lambda_r, u_1, \dots, u_w) = r/\bss + w$, where $\bss = [K_\rho:k]$. Then, for $n \geq 1$ 
		\[
		\trdeg_{\ok} \ok \bigg(\bigcup\limits_{s=0}^{n} \bigcup\limits_{i=1}^{r-1}\bigcup\limits_{m=1}^w \bigcup\limits_{j=1}^r \{\pd_\theta^s(\lambda_j), \pd_\theta^s(\rF_{\tau^i}(\lambda_j)),\pd_\theta^s(u_m), \pd_\theta^s(\rF_{\tau^i}(u_m))\}\bigg) 
		= (n+1)(r^2/\bss
		+ rw).\]
	\end{theorem}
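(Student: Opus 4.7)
The plan is to follow the Chang--Papanikolas strategy from \cite{CP12} while replacing the relevant Anderson $t$-modules by their $n$-th prolongations in the sense of Maurischat, and then to read off the transcendence degree from the dimension of the associated Papanikolas motivic Galois group. Theorem~\ref{T:Main01} will serve as the input that handles the ``Drinfeld part'' of this Galois group.

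First, I would assemble the data of Theorem~\ref{T:Main02} into a single Anderson $t$-module $E_0$ defined over $k^{\sep}$. Using the quasi-periodic extensions of $\rho$ attached to the biderivations $\delta_1,\ldots,\delta_r$, one forms an $r$-dimensional $t$-module whose full period lattice realises the matrix $\bP_\rho$. For each $u_m$ there is a canonical lift $\bu_m\in\Lie(E_0)(\KK)$ with $\Exp_{E_0}(\bu_m)\in E_0(k^{\sep})$, whose coordinates are precisely $u_m$ and the quasi-logarithms $\rF_{\tau^i}(u_m)$. Applying Maurischat's $n$-th prolongation functor \cite{Maurischat18, Maurischat21} yields a $t$-module $P^nE_0$ of dimension $(n+1)r$ whose periods, together with its logarithms at the prolonged points $P^n\bu_m$, recover exactly the quantities $\pd_\theta^s(\lambda_j)$, $\pd_\theta^s(\rF_{\tau^i}(\lambda_j))$, $\pd_\theta^s(u_m)$, and $\pd_\theta^s(\rF_{\tau^i}(u_m))$ listed in the statement.

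Next I would invoke Papanikolas' theorem: the desired transcendence degree equals $\dim\Gamma_{\cM}$, where $\cM$ is the $t$-motive attached to $P^n E_0$ together with the extensions classified by the log points $P^n\bu_m$. The group $\Gamma_{\cM}$ sits in a short exact sequence
\[
1 \longrightarrow V \longrightarrow \Gamma_{\cM} \longrightarrow \Gamma_{\cM_0} \longrightarrow 1,
\]
where $\cM_0$ is the sub-motive corresponding to $P^n E_0$ alone and $V$ is a unipotent radical encoding the $w$ extension classes and their prolongations. Theorem~\ref{T:Main01} identifies $\dim\Gamma_{\cM_0}=(n+1)r^2/\bss$, so the theorem reduces to proving $\dim V=(n+1)rw$.

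The main obstacle is the lower bound $\dim V\geq(n+1)rw$; the upper bound is formal since $V$ embeds into a product of $(n+1)w$ copies of the $r$-dimensional standard representation attached to $\cM_0$. For the lower bound, the hypothesis $\dim_{K_\rho}\Span_{K_\rho}(\lambda_1,\ldots,\lambda_r,u_1,\ldots,u_w)=r/\bss+w$, combined with separability of $K_\rho$, lets one decompose $V$ along the embeddings of $K_\rho$ into $\KK$ and apply Yu's sub-$t$-module theorem \cite{Yu97} factor by factor, as in \cite[\S5]{CP12}. The delicate step is verifying that Maurischat's prolongation preserves non-degeneracy at every order $s\le n$: one must rule out nontrivial $\bk$-linear relations among the classes arising from $\pd_\theta^s(\bu_m)$. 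This comes from exactness of $P^n$ on the relevant category of $t$-motives and its compatibility with the $\Ext^1$-groups parametrising the log extensions, precisely the mechanism exploited in the proof of Theorem~\ref{T:Main01}.
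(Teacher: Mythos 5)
Your overall Tannakian strategy is sound and matches the paper at a high level: encode all the quantities in a suitable $t$-motive, invoke Papanikolas' theorem, and reduce the count to showing that the unipotent part $V$ of the motivic Galois group has dimension $(n+1)rw$. But the mechanism you propose for the crucial lower bound is mis-attributed. Neither \cite[\S 5]{CP12} nor this paper uses Yu's sub-$t$-module theorem there; Yu's theorem is a \emph{linear} independence result and is the tool of Brownawell and Brownawell--Denis, not of Chang--Papanikolas. The lower bound here comes from an algebraic non-triviality statement in $\Ext^1_\cT(\mathbf{1},\rP_n M_\rho)$ --- Theorem~\ref{T:Trivial}, adapting \cite[Thm.~4.2.2]{CP12} --- which is proved by unwinding the difference equations $\gamma^{(-1)}\Phi_S = (\Phi_{\rP_n\rho}\oplus(1))\gamma$ and $\gamma\Psi_S = (\Psi_{\rP_n\rho}\oplus(1))\delta$, specializing at $t=\theta$, and deriving a $K_\rho$-linear relation among $\lambda_1,\dots,\lambda_r,u_1,\dots,u_w$ that contradicts the span hypothesis. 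No appeal to Yu's theorem appears anywhere in this chain.

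More importantly, the phrase ``exactness of $P^n$ and compatibility with the $\Ext^1$-groups'' does not supply the ingredient that actually closes the argument. The paper identifies $X_n \cong V_n^B$ for a sub-$t$-motive $V_n \subseteq (\rP_n M_\rho)^w$, and the decisive structural input is Lemma~\ref{L:Equivvv}: the canonical projections $\rP_n M_\rho \twoheadrightarrow \rP_{n-j-1}M_\rho$ among prolongations induce \emph{surjective} maps $V_n \twoheadrightarrow V_{n-j-1}$. These surjections make possible an induction in $n$: assuming $V_{n-1}\cong(\rP_{n-1}M_\rho)^w$, if $V_n$ were a proper sub-$t$-motive of $(\rP_n M_\rho)^w$ then $\ker(V_n\to V_{n-1})$ would be a proper sub-$t$-motive of $M_\rho^w$, so by complete reducibility of $M_\rho^w$ there is a nonzero $\phi_n\in\Hom_\cT(M_\rho^w,M_\rho)$, and the corresponding pushout $\be_{1*}Y_{1,n}+\cdots+\be_{w*}Y_{w,n}$ becomes trivial in $\Ext^1_\cT(\mathbf{1},\rP_n M_\rho)$, contradicting Theorem~\ref{T:Trivial}. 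Without this reduction from $n$ to $n-1$ the argument does not close. Finally, in positive characteristic the quotient group scheme $X_n$ is not automatically $\bk$-smooth; the identification $X_n\cong V_n^B$ and the dimension count both require smoothness, which is exactly what Lemma~\ref{L:defined1} provides and is one of the places where the separability hypothesis on $K_\rho/k$ genuinely enters. Your proposal does not address this.
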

	
	For an arbitrary basis of $\rH^1_{\DR}(\rho)$ defined over $k^\sep$, we deduce the following corollary. 
	\begin{corollary}\label{C:Main02}
		Let $\rho$ be a Drinfeld $\bA$-module of rank $r$ defined over $k^\sep$ and suppose that $K_\rho$ is separable over $k$. Let $u_1, \dots, u_w \in \KK$ with $\Exp_\rho(u_i) = \alpha_i \in k^\sep$ for each $1\leq i \leq w$. Let $\{\delta_1, \dots, \delta_r\}$ be a basis of $\rH^1_{\DR}(\rho)$ defined over $k^\sep$. If $u_1, \dots, u_w$ are linearly independent over $K_\rho$, then for $n \geq 1$ the $(n+1)rw$ quantities \[\left\{\bigcup_{s=0}^{n}\bigcup_{j=1}^{r}  \left(\pd_\theta^s(\rF_{\delta_{j}}(u_1)), \pd_\theta^s(\rF_{\delta_{j}}(u_2)), \dots, \pd_\theta^s(\rF_{\delta_{j}}(u_w))\right)\right\}\]  are algebraically independent over $\ok$.
	\end{corollary}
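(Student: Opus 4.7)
The plan is to deduce the corollary from Theorem~\ref{T:Main2} by passing from the distinguished basis $\{\tilde{\delta}_1, \dots, \tilde{\delta}_r\}$ of $\rH^1_{\DR}(\rho)$ used there --- where $(\tilde{\delta}_1)_t = \rho_t - \theta$ and $\tilde{\delta}_j(t) = \tau^{j-1}$ for $2 \leq j \leq r$ --- to the given arbitrary basis $\{\delta_1, \dots, \delta_r\}$ defined over $k^\sep$. The main task is to verify that this change of basis is compatible with hyperdifferentiation and evaluation at the $u_m$'s.

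I would first write $\delta_j = \sum_{i=1}^{r} c_{ij}\tilde{\delta}_i + \delta^{(\eta_j)}$ for some change-of-basis matrix $C = (c_{ij}) \in \GL_r(k^\sep)$ and strictly inner biderivations $\delta^{(\eta_j)}$ with $\eta_j \in k^\sep[\tau]\tau$. By the $\KK$-linearity and uniqueness of the assignment $\delta \mapsto \rF_\delta$ imposed by \eqref{E:Fdeltafneq}, together with the identity $\rF_{\delta^{(\eta)}}(z) = \eta(\Exp_\rho(z))$ valid for every $\eta \in \KK[\tau]\tau$, evaluation at $u_m$ gives
\[\rF_{\delta_j}(u_m) = \sum_{i=1}^{r} c_{ij}\,\rF_{\tilde{\delta}_i}(u_m) + \eta_j(\alpha_m), \qquad \eta_j(\alpha_m) \in k^\sep.\]
Applying $\pd_\theta^s$ and the Leibniz rule for hyperderivatives then yields
\[\pd_\theta^s\bigl(\rF_{\delta_j}(u_m)\bigr) = \sum_{i=1}^{r}\sum_{t=0}^{s} \pd_\theta^{s-t}(c_{ij})\,\pd_\theta^t\bigl(\rF_{\tilde{\delta}_i}(u_m)\bigr) + \pd_\theta^s\bigl(\eta_j(\alpha_m)\bigr),\]
with all scalar coefficients and shifts lying in $k^\sep \subset \ok$. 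For each fixed $m$ the $(n+1)r \times (n+1)r$ transition matrix is block triangular in the hyperderivative index, with every diagonal block equal to $C$, hence invertible over $k^\sep$. Therefore $\{\pd_\theta^s(\rF_{\delta_j}(u_m))\}_{s,j,m}$ and $\{\pd_\theta^t(\rF_{\tilde{\delta}_i}(u_m))\}_{t,i,m}$ generate the same field extension of $\ok$, and it is enough to prove algebraic independence of the latter collection.

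Since $\rF_{\tilde{\delta}_1}(u_m) = \alpha_m - u_m$ with $\alpha_m \in k^\sep$ and $\rF_{\tilde{\delta}_i}(u_m) = \rF_{\tau^{i-1}}(u_m)$ for $2 \leq i \leq r$, this is, modulo algebraic shifts by $\pd_\theta^s(\alpha_m) \in \ok$, exactly algebraic independence over $\ok$ of the $(n+1)rw$ quasi-logarithm quantities $\bigcup_{s,i,m}\{\pd_\theta^s(u_m),\,\pd_\theta^s(\rF_{\tau^i}(u_m))\}$, which appears as a sub-collection of the algebraically independent set produced by Theorem~\ref{T:Main2}.

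I expect the hardest part to be the hypothesis mismatch: the corollary only assumes that $u_1, \dots, u_w$ are $K_\rho$-linearly independent, whereas Theorem~\ref{T:Main2} additionally requires them to be $K_\rho$-linearly independent modulo $\Span_{K_\rho}(\lambda_1, \dots, \lambda_r)$. To handle the case when some $K_\rho$-combination of the $u_m$'s lies in $\Span_{K_\rho}(\Lambda_\rho)$, I would split $u_m = u_m^\circ + v_m$ with $v_m \in \Span_{K_\rho}(\Lambda_\rho)$ and the $u_m^\circ$'s satisfying the hypothesis of Theorem~\ref{T:Main2}, use the functional equation \eqref{E:Fdeltafneq} together with the $\End(\rho)$-invariance of $\Lambda_\rho$ to express each $\pd_\theta^s(\rF_{\delta_j}(u_m))$ as an $\ok$-algebraic function of the hyperderivatives of the $u_m^\circ$'s and of the entries of $\bP_\rho$, and finally invoke Theorem~\ref{T:Main1} to absorb the period-related contributions into an algebraically independent set of the required transcendence degree.
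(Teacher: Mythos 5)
Your change-of-basis reduction (steps 1--5) is correct and is essentially one ingredient of the paper's argument, but the fix you propose for the hypothesis mismatch does not work, and this is the crux of the whole proof.

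The specific flaw: you propose to split $u_m = u_m^\circ + v_m$ with $v_m \in \Span_{K_\rho}(\Lambda_\rho)$ so that the $w$ shifted elements $u_1^\circ, \dots, u_w^\circ$ satisfy the hypothesis of Theorem~\ref{T:Main2}. This is impossible whenever the original $u_m$'s admit a nontrivial $K_\rho$-relation modulo $\Span_{K_\rho}(\Lambda_\rho)$. Indeed, if $\sum_m c_m u_m \in \Span_{K_\rho}(\Lambda_\rho)$ with not all $c_m$ zero, then for any choice of $v_m \in \Span_{K_\rho}(\Lambda_\rho)$ one has $\sum_m c_m u_m^\circ = \sum_m c_m u_m - \sum_m c_m v_m \in \Span_{K_\rho}(\Lambda_\rho)$, so $\dim_{K_\rho}\Span_{K_\rho}(\lambda_1,\dots,\lambda_r,u_1^\circ,\dots,u_w^\circ) < r/\bss + w$ no matter what, and Theorem~\ref{T:Main2} cannot be applied to the $u_m^\circ$'s with this $w$. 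The only way to restore the hypothesis is to discard some of the $u_m$'s, which changes the count $(n+1)rw$ you are trying to certify; after that you need a genuine argument that the $(n+1)rw$ target quantities remain algebraically independent once some of them are expressed through periods, and that argument is missing (the vague ``absorb the period-related contributions'' does not supply it). Relatedly, at the end you cite Theorem~\ref{T:Main1}, but that result controls only periods and quasi-periods and cannot absorb quantities coming from the surviving $u_{m_i}^\circ$'s; one needs the joint statement of Theorem~\ref{T:Main2}.

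The paper circumvents this entirely by a different choice of auxiliary set: it picks a maximal $K_\rho$-linearly independent set $\{\eta_1,\dots,\eta_\alpha\}\subseteq\{\lambda_1,\dots,\lambda_r,u_1,\dots,u_w\}$ \emph{containing all the $u_m$}, shows via $\KK$-linearity of $\delta\mapsto\rF_\delta$, the difference equation \eqref{E:Fdeltafneq}, and the hyperderivative product rule that $\ok\bigl(\{\pd_\theta^s(\rF_{\delta_j}(\eta_m))\}_{s,j,m\le\alpha}\bigr)$ equals the full field of hyperderivatives of periods, quasi-periods, logs, and quasi-logs, and then observes that this field has transcendence degree exactly $(n+1)r\alpha$ by Theorem~\ref{T:Main2} (applied after reducing to a maximal sub-collection of the $u_m$'s that are independent modulo $\Span_{K_\rho}(\Lambda_\rho)$; one has $\alpha = r/\bss + w'$ there). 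Since the number of generators $\pd_\theta^s(\rF_{\delta_j}(\eta_m))$ is also $(n+1)r\alpha$, they are \emph{all} algebraically independent, and the $(n+1)rw$ quantities of the corollary form a sub-collection because each $u_m$ is one of the $\eta_m$. This transcendence-degree-equals-cardinality observation is exactly the ``genuine gap'' step in your outline: without it, expressing the missing quantities through periods gives you only an upper bound, not independence.
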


 Combining Theorems~\ref{T:Main01}, \ref{T:Main02}, and Corollary~\ref{C:Main02}, the $\ok$-linear relations among the periods and logarithms of $\rho$  and their hyperderivatives induced by endomorphisms of $\rho$ account for all the $\ok$-algebraic relations among all hyperderivatives of the periods and logarithms as well as all hyperderivatives of the corresponding quasi-periods and quasi-logarithms of $\rho$. 
	
		\subsection{Remarks on structure of the paper}
In \cite{NPapanikolas21}, Papanikolas and the author showed that $t$-motives whose period matrices comprise the values of interest in Theorems~\ref{T:Main01} and \ref{T:Main02} are constructed from the $t$-motive associated to prolongations \cite{Maurischat18} of $\rho$, but did not prove any transcendence results about the values in question. Papanikolas's theorem \cite[Thm.~1.1.7]{P08} states that the transcendence degree of the period matrix of a $t$-motive is equal to the dimension of its Galois group. The primary hurdle, then, is determining the dimension of the associated Galois group of the $t$-motive.

The first goal of this paper is to explicitly determine the Galois group of the $t$-motive corresponding to the $n$-th prolongation $t$-module $\rP_n\rho$ of $\rho$. To do this, we calculate the Zariski closure of the image of the Galois representation on the $\fp$-adic Tate module of $\rP_n\rho$, for a non-zero prime $\fp$ of $\bA$. Next, we immediately extend this result. We construct new $t$-motives whose period matrices are comprised of both periods and quasi-periods of $\rP_n\rho$, and hyperderivatives of logarithms and quasi-logarithms of $\rho$, then determine their Galois groups. We construct a sequence of surjections between specific sub-$t$-motives using consecutive prolongations $\rP_\ell \rho$ for $0\leq \ell \leq n$. These surjections are crucial in establishing that algebraic independence over $\ok$ of all hyperderivatives of the logarithms and quasi-logarithms depend only on $K_\rho$-linear independence of the logarithms.

 The paper is outlined as follows. 
 
 \begin{itemize}
 
 \item In $\S$\ref{S:Prelim} we give necessary background concerning $t$-motives and their Galois groups. Next, we give a brief review of hyperderivatives and then discuss prolongations of dual $t$-motives introduced by Maurischat 
	\cite{Maurischat18}.\par

\item In $\S$\ref{S:Hyperperquasiper}, we describe $t$-motives and rigid analytic trivializations corresponding to Drinfeld $\bA$-modules and their prolongations, then we state Theorem~\ref{P:rigidhyper}. Based on Theorem~\ref{P:rigidhyper} (see \cite[$\S$5.3]{NPapanikolas21} for a detailed account), to prove Theorem~\ref{T:Main01}, for $n \geq 1$ we calculate the Galois group $\Gamma_{\rP_n M_\rho}$ of the $n$-th prolongation $\rP_n M_\rho$ of the $t$-motive $M_\rho$ associated to $\rho$.

\item  
We first make use of a direct connection $\Gamma_{\rP_n M_\rho}$ has with Galois representations. For a non-zero prime $\fp$ of $\bA$, let $\bA_\fp$ be the completion of $\bA$ and let $\bk_{\fp}$ be its fraction field. For a Drinfeld $\bA$-module $\rho$ defined over $K$, where $k \subseteq K \subseteq \ok$ with $[K:k]<\infty$, there is a representation $\varphi_\fp: \Gal(K^\sep/K) \rightarrow \GL_r(\bA_\fp)$ coming from the Galois action on the $\fp$-power torsion points $\rho[\fp^m]:= \{x \in \KK \mid \rho_{\fp^m}(x)=0\}$. In $\S$\ref{S:Periodquasi}, using Anderson generating functions and $\varphi_\fp$, we consider the Galois representation on the $\fp$-adic Tate module of the $n$-th prolongation $t$-module $\rP_n \rho$ associated to $\rho$. The image of this Galois representation is determined using hyperderivatives of the image for the Drinfeld $\bA$-module $\rho$ and is naturally contained in the $\bk_\fp$-valued points of $\Gamma_{\rP_n M_\rho}$ (Theorem~\ref{T:GrHom}).
	
\item For $n \geq 1$, $\rP_{n-1} M_\rho$ is a sub-$t$-motive of $\rP_n M_\rho$ and therefore, $\rP_n M_\rho$ is not simple which makes determining the Zariski closure of the aforementioned image a complicated task. 
To find the Zariski closure, we bring in differential algebraic geometry. We consider hyperdifferential polynomials (precise definition in $\S$\ref{S: Kolchin+}) to determine the above Zariski closure by first determining the defining differential ideal of the aforementioned image and then restricting to Zariski topology (Theorem~\ref{T:GeqGamma}). This allows us to prove Theorem~\ref{T:Main01} and compute the Galois group $\Gamma_{\rP_n M_\rho}$ explicitly (Corollary~\ref{T:LeqGamma}).

\item In $\S$\ref{S:Hyperlogquasilog}, for $u_1, \dots, u_w \in \KK$ satisfying $\Exp_\rho(u_i)\in k^\sep$ for each $1\leq i \leq w$, we build on results of $\S$\ref{S:Periodquasi} to construct new $t$-motives $Y_{1, n}, \dots, Y_{w,n}$ such that the entries of the period matrix of $\oplus_{m=1}^w Y_{m,n}$ comprise $\bigcup_{s=0}^n \bigcup_{i=1}^{r-1}\bigcup_{m=1}^w\{\pd_\theta^s(u_m), \pd_\theta^s(\rF_{\tau^i}(u_m))\}$. Let $\cT$ denote the category of $t$-motives.
In Lemma~\ref{L:Equivvv}, we obtain a surjective map from certain sub-$t$-motives of $Y_{m,n}$ to corresponding sub-$t$-motives of $Y_{m, \ell}$ for $\ell \leq n$ and $1 \leq m \leq w$. This map allows us to implement Theorem~\ref{T:Trivial} that is based on an $\End_\cT(M_{\rho})$-linear independence result of Chang and Papanikolas \cite[Thm.~4.2.2]{CP12} which enables us to prove Theorem~\ref{T:Main02}. 
	
	\item Finally, in Appendix~\ref{DAG}, we cover necessary background concerning differential algebraic geometry in positive characteristic. We explore various properties, especially a result on the determination of the Zariski closure of a set in a differential field (Lemma~\ref{L:ZariskiKolchin}).  

 \end{itemize}
	
	\SkipTocEntry\section*{Acknowledgements}
	
The author is immensely grateful to Matthew A. Papanikolas for many valuable discussions and for pointing out the appropriate references that have aided in the completion of this paper. The author further thanks him for his encouragement throughout the process of writing this paper. The author also thanks Chieh-Yu Chang, O\u{g}uz Gezm\.{i}\c{s}, and Federico Pellarin for helpful comments and suggestions; Yen-Tsung Chen for helpful discussions; and Andreas Maurischat for his questions that helped in closing a gap in the proof of one of the results. Finally, the author is thankful to the referees for many questions, comments, and suggestions, which have helped correct arguments and greatly improved clarity of arguments and exposition. 
	
\section{Preliminaries} \label{S:Prelim}

	\subsection{Notation} \label{S:Not}
We continue with the notation introduced in $\S$\ref{S:IntroHyperPer}. We also define the following. 
 
	Let $\TT$ be the Tate algebra of the closed unit disk of $\KK$, 
	\[
	\TT := \left\{ \sum_{h=0}^\infty a_ht^h \in \power{\KK}{t} \mid \lim_{h \to \infty} \inorm{\,a_h\,} = 0 \right\},
	\]
	and let $\LL$ be its fraction field. 
	
	 For $n \in \ZZ$, recall the Frobenius twist $\tau^n$ from \eqref{E:Frob}. In some cases, we will write $\sigma$ for $\tau^{-1}$. For $M = (m_{ij}) \in \Mat_{e\times d}(\laurent{\KK}{t})$, we define $\smash{M^{(n)}}$ by setting $\smash{M^{(n)} := (m_{ij}^{(n)})}$. Let $\ok(t)[\sigma, \sigma^{-1}]$ be the Laurent polynomial ring over $\ok(t)$ in $\sigma$ subject to the relation 
	\[
	\sigma f = f^{(-1)}\sigma, \quad f \in \ok(t).
	\]
	
	For a field $K \subseteq \KK$, recall from $\S$\ref{S:IntroHyperPer} the twisted power series ring $\power{K}{\tau}$ and the subring $K[\tau]$ given by $\tau f = f^{(1)} \tau$ for all $f \in K$. We also define $\power{K}{\sigma}$ and $K[\sigma]$ when $K$ is a perfect field. For $b = \sum c_i\tau^i \in \KK[\tau]$, we define $b^* := \sum c_i^{(-i)}\sigma^i \in \KK[\sigma]$.  If $B = (b_{ij}) \in \Mat_{e\times d}(\KK[\tau]) = \Mat_{e\times d}(\KK)[\tau]$, then we set $B^* := (b_{ji}^*)$. Thus, if $B \in \Mat_{e\times d}(\KK[\tau])$ and $C \in \Mat_{d\times h}(\KK[\tau])$, then $(BC)^* = C^*B^*$. Moreover, if $B = \beta_0 + \beta_1 \tau + \dots + \beta_\ell \tau^\ell$, then we set $\rd B := \beta_0$.
	
	\subsection{Dual \texorpdfstring{$t$}{t}-motives and \texorpdfstring{$t$}{t}-motives}\label{S:motives}
	In this subsection, we briefly introduce the main tools used in Papanikolas's result. The reader is directed to \cite{P08} for further details. A \textit{pre-$t$-motive} $M$ is a left $\ok(t)[\sigma, \sigma^{-1}]$-module that is finite dimensional over $\ok(t)$. We denote by $\cP$ the category of pre-$t$-motives whose morphisms are the left $\ok(t)[\sigma, \sigma^{-1}]$-module homomorphisms. Let $\bsm \in \Mat_{r\times 1}(M)$ be such that its entries form a $\ok(t)$-basis of $M$. Then, there is a matrix $\Phi \in \GL_r(\ok(t))$ such that \[\sigma \bsm = \Phi \bsm,\] 
	where the action of $\sigma$ on $\bsm$ is entry-wise. 
	\noindent
	We say that $M$ is \emph{rigid analytically trivial} if there exists a matrix $\Psi \in \GL_r(\LL)$ such that
	\[ \Psi^{(-1)} = \Phi \Psi.\]
	\noindent
	The matrix $\Psi$ is called a \emph{rigid analytic trivialization for $\Phi$}. Set $M^\dagger:= \smash{\LL \otimes_{\ok(t)}M}$, where we give $M^\dagger$ a left $\ok(t)[\sigma, \sigma^{-1}]$-module by letting $\sigma$ act diagonally:
	\[
	\sigma(f\otimes m) := f^{(-1)} \otimes \sigma m, \quad f\in \ok(t), \ m\in M.
	\]
	\noindent
	If we let 
	\[
	M^B := (M^\dagger)^\sigma := \{\mu \in M^\dagger  :  \sigma \mu = \mu \},
	\]
	\noindent
	then $M^B$ is a finite dimensional vector space over $\bk$, and $M\mapsto M^B$ is a covariant functor from $\cP$ to the category of $\bk$-vector spaces. The natural map $\LL \otimes_{\ok(t)} M^B \rightarrow M^\dagger$ is an isomorphism if and only if $M$ is rigid analytically trivial \cite[\S3.3]{P08}. If $\Psi$ is a rigid analytic trivialization of $\Phi$, then the entries of $\Psi^{-1}\bsm$ form a $\bk$-basis for $M^B$ \cite[Thm. 3.3.9(b)]{P08}. By \cite[Thm. 3.3.15]{P08}, the category $\cR$ of \emph{rigid analytically trivial pre-t-motives} forms a neutral Tannakian category over $\bk$ with fiber functor $M \mapsto M^B$.

	We now consider $\bA$-finite dual $t$-motives, which were first introduced in \cite{ABP04} (see also \cite{HartlJuschka20}, \cite{NPapanikolas21}). A \emph{dual $t$-motive} $\cM$ is a left $\ok[t, \sigma]$-module that is free and finitely generated as a left $\ok[\sigma]$-module and such that $(t-\theta)^s\cM \subseteq \sigma \cM$ for $s\in \NN$ sufficiently large. If, in addition, $\cM$ is free and finitely generated as a left $\ok[t]$-module, then $\cM$ is said to be \emph{$\bA$-finite}. Thus, if the entries of $\bsm \in \Mat_{r\times1}(\cM)$ form a $\ok[t]$-basis for $\cM$, then there is a matrix $\Phi \in \Mat_r(\ok[t])$ such that $\sigma \bsm = \Phi \bsm$ with $\det \Phi = c(t-\theta)^s$ for some $c\in \ok^\times, s\geq 1$. We say that $\cM$ is \emph{rigid analytically trivial} if there exists a matrix $\Psi \in \GL_r(\TT)$ so that $\Psi^{(-1)} = \Phi \Psi.$ In \cite{ABP04}, the term ``dual $t$-motives" is used for $\bA$-finite dual $t$-motives. We will consider both dual $t$-motives and $\bA$-finite dual $t$-motives. \par
	
	Given an $\bA$-finite dual $t$-motive $\cM$, 
	\[ M :=\ok(t) \otimes_{\ok[t]} \cM\]
	is a pre-$t$-motive where $\sigma(f \otimes m) := f^{(-1)} \otimes \sigma m$. Then, $\cM \mapsto M$ is a functor from the category of $\bA$-finite dual $t$-motives to the category of pre-$t$-motives. We define the category $\cT$ of \emph{$t$-motives} to be the strictly full Tannakian subcategory of $\cR$ generated by the essential image of rigid analytically trivial $\bA$-finite dual $t$-motives under the assignment $\cM \mapsto M$.

	For a $t$-motive $M$, we let $\cT_M$ be the strictly full Tannakian subcategory of $\cT$ generated by $M$. As $\cT_M$ is a neutral Tannakian category over $\bk$, there is an affine group scheme $\Gamma_M$ over $\bk$, a subgroup of the $\bk$-group scheme  $\GL_{r}/\bk$ of $r\times r$ invertible matrices, so that $\cT_M$ is equivalent to the category of finite dimensional representations of $\Gamma_M$ over $\bk$, i.e., $\cT_M \approx \Rep(\Gamma_M, \bk)$ \cite[\S3.5]{P08}. We call $\Gamma_M$ the \emph{Galois group of $M$}.

	\subsection{The difference Galois group}\label{S:DiffGalGr}
	
We now  present a brief summary of the construction of the Galois group of a $t$-motive as the Galois group of a system of difference equations. The reader is directed to \cite{P08} for further details. For a subfield $F \subset \laurent{\KK}{t}$ invariant under the action of $\sigma$, let $F^\sigma$ denote the elements of $F$ fixed by $\sigma$. Note that the automorphism $\sigma:\LL \rightarrow \LL$ restricts to automorphisms of $\bk$ and $\ok(t)$, and $\bk = \bk^\sigma = \ok(t)^\sigma = \LL^\sigma$. 

For a $t$-motive $M$, let $\Phi \in \GL_r(\ok(t))$ denote the action of $\sigma$ on a $\ok(t)$-basis of $M$ and let $\Psi \in \GL_r(\LL)$ be the rigid analytic trivialization for $\Phi$ satisfying $\Psi^{(-1)} =\Phi \Psi$.

	We define a $\ok(t)$-algebra homomorphism $ \nu : \ok(t)[X, 1/\det X] \rightarrow \LL$ by setting $\nu(X_{ij}) := \Psi_{ij}$, where $X = (X_{ij})$ is an $r \times r$ matrix of independent variables. We let $\mathfrak{p} := \ker \nu$ and $\Sigma := \im \nu = \ok(t)[\Psi, 1/\det \Psi] \subseteq \LL$,	and set $Z_\Psi  = \text{Spec} \ \Sigma$. Then, $Z_\Psi$ is the smallest closed subscheme of $\GL_{r}/\ok(t)$ such that $\Psi \in Z_\Psi(\LL)$. \par
	
	Set $\Psi_1, \Psi_2 \in \GL_r(\LL \otimes_{\ok(t)} \LL)$ to be such that $(\Psi_1)_{ij} = \Psi_{ij} \otimes 1$ and $(\Psi_2)_{ij} = 1 \otimes \Psi_{ij}$, and let $\widetilde{\Psi} := \Psi_1^{-1}\Psi_2 \in \GL_r(\LL \otimes_{\ok(t)} \LL)$. We define a $\bk$-algebra homomorphism $ \mu : \bk[X, 1/\det X] \rightarrow \LL \otimes_{\ok(t)} \LL$ by setting $\mu(X_{ij}) := \widetilde{\Psi}_{ij}$. We let $\mathfrak{q} := \ker \mu$ and $\Delta := \im \mu$, and set $\Gamma_\Psi = \text{Spec} \ \Delta$. Then, $\Gamma_\Psi$ is the smallest closed subscheme of $\GL_{r}/\bk$ such that $\widetilde{\Psi} \in \Gamma_\Psi(\LL \otimes_{\ok(t)} \LL)$. The following properties hold.
	
	\begin{theorem}[{Papanikolas~\cite[\S4]{P08}}] \label{Thm.:diffP}
		Let $M$ be a $t$-motive, and let $\Phi \in \GL_r(\ok(t))$ represent multiplication by $\sigma$ on a $\ok(t)$-basis of $M$. Let $\Psi \in \GL_r(\LL)$ satisfy $\Psi^{(-1)} = \Phi \Psi$. 
		\begin{enumerate}
		
			\item[(a)] The closed $\ok(t)$-subscheme $Z_\Psi$ is stable under right-multiplication by $\ok(t) \times_{\bk} \Gamma_\Psi$ and is a $\ok(t) \times_{\bk} \Gamma_\Psi$-torsor over $\ok(t)$. In particular, $\Gamma_\Psi(\overline{\LL}) = \Psi^{-1} Z_\Psi(\overline{\LL})$.
		
			\item[(b)] The $\bk$-scheme $\Gamma_\Psi$ is absolutely irreducible and smooth over $\overline{\bk}$. 
	 
			\item[(c)] $\Gamma_\Psi \cong \Gamma_M$ over $\bk$. 
		\end{enumerate}
	\end{theorem}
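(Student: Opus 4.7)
The plan is to follow Papanikolas's approach in \cite[\S4]{P08}, which adapts the Picard--Vessiot framework for difference equations to the category of $t$-motives. The three parts build on one another, so I would address them in the order (a), (b), (c), with (c) being the central payoff and primary obstacle.

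For (a), I would unpack the definitions: $Z_\Psi$ and $\Gamma_\Psi$ are by construction the smallest closed subschemes of $\GL_r$ (over $\ok(t)$ and over $\bk$, respectively) whose points contain the distinguished matrices $\Psi$ and $\widetilde{\Psi} = \Psi_1^{-1}\Psi_2$. Stability of $Z_\Psi$ under right multiplication by $\ok(t) \times_{\bk} \Gamma_\Psi$ falls out of the identity $\Psi_2 = \Psi_1 \widetilde{\Psi}$ combined with the universal property of $Z_\Psi$: since $\Psi \cdot \widetilde{\Psi} = \Psi_2 \in Z_\Psi(\LL \otimes_{\ok(t)} \LL)$, minimality forces $\Psi \cdot \Gamma_\Psi \subseteq Z_\Psi$ after passing to $\overline{\LL}$-points. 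To upgrade this to the torsor property, I would verify that the shear map $Z_\Psi \times (\ok(t) \times_{\bk} \Gamma_\Psi) \to Z_\Psi \times_{\ok(t)} Z_\Psi$ sending $(z, g) \mapsto (z, zg)$ is an isomorphism, by identifying each side as the smallest closed subscheme containing the appropriate distinguished point built from $\Psi$ and $\widetilde{\Psi}$. The description $\Gamma_\Psi(\overline{\LL}) = \Psi^{-1} Z_\Psi(\overline{\LL})$ then follows directly by chasing definitions.

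For (b), the key input is that $\Sigma = \ok(t)[\Psi, 1/\det \Psi] \subseteq \LL$ is an integral domain, so $Z_\Psi$ is integral. Since $Z_\Psi$ is a torsor under $\ok(t) \times_{\bk} \Gamma_\Psi$ by (a), geometric integrality descends to $\Gamma_\Psi$, provided one uses the $\sigma$-admissibility condition $\bk = \LL^\sigma$ to exclude spurious extensions of constants that could cause the base change $\Gamma_\Psi \times_{\bk} \overline{\bk}$ to acquire extra irreducible components. Smoothness then follows from the general fact that a geometrically integral algebraic group over a perfect field is automatically smooth, and $\bk = \FF_q(t)$ is perfect.

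For (c), which I expect to be the main obstacle, I would construct the isomorphism $\Gamma_\Psi \cong \Gamma_M$ explicitly. The vector $\Psi^{-1} \bsm$ provides a canonical $\bk$-basis of $M^B$, thereby trivializing the Betti fiber functor at $M$ and presenting $\Gamma_M$ as a closed subgroup of $\GL_r/\bk$. The containment $\Gamma_M \subseteq \Gamma_\Psi$ follows from checking that the action of $\Gamma_M$ on $M^B$ preserves the relations encoded by $\widetilde{\Psi}$, so that the defining ideal of $\Gamma_\Psi$ vanishes on $\Gamma_M$. The reverse containment is the delicate direction: by Chevalley's theorem every closed subgroup of $\GL_r/\bk$ is the stabilizer of a line in some tensor--dual--subquotient construction, so one must show that every line fixed by $\Gamma_\Psi$ inside such a construction on $M^B$ corresponds to a sub-$t$-motive of the corresponding construction on $M$. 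The hardest step is establishing this correspondence between $\Gamma_\Psi$-stable subspaces and genuine sub-$t$-motives; it requires that the $\sigma$-action on $M$ extends compatibly to the Picard--Vessiot extension generated by $\Psi$, and that the resulting difference-algebraic structure is faithful enough to recover all tensorial information about $\cT_M$.
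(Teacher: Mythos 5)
This statement is not proved in the paper at all: it is quoted as a citation to Papanikolas \cite[\S4]{P08} (his Theorems 4.2.11, 4.3.1, and 4.5.10), so there is no in-paper argument to compare against. Your proposal is therefore an attempt to reconstruct Papanikolas's proof, and while the overall architecture (cocycle relation, torsor property, descent of integrality, Tannakian identification via sub-objects) tracks \cite{P08} faithfully, two concrete points need repair.

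First, in part (b) you assert that $\bk = \FF_q(t)$ is perfect. It is not: $t$ has no $p$-th root in $\FF_q(t)$, so $\FF_q(t) \neq \FF_q(t)^p$. The conclusion survives — once $\Gamma_\Psi$ is known to be geometrically reduced, it is smooth by the general fact that a geometrically reduced group scheme of finite type over an arbitrary field is smooth (generic smoothness of a reduced scheme over an algebraically closed field plus translation-invariance) — but the argument you offer through perfectness of $\bk$ is unsound, and the real work in \cite[\S 4.4--4.5]{P08} is precisely establishing geometric reducedness, which does not ``descend'' trivially from $Z_\Psi$.

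Second, your sketch of the torsor property in (a) gives only one direction. Minimality of $Z_\Psi$ and the cocycle identity $\Psi_2 = \Psi_1\widetilde{\Psi}$ show that the shear map lands inside $Z_\Psi \times_{\ok(t)} Z_\Psi$, but the torsor property requires that the shear be an isomorphism, equivalently that every point of $Z_\Psi \times_{\ok(t)} Z_\Psi$ has the form $(z, zg)$. Neither factor of $Z_\Psi \times_{\ok(t)} Z_\Psi$ is ``the smallest closed subscheme containing a distinguished point''; it is a genuine fiber product, and the surjectivity is the heart of difference Picard--Vessiot theory: one must compute the $\sigma$-invariants of $\Sigma \otimes_{\ok(t)} \Sigma$ and identify them with $\Delta$, using $\sigma$-admissibility ($\LL^\sigma = \bk$) and the functional equation for $\Psi$ in an essential way. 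That calculation is the content of \cite[Thm.~4.3.1]{P08} and is not a formal closure argument.
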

	
	\noindent For the $t$-motive $M$, if $\Phi \in \GL_r(\ok(t)) \cap \Mat_r(\ok[t])$ and $\det \Phi = c(t-\theta)^s$ for some $c \in \ok^\times$, $s\geq 1$, then we can pick $\Psi$ to be in $\GL_r(\TT)$ \cite[Prop.~3.3.9(c)]{P08}. Moreover, the entries of $\Psi$ are regular at $t=\theta$ \cite[Prop.~3.1.3]{ABP04}.  Let $\Psi|_{t=\theta}$ denote the specialization of the entries of $\Psi$ at $t=\theta$ and let $\ok(\Psi|_{t=\theta})$ be the field formed by adjoining the entries of $\Psi|_{t=\theta}$ to $\ok$. The main theorem of \cite{P08} is as follows.

	\begin{theorem}[{Papanikolas~\cite[Thm. 1.1.7]{P08}}] \label{T:Tannakian}
		Let $M$ be a $t$-motive, and let $\Gamma_M$ be its Galois group. Suppose that $\Phi \in \GL_r(\ok(t)) \cap \Mat_r(\ok[t])$ represents multiplication by $\sigma$ on a $\ok(t)$-basis of $M$ and that $\det \Phi = c(t-\theta)^s, c \in \ok^\times$, $s\geq 1$. Let $\Psi \in \GL_r(\TT)$ be a rigid analytic trivialization of $\Phi$. Then,
		$\trdeg_{\ok} \ok(\Psi|_{t=\theta}) = \dim \Gamma_M$.
	\end{theorem}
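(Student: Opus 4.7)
The plan is to establish $\trdeg_\ok \ok(\Psi|_{t=\theta}) = \dim \Gamma_M$ by proving two matching inequalities, both keyed to $\dim \Gamma_\Psi$ via Theorem~\ref{Thm.:diffP}(c).

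For the upper bound, Theorem~\ref{Thm.:diffP}(a) presents $Z_\Psi$ as a $\ok(t) \times_{\bk} \Gamma_\Psi$-torsor over $\ok(t)$, so $\dim Z_\Psi = \dim \Gamma_\Psi$; since $Z_\Psi = \text{Spec}\,\Sigma$ with $\Sigma = \ok(t)[\Psi, 1/\det\Psi]$, this gives $\trdeg_{\ok(t)} \ok(t)(\Psi) = \dim \Gamma_\Psi$. I would then form the $\ok$-subalgebra $R := \ok[t, \Psi_{ij}, 1/\det\Psi] \subseteq \TT$, whose Krull dimension is $1 + \dim \Gamma_\Psi$, since adjoining $t$ contributes exactly one to the transcendence degree over $\ok$. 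Evaluation at $t = \theta$ yields a surjective $\ok$-algebra map $R \twoheadrightarrow \ok[\Psi|_{t=\theta}, 1/\det\Psi|_{t=\theta}]$ whose kernel contains the nonzero element $t-\theta$; Krull's Hauptidealsatz then forces the quotient to have dimension at most $\dim R - 1 = \dim \Gamma_\Psi$, so $\trdeg_\ok \ok(\Psi|_{t=\theta}) \leq \dim \Gamma_M$. This step tacitly uses that $\Psi$ extends analytically past $|t| \leq 1$ so that $\Psi|_{t=\theta}$ is defined; that extension is standard, obtained by iterating the functional equation and exploiting $\det \Phi = c(t-\theta)^s$ to push the singularities of $(\Phi^{-1})^{(j)}$ out to $t = \theta^{q^j}$.

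The substantive step is the reverse inequality, for which I would invoke the Anderson–Brownawell–Papanikolas (ABP) criterion: every polynomial $P \in \ok[X_{ij}]$ satisfying $P(\Psi|_{t=\theta}) = 0$ admits a lift $Q \in \ok[t, X_{ij}]$ with $Q(\theta, X) = P(X)$ and $Q(t, \Psi(t)) = 0$ identically in $\TT$. The proof of ABP combines a Siegel-style vanishing lemma for entire functions on $\KK$ of controlled Newton polygon with the functional equation $\Psi^{(-1)} = \Phi \Psi$ and the precise form of $\det \Phi$, using an iterative denominator-clearing argument to upgrade pointwise vanishing at $t = \theta$ into identical vanishing of a lifted polynomial. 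Granted ABP, specialization shows every algebraic relation over $\ok$ at $t = \theta$ comes from one over $\ok(t)$, so $\dim \Gamma_\Psi = \trdeg_{\ok(t)} \ok(t)(\Psi) \leq \trdeg_\ok \ok(\Psi|_{t=\theta})$, completing the proof.

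The main obstacle is the ABP criterion itself; the torsor structure and Krull-dimension accounting are essentially formal, but the zero-estimate that forces the lifted polynomial to vanish identically is the deep non-archimedean analytic input, and is the unique place where the hypothesis $\det \Phi = c(t-\theta)^s$ enters in an essentially analytic way, controlling both the extension of $\Psi$ to an entire matrix function on $\KK$ and the growth conditions needed for the vanishing lemma.
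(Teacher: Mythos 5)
This theorem enters the paper only by citation to \cite[Thm.~1.1.7]{P08}, with no proof supplied here, so the comparison is necessarily against Papanikolas's original argument; your outline reconstructs it faithfully, with the torsor structure of $Z_\Psi$ plus a one-step Krull-dimension drop at $t=\theta$ for the upper bound and ABP-type lifting for the lower bound, exactly as in \cite[\S\S4--5]{P08}.

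Two calibration remarks are worth making. What you label ``the ABP criterion'' --- lifting an arbitrary polynomial relation $P(\Psi|_{t=\theta})=0$ to an identity $Q(t,\Psi)=0$ with $Q(\theta,\cdot)=P$ --- is the corollary Papanikolas \emph{derives} from ABP, not the criterion itself; the ABP theorem \cite[Thm.~3.1.1]{ABP04} lifts $\ok$-\emph{linear} relations $\rho\psi(\theta)=0$ to $\ok[t]$-linear ones $P\psi=0$, and the polynomial version is obtained by applying it to a Kronecker/tensor-power construction of $(\Phi,\Psi)$ whose rigidly trivialized column vector has the bounded-degree monomials in entries of $\Psi$ as coordinates. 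Your closing sentence also compresses a genuine step: the lifting only tells you that the defining ideal of $\ok[\Psi(\theta),1/\det\Psi(\theta)]$ is contained in the image under $t\mapsto\theta$ of the defining ideal of the closure of $Z_\Psi$ over $\ok[t]_{(t-\theta)}$, so its zero locus contains the special fiber at $t=\theta$; to conclude $\trdeg_\ok\ok(\Psi|_{t=\theta})\geq\dim\Gamma_\Psi$ you still need that this fiber is nonempty (it is, because $\Psi(\theta)$ lies on it) and has dimension at least $\dim Z_\Psi$, which is Chevalley's theorem on fiber dimension applied to the dominant map to $\text{Spec}\,\ok[t]_{(t-\theta)}$. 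Neither point is a flaw in the approach, but both are exactly where the cited proof expends its effort.
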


	\subsection{Hyperderivatives and Hyperdifferential operators}\label{HyperD}
	
	In this subsection, we review hyperderivatives and hyperdifferential operators. The reader may refer to \cite{Brownawell99}, \cite{Jeong11}, and \cite[$\S$2.4]{NPapanikolas21} for details. For $m, j \geq 0$, let $\smash{\binom{m}{j}} \in \NN$ denote the usual binomial coefficient modulo $p$. Then, for $F$ a field of characteristic $p>0$ where $\theta$ is transcendental over $F$, the $F$-linear map $\pd_{\theta}^j: F[\theta]\rightarrow F[\theta]$ defined by setting \[\pd_{\theta}^j(\theta^m) = \binom{m}{j} \theta^{m-j}
	\]
	is called the \emph{$j$-th hyperdifferential operator with respect to~$\theta$}. For each $f \in F[\theta]$, we call $\pd_\theta^j(f)$ the \emph{$j$-th hyperderivative of $f$}. The definition of $\pd_\theta^j$ extends naturally to $\pd_\theta^j : \power{F}{\theta} \rightarrow \power{F}{\theta}$. The hyperdifferential operators satisfy various identities including the product rule $\pd_\theta^j(fg) = \sum_{i=0}^j \pd_\theta^i(f)\pd_\theta^{j-i}(g)$ and the composition rule $\pd_\theta^i(\pd_\theta^j(f))= \smash{\binom{i+j}{j}}\partial_\theta^{i+j}(f)$. \par
	
	The product rule extends $\pd_\theta^j$ to the Laurent series field $\laurent{F}{\theta}$ where as usual for $m > 0$, we have $\smash{\binom{-m}{j} = (-1)^j\binom{m+j-1}{j}}$. For a place $v$ of $F(\theta)$ there are unique extensions $\pd_{\theta}^j : F(\theta)_v \to F(\theta)_v$ and $\pd_{\theta}^j : F(\theta)_v^{\sep} \to F(\theta)_v^{\sep}$, where $F(\theta)_v^{\sep}$ is a separable closure of $F(\theta)_v$.

	\begin{proposition}[see {Brownawell~\cite[\S 7]{Brownawell99}, Jeong~\cite[\S 2]{Jeong11}}]\label{P:hyperderprod}
		Let $F$ be a field of characteristic $p>0$, and let $v$ be a place of $F(\theta)$. Then, for $f \in F(\theta)_v^{\sep}$, $n \geqslant 0$, and $j \geq 1$, $\pd_\theta^j : F(\theta)_v^{\sep} \to F(\theta)_v^{\sep}$, $j \geqslant 0$, satisfies
			\[
			\pd_\theta^j\bigl( f^{p^n} \bigr) =
			\begin{cases}
				\bigl(\pd_\theta^e(f)\bigr)^{p^n} & \text{if $j=ep^n$,} \\
				0 & \text{if $p^n \nmid j$.}
			\end{cases}
			\]
	\end{proposition}

	For $f \in F(\theta)_v^{\sep}$ and $n \geq 0$, we define the \emph{$d$-matrix with respect to $\theta$}, $d_{\theta, n} [f] \in \Mat_n(F(\theta)_v^{\sep})$ to be the upper-triangular $n \times n$ matrix
	\begin{equation}\label{E:dmatrix}
		d_{\theta, n}[f] := \begin{pmatrix}
			f & \pd_\theta^1(f) & \dots &\dots & \pd_\theta^{n-1}(f) \\
			& f & \pd_\theta^1(f) &  & \vdots \\
			&   & \ddots & \ddots &\vdots \\
			&   &   & \ddots & \pd_\theta^1(f)\\
			&   &   &  & f
		\end{pmatrix}.
	\end{equation}
	Using the product rule, it is easy to see that 
	$d_{\theta,n}[g] \cdot d_{\theta,n}[f] = d_{\theta,n}[gf]$. For a matrix $B := (b_{ij}) \in \Mat_{e_1\times e_2}(F(\theta)_v^\sep)$, we also define the $d$-matrix with respect to $\theta$, $d_{\theta,n}[B] \in \Mat_{ne_1\times ne_2}(F(\theta)_v^\sep)$ as in \eqref{E:dmatrix}, where we let $\pd_\theta^j(B) := (\pd_\theta^j(b_{ij})) \in \Mat_{e_1\times e_2}(F(\theta)_v^\sep)$.\par
	
	We further define \emph{partial hyperderivatives} for two independent variables $\theta$ and $t$ to be the $F$-linear maps
	\[
	\pd_{\theta}^j , \pd_t^j: F(\theta, t) \rightarrow F(\theta, t), \quad j \geq 0
	\]
	such that for $m\in \ZZ$,  we have $\pd_{\theta}^j(\theta^m) = \binom{m}{j} \theta^{m-j}$, $\pd_{t}^j(t^m) = \binom{m}{j} t^{m-j}$, and $\pd_{\theta}^j(t^m) = \pd_{t}^j(\theta^m)=0$. Thus, we have $\pd_{\theta} \circ \pd_{t} = \pd_{t} \circ \pd_{\theta}$. For $n\geq 0$, we define the $d$-matrices $d_{\theta,n}[\cdot]$ and $d_{t, n}[\cdot]$ with respect to each independent variable $\theta$ and $t$ as in \eqref{E:dmatrix}. 
 
Note that $\pd_t^j$ extends naturally to $\TT$, and $\pd_{\theta}^j$ extends to $\TT \cap \power{k_\infty^\sep}{t}$.

	\subsection{Prolongations of dual \texorpdfstring{$t$}{t}-motives}\label{S:Pro}
	
	We review the construction of prolongations of dual $t$-motives, introduced by Maurischat \cite{Maurischat18}. For a left $\ok[t, \sigma]$-module $\cM$ and $n \geq0$, we define the \emph{$n$-th prolongation of $\cM$} to be the left $\ok[t, \sigma]$-module $\rP_n\cM$ generated by symbols $D_im$, for $m \in \cM$ and $0 \leq i \leq n$ subject to the relations
	\begin{itemize}
		\item[(a)] $D_i(m_1+m_2) = D_im_1 + D_im_2$,
		\item[(b)] $D_i(a \cdot m) = \sum_{i=i_1 +i_2} \pd_t^{i_1}(a) \cdot D_{i_2}m$, 
		\item[(c)] $\sigma(a \cdot D_im) = a^{(-1)} \cdot D_i(\sigma m)$,
	\end{itemize}
	where $m, m_1, m_2 \in \cM$ and $a \in \ok[t]$.

	If $\cM$ is an $\bA$-finite dual $t$-motive, then $\rP_n\cM$ is also an $\bA$-finite dual $t$-motive \cite[Thm. 3.4]{Maurischat18}. Thus, if the entries of $\bsm = [m_1, \dots, m_r]^\tr \in \cM^r$ is a $\ok[t]$-basis of $\cM$, then a $\ok[t]$-basis of $\rP_n\cM$ is given by the entries of
	\begin{equation}\label{E:basisM}
		\bsD_n \bsm := (D_n\bsm^\tr, D_{n-1}\bsm^\tr, \dots, \dots, D_0\bsm^\tr)^\tr \in (\rP_n\cM)^{r(n+1)},
	\end{equation}
	where $D_i\bsm := (D_im_1, \dots, D_im_r)^\tr \in (\rP_n\cM)^{r}$ for each $0 \leq i \leq n$ \cite[Prop.~4.2]{Maurischat18}. Also, if $\Phi \in \GL_r(\ok[t])$ represents multiplication by $\sigma$ on $\bsm$, then
	\begin{equation}\label{E:PhiPro}
		\sigma(\bsD_n \bsm) = d_{t,n+1}[\Phi] \cdot \bsD_n \bsm.
	\end{equation}
	If $\cM$ is rigid analytically trivial with $\Psi \in \GL_r(\TT)$ so that $\Psi^{(-1)} = \Phi \Psi$, then since Frobenius twisting commutes with hyperdifferentiation with respect to $t$, we have
	\begin{equation}\label{E:ratpro}
		(d_{t,n+1}[\Psi])^{(-1)} = d_{t,n+1}[\Psi^{(-1)}] = d_{t,n+1}[\Phi \Psi] = d_{t,n+1}[\Phi] d_{t,n+1}[\Psi].
	\end{equation}
	Therefore, $\rP_n\cM$ is rigid analytically trivial. \par 
	
	Via $D_0m \mapsto m$, we see that $\rP_0\cM$ is naturally isomorphic to $\cM$, and as in \cite[Rem.~3.2]{Maurischat18}, for $0 \leq j \leq n-1$ we obtain a short exact sequence of dual $t$-motives
	\begin{equation}\label{E:motivespro}
		0 \rightarrow \rP_j\cM \rightarrow \rP_{n}\cM \xrightarrow{\boldsymbol{\rpr}_{n-j-1}} \rP_{n-j-1}\cM \rightarrow 0
	\end{equation}
	where ${\boldsymbol{\rpr}_{n-j-1}}(D_im) := D_{i-j-1}m$ for $i > j$ and ${\boldsymbol{\rpr}_{n-j-1}}(D_im) := 0$ for $i \leq j$ and $m \in \cM$.

	\section{Rigid analytic trivializations and hyperderivatives}
	\label{S:Hyperperquasiper}
	
	The goal of this section is to provide necessary background on Anderson $t$-modules for the purpose of studying Drinfeld $\bA$-modules and their prolongations, and their connection to dual $t$-motives and rigid analytic trivializations via Anderson generating functions. Then, we state Theorem~\ref{P:rigidhyper} which provides the connection between Taylor coefficients of series expansions of Anderson generating functions and hyperderivatives of periods, quasi-periods, logarithms, and quasi-logarithms of a Drinfeld $\bA$-module defined over $k^{\sep}$.
	
	\subsection{Anderson \texorpdfstring{$t$}{t}-modules, dual \texorpdfstring{$t$}{t}-motives, and Anderson generating functions} \label{S:motivemodule}
	
	For a field $K \subseteq \KK$, an \emph{Anderson $t$-module defined over $K$} is an $\FF_q$-algebra homomorphism $\phi : \bA \rightarrow \Mat_d(K[\tau])$ defined uniquely by
	\[\phi_t = B_0 + B_1\tau + \dots + B_\ell \tau^\ell,\]
	where $B_i \in \Mat_d(K)$ for $0 \leq i \leq \ell$, and $\rd \phi_t = B_0 = \theta I_d + N$ such that $I_d$ is the $d \times d$ identity matrix and $N$ is a nilpotent matrix. Then, $\phi$ defines an $\bA$-module structure on $\KK^d$ via 
	\begin{equation}\label{E:moduleaction}
		a \cdot \bsx = \phi_a(\bsx), \quad a \in \bA,\ \bsx \in \KK^d.
	\end{equation}
	We call $d$ the \emph{dimension of $\phi$}.  If $\phi_t=B_0 \in \Mat_d(K)$, then $\phi$ is said to be a {\emph{trivial}} Anderson $t$-module. A non-trivial Anderson $t$-module of dimension $1$ is called a \emph{Drinfeld $\bA$-module}. \par
	
	There exists a unique power series $\Exp_\phi(\bsz) = \sum_{i=0}^\infty C_i\bsz^{(i)} \in \power{\KK}{z_1, \dots, z_d}^d$,  $\bsz = [z_1, \dots, z_d]^\tr$ so that $C_0 = I_d$ and satisfies
	\[
	\Exp_\phi(\rd \phi_a \bsz) = \phi_a(\Exp_\phi(\bsz))
	\]
	for all $a \in \bA$. Moreover, $\Exp_\phi(\bsz)$ defines an entire function $\Exp_\phi:\KK^d \rightarrow \KK^d$. If $\Exp_\phi$ is surjective, then we say that $\phi$ is \emph{uniformizable}. The kernel $\Lambda_\phi \subseteq \KK^d$ of $\Exp_\phi$ is a free and finitely generated discrete $\bA$-submodule of $\KK^d$ through the action of $\rd\phi(\bA)$ and it is called the \emph{period lattice of $\phi$}. If $\phi$ is uniformizable, then we have an isomorphism $\KK^d/\Lambda_\phi \cong (\KK^d, \phi)$ of $\bA$-modules, where $(\KK^d, \phi)$ denotes $\KK^d$ together with the $\bA$-module structure defined in \eqref{E:moduleaction} coming from $\phi$. For more details about Anderson $t$-modules, see \cite{And86}, \cite{BPrapid}, \cite{Thakur}. \par
	
	We define the dual $t$-motive $\cM_\phi$ associated to a $t$-module $\phi$ defined over $K\subseteq \ok$ in the following way. We let $\cM_\phi:= \Mat_{1\times d}(\ok[\sigma])$. To give $\cM_\phi$ the $\ok[t, \sigma]$-module structure, set
	\begin{equation}\label{E:dualaction}
		a \cdot m = m \phi_a^*, \quad m \in \cM_\phi, a \in \bA,
	\end{equation}
	where $\phi_a^*$ is defined as in $\S$\ref{S:Not}. For each $m \in \cM_{\phi}$, by straightforward computation we obtain $(t-\theta)^d \cdot m \in \sigma \cM_{\phi}$. Thus, $\cM_\phi$ defines a dual $t$-motive and \eqref{E:dualaction} gives a unique correspondence between a $t$-module and its associated dual $t$-motive (see also \cite[\S 4.4]{BPrapid}, \cite{HartlJuschka20} and \cite[\S 2.3]{NPapanikolas21}). If $\cM_{\phi}$ is $\bA$-finite, then we say that \emph{$\phi$ is $\bA$-finite} and call the rank of $\cM_{\phi}$ as a left $\ok[t]$-module the \emph{rank of $\phi$}. The reader is directed to \cite{HartlJuschka20}, \cite[$\S$2.3]{NPapanikolas21} for more information on dual $t$-motives associated to $t$-modules. \par
	
	We conclude this subsection by introducing Anderson generating function associated to $t$-modules (see \cite{Green19b}, \cite{Maurischat19b}, \cite{NPapanikolas21} for further details). For $\bsy \in \KK^d$, we define the \emph{Anderson generating function for $\phi$} by the infinite series
	\begin{equation}\label{E:AGFG}
	\cG_{\bsy}(t) := \sum_{m=0}^{\infty} \Exp_{\phi}(\rd\phi_t^{-m-1}\bsy)t^m \in \TT^d.
	\end{equation}
We explore the properties we will use in subsections \ref{RATPro}, \ref{S:Tatem}, \ref{S:quasilogtmotive}. For clarity, we will denote by $f_y(t)$ the Anderson generating function for a Drinfeld $\bA$-module at $y \in \KK$.

\subsection{Prolongations of Drinfeld \texorpdfstring{$\bA$}{A}-modules and associated dual \texorpdfstring{$t$}{t}-motives}\label{S:Produal}
Let $\rho:\bA \rightarrow K[\tau]$ be a Drinfeld $\bA$-module defined over $K \subseteq \ok$ such that 
	\[
	\rho_t = \theta + \kappa_1\tau + \dots + \kappa_r\tau^r,
	\]
	where $\kappa_r \neq 0$. Drinfeld $\bA$-modules are uniformizable and the rank of the period lattice $\Lambda_\rho$ of $\rho$ as an $\bA$-module is~$r$. As defined above for $t$-modules, we define the dual $t$-motive $\cM_\rho:= \ok[\sigma]$. Then the set $\{m_1, m_2, \dots, m_r\}=\{1, \sigma, \dots, \sigma^{r-1}\}$ forms a $\ok[t]$-basis for $\cM_\rho$ \cite[\S3.3]{CP12}, \cite[Example 3.5.14]{NPapanikolas21}, and with respect to this basis, multiplication by $\sigma$ on $\cM_\rho$ is represented by
	\begin{equation}\label{E:Phi}
		\Phi_\rho := \begin{pmatrix}
			0 & 1 & \dots & 0 \\
			\vdots & \vdots & \ddots & \vdots \\
			0 & 0 & \dots & 1 \\
			(t-\theta)/\kappa_r^{(-r)} & -\kappa_1^{(-1)}/\kappa_r^{(-r)} & \dots & -\kappa_{r-1}^{(-r+1)}/\kappa_r^{(-r)}
		\end{pmatrix}.
	\end{equation}
	Thus, $\cM_\rho$ is $\bA$-finite. We let $M_\rho := \ok(t) \otimes_{\ok[t]} \cM_\rho$ be the pre-$t$-motive associated to $\cM_\rho$. \par
	
	For Drinfeld $\bA$-modules $\rho$ and $\rho'$ defined over $K \subseteq \KK$, a \emph{morphism} $b : \rho \rightarrow \rho'$ is a twisted polynomial $b \in \KK[\tau]$ such that $b \rho_a = \rho'_a b$ for all $a \in \bA$. We say that $b$ is defined over $L \subseteq \KK$ if $b \in L[\tau]$. A morphism  $b : \rho \rightarrow \rho'$ defined over $\ok$ induces a morphism $B : \cM_\rho \rightarrow \cM_{\rho'}$ of $\bA$-finite dual $t$-motives in the following way. If $b = \sum c_i\tau^i \in L[\tau]$, recall from $\S$\ref{S:Not} that $b^* = \sum c_i^{(-i)}\sigma^i$. Then, $B$ is the $\ok[\sigma]$-linear map such that $B(1) = b^*$ (see \cite[Lem. 2.4.2]{CP11}). 

 The map
	\begin{equation}\label{E:Endr}
	\End(\rho)~\rightarrow~\{ c\in \KK \mid c\Lambda_\rho \subseteq \Lambda_\rho\} : \sum c_i \tau^i \mapsto c_0
	\end{equation}
	is an isomorphism \cite{Drinfeld}. Throughout this paper, we identify $\End(\rho)$ with the image of this map and let $K_\rho$ denote its fraction field. We state the following result due to Anderson. 
	
	\begin{proposition}[{see Chang-Papanikolas~\cite[Prop.~3.3.2, Cor.~3.3.3]{CP12}}] \label{P:0thpro}
		The functor $\rho \rightarrow \cM_\rho$ from the category of Drinfeld $\bA$-modules defined over $K \subseteq \ok$ to the category of $\bA$-finite dual $t$-motives is fully faithful. Moreover, 
		\[
		\End(\rho) \cong \End_{\ok[t, \sigma]}(\cM_\rho), \quad K_\rho \cong \End_\cT(M_\rho),
		\]
		and $M_\rho$ is a simple left $\ok(t)[\sigma, \sigma^{-1}]$-module. 
	\end{proposition}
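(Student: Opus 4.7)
The plan is to build the functor $\rho \mapsto \cM_\rho$ explicitly via the $*$-involution on twisted polynomials, verify fully faithfulness by direct manipulation of that involution, deduce the endomorphism identifications as consequences, and prove simplicity of $M_\rho$ via the left-ideal structure of $\ok[\sigma]$. The main subtlety I anticipate is the passage from integral endomorphisms of $\cM_\rho$ to rational endomorphisms of $M_\rho$, which requires care with localization.

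For functoriality and fullness, given a morphism $b \in \KK[\tau]$ of Drinfeld $\bA$-modules from $\rho$ to $\rho'$ satisfying $b\rho_a = \rho'_a b$, I would define the candidate map $B : \cM_\rho \to \cM_{\rho'}$ to be the unique $\ok[\sigma]$-linear map with $B(1) = b^*$. Applying the anti-multiplicative involution $*$ to the intertwining relation and using $(BC)^* = C^*B^*$ yields $\rho_a^* b^* = b^* (\rho'_a)^*$, which is exactly what is needed to verify that $B$ respects the $\bA$-module structure $a \cdot m = m\rho_a^*$ of \eqref{E:dualaction}; hence $B$ is a morphism of $\bA$-finite dual $t$-motives. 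Faithfulness is immediate since $b \mapsto b^*$ is an involution on twisted polynomials. For fullness, any $\ok[t,\sigma]$-linear map $B : \cM_\rho \to \cM_{\rho'}$ is determined by $c := B(1) \in \cM_{\rho'} = \ok[\sigma]$ via $\ok[\sigma]$-linearity, and the compatibility $B(t \cdot 1) = t \cdot B(1)$ unwinds to $\rho_t^* c = c \rho_t^{\prime *}$, which after applying $*$ gives $b\rho_t = \rho'_t b$ with $b := c^* \in \ok[\tau]$; this promotes to $b \rho_a = \rho'_a b$ for every $a \in \bA$ by $\FF_q$-linearity of $\rho$ and $\rho'$.

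Setting $\rho = \rho'$ in this correspondence gives $\End(\rho) \cong \End_{\ok[t,\sigma]}(\cM_\rho)$ at once. For the isomorphism $K_\rho \cong \End_\cT(M_\rho)$, I would repeat the fullness argument with coefficients in $\ok(t)$ instead of $\ok[t]$: any $\ok(t)[\sigma,\sigma^{-1}]$-linear endomorphism of $M_\rho = \ok(t)\otimes_{\ok[t]} \cM_\rho$ is determined by its value at $1$, and after clearing denominators it arises from an endomorphism of $\cM_\rho$ tensored with $\bk$. Combined with the classical fact that $\End(\rho)$ is a commutative $\bA$-order in its fraction field $K_\rho$, this gives $\End_\cT(M_\rho) \cong \End_{\ok[t,\sigma]}(\cM_\rho) \otimes_\bA \bk \cong \End(\rho) \otimes_\bA \bk = K_\rho$.

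Finally, for simplicity of $M_\rho$ as a left $\ok(t)[\sigma,\sigma^{-1}]$-module, suppose by contradiction that $N \subsetneq M_\rho$ is a proper nonzero submodule. Set $\cN := N \cap \cM_\rho$; clearing denominators shows $N = \cN \otimes_{\ok[t]} \ok(t)$, so $\cN$ is a nonzero $\ok[t,\sigma]$-submodule of $\cM_\rho$ whose $\ok[t]$-rank equals $\dim_{\ok(t)} N < r$. Since $\cM_\rho = \ok[\sigma]$ and $\ok[\sigma]$ is a left Euclidean domain (its coefficient field $\ok$ is perfect, and leading coefficients are units), $\cN$ is a principal left ideal $\ok[\sigma] c$ for some nonzero $c \in \ok[\sigma]$. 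By the left division algorithm, $\cM_\rho/\cN = \ok[\sigma]/\ok[\sigma]c$ has finite $\ok$-dimension equal to $\deg_\sigma c$, hence is $\ok[t]$-torsion. This forces $\cN$ to have full $\ok[t]$-rank $r$, contradicting $\dim_{\ok(t)} N < r$, and establishes that $M_\rho$ is simple.
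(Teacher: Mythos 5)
The paper supplies no proof here; it simply cites Chang--Papanikolas \cite[Prop.~3.3.2, Cor.~3.3.3]{CP12}, so your reconstruction has to be judged on its own terms.

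Your full-faithfulness argument via the anti-involution $b\mapsto b^*$ (together with $(BC)^*=C^*B^*$ and the fact that $**$ is the identity because $\ok$ is perfect) is correct and is the standard approach, as is the resulting identification $\End(\rho)\cong\End_{\ok[t,\sigma]}(\cM_\rho)$. The simplicity argument is also sound and pleasantly clean: $\cN := N\cap\cM_\rho$ is a nonzero $\sigma$-stable $\ok$-submodule of $\cM_\rho=\ok[\sigma]$, hence a left ideal, hence principal since $\ok$ is perfect and $\ok[\sigma]$ is left Euclidean; $\ok[\sigma]/\ok[\sigma]c$ has finite $\ok$-dimension $\deg_\sigma c$, hence is $\ok[t]$-torsion, forcing $\cN$ to have full $\ok[t]$-rank and contradicting properness of $N$.

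The genuine gap is in the step $K_\rho\cong\End_\cT(M_\rho)$. You assert that any $\ok(t)[\sigma,\sigma^{-1}]$-linear endomorphism of $M_\rho$, ``after clearing denominators, arises from an endomorphism of $\cM_\rho$ tensored with $\bk$.'' But if $E(\cM_\rho)\subseteq\frac{1}{g}\cM_\rho$ with $g\in\ok[t]$, then although $gE$ is $\ok[t]$-linear on $\cM_\rho$, it no longer commutes with $\sigma$: one has $\sigma\bigl(gE(m)\bigr)=g^{(-1)}E(\sigma m)$, so $gE$ is $\sigma$-equivariant only when $g=g^{(-1)}$, i.e.\ when $g\in\bA=\FF_q[t]$. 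Thus to conclude $\End_\cT(M_\rho)\cong\End_{\ok[t,\sigma]}(\cM_\rho)\otimes_\bA\bk$ you must prove that the denominator can be taken in $\bA$, not merely in $\ok[t]$. This is precisely the content of Papanikolas's \cite[Prop.~3.4.5]{P08} --- a fact this paper invokes explicitly in the proof of Theorem~\ref{T:Trivial} (``all denominators of entries of $\gamma$ are in $\bA$''). You correctly anticipated that the localization step would be the delicate point, but the proposal as written does not supply the needed argument. The remainder of the chain ($\End(\rho)$ commutative, an $\bA$-order with fraction field $K_\rho$, so $\End(\rho)\otimes_\bA\bk=K_\rho$) is fine.
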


\begin{remark}\label{R:0thpro}
Let $i_t\in \End_{\ok[t,\sigma]}(\cM_\rho)$ be such that $i_t(1) = t\cdot 1 = \rho_t^*$. The isomorphism $\End(\rho) \cong \End_{\ok[t, \sigma]}(\cM_\rho)$ in Proposition~\ref{P:0thpro} sends $\theta \mapsto i_t$ and so, it sends $A$ to $\bA$. Thus, $K_\rho \cong \End_\cT(M_\rho)$ sends $k$ to $\bk$.
\end{remark}

	For $n \geq 0$, we define the \emph{$n$-th prolongation $t$-module $\rP_n\rho$ of $\rho$} to be the Anderson $t$-module associated to the $n$-th prolongation $\rP_n\cM_\rho$ of the $\bA$-finite dual $t$-motive $\cM_\rho$ (see for details, \cite[\S5]{Maurischat18}, \cite[\S5.2]{NPapanikolas21}). The Anderson $t$-module $\rP_n\rho : \bA \rightarrow \Mat_{n+1}(K[\tau])$ is of dimension $n+1$ and is defined by 
	\[
	(\rP_n\rho)_t =
	\rd (\rP_n\rho)_t + \diag(\kappa_1)\tau + \dots + \diag(\kappa_r)\tau^r
	\]
	where 
	\begin{equation} \label{E:constpro}
		\rd (\rP_n\rho)_t = \begin{pmatrix} 
			\theta &  &  &  & \\ 
			-1 & \ddots &&& \\ 
			0 & \ddots & \ddots & & \\ 
			\vdots & \ddots & \ddots & \ddots &  \\
			0 & \dots & 0 & -1 & \theta 
		\end{pmatrix},
	\end{equation}
	and $\diag(\kappa_i)$ is the $(n+1)\times (n+1)$ diagonal matrix with diagonal entries all equal to $\kappa_i$ for each $1 \leq i \leq r$. If we set $\cM_{\rP_n \rho} := \Mat_{1\times (n+1)}(\ok[\sigma])$ to be the dual $t$-motive associate to $\rP_n \rho$ defined as in \eqref{E:dualaction}, then by \cite[Prop.~5.2.11(b)]{NPapanikolas21} we have
	\[
	\cM_{\rP_n \rho} = \rP_n \cM_{\rho}.
	\]
	We define $\rP_nM_\rho := \ok(t) \otimes_{\ok[t]} \rP_n\cM_\rho$ to be the pre-$t$-motive associated to $\rP_n\cM_\rho$.

	\subsection{Rigid analytic trivializations}\label{RATPro}
	
	We fix our choice of Drinfeld $\bA$-module $\rho$ of rank $r$ from $\S$\ref{S:Produal} such that it is defined over $K=k^\sep$. In this subsection, we show that the $\bA$-finite dual $t$-motive $\cM_\rho$ associated to $\rho$ is rigid analytically trivial by constructing the rigid analytic trivialization $\Psi_\rho$, and then extend to the prolongation $t$-module $\rP_n \rho$. The details regarding Drinfeld $\bA$-modules can be found in \cite[\S3.4]{CP12} and \cite[Example~4.6.7]{NPapanikolas21}. \par 
	
	For $u \in \KK$, we let $f_u(t) \in \TT$ denote the Anderson generating function of $\rho$ given as in \eqref{E:AGFG}. For an $\bA$-basis $\{\lambda_1, \dots, \lambda_r\}$ of $\Lambda_\rho$, we set $f_i(t) := f_{\lambda_i}(t)$ for each $1 \leq i \leq r$. Define the matrices 
	\begin{equation}\label{E:Upsilon}
		\Upsilon := \begin{pmatrix}
			f_1 & f_1^{(1)} & \dots & f_1^{(r-1)}\\
			f_2 & f_2^{(1)} & \dots & f_2^{(r-1)}\\
			\vdots & \vdots & & \vdots\\
			f_r & f_r^{(1)} & \dots & f_r^{(r-1)}
		\end{pmatrix}, \, \text{and} \, \, 	V := \begin{pmatrix}
		\kappa_1 & \kappa_2^{(-1)} & \dots & \kappa_{r-1}^{(-r+2)} & \kappa_r^{(-r+1)}\\
		\kappa_2 & \kappa_3^{(-1)} & \dots & \kappa_{r}^{(-r+2)} & \\
		\vdots & \vdots &&&\\
		\kappa_{r-1} & \kappa_r^{(-1)}&&&\\
		\kappa_r &&&&
	\end{pmatrix}.
	\end{equation}
By \cite[$\S$3.4]{CP12} (see also~\cite[Lem. 4.3.9]{NPapanikolas21}), it follows that $\det \Upsilon \neq 0$. Set 
	\begin{equation}\label{E:ratD}
		\Psi_\rho := V^{-1}[\Upsilon^{(1)}]^{-1}.
	\end{equation}
	Then $\Psi_{\rho}^{(-1)} = \Phi_\rho \Psi_\rho$. Thus, the pre-$t$-motive $M_\rho = \ok(t) \otimes_{\ok[t]} \cM_\rho$ is rigid analytically trivial and is in the category $\cT$ of $t$-motives. \par
	
	By \eqref{E:ratpro}, the $n$-th prolongation $t$-motive $\rP_n M_\rho = \ok(t) \otimes_{\ok[t]} \rP_n\cM_\rho$ is rigid analytically trivial with rigid analytic trivialization $\Psi_{\rP_n\rho} = d_{t,n+1}[\Psi_\rho]$. Thus, 
    	\begin{equation}\label{ratpro1}
		\Psi_{\rP_n\rho} = d_{t,n+1}[V]^{-1} d_{t,n+1}[\Upsilon^{(1)}]^{-1}. 
	\end{equation}
	
	\subsection{Hyperderivatives of periods and logarithms}\label{RATProHyper}
	We continue with our choice of Drinfeld $\bA$-module $\rho$ of rank $r$ defined over $k^\sep$. Recall from $\S$\ref{S:IntroHyperPer} that a $\KK$-basis of $\rH^1_{\DR}(\rho)$ is represented by $\{\delta_1, \dots, \delta_r\}$, where $\delta_1$ is the inner biderivation such that $(\delta_1)_t = \rho_t-\theta = \kappa \tau + \dots + \kappa_r \tau^r$, and $\delta_j(t) = \tau^{j-1}$ for $2 \leq j \leq r$. Let $\rF_{\tau^{j-1}}(z)$ denote the quasi-periodic function associated to the biderivation $\delta_j: t \mapsto \tau^{j-1}$. Note that $\rF_{\delta_1}(z) = \Exp_{\rho}(z)-z$.
	Then, we have the following result which is a modified version for Drinfeld $\bA$-modules, and its proof is due to Papanikolas and the author.  
	
	\begin{theorem}[see {Namoijam-Papanikolas~\cite[Thm.~E]{NPapanikolas21}}]\label{P:rigidhyper}
		Let $\rho$ be a Drinfeld $\bA$-module defined over $k^\sep$ of rank $r$. Let $u \in \KK^d$ satisfy $\Exp_\phi(u) \in (k^\sep)^d$.  Then, for $n \geqslant 0$ 
		\begin{equation}\label{E:hyperquasilog11}
				\Span_{\ok}\bigg(\{1\} \cup\bigcup\limits_{s=0}^n  \bigcup\limits_{\ell=1}^{r} \bigl\{
				\pd_t^s(f_u^{(\ell)}(t))|_{t=\theta} \bigr\}\bigg)=\Span_{\ok}\bigg(\{1\} \cup
				\bigcup\limits_{s=0}^n \bigcup\limits_{j=1}^{r-1} \bigl\{\pd_{\theta}^s(u), \pd_\theta^s(\rF_{\tau^j}(u))\bigr\}\bigg).
		\end{equation}
		In particular, if $\{\lambda_1, \dots, \lambda_r\}$ is an $\bA$-basis of the period lattice $\Lambda_\rho$, then
		\begin{equation}\label{E:hyperquasiper11}
			\Span_{\ok} \left( d_{t, n+1}[\Psi_{\rho}]^{-1}\big|_{t=\theta} \right)
			= \Span_{\ok}\bigg(\bigcup\limits_{s=0}^{n} \bigcup\limits_{i=1}^{r} \bigcup\limits_{j=1}^{r-1} \{ \partial_\theta^s(\lambda_j), 
			\partial_\theta^s(\rF_{\tau^j}(\lambda_i))\}  \bigg).
		\end{equation}
	\end{theorem}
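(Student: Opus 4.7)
The plan is to first prove \eqref{E:hyperquasilog11} and then deduce the matrix version \eqref{E:hyperquasiper11} from it. For the reduction, recall from \eqref{E:ratD} that $\Psi_\rho^{-1} = \Upsilon^{(1)} V$ with $V \in \GL_r(\ok)$, so by the multiplicativity of the $d$-matrix one has $d_{t,n+1}[\Psi_\rho]^{-1} = d_{t,n+1}[\Upsilon^{(1)}]\cdot d_{t,n+1}[V]$. Because $V$ is constant in $t$, the matrix $d_{t,n+1}[V]$ is block-diagonal over $\ok$ with blocks equal to $V$, hence invertible over $\ok$; consequently the $\ok$-span of the entries of $d_{t,n+1}[\Psi_\rho]^{-1}\big|_{t=\theta}$ coincides with the $\ok$-span of $\{\pd_t^s(f_i^{(\ell)})|_{t=\theta} : 0 \leq s \leq n,\ 1 \leq \ell \leq r,\ 1 \leq i \leq r\}$. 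Applying \eqref{E:hyperquasilog11} to each $u = \lambda_i$ then yields \eqref{E:hyperquasiper11}.

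To prove \eqref{E:hyperquasilog11}, I would start from the functional equation $\rho_t(f_u(t)) = t f_u(t) + \Exp_\rho(u)$ (derivable directly from \eqref{E:AGFG}), which rearranges as
\begin{equation*}
\sum_{i=1}^r \kappa_i f_u^{(i)}(t) \;=\; (t - \theta) f_u(t) + \Exp_\rho(u).
\end{equation*}
Since each $f_u^{(i)}$ with $i \geq 1$ has radius of convergence strictly larger than $|\theta|_\infty$ (the Frobenius twist raises the coefficients of $f_u$ to $q^i$th powers, enlarging the disk of convergence), the left-hand side is analytic at $t = \theta$, so $(t - \theta) f_u(t)$ extends analytically across $t = \theta$. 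A Taylor expansion at $t = \theta$, following the residue/evaluation computations of \cite[\S 3.4]{CP12} adapted from periods to an arbitrary $u$ with $\Exp_\rho(u)\in k^\sep$, produces an upper-triangular $\ok$-linear system expressing each $f_u^{(i)}(\theta)$ in terms of $u,\rF_\tau(u),\dots,\rF_{\tau^{r-1}}(u)$ modulo $\ok$, and conversely. This settles the base case $n=0$.

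For $n\geq 1$, applying $\pd_t^s$ to the functional equation, using $\pd_t^s(\Exp_\rho(u)) = 0$ for $s \geq 1$ together with the Leibniz-type identity $\pd_t^s((t-\theta)f_u) = (t-\theta)\pd_t^s(f_u) + \pd_t^{s-1}(f_u)$, yields the recursion
\begin{equation*}
\sum_{i=1}^r \kappa_i \pd_t^s(f_u^{(i)})(t) \;=\; (t - \theta)\pd_t^s(f_u)(t) + \pd_t^{s-1}(f_u)(t).
\end{equation*}
To translate the values $\pd_t^s(f_u^{(i)})|_{t=\theta}$ into $\pd_\theta^s$-hyperderivatives of $u$ and $\rF_{\tau^j}(u)$, I would use the chain rule for partial hyperderivatives
\begin{equation*}
\pd_\theta^s\bigl(F(t,\theta)|_{t=\theta}\bigr) \;=\; \sum_{a+b=s} \pd_{t_1}^a \pd_{t_2}^b F(t_1,t_2)\big|_{t_1=t_2=\theta},
\end{equation*}
applied to $f_u(t,\theta)$ viewed as a function of two independent variables (so that the $\theta$-dependence of $u$ and the $\kappa_i$ is separated from the $t$-dependence). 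Combining this with the $\pd_\theta$-images of $\Exp_\rho(u)=\alpha$ and of the defining functional equation $\rF_{\tau^j}(\theta u) = \theta\rF_{\tau^j}(u) + \Exp_\rho(u)^{q^j}$ produces an upper-triangular change-of-basis (in the order $s$) between the two sets of values, with diagonal entries invertible in $\ok$, which propagates the equality of $\ok$-spans from the base case to all orders $n$.

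The main obstacle is the careful bookkeeping of the mixed terms $\pd_{t_1}^a \pd_{t_2}^b f_u$ arising from the chain rule: one must show that the off-diagonal corrections fall into the $\ok$-span of lower-order hyperderivatives already under induction, and that the $q$th-power twist $(\cdot)^{(i)}$ interacts predictably with $\pd_\theta^s$. This requires repeated appeal to Proposition~\ref{P:hyperderprod}(a), which controls $\pd_\theta^s$ applied to $p$th-power expressions, together with the recursive structure inherited from the functional equation; once this is in place, the triangularity of the resulting system guarantees the desired equality of $\ok$-spans.
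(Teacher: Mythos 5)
The paper does not give its own proof of this statement: Theorem~\ref{P:rigidhyper} is quoted directly from Namoijam--Papanikolas~\cite[Thm.~E]{NPapanikolas21}, so there is no in-paper argument to compare yours against. Judged on its own terms, your outline has the right shape but a genuine unresolved gap at the crux.

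What you have right: the reduction of \eqref{E:hyperquasiper11} to \eqref{E:hyperquasilog11} via $\Psi_\rho^{-1}=\Upsilon^{(1)}V$, multiplicativity of $d_{t,n+1}[\cdot]$, and the observation that $d_{t,n+1}[V]$ is block-diagonal and invertible over $\ok$ since $V$ is $t$-independent, is correct. The functional equation $\sum_{i=1}^r\kappa_i f_u^{(i)}(t)=(t-\theta)f_u(t)+\Exp_\rho(u)$ is correct, as is the fact that $f_u^{(i)}$ for $i\geq 1$ is regular at $t=\theta$, and your diagonal-substitution identity $\pd_\theta^s\bigl(F(t,\theta)|_{t=\theta}\bigr)=\sum_{a+b=s}\pd_{t}^a\pd_{\theta}^b F|_{t=\theta}$ is a valid hyperderivative version of the chain rule.

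The gap is in the inductive step, and you flag it yourself but do not close it. The chain rule expresses $\pd_\theta^s(\rF_{\tau^j}(u))$ as $\pd_t^s(f_u^{(j)})|_{t=\theta}$ plus correction terms $\bigl(\pd_t^{s-eq^j}\pd_\theta^{eq^j}F_j\bigr)|_{t=\theta}$ for $e\geq 1$, where by Proposition~\ref{P:hyperderprod}(a) one has $\pd_\theta^{eq^j}(c_m^{q^j})=(\pd_\theta^e(c_m))^{q^j}$ with $c_m=\Exp_\rho(u/\theta^{m+1})$. Whether these corrections lie in the $\ok$-span of lower-order hyperderivatives of $u$ and $\rF_{\tau^i}(u)$ (together with $1$) is exactly the theorem one is trying to prove --- it is not a bookkeeping triviality, and asserting it ``by induction'' is circular unless the induction is set up so that the correction at level $s$ is manifestly a $\ok$-combination of the quantities already handled at levels $<s$. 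Concretely, $\pd_\theta^e(c_m)$ involves $\pd_\theta^{e'}(u)$, $\pd_\theta^{e'}$ of the coefficients of $\Exp_\rho$, and hyperderivatives of $\theta^{-(m+1)q^h}$; after raising to the $q^j$th power and resumming over $m$, one gets a power series in $t$ whose relation to the set on the right-hand side of \eqref{E:hyperquasilog11} is not at all obvious from first principles. Making this precise is where the real work lies, and it is the content the reference \cite{NPapanikolas21} supplies via the structured framework of prolongation $t$-modules (compare \eqref{E:AGFPro}, which builds the $\pd_t$-derivatives of $f_i$ directly into the Anderson generating functions of $\rP_n\rho$), rather than by manipulating the bivariate chain rule directly. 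Until you produce a closed inductive loop that controls those correction terms, the argument is incomplete at its most important step.
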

	
By using Theorem~\ref{T:Tannakian} and Theorem~\ref{P:rigidhyper}, computing the dimension of the Galois group $\Gamma_{\rP_n M_\rho}$ for $n \geq 1$ proves Theorem~\ref{T:Main01}. Moreover, by \eqref{E:hyperquasilog11} if we are able to construct appropriate $t$-motives whose periods span the hyperderivatives in question and determine the dimension of their associated Galois groups, then we can prove Theorem~\ref{T:Main02}.

	\section{Hyperderivatives of periods and quasi-periods}
	\label{S:Periodquasi}
	Let $\rho$ be a Drinfeld $\bA$-module of rank $r$ defined over $k^\sep$. Let $K_{\rho}$ be the fraction field of $\End(\rho)$ defined as in \eqref{E:Endr} and let $[K_{\rho}:k]=\bss$. In this section, we prove Theorem~\ref{T:Main01} (restated as Theorem~\ref{T:Main1}). To prove this theorem, we first show in Theorem~\ref{T:GeqGamma} that $\dim \Gamma_{\rP_nM_\rho}\geq (n+1)\cdot r^2/\bss$, and in Theorem~\ref{T:1LeqGamma} that $\dim \Gamma_{\rP_nM_\rho}\leq (n+1)\cdot r^2/\bss$. Moreover, in Corollary~\ref{T:LeqGamma} we explicitly compute the Galois group $\Gamma_{\rP_n M_\rho}$ for all $n \geq 1$.
	
	\subsection{The \texorpdfstring{$\fp$}{\fp}-adic Tate module and Anderson generation functions}\label{S:Tatem}
	
	Let $\phi$ be a uniformizable and $\bA$-finite Anderson $t$-module of dimension $d$ and rank~$r$. For any $a \in \bA$, the torsion $\bA$-module $\phi[a] := \{\bsx \in \KK^d \ | \ \phi_a(\bsx) = 0\}$ is isomorphic to $\big(\bA/(a)\big)^{\oplus r}$ (see \cite{And86}, \cite[Thm.~7.2.1]{Thakur}). For a non-zero prime $\mathfrak{p}$ of $\bA$, we define the \emph{$\mathfrak{p}$-adic Tate module} 
	\[
	T_{\mathfrak{p}}(\phi) := \lim\limits_{\substack{\longleftarrow \\ m}} \phi[\mathfrak{p}^m] \cong \bA_{\mathfrak{p}}^{\oplus r},
	\]
	where $\bA_{\mathfrak{p}}$ is the completion of $\bA$ at $\mathfrak{p}$. Now, we fix a Drinfeld $\bA$-module $\rho$ of rank~$r$. If $\rho$ is defined over $K$ such that $k \subseteq K \subseteq \ok$ and $[K:k] < \infty$, then note that every element of $\rho[\mathfrak{p}^m]$ is separable over $K$. Thus, the absolute Galois group $\Gal(K^\sep/K)$ of the separable closure of $K$ inside $\ok$ acts on $T_{\mathfrak{p}}(\rho)$, defining a representation
	\[
	\varphi_{\mathfrak{p}} : \Gal(K^\sep/K) \rightarrow \Aut(T_{\mathfrak{p}}(\rho)) \cong \GL_r(\bA_{\mathfrak{p}}).
	\]
Set $\mathrm{p}:=\mathfrak{p}(\theta) \in A$. We fix  an $\bA$-basis $\{\lambda_1, \dots, \lambda_r\}$ of $\Lambda_\rho$ and define 
	\[
	\xi_{i,m} := \Exp_\rho \bigg(\frac{\lambda_i}{\rp^{m+1}}\bigg) \in \rho[\fp^{m+1}],
	\]
	for each $1 \leq i \leq r$ and $m \geq 0$. Then, $\{x_1, \dots, x_r\}$ is an $\bA_{\fp}$-basis of $T_{\fp}(\rho)$, where we set $x_i := (\xi_{i,0}, \xi_{i,1}, \xi_{i,2}, \dots )$. Set $\bx := [x_1, \dots, x_r]^\tr$. Then, for each $\epsilon \in \Gal(K^\sep/K)$ there exists $g_\epsilon \in \GL_r(\bA_{\mathfrak{p}})$ such that 
	\begin{equation} \label{E:gepsilon}
		\varphi_\fp(\epsilon) \bx = g_\epsilon \bx.
	\end{equation}

	\begin{theorem} [{Maurischat-Perkins~\cite[Theorem~1.2]{MauPer20}}]\label{T:GrHomD}
		Let $\rho$ be a Drinfeld $\bA$-module defined over $K$ such that $k \subseteq K \subseteq \ok$ and $[K:k] < \infty$. 
		Let $\bk_\fp$ be the field of fractions of $\bA_{\fp}$.
		For each $\epsilon \in  \Gal(K^\sep/K)$, let $g_{\epsilon} \in \GL_r(\bA_\fp)$ be as in \eqref{E:gepsilon}. Then, the assignment $\epsilon \mapsto g_\epsilon$ induces a group homomorphism 
		\[
		\beta_0 : \Gal(K^\sep/K) \rightarrow \Gamma_{\Psi_{\rho}}(\bA_\fp) := \GL_r(\bA_\fp)\cap\Gamma_{\Psi_{\rho}}(\bk_\fp).
		\]
	\end{theorem}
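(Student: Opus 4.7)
The plan is to split the assertion into two parts: (i) the assignment $\epsilon \mapsto g_\epsilon$ is a group homomorphism $\Gal(K^\sep/K) \to \GL_r(\bA_\fp)$, and (ii) its image lies in $\Gamma_{\Psi_\rho}(\bk_\fp)$. Part (i) is routine from the definitions: $\varphi_\fp$ is already a homomorphism by construction, and in the fixed basis $\bx = [x_1, \dots, x_r]^\tr$ of $T_\fp(\rho)$ each $\varphi_\fp(\epsilon)$ acts $\bA_\fp$-linearly, uniquely yielding $g_\epsilon \in \GL_r(\bA_\fp)$ with $g_{\epsilon \eta} = g_\epsilon g_\eta$ following from associativity of the Galois action on torsion.

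The substance is (ii), for which I would construct an $\fp$-adic specialization of the rigid analytic trivialization $\Psi_\rho$. The entries of $\Upsilon$ in \eqref{E:Upsilon} are the Anderson generating functions $f_i(t)$ of $\rho$ (and their Frobenius twists); by \eqref{E:ratD}, these determine $\Psi_\rho$. Since $\lambda_i \in \Lambda_\rho$, the Anderson generating functions satisfy the functional equation
\[
(t - \theta) f_i(t) = \sum_{j=1}^r \kappa_j f_i^{(j)}(t),
\]
obtained directly from $\Exp_\rho(\theta z) = \rho_t(\Exp_\rho(z))$ and the defining series of $f_i(t)$. Iterating this equation and using the action of $\rho_\fp$, the evaluation of $f_i(t)$ at roots of $\mathfrak{p}(t)^{m+1}$ in an algebraic closure of $\bk_\fp$ recovers (up to the matrix $V$ from \eqref{E:Upsilon}) the $\fp$-power torsion points $\xi_{i,m}$. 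This yields a well-defined $\fp$-adic evaluation map $\mathrm{ev}_\fp : \Sigma \to \bk_\fp$ on the coordinate ring $\Sigma = \ok(t)[\Psi_\rho, 1/\det \Psi_\rho]$ that intertwines with the Galois action on torsion.

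The relation $\varphi_\fp(\epsilon)\bx = g_\epsilon \bx$ then translates, under this specialization, into a matrix identity of the form $\mathrm{ev}_\fp(\epsilon \cdot \Psi_\rho) = \mathrm{ev}_\fp(\Psi_\rho) \cdot g_\epsilon$, so that $\mathrm{ev}_\fp(\widetilde{\Psi}) = g_\epsilon$ where $\widetilde{\Psi} = \Psi_1^{-1}\Psi_2$. By Theorem~\ref{Thm.:diffP} and the construction of $\Gamma_{\Psi_\rho}$ as the smallest closed $\bk$-subscheme of $\GL_r/\bk$ containing $\widetilde{\Psi} \in \GL_r(\LL \otimes_{\ok(t)} \LL)$, every polynomial identity over $\bk$ satisfied by the entries of $\widetilde{\Psi}$ is inherited by $g_\epsilon$ under $\mathrm{ev}_\fp$. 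Hence $g_\epsilon \in \Gamma_{\Psi_\rho}(\bk_\fp)$, establishing (ii). The hypothesis $\End(\rho) \subseteq K[\tau]$ ensures that all endomorphism data is $K$-rational, so $\epsilon$ acts trivially on the endomorphism structure and the specialization is Galois-equivariant as required.

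The main obstacle will be the rigorous construction of $\mathrm{ev}_\fp$ together with the verification that it intertwines the Galois action with right-multiplication by $g_\epsilon$. A priori the Anderson generating functions live in $\TT$ (convergent on the closed unit disk of $\KK$), so one must carefully justify their $\fp$-adic interpretation at roots of $\mathfrak{p}^{m+1}(t)$ via analytic continuation. I expect this to proceed by explicit computation along the lines of Maurischat-Perkins, building the specialization inductively on $m$ using the functional equation above and the compatibility of $\rho_\fp$ with $f_i(t)$.
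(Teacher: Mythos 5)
Your proposal identifies the correct strategy --- the Maurischat--Perkins $\fp$-adic specialization of $\Psi_\rho$, which is exactly what the paper cites for this statement and adapts in the prolongation version Theorem~\ref{T:GrHom} --- but the description of that specialization contains a gap that would cause the argument to fail if carried out as written. You speak of evaluating $f_i(t)$ ``at roots of $\mathfrak{p}(t)^{m+1}$,'' but the roots of $\mathfrak{p}(t)^{m+1}$ coincide (as a set) with those of $\mathfrak{p}(t)$, so this evaluation produces no new data as $m$ increases and cannot possibly recover $\fp^{m+1}$-torsion from $\fp$-torsion. The actual construction, which the paper writes out explicitly, is the hyperderivative (Taylor-coefficient) map $\mathcal{D}_\zeta : g \mapsto \sum_{m\geq 0} \pd_t^m(g)\big|_{t=\zeta} X^m$ at a \emph{single} root $\zeta$ of $\fp$, together with the key fact that this restricts to an isomorphism $\bA_\fp \xrightarrow{\sim} \power{\FF_q(\zeta)}{X}$; it is the $m$-th Taylor coefficient at fixed $\zeta$, not iterated evaluation, that encodes the $\fp^{m+1}$-torsion. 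Your induction on $m$ should therefore run over orders of $\pd_t$ at fixed $\zeta$ rather than over powers of $\fp(t)$.

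A secondary point: rather than trying to produce a specialization of $\widetilde{\Psi}\in\GL_r(\LL\otimes_{\ok(t)}\LL)$ directly to $g_\epsilon$ --- which requires compatibly specializing both tensor factors of $\LL\otimes_{\ok(t)}\LL$ --- the cleaner route taken in the paper is to verify that $\Psi_\rho\cdot g_\epsilon^{-1}$ satisfies the defining equations of $Z_{\Psi_\rho}$ (applying $\widetilde{\mathcal{D}}_\zeta$ to those equations and using Galois equivariance) and then invoke the torsor property $\Gamma_{\Psi_\rho}(\overline{\LL}) = \Psi_\rho^{-1}Z_{\Psi_\rho}(\overline{\LL})$ from Theorem~\ref{Thm.:diffP}(a). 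This sidesteps the bookkeeping around $\widetilde{\Psi}$ and the coordinate ring $\Delta$ entirely, while still delivering $g_\epsilon^{-1}\in\Gamma_{\Psi_\rho}(\bk_\fp)$.
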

	
Note that in the case of $\fp =t$, Theorem~\ref{T:GrHomD} was first proved by Chang and Papanikolas (see \cite[Thm. 3.5.1]{CP12}). 

For the remainder of this subsection, we fix $n \geq 0$. By \cite[Prop.~5.2.16]{NPapanikolas21}, we have that for $\bsz = [z_0, \dots, z_{n}]^\tr$, 
	\begin{equation}\label{E:exppro}
		\Exp_{\rP_n\rho}(\bsz) = [\Exp_\rho(z_0), \dots, \Exp_\rho(z_{n})]^\tr.
	\end{equation}
For $u \in \KK$, set
	\begin{equation}\label{E:ucol}
	(u)_{j} :=  [0, \dots, 0, u, 0, \dots, 0]^\tr =\begin{psmallmatrix}
	    0\\ \vdots \\ 0 \\ u\\0\\ \vdots \\ 0
	\end{psmallmatrix} \in \KK^{n+1},
\end{equation}
where $u$ is in the $j$-th entry and all other entries are $0$. By \eqref{E:exppro}, using the $\bA$-basis $\{\lambda_1, \dots, \lambda_r\}$ of $\Lambda_\rho$, an $\bA$-basis of the period lattice $\Lambda_{\rP_n\rho}$ of $\rP_n\rho$ is
\[\{ (\lambda_i)_{j} \ | \ 1 \leq i \leq r~\textup{and}~1 \leq j \leq n+1\}.\]

We define 
	\[
	\chi_{i,m} := \Exp_\rho \bigg(\frac{\lambda_i}{\theta^{m+1}}\bigg), 
	\]
	for each $1 \leq i \leq r$ and $m \geq 0$. By \eqref{E:AGFG}, the Anderson generating function $f_i(t):=f_{\lambda_i}(t)$ of $\rho$ with respect to~$\lambda_i$ is
	\[
	f_i(t) = \sum\limits_{m=0}^\infty \Exp_{\rho}\bigg(\frac{\lambda_i}{\theta^{m+1}} \bigg)t^m = \sum\limits_{m=0}^\infty \chi_{i,m}t^m 
	\in \TT \cap \power{K^\sep}{t}.
	\]
For each $1 \leq i \leq r$ and $1 \leq j \leq n+1$ we let $\bcG_{i,j}(t) := \bcG_{(\lambda_i)_j}(t)$ denote the Anderson generating function of $\rP_n\rho$ with respect to $(\lambda_i)_j$. Then, by \eqref{E:AGFG} we have
	\[
	\bcG_{i,j}(t) = \sum\limits_{m=0}^\infty \Exp_{\rP_n\rho}\big( (\rd(\rP_n\rho)_t)^{-m-1} (\lambda_i)_{j} \big)t^m 
	\in \TT^{n+1} \cap \power{K^\sep}{t}^{n+1}.
	\]
	Observe that in \eqref{E:constpro}, the subdiagonal entries of $\rd(\rP_n\rho)_t$ are $-\pd_\theta^1(\theta)$. Also, $(-1)^c \pd_\theta^c (\theta)=0$ for $c \geq 2$. Moreover, the $e$-th subdiagonal entries of  $\rd(\rP_n\rho)_t^{-1}$ are $(-1)^e\pd_\theta^e(\theta^{-1})$ and so, by the product rule of hyperderivatives, for $h \in \ZZ$ we have 
	\begin{equation*} \label{E:power}
		(\rd(\rP_n\rho)_t)^{h} = \begin{pmatrix}
			\theta^{h} & & & & &\\
			-\pd_\theta^1(\theta^{h}) & \theta^{h} &&&&\\
			\pd_\theta^2(\theta^{h}) & -\pd_\theta^1(\theta^{h}) & \theta^{h} &&&\\
			\vdots&&\ddots&\ddots&&\\
			\vdots&&&\ddots&\ddots& \\
			(-1)^n\pd_\theta^n(\theta^{h}) & \dots &\dots& \pd_\theta^2(\theta^{h})& -\pd_\theta^1(\theta^{h}) & \theta^{h}
		\end{pmatrix}.
	\end{equation*}
	
	\noindent
Note that for $m \geq 0$ and $c \geq 1$, $(-1)^c \pd_\theta^c (\theta^{-m-1}) =  \binom{m+c}{c} \theta^{-m-1-c}$. Then, it follows by using \eqref{E:exppro} that
	\begin{equation*}
		\begin{split}
		&	\bcG_{i,j}(t) = \sum\limits_{m=0}^{\infty}\left(0, \dots, 0, \chi_{i,m}, \binom{m+1}{1} \chi_{i,m+1}, \dots, \binom{m+(n+1-j)}{n+1-j} (\chi_{i,m+(n+1-j)})\right)^{\tr}t^m\\
			&=\left(0, \dots, 0, \sum\limits_{m=0}^{\infty}\chi_{i,m}t^{m}, \sum\limits_{m=1}^{\infty}\chi_{i,m} \binom{m}{1}t^{m-1}, \dots, \sum\limits_{m=n+1-j}^{\infty}\chi_{i,m}\binom{m}{n+j-1}t^{m-(n+j-1)}\right)^{\tr}.
		\end{split}
	\end{equation*}
	Thus,
	\begin{equation}\label{E:AGFPro}
		\bcG_{i,j}(t) =  \left(0, \dots, 0, f_i, \pd_t^1(f_i), \dots, \pd_t^{n+1-j}(f_i)\right)^{\tr} \in \TT^{n+1}.
	\end{equation}   

For our purpose, we consider the Galois group $\Gamma_{\Psi_{\rP_n \rho}}$ and its principal homogeneous space $Z_{\Psi_{\rP_n \rho}}$ as in $\S$\ref{S:DiffGalGr}, and we prove the following result.	
	\begin{theorem} \label{T:GrHom}
		Let $\rho$ be a Drinfeld $\bA$-module defined over $K$ such that $k \subseteq K \subseteq \ok$ and $[K:k] < \infty$, and for $n \geq 0$ let $\rP_n\rho$ be its $n$-th prolongation $t$-module. 
		Let $\bk_\fp$ be the fraction field of $\bA_{\fp}$.
		For each $\epsilon \in  \Gal(K^\sep/K)$, let $g_{\epsilon} \in \GL_r(\bA_\fp)$ be as in \eqref{E:gepsilon}.
		Then, the assignment $\epsilon \mapsto d_{t,n+1}[g_\epsilon]$ induces a group homomorphism 
		\[
		\beta_n : \Gal(K^\sep/K) \rightarrow \Gamma_{\Psi_{\rP_n\rho}}(\bA_\fp) := \GL_{(n+1)r}(\bA_\fp)\cap\Gamma_{\Psi_{\rP_n\rho}}(\bk_\fp).
		\]
	\end{theorem}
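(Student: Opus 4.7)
The plan is to bootstrap from Maurischat--Perkins' Theorem~\ref{T:GrHomD} (the $n=0$ case), leveraging two facts: the identification $\Psi_{\rP_n\rho} = d_{t,n+1}[\Psi_\rho]$ recorded just before \eqref{ratpro1}, and the multiplicative property $d_{t,n+1}[AB] = d_{t,n+1}[A]\cdot d_{t,n+1}[B]$ of the $d$-matrix noted after \eqref{E:dmatrix}. Defining $\beta_n(\epsilon) := d_{t,n+1}[g_\epsilon]$, three claims require verification: (i) the image lies in $\GL_{(n+1)r}(\bA_\fp)$; (ii) $\beta_n$ is multiplicative; (iii) the image lies in $\Gamma_{\Psi_{\rP_n\rho}}(\bk_\fp)$.

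Items (i) and (ii) are immediate from the formalism. Since $g_\epsilon \in \GL_r(\bA_\fp)$ and $\bA_\fp$ is preserved by each hyperdifferential operator $\pd_t^j$, every block $\pd_t^j(g_\epsilon)$ sits in $\Mat_r(\bA_\fp)$. The product rule applied to $g_\epsilon \cdot g_\epsilon^{-1} = I_r$ yields $d_{t,n+1}[g_\epsilon]^{-1} = d_{t,n+1}[g_\epsilon^{-1}] \in \Mat_{(n+1)r}(\bA_\fp)$, so $d_{t,n+1}[g_\epsilon] \in \GL_{(n+1)r}(\bA_\fp)$. Multiplicativity $\beta_n(\epsilon_1 \epsilon_2) = \beta_n(\epsilon_1)\beta_n(\epsilon_2)$ follows from $g_{\epsilon_1\epsilon_2} = g_{\epsilon_1} g_{\epsilon_2}$, granted by Theorem~\ref{T:GrHomD}, combined with the product rule.

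The substance is item (iii). Setting $\Psi_{\rP_n\rho,1} := \Psi_{\rP_n\rho}\otimes 1$ and $\Psi_{\rP_n\rho,2} := 1 \otimes \Psi_{\rP_n\rho}$ in $\GL_{(n+1)r}(\LL \otimes_{\ok(t)} \LL)$, so that $\widetilde{\Psi}_{\rP_n\rho} = \Psi_{\rP_n\rho,1}^{-1}\Psi_{\rP_n\rho,2}$, one first checks that $\widetilde{\Psi}_{\rP_n\rho} = d_{t,n+1}[\widetilde{\Psi}_\rho]$, using $\Psi_{\rP_n\rho} = d_{t,n+1}[\Psi_\rho]$ and the product rule; the operators $\pd_t^j$ extend unambiguously to $\LL\otimes_{\ok(t)}\LL$ via the Leibniz expansion since $t$ sits in the base $\ok(t)$. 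Theorem~\ref{T:GrHomD} supplies a $\bk$-algebra homomorphism $\Delta_{\Psi_\rho} \to \bk_\fp$ sending $\widetilde{\Psi}_\rho \mapsto g_\epsilon$ which realizes $g_\epsilon \in \Gamma_{\Psi_\rho}(\bk_\fp)$. The plan is then to promote this to a $\bk$-algebra homomorphism $\Delta_{\Psi_{\rP_n\rho}} \to \bk_\fp$ sending $\widetilde{\Psi}_{\rP_n\rho} \mapsto d_{t,n+1}[g_\epsilon]$, by applying $\pd_t^j$ for $0 \leq j \leq n$ entry-wise to the relation $\widetilde{\Psi}_\rho \mapsto g_\epsilon$ and repackaging via the $d$-matrix, which places $d_{t,n+1}[g_\epsilon]$ inside $\Gamma_{\Psi_{\rP_n\rho}}(\bk_\fp)$.

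The chief technical obstacle is making this promotion rigorous: for any polynomial $P \in \bk[X,1/\det X]$ with $P(\widetilde{\Psi}_{\rP_n\rho}) = 0$, one must verify $P(d_{t,n+1}[g_\epsilon]) = 0$. Because the entries of $\widetilde{\Psi}_{\rP_n\rho}$ are $\pd_t^j$-images of entries of $\widetilde{\Psi}_\rho$ arranged in block-upper-triangular form, the identity $P(\widetilde{\Psi}_{\rP_n\rho}) = 0$ translates into an algebraic identity among hyperderivatives of entries of $\widetilde{\Psi}_\rho$; since $\pd_t^j$ and the Maurischat--Perkins specialization $\widetilde{\Psi}_\rho \mapsto g_\epsilon$ commute (both act only on $t$-variable structure and fix $\bk$), the identity survives specialization to give $P(d_{t,n+1}[g_\epsilon]) = 0$. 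The Leibniz expansions of $\pd_t^j$ on products and inverses demand careful bookkeeping, but no conceptual difficulty beyond Theorem~\ref{T:GrHomD} arises.
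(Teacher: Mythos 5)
Your reduction of the problem to the multiplicativity of $d_{t,n+1}[\cdot]$ and the claim $\widetilde{\Psi}_{\rP_n\rho} = d_{t,n+1}[\widetilde{\Psi}_\rho]$ is sound, and items (i) and (ii) are correctly dismissed as formal. The gap is in item (iii), precisely at the sentence asserting that ``$\pd_t^j$ and the Maurischat--Perkins specialization $\widetilde{\Psi}_\rho \mapsto g_\epsilon$ commute.'' This is not a formal fact, and the parenthetical justification (``both act only on $t$-variable structure and fix $\bk$'') does not constitute an argument. The specialization is a $\bk$-algebra homomorphism $\nu_\epsilon : \Delta_{\Psi_\rho} \to \bk_\fp$ defined by sending each entry $(\widetilde{\Psi}_\rho)_{ab}$ to $(g_\epsilon)_{ab}$; nothing in the Tannakian formalism or in Theorem~\ref{T:GrHomD} makes $\nu_\epsilon$ a $\pd_t$-differential homomorphism. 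Worse, $\pd_t^j$ need not even preserve the subring $\Delta_{\Psi_\rho} \subseteq \LL \otimes_{\ok(t)} \LL$: already for the Carlitz module, $\pd_t^1(\widetilde{\Psi}_\rho)$ does not generally lie in $\bk[\widetilde{\Psi}_\rho, \widetilde{\Psi}_\rho^{-1}]$, so the expression $\nu_\epsilon(\pd_t^j(\widetilde{\Psi}_\rho))$ is not even well-posed. The needed compatibility is essentially equivalent to the theorem being proved, so invoking it as a formal commutation is circular.

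The paper fills exactly this gap through the Anderson generating function machinery. The nontrivial input is Proposition~\ref{P:AGFz}, which computes the Galois action on $\mathcal{D}_\zeta(\bcG)$ for the prolongation via the explicit relation $\bcG_{i,j} = (0,\dots,0, f_i, \pd_t^1 f_i, \dots, \pd_t^{n+1-j} f_i)^\tr$ and the composition rule $\pd_t^{m_1}\pd_t^{m_2} = \binom{m_1+m_2}{m_1}\pd_t^{m_1+m_2}$, and then Proposition~\ref{Cor.:twistepsilon}, which converts this into the Galois equivariance $\epsilon(\mathcal{D}_\zeta(\Psi_{\rP_n\rho})) = \mathcal{D}_{\epsilon(\zeta)}(\Psi_{\rP_n\rho}) \cdot \mathcal{D}_{\epsilon(\zeta)}(d_{t,n+1}[g_\epsilon])^{-1}$. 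It is the Galois action on Taylor coefficients of Anderson generating functions --- not an abstract specialization --- that commutes with hyperdifferentiation in $t$, and this is what makes the argument work. The proof of Theorem~\ref{T:GrHom} then feeds these propositions into the $\widetilde{\mathcal{D}}_\zeta$ formalism to show $h(\Psi_{\rP_n\rho}\cdot d_{t,n+1}[g_\epsilon]^{-1}) = 0$ for generators $h$ of the defining ideal of $Z_{\Psi_{\rP_n\rho}}$. Your high-level reduction is a reasonable guide, but the ``careful bookkeeping'' you defer is not bookkeeping: it is the content of Propositions~\ref{P:AGFz} and~\ref{Cor.:twistepsilon}, which you would need to state and prove.
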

We follow the methods used by Maurischat and Perkins \cite{MauPer20}. Let $\overline{\FF_q}$ denote an algebraic closure of $\FF_q$ inside $\KK$ and let $\zeta \in \overline{\FF_q}$ be a root of $\fp$. We define the $\KK$-algebra map $\mathcal{D}_\zeta:\TT \rightarrow \power{\KK}{X}$ by
\[
g \mapsto \sum_{m=0}^{\infty} \pd_t^m(g)|_{t=\zeta}X^m.
\]
By \cite[Lem.~2.2]{MauPer20}, the map $\mathcal{D}_\zeta:\bA \rightarrow \power{\KK}{X}$ extends to an isomorphism $\mathcal{D}_\zeta:\bA_\fp \rightarrow \power{\FF_q(\zeta)}{X}$. 

	The Galois group $\Gal(K^\sep/K)$ acts on $\power{K^\sep}{X}$ by acting on each coefficient. 
We now consider the Galois action on Anderson generating functions of $\rP_n\rho$ and their Frobenius twists. 

\begin{proposition}[{cf.~\cite[Proposition~4.2]{MauPer20}}]\label{P:AGFz}
    	For each $\epsilon \in \Gal(K^\sep/K)$, let $g_\epsilon \in \GL_r(\bA_\fp)$ be defined as in \eqref{E:gepsilon}. Let $\bcG := [\cG_{1,1}, \dots, \cG_{r,1}, \dots, \dots, \cG_{1,n+1}, \dots, \cG_{r,n+1}]^\tr \in \Mat_{r(n+1)\times n+1}(\TT)$. Then, 
		\[ \epsilon(\mathcal{D}_{\zeta}(\bcG)) = \mathcal{D}_{\epsilon(\zeta)}(d_{t, n+1}[g_\epsilon])\cdot\mathcal{D}_{\epsilon(\zeta)}(\bcG), \] where \[\epsilon(\mathcal{D}_{\zeta}(\bcG)) = [\epsilon(\mathcal{D}_{\zeta}(\cG_{1,1})), \dots, \epsilon(\mathcal{D}_{\zeta}(\cG_{r,1})), \dots, \dots, \epsilon(\mathcal{D}_{\zeta}(\cG_{1,n+1})), \dots, \epsilon(\mathcal{D}_{\zeta}(\cG_{r,n+1}))]^\tr.\] 
\end{proposition}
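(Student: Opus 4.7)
The plan is to bootstrap from the analogous identity for the Drinfeld module $\rho$ itself, namely \cite[Prop.~4.2]{MauPer20}, by applying $t$-hyperderivatives and invoking \eqref{E:AGFPro}, which expresses each component of $\bcG_{i,j}$ as a $t$-hyperderivative of some $f_i$ preceded by $j-1$ zeros. Writing $\bff := [f_1, \dots, f_r]^\tr$, the base case of the induction is the identity
\[
\epsilon\bigl(\mathcal{D}_\zeta(\bff)\bigr) = \mathcal{D}_{\epsilon(\zeta)}(g_\epsilon) \cdot \mathcal{D}_{\epsilon(\zeta)}(\bff)
\]
supplied by \cite[Prop.~4.2]{MauPer20}.

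First I would record three elementary compatibilities of the operator $\mathcal{D}_\zeta$ with hyperderivatives and with the Galois action. By the composition rule $\pd_t^m \circ \pd_t^k = \binom{m+k}{k} \pd_t^{m+k}$ from Proposition~\ref{P:hyperderprod}, the coefficient of $X^m$ in $\mathcal{D}_\zeta(\pd_t^k g)$ equals the coefficient of $X^m$ in $\pd_X^k(\mathcal{D}_\zeta g)$; hence $\mathcal{D}_\zeta \circ \pd_t^k = \pd_X^k \circ \mathcal{D}_\zeta$. Next, since $\epsilon$ acts only on the scalar coefficients of a series in $\TT \cap \power{K^\sep}{t}$ and sends $\zeta$ to $\epsilon(\zeta)$, one verifies $\epsilon(\mathcal{D}_\zeta(g)) = \mathcal{D}_{\epsilon(\zeta)}(\epsilon(g))$ for such $g$. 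Finally, $\epsilon$ and $\pd_X^k$ commute on $\power{K^\sep}{X}$ because $\pd_X^k$ acts only on the $X$-powers.

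Next, I would apply $\pd_X^k$ to both sides of the base identity for each $0 \leq k \leq n$. On the left, combining the last two compatibilities gives $\epsilon(\mathcal{D}_\zeta(\pd_t^k \bff))$. On the right, the Leibniz rule for $\pd_X^k$ combined with the first compatibility yields
\[
\pd_X^k\bigl(\mathcal{D}_{\epsilon(\zeta)}(g_\epsilon) \cdot \mathcal{D}_{\epsilon(\zeta)}(\bff)\bigr) = \sum_{a+b=k} \mathcal{D}_{\epsilon(\zeta)}(\pd_t^a g_\epsilon) \cdot \mathcal{D}_{\epsilon(\zeta)}(\pd_t^b \bff).
\]
Thus, for each such $k$,
\[
\epsilon\bigl(\mathcal{D}_\zeta(\pd_t^k \bff)\bigr) = \sum_{a+b=k} \mathcal{D}_{\epsilon(\zeta)}(\pd_t^a g_\epsilon) \cdot \mathcal{D}_{\epsilon(\zeta)}(\pd_t^b \bff).
\]

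Finally, by \eqref{E:AGFPro} the nonzero entries of $\bcG_{i,j}$ are precisely $f_i, \pd_t^1(f_i), \dots, \pd_t^{n+1-j}(f_i)$, preceded by $j-1$ zeros. Stacking the $\bcG_{i,j}$ into $\bcG$ in the order prescribed in the statement and matching with the block-upper-triangular structure of $d_{t,n+1}[g_\epsilon]$, whose $(p,q)$-block is $\pd_t^{q-p}(g_\epsilon)$ for $q \geq p$ and zero otherwise, the identities from the previous step reassemble into $\epsilon(\mathcal{D}_\zeta(\bcG)) = \mathcal{D}_{\epsilon(\zeta)}(d_{t,n+1}[g_\epsilon]) \cdot \mathcal{D}_{\epsilon(\zeta)}(\bcG)$. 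I expect the main obstacle to be this final bookkeeping: verifying that the chosen ordering of the components of $\bcG$ aligns with the block-upper-triangular layout of $d_{t,n+1}[g_\epsilon]$ so that the padding zeros in \eqref{E:AGFPro} correspond to the zero blocks of the $d$-matrix, and that the Leibniz-expanded terms organize themselves into the prescribed $\pd_t^{q-p}(g_\epsilon)$ blocks.
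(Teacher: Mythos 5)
Your proof is correct and follows essentially the same route as the paper, which likewise reduces to the $n=0$ Maurischat--Perkins result via the composition rule $\pd_t^{m_1}\pd_t^{m_2} = \binom{m_1+m_2}{m_1}\pd_t^{m_1+m_2}$ and the column structure of $\bcG$ supplied by \eqref{E:AGFPro}. The only presentational difference is that you bootstrap by applying $\pd_X^k$ to the base identity of \cite[Prop.~4.2]{MauPer20} and then use the Leibniz rule to reassemble the block structure, whereas the paper's terse proof defers the same reassembly directly to \cite[Lem.~4.1]{MauPer20}.
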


\begin{proof}
  Note that by \eqref{E:AGFPro}, the $j$-th column of $\cG$ for $1\leq j \leq n+1$ is 
  \[
  [\pd_t^{j-1}(f_1), \dots, \pd_t^{j-1}(f_r), \pd_t^{j-2}(f_1), \dots, \pd_t^{j-2}(f_r), \dots \dots,  f_1, \dots, f_r, 0 \dots \dots 0]^{\tr} \in \TT^{r(n+1)}.
  \]
Then, for $m_1, m_2 \in \NN$ and $1 \leq i \leq r$, since $\pd_t^{m_1}(\pd_t^{m_2}(f_i)) = \binom{m_1+m_2}{m_1}\pd_t^{m_1+m_2}(f_i)$, the result follows by using \cite[Lem.~4.1]{MauPer20}.
\end{proof}

\begin{proposition}[{cf.~\cite[Proposition~5.1]{MauPer20}}]\label{Cor.:twistepsilon}
		For $1\leq i,j \leq r$, define $\Upsilon \in \Mat_r (\TT)$ so that $\Upsilon_{ij} := f_i^{(j-1)}(t)$ as in \eqref{E:Upsilon}. Then, for any $\epsilon \in \Gal(K^\sep/K)$ and $g_\epsilon \in \GL_r(\bA_\fp)$ as in \eqref{E:gepsilon}, we have
		\[
		\epsilon\bigl(\mathcal{D}_{\zeta}\left(d_{t,n+1}[\Upsilon]^{(1)}\right)\bigr) = \mathcal{D}_{\epsilon(\zeta)}\left(d_{t,n+1}[g_\epsilon]\right)\cdot \mathcal{D}_{\epsilon(\zeta)}\left(d_{t,n+1}[\Upsilon]^{(1)}\right),
		\]
		and
	\[
		\epsilon\bigl(\mathcal{D}_{\zeta}\left(\Psi_{\rP_n\rho}\right)\bigr) = \mathcal{D}_{\epsilon(\zeta)}\left(\Psi_{\rP_n\rho}\right)\cdot \mathcal{D}_{\epsilon(\zeta)}\left(d_{t,n+1}[g_{\epsilon}]\right)^{-1}.
		\]
	\end{proposition}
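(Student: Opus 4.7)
The plan is to derive the proposition from Proposition~\ref{P:AGFz} by converting the Galois equivariance of $\bcG$ into one for $d_{t,n+1}[\Upsilon]^{(1)} = d_{t,n+1}[\Upsilon^{(1)}]$, and then to obtain the second equation by inverting via the factorization \eqref{ratpro1}.

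\textbf{First observation.} With $\cG := [f_1,\ldots,f_r]^\tr \in \TT^r$, formula \eqref{E:AGFPro} shows that $\bcG$ is precisely the block upper-triangular matrix $d_{t,n+1}[\cG]$ once one identifies the row of $\bcG$ indexed by $(i,j)$ with the $i$-th row of the $j$-th block row of $d_{t,n+1}[\cG]$. Since $\Upsilon = [\cG,\cG^{(1)},\ldots,\cG^{(r-1)}]$ column by column and $\pd_t$ commutes with the Frobenius twist $\tau$, there is a column-shuffle permutation $P$ satisfying
\[
d_{t,n+1}[\Upsilon^{(1)}]\cdot P = \bigl[\,d_{t,n+1}[\cG^{(1)}],\;\ldots,\;d_{t,n+1}[\cG^{(r)}]\,\bigr].
\]

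\textbf{First equation.} I would apply the Frobenius twist $\tau^s$ to Proposition~\ref{P:AGFz} for $s=1,\ldots,r$. Since $\tau$ commutes with $\epsilon$ and with $\pd_t$, and since the entries of $g_\epsilon$ lie in $\bA_\fp$ with $\FF_q$-coefficients (hence are fixed by $\tau$), each twist yields a Galois equivariance for $\bcG^{(s)}$; the parameter $\zeta$ shifts appropriately under the action of Frobenius on $\overline{\FF_q}$, and this shift must be tracked through the argument. Assembling the $r$ resulting relations horizontally via the identifications in the previous paragraph, and right-multiplying by $P^{-1}$, produces the first asserted identity, with the block-diagonal form of $d_{t,n+1}[g_\epsilon]$ ensuring the correct factor on the left.

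\textbf{Second equation.} By \eqref{ratpro1}, $\Psi_{\rP_n\rho} = d_{t,n+1}[V]^{-1}\cdot d_{t,n+1}[\Upsilon^{(1)}]^{-1}$. The matrix $V$ in \eqref{E:Upsilon} is $t$-independent with entries $\kappa_i^{(-j)}$ satisfying $(\kappa_i^{(-j)})^{q^j}=\kappa_i\in K$, hence purely inseparable over $K$. Any $\epsilon\in\Gal(K^\sep/K)$ extends uniquely to the algebraic closure of $K$ and fixes purely inseparable elements, so $d_{t,n+1}[V]$ is $\epsilon$-invariant and $\epsilon\bigl(\mathcal{D}_\zeta(d_{t,n+1}[V]^{\pm1})\bigr) = \mathcal{D}_{\epsilon(\zeta)}(d_{t,n+1}[V]^{\pm1})$. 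Inverting the first equation, left-multiplying by this Galois-invariant factor, and regrouping then produces the second identity.

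\textbf{Main obstacle.} The principal technical difficulty lies in the first step: one must carefully track how the Frobenius-induced shift of $\zeta$ interacts with the specialization map $\mathcal{D}_\zeta$ across the $r$ separate twists, and verify that the column-shuffle $P$ is compatible with the block-diagonal structure of $d_{t,n+1}[g_\epsilon]$ so that the assembled equivariance retains exactly the factor $\mathcal{D}_{\epsilon(\zeta)}(d_{t,n+1}[g_\epsilon])$ on the left. Once this bookkeeping is in place, the remaining steps are formal manipulations.
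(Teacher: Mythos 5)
Your proposal is correct and takes essentially the same route as the paper, which itself simply records the column correspondence and the factorization $\Psi_{\rP_n\rho} = d_{t,n+1}[V]^{-1}d_{t,n+1}[\Upsilon^{(1)}]^{-1}$ and then defers the remaining details to the proof of \cite[Prop.~5.1]{MauPer20}. Your reconstruction of the deferred step is sound: the $\zeta$-shift bookkeeping you flag as the ``main obstacle'' does close up, because $\tau^s\bigl(\mathcal{D}_\zeta(h)\bigr) = \mathcal{D}_{\zeta^{q^s}}\bigl(h^{(s)}\bigr)$, $\tau^s$ commutes with $\epsilon$, and $g_\epsilon$ has $\FF_q$-coefficients so $\tau^s$ fixes $\mathcal{D}$ of it up to the same $\zeta \mapsto \zeta^{q^s}$ shift; since Proposition~\ref{P:AGFz} holds for every root $\zeta$ of $\fp$, the twisted relation at $\zeta$ is obtained by starting from the untwisted relation at $\zeta^{q^{-s}}$. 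Two small points: first, $d_{t,n+1}[g_\epsilon]$ is block upper-triangular (not block-diagonal as you write), since the entries of $g_\epsilon$ genuinely depend on $t$; this is harmless because what matters is only that the common left factor $\mathcal{D}_{\epsilon(\zeta)}(d_{t,n+1}[g_\epsilon])$ is a fixed square matrix acting by left multiplication, so it survives horizontal concatenation of the $r$ column-blocks. Second, for the inversion step in the second identity you should note, as you implicitly do, that $\mathcal{D}_\zeta$ is a ring homomorphism, so it commutes with matrix inversion.
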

	\begin{proof}
Since Frobenius twisting commutes with hyperdifferentiation with respect to $t$, we see  by using \eqref{E:AGFPro} that
for $1\leq j \leq r$ and $0\leq \ell \leq n$, the $(\ell r+j)$-th column of	
	$d_{t,n+1}[\Upsilon^{(1)}]$ is given by the $j$-th Frobenius twist of the $(\ell+1)$-th column of $\cG$. Moreover, by \eqref{ratpro1} we have $\Psi_{\rP_n\rho} = d_{t,n+1}[V]^{-1} d_{t,n+1}[\Upsilon^{(1)}]^{-1}$. Then by using Proposition~\ref{P:AGFz}, the results follow by a straightforward adaptation of the proof of \cite[Prop.~5.1]{MauPer20}.
	\end{proof}

By an abuse of the notation $\mathcal{D}_\zeta$, we consider the homomorphism $\mathcal{D}_\zeta: \TT \otimes_{\bA}\bA_\fp \rightarrow \power{\KK}{X}$ defined by 
\[
\sum_{i}g_i \otimes b_i \mapsto \sum_i \mathcal{D}_\zeta(g_i) \cdot \mathcal{D}_\zeta(b_i).
\]
Note that $\mathcal{D}_\zeta$ is injective on $\TT$, and so it extends to $\LL\otimes_{\TT}(\TT\otimes_{\bA}\bA_{\fp}) \cong \LL\otimes_{\bk}\bk_{\fp}$, that is, to a ring homomorphism
\[
\widetilde{\mathcal{D}}_\zeta: \LL\otimes_{\bk}\bk_{\fp} \rightarrow \laurent{\KK}{X}.
\]

	\begin{proof}[Proof of Theorem~\ref{T:GrHom}]
		Let $S \subseteq K^\per(t)[Y, 1/\det Y]$ denote a finite set of generators of the defining ideal of $Z_{\Psi_{\rP_n\rho}}$, where $K^\per$ is the perfect closure of $K$ in $\KK$. Then, for any $h \in S$, we have $h(\Psi_{\rP_n \rho}) = 0$. 
If $\Psi_{\rP_n \rho} \cdot d_{t,n+1}[g_\epsilon]^{-1} \in Z_{\Psi_{\rP_n\rho}} (\laurent{\KK}{t})$, then by Theorem~\ref{Thm.:diffP} we have $d_{t,n+1}[g_\epsilon]^{-1} \in \Gamma_{\Psi_{\rP_n\rho}}(\bk_\fp)$. Thus, to prove our result, we will show that $h(\Psi_{\rP_n \rho} \cdot  d_{t,n+1}[g_{\epsilon}]^{-1})=0$ for every $h \in S$. The proof follows by a straightforward adaptation of the proof of \cite[Thm.~1.2]{MauPer20}, but for completeness we provide a proof.

For $h \in S$, let $h_{\zeta} \in \laurent{\KK}{X}[Y, 1/\det Y]$ denote its image after mapping its coefficients via the map $\widetilde{\mathcal{D}}_{\zeta}$. Then,
\begin{equation}
\begin{split}
	\widetilde{\mathcal{D}}_\zeta \left(h(\Psi_{\rP_n \rho} \cdot  d_{t,n+1}[g_{\epsilon}]^{-1})\right) &=  h_\zeta\left(\mathcal{D}_{\zeta}(\Psi_{\rP_n\rho})\cdot \mathcal{D}_{\zeta}(d_{t,n+1}[g_\epsilon]^{-1})\right)\\
	& =h_\zeta\left(\epsilon(\mathcal{D}_{\epsilon^{-1}(\zeta)}(\Psi_{\rP_n\rho}))\right)\\
	&= \epsilon\left(h_{\epsilon^{-1}(\zeta)}\left(\mathcal{D}_{\epsilon^{-1}(\zeta)}(\Psi_{\rP_n\rho})\right)\right)\\
		&= \epsilon\left(\widetilde{\mathcal{D}}_{\epsilon^{-1}(\zeta)}(h(\Psi_{\rP_n\rho}))\right)\\
		&=0,
		\end{split}
		\end{equation}
		where the second equality is by Proposition~\ref{Cor.:twistepsilon} and we obtain the third equality since the coefficients of $h$ are in $K^\per(t)$. Since $\zeta$ is an arbitrary root of $\fp$, it follows by \cite[Lem.~5.3]{MauPer20} that $h(\Psi_{\rP_n \rho} \cdot d_{t,n+1}[g_{\epsilon}]^{-1})=0$.
	\end{proof}
	
	\subsection{Elements of \texorpdfstring{$\Gamma_{{\rP_n M_\rho}}$}{GPn}}
	
Let $\rho$ a Drinfeld $\bA$-module of rank $r$ defined over $k^\sep$, and consider the $t$-motive $M_\rho$ associated to $\rho$ (see $\S$\ref{S:motivemodule}). In this subsection, for $n \geq 1$ we study the structure of the Galois group $\Gamma_{\rP_{n} M_\rho}$ of the $n$-th prolongation $t$-motive $\rP_{n} M_\rho$. We let $\End_\cT(\rP_{n} M_\rho)$ denote the ring of endomorphisms of $\rP_{n} M_\rho$ and set $\bK_\rho := \End_\cT(M_\rho)$. If the entries of $\bsm \in \Mat_{r \times 1} (M_\rho)$ form a $\ok(t)$-basis of $M_\rho$, then the entries of $\bsD_n \bsm$ form a $\ok(t)$-basis of $\rP_{n} M_\rho$ as in \eqref{E:basisM}. Given $h \in \End_\cT(\rP_{n} M_\rho)$, let $\rH \in \Mat_{r(n+1)}(\ok(t))$ be such that $h(\bsD_{n} \bsm) = \rH\bsD_{n} \bsm$. Since $h \sigma = \sigma h$ and $\Phi_{\rP_{n} \rho} = d_{t,n+1}[\Phi_\rho]$, we have 
	\begin{align*}
		d_{t,n+1}[\Phi_\rho] \rH = \rH^{(-1)}d_{t,n+1}[\Phi_\rho].
	\end{align*}
	
	\noindent From this, we see that $\sigma$ fixes $d_{t,n+1}[\Psi_\rho]^{-1}\rH d_{t,n+1}[\Psi_\rho]$, and thus $d_{t,n+1}[\Psi_\rho]^{-1}\rH d_{t,n+1}[\Psi_\rho] \in \Mat_{r(n+1)}(\bk)$. We have thus defined the following injective map:
	\[
	\End_\cT(\rP_n M_\rho) \rightarrow \End((\rP_n M_\rho)^B) = \Mat_{r(n+1)}(\bk),
	\]
	\begin{equation} \label{E:End}
		h \mapsto h^B := d_{t,n+1}[\Psi_\rho]^{-1}\rH d_{t,n+1}[\Psi_\rho].
	\end{equation}
	Since the tautological representation $\varpi_n: \Gamma_{\rP_n M_\rho} \rightarrow \GL((\rP_n M_\rho)^B)$ is functorial in $\rP_n M_\rho$ \cite[Thm.~4.5.3]{P08}, for any $\bk$-algebra $\rR$ and $\mu \in \Gamma_{\rP_n M_\rho}(\rR)$, it follows that we have the following commutative diagram:
	\begin{equation} \label{E:Com}
		\begin{tikzcd}
			\rR \otimes_{\bk} (\rP_n M_\rho)^B \arrow{r}{\varpi_n^{\rR}(\mu)} \arrow{d}{1\otimes h^B} & \rR \otimes_{\bk} (\rP_n M_\rho)^B \arrow{d}{1 \otimes h^B} \\
			\rR \otimes_{\bk} (\rP_n M_\rho)^B \arrow{r}{\varpi_n^{\rR}(\mu)} & \rR \otimes_{\bk} (\rP_n M_\rho)^B
		\end{tikzcd}.
	\end{equation}

	\begin{proposition}\label{P:End}
		Given $f \in \bK_\rho$, let $\rF \in \Mat_r(\ok(t))$ satisfy $f(\bsm) = \rF \bsm$. Also, for $n \geq 1$ let $h \in \End_\cT(\rP_n M_\rho)$ be such that $h(\bsD_n \bsm) = \rH \bsD_n \bsm$, where $\rH = (\rH_{ij}) \in \Mat_{r(n+1)}(\ok(t))$ and each $\rH_{ij}$ is an $r \times r$ block for $1 \leq i,j \leq n+1$. Then,
		\begin{enumerate}
			\item[(a)] 
			For $n \geq 1$ there exists $g \in \End_\cT(\rP_n M_\rho)$ such that $g(\bsD_n \bsm) = d_{t,n+1}[\rF]\bsD_n \bsm. $
			\item[(b)] For $0 \leq j \leq n-1$, the matrix $\rH_j := (\rH_{uv})\in \Mat_{r(j+1)} (\ok(t))$, $j+1 \leq u \leq n+1, 1 \leq v \leq j+1$ formed by the lower left $r(j+1) \times r(j+1)$ square of $\rH$ represents an element of $\End_\cT(\rP_j M_\rho)$.
		\end{enumerate}
	\end{proposition}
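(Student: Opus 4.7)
The plan is to exploit two structural facts about the $d$-matrix: multiplicativity $d_{t,n+1}[AB] = d_{t,n+1}[A]\,d_{t,n+1}[B]$ (a consequence of the Leibniz rule for $\pd_t$) and commutation with Frobenius twist $d_{t,n+1}[A^{(-1)}] = d_{t,n+1}[A]^{(-1)}$ (noted already in \eqref{E:ratpro}). Since $\cT$ is a full Tannakian subcategory of $\cP$, verifying either (a) or (b) amounts to exhibiting an endomorphism of the underlying pre-$t$-motive, i.e., a $\ok(t)$-linear map that commutes with $\sigma$.

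For (a), I would define $g:\rP_n M_\rho \to \rP_n M_\rho$ as the $\ok(t)$-linear map determined by $g(\bsD_n \bsm) := d_{t,n+1}[\rF]\,\bsD_n \bsm$. Using $\sigma \bsD_n \bsm = d_{t,n+1}[\Phi_\rho]\,\bsD_n \bsm$ from \eqref{E:PhiPro}, the identity $g\sigma = \sigma g$ on the basis reduces to
\[
d_{t,n+1}[\Phi_\rho]\,d_{t,n+1}[\rF] \;=\; d_{t,n+1}[\rF]^{(-1)}\,d_{t,n+1}[\Phi_\rho].
\]
By the two structural facts above, the two sides equal $d_{t,n+1}[\Phi_\rho \rF]$ and $d_{t,n+1}[\rF^{(-1)}\Phi_\rho]$ respectively, and these agree because $f \in \bK_\rho = \End_\cT(M_\rho)$ forces $\Phi_\rho \rF = \rF^{(-1)}\Phi_\rho$.

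For (b), I would write the commutation $d_{t,n+1}[\Phi_\rho]\,\rH = \rH^{(-1)}\,d_{t,n+1}[\Phi_\rho]$ in $(n+1)\times(n+1)$ block form. Because the $(u,k)$-block of $d_{t,n+1}[\Phi_\rho]$ equals $\pd_t^{k-u}(\Phi_\rho)$ when $k \geq u$ and vanishes otherwise, the $(u,v)$ block equation reads
\[
\sum_{k=u}^{n+1} \pd_t^{k-u}(\Phi_\rho)\,\rH_{k,v} \;=\; \sum_{k=1}^{v} \rH_{u,k}^{(-1)}\,\pd_t^{v-k}(\Phi_\rho).
\]
The key observation is that restricting to $(u,v)$ in the lower-left block region, and reindexing $u = u' + (n-j)$ and $k = k' + (n-j)$ on the left-hand side, produces a system of equations identical to the block expansion of $d_{t,j+1}[\Phi_\rho]\,\rH_j = \rH_j^{(-1)}\,d_{t,j+1}[\Phi_\rho]$ on the $r(j+1)\times r(j+1)$ submatrix $\rH_j$. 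This exhibits $\rH_j$ as representing a morphism of pre-$t$-motives on $\rP_j M_\rho$ in the basis $\bsD_j \bsm$, which then lies in $\End_\cT(\rP_j M_\rho)$ by fullness.

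The argument is essentially matrix bookkeeping; the only substantive point is that the upper-triangular block shape of $d_{t,n+1}[\Phi_\rho]$ decouples the lower-left block equations from the remaining blocks of $\rH$, so no information about the other blocks is needed to conclude. I do not anticipate a serious obstacle: the main care is in aligning the reindexing so that the truncated system matches the smaller operator $d_{t,j+1}[\Phi_\rho]$ exactly.
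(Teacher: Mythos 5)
Your proof is correct and takes essentially the same route as the paper: in (a) both reduce to the identity $d_{t,n+1}[\Phi_\rho]\,d_{t,n+1}[\rF] = d_{t,n+1}[\rF]^{(-1)}\,d_{t,n+1}[\Phi_\rho]$ via multiplicativity of $d$-matrices and $\Phi_\rho\rF=\rF^{(-1)}\Phi_\rho$, and in (b) both extract the lower-left $(j+1)\times(j+1)$ block system from $d_{t,n+1}[\Phi_\rho]\rH = \rH^{(-1)}d_{t,n+1}[\Phi_\rho]$ using the upper-triangular shape of $d_{t,n+1}[\Phi_\rho]$. Your write-up makes the decoupling and reindexing for (b) explicit where the paper only asserts it, which is a useful level of detail but not a different argument.
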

	\begin{proof}
		For part (a), since $f \sigma = \sigma f$, we have $\Phi_\rho \rF = \rF^{(-1)} \Phi_\rho$. Since multiplication by $\sigma$ on $\rP_n M_\rho$ is represented by $\Phi_{\rP_n \rho} = d_{t,n+1}[\Phi_\rho]$, the proof of (a) follows from the observation that 
		\[
		d_{t,n+1}[\Phi_\rho] d_{t,n+1}[\rF] = d_{t,n+1}[\rF]^{(-1)} d_{t,n+1}[\Phi_\rho].
		\]
		For part (b), using $d_{t,n+1}[\Phi_\rho] \rH = \rH^{(-1)} d_{t,n+1}[\Phi_\rho]$ and the definition of $d$-matrices, we see that for $0 \leq j \leq n-1$,
		\[
		d_{t,j+1}[\Phi_\rho] \rH_j = \rH_j^{(-1)} d_{t,j+1}[\Phi_\rho],
		\]
		and the result follows. 
	\end{proof}

 For any $n \geq 1$ and $0\leq j \leq n-1$, since $\rP_{n-j-1}M_\rho$ is a sub-$t$-motive of $\rP_n M_\rho$, we have a surjective map of affine group schemes over $\bk$,
	\begin{equation}\label{E:SESschemes11gen}
		\boldsymbol{\pi}_{n-j-1}: \Gamma_{\rP_n M_\rho} \twoheadrightarrow \Gamma_{\rP_{n-j-1}M_\rho}.
	\end{equation}
 
	We are now ready to prove the main result of this subsection. 
	
	\begin{theorem} \label{T:Element}
		For each $n \geq 1$ and any $\bk$-algebra $\rR$, an element of $\Gamma_{\rP_n M_\rho}(\rR)$ is of the form 
		\begin{equation}\label{E:elementn}
		\mu_n = \begin{pmatrix}
			\gamma_0 & \gamma_1  & \dots & \gamma_{n-1}& \gamma_{n} \\ 
			&\gamma_0 & \gamma_1 & \ddots & \gamma_{n-1}\\
			&  & \ddots & \ddots &\vdots \\
			&&&\ddots& \gamma_1 \\
			&&&&\gamma_0
		\end{pmatrix},
		\end{equation}
		where for each $0 \leq i \leq n$, $\gamma_i$ is an $r \times r$ block. Furthermore, for $0\leq j \leq n-1$, the matrix $\mu_{n-j-1}$ formed by the upper left $r(n-j) \times r(n-j)$ square is an element of $\Gamma_{\rP_{n-j-1} M_\rho}(\rR)$. In particular, the map $\boldsymbol{\pi}_{n-j-1}^{(\rR)}: \Gamma_{\rP_n M_\rho}(\rR) \twoheadrightarrow \Gamma_{\rP_{n-j-1}M_\rho}(\rR)$ maps an element $\mu_n$ of $\Gamma_{\rP_n M_\rho}(\rR)$ to the matrix $\mu_{n-j-1}$.
	\end{theorem}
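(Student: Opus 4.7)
The plan is to exhibit a nilpotent ``shift'' endomorphism of $\rP_n M_\rho$ whose Betti realization forces every $\mu \in \Gamma_{\rP_n M_\rho}(\rR)$ into block upper-triangular Toeplitz form via the commutative diagram \eqref{E:Com}. Concretely, I define the $\ok[t,\sigma]$-linear map $S \colon \rP_n \cM_\rho \to \rP_n \cM_\rho$ by $S(D_i m) := D_{i-1} m$ for $1 \leq i \leq n$ and $S(D_0 m) := 0$. Checking $\ok[t,\sigma]$-linearity is a direct computation using the defining relations of $\rP_n \cM_\rho$ from $\S$\ref{S:Pro}: relation (b) reduces under $S$ to a clean reindexing of $\sum_{i_1+i_2=i} \pd_t^{i_1}(a) D_{i_2} m$, using that the $i_2=0$ term is killed, and relation (c) is immediate since $S$ manifestly commutes with $\sigma$. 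After tensoring with $\ok(t)$, $S$ descends to an element of $\End_\cT(\rP_n M_\rho)$, and with respect to the basis $\bsD_n \bsm$ from \eqref{E:basisM} it is represented by the nilpotent block matrix $\bS$ whose only nonzero entries are $I_r$ on the first block superdiagonal.

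Next, I would compute $S^B$ using \eqref{E:End}. Since $d_{t,n+1}[\Psi_\rho]$ is block upper-triangular Toeplitz with $(i,j)$-block $\pd_t^{j-i}(\Psi_\rho)$, a direct block calculation shows $\bS \cdot d_{t,n+1}[\Psi_\rho] = d_{t,n+1}[\Psi_\rho] \cdot \bS$: both products agree with the matrix obtained by shifting the block columns of $d_{t,n+1}[\Psi_\rho]$ one place to the right and dropping the last column. Therefore $S^B = \bS$. Given any $\mu \in \Gamma_{\rP_n M_\rho}(\rR)$ partitioned into $r \times r$ blocks $\mu = (\mu_{ij})$, the diagram \eqref{E:Com} with $h = S$ forces $\mu \bS = \bS \mu$. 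Comparing blocks gives $\mu_{i,j-1} = \mu_{i+1,j}$ for $1 \leq i \leq n$ and $2 \leq j \leq n+1$, together with $\mu_{k,1} = 0$ for $k \geq 2$ and $\mu_{n+1,\ell} = 0$ for $\ell \leq n$. Hence $\mu_{ij} = 0$ whenever $i > j$ and $\mu_{ij}$ depends only on $j-i$, producing precisely the form \eqref{E:elementn} with $\gamma_i := \mu_{1, 1+i}$.

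For the ``furthermore'' claim, I would invoke the short exact sequence \eqref{E:motivespro} with $\ell = n-j-1$, which yields a surjection $\boldsymbol{\rpr} \colon \rP_n M_\rho \twoheadrightarrow \rP_j M_\rho$ in $\cT$. Standard Tannakian functoriality then produces a surjective morphism of affine group schemes $\Gamma_{\rP_n M_\rho} \twoheadrightarrow \Gamma_{\rP_j M_\rho}$ compatible with the tautological representations. Tracking bases, $\boldsymbol{\rpr}$ identifies the top $r(j+1)$ entries of $\bsD_n \bsm$ with $\bsD_j \bsm$ and annihilates the bottom $r(n-j)$ entries; because $\mu$ has just been shown to be block upper triangular, this top subspace is $\mu$-stable, and so the image of $\mu$ in $\Gamma_{\rP_j M_\rho}(\rR)$ is precisely the upper-left $r(j+1) \times r(j+1)$ block of $\mu$. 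Setting $j = 0$ recovers $\gamma_0 \in \Gamma_{M_\rho}(\rR)$.

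I expect the only delicate point to be verifying that the shift $S$ really is $\ok[t,\sigma]$-linear on the prolongation, since the $t$-action is twisted by hyperdifferential operators; once this is in place, every remaining step is routine block-matrix algebra or a direct application of the Tannakian functoriality encoded in \eqref{E:Com} and \eqref{E:motivespro}.
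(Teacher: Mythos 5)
Your proof is correct, and it takes a genuinely different route from the paper, most notably in how it establishes the Toeplitz structure. The paper first uses the two maps $\iota$ and $\boldsymbol{\rpr}$ from the short exact sequence \eqref{E:motivespro1} to argue that $\mu^{-1}$ (hence $\mu$) is block upper triangular, and then separately invokes Proposition~\ref{P:End}(b) together with the commutative diagram \eqref{E:Com} applied to the $n$-th and $(n-1)$-th prolongations to conclude that the upper-left $rn \times rn$ and lower-right $rn \times rn$ squares of $\mu$ coincide, i.e.\ that $\mu$ is Toeplitz. Your approach instead isolates a single concrete endomorphism, the shift $S(D_i m) = D_{i-1}m$, verifies it is $\ok[t,\sigma]$-linear (the relation $D_i(a\cdot m) = \sum_{i_1+i_2=i}\pd_t^{i_1}(a)D_{i_2}m$ is respected precisely because the $i_2=0$ term is killed, and $\sigma$-compatibility is immediate), computes that $S^B$ equals the block nilpotent shift matrix $\bS$ because $\bS$ commutes with the block-Toeplitz matrix $d_{t,n+1}[\Psi_\rho]$, and then extracts both upper-triangularity and the Toeplitz constraint $\mu_{i,j-1}=\mu_{i+1,j}$ at once from $\mu\bS=\bS\mu$. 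This is a cleaner and more self-contained argument for the main structural claim. For the ``furthermore'' part you rely on the surjection $\boldsymbol{\rpr}\colon \rP_n M_\rho \twoheadrightarrow \rP_j M_\rho$ from \eqref{E:motivespro} with $\ell=n-j-1$ and Tannakian functoriality, which is essentially the same mechanism the paper uses at the corresponding point; the only imprecision is your phrasing that ``this top subspace is $\mu$-stable'' --- strictly, it is the kernel of $\boldsymbol{\rpr}^B$ (the span of the last $r(n-j)$ coordinates) that is $\mu$-stable, and upper-triangularity of $\mu$ then identifies the induced quotient action with the upper-left $r(j+1)\times r(j+1)$ block, but the conclusion you draw is correct. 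Net comparison: the paper's Proposition~\ref{P:End} packages a family of sub-endomorphism observations that are reused elsewhere, while your shift endomorphism $S$ gets to the structural result for Theorem~\ref{T:Element} more directly.
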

	
	\begin{proof}
		Since the prolongation of an $\bA$-finite dual $t$-motive is also an $\bA$-finite dual $t$-motive, by \eqref{E:motivespro} for any $n \geq 1$ and $0 \leq j \leq n-1$ we obtain a short exact sequence of $t$-motives
		\begin{equation}\label{E:motivespro1}
			0 \rightarrow \rP_j M_\rho \xrightarrow{\iota} \rP_{n}M_\rho \xrightarrow{\boldsymbol{\rpr}_{n-j-1}} \rP_{n-j-1}M_\rho \rightarrow 0,
		\end{equation}
		\noindent
		where $\boldsymbol{\rpr}_{n-j-1}(D_im) := D_{i-j-1}m$ for $i > j$ and $\boldsymbol{\rpr}_{n-j-1}(D_im) := 0$ for $i \leq j$ and $m \in M_\rho$, and $\iota$ is the inclusion map. Note that $\rP_0 M_\rho \cong M_\rho$ via $D_0m \mapsto m$ for all $m \in M_\rho$. \par
		
		For any $\bk$-algebra $\rR$, we recall the action of $\Gamma_{\rP_n M_\rho}(\rR)$ on $\rR \otimes_\bk (\rP_n M_\rho)^B$ from \cite[\S4.5]{P08}. 
		Since $\Psi_{\rP_n \rho}= d_{t, n+1}[\Psi_\rho]$, the entries of $\bsu_n := d_{t,n+1}[\Psi_\rho]^{-1} \bsD_n \bsm$ form a $\bk$-basis of $(\rP_n M_\rho)^B$ \cite[Prop. 3.3.9]{P08} and similarly for $0 \leq j \leq n-1$, we have that the entries of $\bsu_{n-j-1} := d_{t,n-j}[\Psi_\rho]^{-1} \bsD_{n-j-1} \bsm$ form a $\bk$-basis of $(\rP_{n-j-1} M_\rho)^B$. For any $\mu_n \in \Gamma_{\rP_n M_\rho}(\rR)$ and any $a_i \in \Mat_{1\times r}(\rR)$, $0 \leq i \leq n$, the action of $\mu_n$ on $(a_0, \dots, a_n) \cdot \bsu_n \in \rR \otimes_{\bk} (\rP_n M_\rho)^B$ is 
		\begin{equation}\label{E:action2}
			(a_0, \dots, a_n) \cdot d_{t,n+1}[\Psi_\rho]^{-1} \bsD_n \bsm\mapsto (a_0, \dots, a_n) \cdot \mu_n^{-1}d_{t,n+1}[\Psi_\rho]^{-1} \bsD_n \bsm.
		\end{equation}

		We first restrict the action of $\mu_n$ to $\rR\otimes_\bk(\rP_j M_\rho)^B$ via the map $\iota$ in \eqref{E:motivespro1}. So, we take $a_0, \dots, a_{n-j-1} = 0$ and set $\mu_n^{-1} := (\rB_{uw}),  1 \leq u, w \leq n+1$ where each $\rB_{uw}$ is an $r \times r$ block. By $\iota$ in \eqref{E:motivespro1}, we see that $\mu_n$ leaves $(\rP_j M_\rho)^B$ invariant and thus the blocks 
		\[\rB_{n-j+v,1} = \rB_{n-j+v,2} = \dots = \rB_{n-j+v,n-j} = \mathbf{0}, \quad \text{for} \, \, 1 \leq v \leq j+1.\]  
		Moreover, since the non-zero $a_i$'s were chosen arbitrarily, we see that the matrix formed by the lower right $r(j+1) \times r(j+1)$ square is an element of $\Gamma_{\rP_j M_\rho}(\rR)$. Varying $j$ from $0$ to $n-1$, we see that $\mu_n^{-1}$ is a block upper triangular matrix and that the matrices formed by the lower right $r(j+1) \times r(j + 1)$ square is an element of $\Gamma_{\rP_j M_\rho}(\rR)$ for each $0 \leq j \leq n-1$. \par
		
		We return to arbitrary $a_i \in \Mat_{1\times r}(\rR)$, $0 \leq i \leq n$. We restrict the action of $\mu_n$ to $\rR~\otimes_\bk~(\rP_{n-j-1} M_\rho)^B$ via the map $\boldsymbol{\rpr}_{n-j-1}$ in \eqref{E:motivespro1}. Through $\boldsymbol{\rpr}_{n-j-1}$, we see that $\mu_n$ leaves $(\rP_{n-j-1} M_\rho)^B$ invariant and so the matrix $\mu_{n-j-1}$ formed by the upper left $r(n-j) \times r(n-j)$ square of $\mu_n$ is an element of $\Gamma_{\rP_{n-j -1} M_\rho}(\rR)$. Varying $j$ from $0$ to $n-1$, we see that the matrices $\mu_{n-j-1}$ formed by the upper left $r(n-j) \times r(n-j)$ square of $\mu_n$ is an element of $\Gamma_{\rP_{n-j-1} M_\rho}(\rR)$ for each $0 \leq j \leq n-1$. \par
		
		Now, we let $h \in \End_\cT(\rP_n M_\rho)$ be such that for $\rH \in \Mat_{r(n+1)}(\ok(t))$ we have $h(\bsD_n \bsm) = \rH \bsD_n \bsm$. Let $\rH := (\rH_{iw})$, where each $(\rH_{iw})$ is an $r\times r$ block. For $0 \leq j \leq n-1$, let $\rH_j := (\rH_{uv})\in \Mat_{r(j+1)} (\ok(t))$, $j+1 \leq u \leq n+1, 1 \leq v \leq j+1$ be the matrix formed by the lower left $r(j+1) \times r(j+1)$ square of $\rH$. Using the definition of $d$-matrices, we see that the matrix formed by the lower left $r(j+1) \times r(j+1)$ square of $d_{t,n+1}[\Psi_\rho]^{-1} \rH d_{t,n+1}[\Psi_\rho]$ is $d_{t,j+1}[\Psi_\rho]^{-1} \rH_j ~d_{t,j+1}[\Psi_\rho]$. By Proposition~\ref{P:End}(b), we have that $d_{t,j+1}[\Psi_\rho]^{-1} \rH_j ~d_{t,j+1}[\Psi_\rho]$ is an element in the image of the natural embedding \eqref{E:End} for the $j$-th prolongation. Thus, by using the commutative diagram \eqref{E:Com} for the $n$-th and the $(n-1)$-th prolongations, we see that since $\mu_n$ is upper triangular, the matrices formed by the lower right $rn \times rn$ square and the upper left $rn \times rn$ square of $\mu_n$ are equal. Now, comparing each $r \times r$ block in this equality, we get the required result. 
	\end{proof}
	
	\subsection{Lower bound on the dimension of \texorpdfstring{$\Gamma_{\rP_nM_\rho}$}{GPn}}
	For this subsection, the reader is directed to Appendix~\ref{DAG} for details about differential algebra and differential algebraic geometry in characteristic $p>0$. We note that the purpose of Appendix~\ref{DAG} is for use in this subsection to prove Theorem~\ref{T:GeqGamma}. For a non-zero prime $\fp \in \bA$, let $\bA_\fp$ denote the completion of $\bA$ at $\fp$, and let $\bk_\fp$ be the fraction field of $\bA_\fp$. By the properties of hyperderivatives (see $\S$\ref{HyperD}) we see that $(\bk_\fp, \pd_t)$, where $\pd_t$ represents hyperdifferentiation with respect to $t$, is a $\pd_t$-field. Using Theorem~\ref{T:Element}, by a slight abuse of notation, we make the choice to let the coordinates of $\Gamma_{{\rP_n M_\rho}}$ be 
	\begin{equation}\label{E:CoordinatesX}
		\bX := \begin{pmatrix}
			\bX_0 & \bX_1 & \dots&\bX_n \\
			&    \ddots     & \ddots&\vdots\\
			&&\ddots&\bX_1\\
			&&&\bX_0
		\end{pmatrix},
	\end{equation}
	\noindent where $\bX_h := ((X_h)_{i, j})$, an $r \times r$ matrix for $0 \leq h \leq n$. We set $\pd_t^\ell(\bX_h):=(\pd_t^\ell((X_h)_{i,j}))$ and 
	\[\vect(\bX_h) := [ (X_h)_{1,1}, \dots, (X_h)_{r,1}, (X_h)_{1,2}, \dots, (X_h)_{r,2}, \dots, \dots, (X_h)_{1,r}, \dots, (X_h)_{r,r}]^\tr,
	\]
	which consists of all entries of $\bX_h$ lined up in a column vector. \par 
	
	Let $0\leq \alpha \leq n$. As in Appendix~\ref{S: Kolchin+}, we define $\bk_\fp\{\bX_0, \dots, \bX_\alpha \}$ to be the $\pd_t$-polynomial ring over $\bk_\fp$ with entries of each $\bX_h$ for $0 \leq h \leq \alpha $ as $\pd_t$-indeterminates. We also define $\bk_\fp\{\bX_0, \dots, \bX_\alpha, 1/\det \bX_0\}$ to be the localization of $\bk_\fp\{\bX_0, \dots, \bX_\alpha \}$ at $\det \bX_0$. We define $\bk_\fp[\bX_0, \dots, \bX_\alpha ]$ to be the usual polynomial ring over $\bk_\fp$ with entries of each $\bX_h$ for $h=0, \dots, \alpha $ as indeterminates, and $\bk_\fp[\bX_0, \dots, \bX_\alpha , 1/\det \bX_0]$ to be the localization of $\bk_\fp[\bX_0, \dots, \bX_\alpha ]$ at $\det \bX_0$.

	We define the centralizer $\Cent_{\GL_r/\bk}(\bK_\rho)$ to be the algebraic group over $\bk$ such that for any $\bk$-algebra $\rR$,
		\[
		\Cent_{\GL_r/\bk}(\bK_\rho)(\rR):= \{\gamma \in \GL_r(\rR) \mid \gamma g = g \gamma \, \, \textup{for all} \, \, g\in \rR\otimes_\bk \bK_\rho \subseteq \Mat_r(\rR)\}.
		\]
		By \cite[Thm.~0.2]{Pink97} and \cite[Thm.~0.2]{PinRut09}, the image $\im\beta_0$ of the homomorphism $\beta_0$ in Theorem~\ref{T:GrHomD} is equal to $\Cent_{\GL_r(\bA_\fp)}(\bK_\rho)$ for all but finitely many primes of $\bA$. Therefore, let $\fp \in \bA$ be a non-zero prime such that $\im \beta_0=\Cent_{\GL_r(\bA_\fp)}(\bK_\rho)$. Then, by \cite[Thm.~3.5.4]{CP12} we see that \begin{equation}\label{E:imageAd}
		\Gamma_{M_\rho}(\bA_\fp)=\Cent_{\GL_r(\bA_\fp)}(\bK_\rho)=~\im \beta_0.
		\end{equation}

	\begin{theorem}\label{T:GeqGamma}
		Fix $n\geq 1$. Let $\rho$ be a Drinfeld $\bA$-module of rank $r$ defined over $k^{\textup{sep}}$ and $\rP_n \rho$ be its associated $n$-th prolongation $t$-module. Let $M_\rho$ and $\rP_{n} M_\rho$ be the $t$-motives corresponding to $\rho$ and $\rP_{n}\rho$ respectively. Let $K_{\rho}$ be the fraction field of $\End(\rho)$ defined as in \eqref{E:Endr} and suppose that $[K_\rho:k]=\bss$. 
		Then, 
		\[
		\dim \Gamma_{{\rP_n M_\rho}} \geq (n+1)\frac{r^2}{\bss}.
		\]
	\end{theorem}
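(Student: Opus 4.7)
The plan is to use the $\fp$-adic Galois representation $\beta_n$ constructed in Theorem~\ref{T:GrHom} to exhibit a large family of $\bA_\fp$-points inside $\Gamma_{\rP_n M_\rho}$, and then extract a lower bound on the dimension by viewing the Zariski closure of this family as the Zariski realization of a prolongation variety in the Kolchin topology associated to $\pd_t$.

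First, I would fix a non-zero prime $\fp \in \bA$ such that the Pink--R\"utsche adelic image theorem combines with \cite[Thm.~3.5.4]{CP12} to yield \eqref{E:imageAd}; that is, $\im \beta_0 = \Gamma_{M_\rho}(\bA_\fp) = \Cent_{\GL_r(\bA_\fp)}(\bK_\rho)$, so that $\im \beta_0$ is Zariski-dense in $\Gamma_{M_\rho}$, a group of dimension $r^2/\bss$. Theorem~\ref{T:GrHom} then provides
\[
S_n := \{d_{t,n+1}[g_\epsilon] : \epsilon \in \Gal(k^\sep/K)\} \subseteq \Gamma_{\rP_n M_\rho}(\bA_\fp).
\]
Let $Z$ denote the Zariski closure of $S_n$ inside $\GL_{(n+1)r}/\bk$. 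Since $Z \subseteq \Gamma_{\rP_n M_\rho}$, it suffices to prove $\dim Z \geq (n+1) r^2/\bss$. Using the block coordinates $(\bX_0, \bX_1, \dots, \bX_n)$ from \eqref{E:CoordinatesX}, the set $S_n$ consists precisely of matrices whose block $\bX_0$ lies in $\im \beta_0$ and whose remaining blocks satisfy the hyperdifferential identities $\bX_h = \pd_t^h(\bX_0)$ for $1 \leq h \leq n$.

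The second step is to pass to the Kolchin topology on $(\bk_\fp, \pd_t)$ following Appendix~\ref{DAG}. Since $\im \beta_0$ is Zariski-dense in $\Gamma_{M_\rho}$ and $\Gamma_{M_\rho}$ is defined over $\bk \subseteq \bk_\fp$, the Kolchin closure of the image of $\im \beta_0$ under $\bX_0 \mapsto (\bX_0, \pd_t \bX_0, \dots, \pd_t^n \bX_0)$ is precisely the $\pd_t$-closed subset of $\GL_{(n+1)r}/\bk$ cut out by the vanishing ideal of $\Gamma_{M_\rho}$ applied to $\bX_0$, together with the prolongation identifications. This $\pd_t$-closed set has Kolchin dimension $r^2/\bss$. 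Invoking Lemma~\ref{L:ZariskiKolchin} to translate Kolchin-closed sets into Zariski-closed sets, the Zariski closure of this prolongation in $\GL_{(n+1)r}/\bk$ has Zariski dimension $(n+1) r^2/\bss$, since encoding each of the $n$ hyperderivative prolongations as an independent Zariski coordinate multiplies the ambient dimension contribution by $n+1$. As this Zariski closure is contained in $Z$, the desired inequality follows.

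The main obstacle is the dimension accounting in the Kolchin-to-Zariski translation: one must rule out any unexpected hyperdifferential algebraic dependencies among the blocks $\bX_h$ beyond those inherited from the polynomial ideal defining $\Gamma_{M_\rho}$ evaluated on $\bX_0$. This requires exploiting both the Zariski density of $\im \beta_0$ in $\Gamma_{M_\rho}$ and the fact that $\pd_t$ acts non-trivially on $\bA_\fp$ (so that $\bX_0, \pd_t \bX_0, \dots, \pd_t^n \bX_0$ are genuinely $\pd_t$-independent in the prolongation sense). Lemma~\ref{L:ZariskiKolchin}, which gives the precise relationship between a Kolchin-closed set and the dimension of its Zariski realization across prolongation coordinates, is what makes this accounting rigorous in the positive-characteristic hyperderivative setting developed in Appendix~\ref{DAG}.
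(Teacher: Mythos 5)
Your proposal correctly identifies the architecture of the paper's argument (Theorem~\ref{T:GrHom} to embed $\im\beta_n$ in $\Gamma_{\rP_n M_\rho}(\bA_\fp)$, then Kolchin topology, then Lemma~\ref{L:ZariskiKolchin} to convert back to Zariski), but the two steps that carry all the mathematical weight are asserted rather than carried out, and each requires genuine work. The claim that the Kolchin closure of the prolonged image of $\im\beta_0$ is cut out exactly by the defining ideal of $\Gamma_{M_\rho}$ applied to $\bX_0$ together with the prolongation identifications $\bX_h = \pd_t^h(\bX_0)$ is precisely Claim~1 in the paper's proof, and it is not a formal consequence of Zariski density of $\im\beta_0$. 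The paper establishes it by introducing a monomial order on the $\pd_t$-polynomial ring $\bk_\fp\{\bX_0,\dots,\bX_n\}$ and running the division algorithm of \cite{IimaYoshino}, so that the remainder of any $F \in \mathfrak{I}(\overline{\im\beta_n}^\pd)$ lives in $\bk_\fp\{\bX_0,1/\det\bX_0\}$; it then uses the surjection $\im\beta_n \twoheadrightarrow \im\beta_0$, $d_{t,n+1}[g_\epsilon]\mapsto g_\epsilon$, together with $\mathfrak{I}(\im\beta_0)=\mathfrak{D}(\bBB\cdot\vect(\bX_0))$, which rests on the Kolchin Nullstellensatz (Theorem~\ref{T:HNull}) and Proposition~\ref{P:raddiff}. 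Your proposal gives no mechanism for ruling out extraneous elements of the defining $\pd_t$-ideal.

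More seriously, the assertion that ``encoding each of the $n$ hyperderivative prolongations as an independent Zariski coordinate multiplies the ambient dimension contribution by $n+1$'' is an unjustified leap. Lemma~\ref{L:ZariskiKolchin} only identifies $\overline{X}^Z$ as $\mathcal{Z}(\mathfrak{I}(X)\cap K[y_1,\dots,y_m])$; it supplies no dimension formula, and in general prolongation does not scale dimension linearly. The paper's Claim~2 computes the intersection $T\cap\bk_\fp[\bX_0,\dots,\bX_n,1/\det\bX_0]$ explicitly, finding it generated by the entries of $d_{t,n+1}[\bBB]\cdot\vect([\bX_n,\dots,\bX_0]^\tr)$, and then verifies $\rank d_{t,n+1}[\bBB]=(n+1)(r^2-r^2/\bss)$ by the block upper-triangular structure of the $d$-matrix. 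The crucial structural ingredient that makes this work --- that $\Gamma_{M_\rho}$, being a centralizer, is cut out by \emph{homogeneous degree one} polynomials over $\bk$, so that hyperdifferentiating the defining equations again yields linear equations in the $d$-matrix pattern with full-rank coefficient matrix, and so that Proposition~\ref{P:raddiff} even applies --- is absent from your proposal. Without it the $(n+1)$ factor has no justification.
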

	
	\begin{proof}
		By Theorem~\ref{T:GrHom}, we see that the Zariski closure $\overline{\im \beta_n}^Z$ of $\im \beta_n$ is an algebraic subgroup of $\Gamma_{{\rP_n M_\rho}}$. Therefore, our task is to prove that $\dim (\overline{\im \beta_n}^Z) = (n+1)r^2/\bss$. By \cite[Thm.~3.5.4]{CP12}, we have $\Gamma_{M_\rho} =\Cent_{\GL_r/\bk}(\bK_\rho)$ and $\dim \Gamma_{M_\rho} = r^2/\bss$. Since the defining polynomials of $\Cent_{\Mat_r(\bk)}(\bK_\rho)=\Lie \Gamma_{M_\rho}$ are homogeneous degree one polynomials, let its defining equations be as follows:
		\begin{equation}\label{E:dim0nomat}
			\sum\limits_{i,j=1}^r  (b_u)_{i,j}(X_0)_{i,j}=0, \quad (b_u)_{i,j}\in \bk, u=1, \dots, r^2-r^2/\bss,
		\end{equation}
		which can be written as 
		\begin{equation}\label{E:dim0}
			\bBB \cdot \vect(\bX_0) = {\bf{0}},
		\end{equation}
		\noindent where we set $\bBB$ to be the  $(r^2-r^2/\bss) \times r^2$ matrix of full rank with $(b_u)_{ij}$ as the $u \times ((j-1)r+i)$-th entry. 
		We see that $\rank \bBB=r^2-\dim \Gamma_{M_\rho} = r^2-r^2/\bss$. 
		Therefore, the defining ideal of $\Gamma_{M_\rho}$ is the ideal generated by the entries of $\bBB \cdot \vect(\bX_0)$ in $\bk[\bX_0, 1/\det\bX_0]$, the coordinate ring of $\GL_{r}/\bk$. \par

 For $0\leq \alpha \leq n$, we define a monomial order on $\bk_\fp\{\bX_0, \dots, \bX_{\alpha}\}$ and use the division algorithm \cite[Prop.~1.9]{IimaYoshino} on it. We denote by $\smash{\ZZ_{\geq 0}^{(\infty)}}$ the set of all sequences $(a_1, a_2, a_3, \dots \dots)$ of non-negative integers such that $a_i = 0$ for all but finitely many $i \geq 1$. Any monomial in $\bk_\fp\{\bX_0, \dots, \bX_{\alpha}\}$ can be described uniquely as $\bX^{\bbb}~=~\prod \pd_t^\ell((X_h)_{i,j})^{(b_{h,\ell})_{i,j}}$ for some $\bbb= (\bbb_{0,0}, \bbb_{0,1} \dots, \bbb_{1,0}, \bbb_{1,1}, \dots, \dots, \bbb_{{\alpha},0}, \bbb_{{\alpha},1}, \dots) \in\ZZ_{\geq 0}^{(\infty)}$,  where $\bbb_{h, \ell}=\vect(\left((b_{h,\ell})_{i,j}\right))^\tr= [ (b_{h,\ell})_{1,1}, \dots, (b_{h,\ell})_{r,1}, (b_{h,\ell})_{1,2}, \dots, (b_{h,\ell})_{r,2}, \dots, \dots, (b_{h,\ell})_{1,r}, \dots, (b_{h,\ell})_{r,r}]$ for $0 \leq h \leq {\alpha}$ and $\ell~\in~\ZZ_{\geq0}$ 
			such that $\left((b_{h,\ell})_{i,j}\right)$ is an $r \times r$ matrix and $(b_{h,\ell})_{i.j} = 0$ for all but a finite number of $h, \ell, i, j$. We define a monomial order on $\bk_\fp\{\bX_0, \dots, \bX_{\alpha}\}$ as in \cite[Def.~1.1]{IimaYoshino} in the following way: \begin{itemize}
				\item  we set $\pd_t^{\ell}((X_h)_{1,1})<\dots<\pd_t^{\ell}((X_h)_{r,1})< 
				\dots \dots< \pd_t^{\ell}((X_h)_{1,r}), \dots< \pd_t^{\ell}((X_h)_{r,r})$, 
				\item we set $\pd_t^{\ell}((X_h)_{i_1,j_1})<\pd_t^{\ell+1}((X_h)_{i_2,j_2}),$
				\item we set $\pd_t^{\ell_1}((X_h)_{i_1,j_1}) < \pd_t^{\ell_2}((X_{h+1})_{i_2,j_2})$,
				\item we take the pure lexicographic order defined such that $\bX^\bbb <\bX^{\bf{c}}$ if the left-most nonzero component of $\bbb-\bf{c}$ is negative,
			\end{itemize}
			where $\bbb, \textbf{c} \in \ZZ_{\geq0}^{(\infty)}$, $\ell, \ell_1, \ell_2 \in \ZZ_{\geq0}$,  $i, j, i_1, i_2, j_1, j_2 \in\{0, \dots, r\}$ and $0 \leq h \leq {\alpha}$.

   Let $\mathfrak{I}(\im \beta_0)$ denote the defining $\bk_\fp$-$\pd_t$-ideal of $\im \beta_0$ in $\bk_\fp\{\bX_0, 1/\det \bX_0\}$, and let $\mathfrak{D}(\bBB \cdot \vect(\bX_0))$ denote the $\pd_t$-ideal in $\bk_\fp\{\bX_0, 1/\det \bX_0\}$ generated by the homogeneous degree one polynomials given by the entries of $\bBB \cdot \vect(\bX_0)$. Also, let $\mathfrak{R}(\bBB\cdot \vect(\bX_0))$ denote the radical $\pd_t$-ideal in $\bk_\fp\{\bX_0,  1/\det \bX_0\}$ generated by the entries of $\bBB \cdot \vect(\bX_0)$.
   
     \begin{claim1}\label{Claim:12} 
  We claim that  $\mathfrak{I}(\im\beta_0)=\mathfrak{D}(\bBB\cdot \vect(\bX_0))$.
\end{claim1}

\begin{proof}
By Proposition~\ref{P:raddiff}, we have $\mathfrak{D}(\bBB\cdot \vect(\bX_0))=\mathfrak{R}(\bBB\cdot \vect(\bX_0))$. Clearly, we have $\mathfrak{D}(\bBB\cdot \vect(\bX_0)) \subseteq \mathfrak{I}(\im \beta_0)$. 
   To show that  $\mathfrak{I}(\im\beta_0)\subseteq \mathfrak{D}(\bBB\cdot \vect(\bX_0))$, let $P \in \mathfrak{I}(\im\beta_0) \subseteq \bk_\fp\{\bX_0, 1/\det \bX_0\}$.
Let $(X_0)_{\vartheta_u, \omega_u}$ denote the leading variable of $\sum_{i,j=1}^r  (b_u)_{i,j}(X_0)_{i,j}$ for $u=1, \dots, r^2-r^2/\bss$ with respect to the monomial order above. This means that for $\ell> \vartheta_u, h>\omega_u$ the coefficients $(b_u)_{\ell h}=0$. Moreover, by clearing denominators, we may assume that each $(b_u)_{i,j} \in \bA$. Thus, the defining polynomials of $\im\beta_0$ are now as follows:
\begin{equation}\label{E:dim0nomat1}
\sum_{i=1}^{\vartheta_u}\sum_{j=1}^{\omega_u} (b_u)_{i,j}(X_0)_{i,j}=0, \quad (b_u)_{i,j}\in \bA, u=1, \dots, r^2-r^2/\bss,
		\end{equation}
Since the rank of $\bBB$ is full, we may pick $(b_u)_{i,j}$ so that for each $u=1, \dots, r^2-r^2/\bss-1$
    \[
    (X_0)_{\vartheta_u, \omega_u} < (X_0)_{\vartheta_{u+1}, \omega_{u+1}}.
    \]
By using the division algorithm \cite[Prop.~1.9]{IimaYoshino}, we can write 
    \[
    P = \sum_{u=1}^{r^2-r^2/\bss}\sum_{\ell=0}^{\mu_u} \pd_t^{\ell} \left(\sum_{i=1}^{\vartheta_u}\sum_{j=1}^{\omega_u} (b_u)_{i,j}(X_0)_{i,j}\right)\cdot z_{\ell,u}+ S,
    \]
where $\mu_u$ is the largest number such that $\pd_t^{\mu_u}\left((X_0)_{\vartheta_u, \omega_u}\right)$ occurs as a variable in $P$, each $z_{\ell,u} \in \bk_\fp\{\bX_0, 1/\det \bX_0\}$, and the remainder $S$ is an element of $\mathfrak{I}(\im \beta_0) \setminus \mathfrak{D}(\bBB \cdot \vect(\bX_0))$. Note that the variables $\pd_t^{\ell}\left((X_0)_{\vartheta_u, \omega_u}\right)$ do not occur in $S$.

Suppose that $S \neq 0$. Then, note that there exist $\alpha \geq 0$ and $m \geq \alpha$ such that 
\[
S \in \bk_\fp[\pd_t^\alpha((X_0)_{1,1}), \dots, \pd_t^\alpha((X_0)_{r,r}),  \dots, \dots,   \pd_t^m((X_0)_{1,1}), \dots, \pd_t^m((X_0)_{r,r})],
\]
when $S$ is regarded as a usual polynomial in the variables $\{\pd_t^{\ell}((X_0)_{i,j})\, |\, \alpha\leq \ell \leq m, \, 1\leq i,j, \leq r\}$ over $\bk_\fp$. Suppose $\pd_t^\alpha((X_0)_{v_1, v_2})$ for some $1 \leq v_1, v_2 \leq r$ is the smallest, with respect to the above monomial order, among the variables $\pd_t^\alpha((X_0)_{i,j})$ occurring in $S$. We will show that the coefficients of $S$ as a polynomial in the single variable $\pd_t^\alpha((X_0)_{v_1, v_2})$ over the ring $\bk_\fp[\pd_t^{\alpha}((X_0)_{\gamma_1, \gamma_2}), \pd_t^\ell((X_0)_{i,j}) \, | \, v_1< \gamma_1\leq r, v_2< \gamma_2\leq r, 1\leq i,j\leq r, \alpha< \ell \leq m]$ are in $\mathfrak{I}(\im\beta_0)$ as well.

    We pick $\mathrm{v}>1$ such that $q^\mathrm{v}>m$. Consider $\mathfrak{f} \in \bA_\fp^{q^\mathrm{v}}$ of the form 
    \begin{equation}\label{E:elementformim}
    \mathfrak{f} = \mathfrak{g}\cdot \prod_{u=1}^{r^2-r^2/\bss} (b_u)_{\vartheta_u, \omega_u}^{q^\mathrm{v}},
    \end{equation}
    where $\mathfrak{g} \in \bA_\fp^{q^\mathrm{v}}$, $\mathfrak{g}|_{t=0} = 0$ and each $(b_u)_{\vartheta_u, \omega_u} \in \bA$ is the coefficient of $(X_0)_{\vartheta_u, \omega_u}$ in \eqref{E:dim0nomat1}. Note that $\mathfrak{f}|_{t=0} = 0$. Then, for $\alpha\leq \ell \leq m$ by using the product rule for hyperderivatives and Proposition~\ref{P:hyperderprod} we have
    \[
    \pd_t^\ell(t^{\alpha} \cdot \mathfrak{f} ) =   \pd_t^\ell(t^{\alpha}) \cdot \mathfrak{f} = \begin{cases}
        \mathfrak{f} & \text{ for } \ell=\alpha\\
        0 & \text{ for } \alpha < \ell \leq m
    \end{cases}.
    \]
For $\mathfrak{f}$ as in \eqref{E:elementformim}, consider $ \mathfrak{G}=(\mathfrak{f}_{i,j})\in \Mat_r(\bA_\fp)$ where we set $\mathfrak{f}_{v_1, v_2}=t^\alpha \cdot \mathfrak{f}$, for $(i,j) \neq (v_1, v_2), (\vartheta_u, \omega_u)$, $u=1, \dots, r^2-r^2/s$, we set $\mathfrak{f}_{i,j} =t^{\alpha-1}\cdot \mathfrak{f}$ (or $\mathfrak{f}_{i,j} =0$ in the case $\alpha=0$), and finally we pick the entries $\mathfrak{f}_{\vartheta_u, \omega_u} \in \bA_\fp$ for each $u=1, \dots, r^2-r^2/\bss$ such that 
\[
(b_u)_{\vartheta_u, \omega_u}\cdot  \mathfrak{f}_{\vartheta_u, \omega_u} = -\left(\sum_{i=1}^{\vartheta_u-1}\sum_{j=1}^{\omega_u-1} (b_u)_{i,j}\cdot \mathfrak{f}_{i,j}\right).
\]
Note that each $\mathfrak{f}_{\vartheta_u, \omega_u}|_{t=0} =0$. Then, $\mathfrak{G}$ satisfies \eqref{E:dim0}, that is,
\[
\bBB \cdot \vect(\mathfrak{G}) =0.
\] 
Since $\mathfrak{f}_{i,j}|_{t=0}=0$ for all $1\leq i,j\leq r$, for any $\mathfrak{C} \in \Cent_{\GL_r(\bA_\fp)}(\mathbf{K}_\rho)=~\im \beta_0$, we see that $\mathfrak{C}+\mathfrak{G} \in \GL_r(\bA_\fp)$. Moreover, $\mathfrak{C}+\mathfrak{G}$ satisfies $\bBB \cdot \vect(\mathfrak{C}+\mathfrak{G}) =0$, and so 
\begin{equation}\label{E:CplusD}
\mathfrak{C}+\mathfrak{G}  =(\mathfrak{e}_{i,j}) \in \Cent_{\GL_r(\bA_\fp)}(\mathbf{K}_\rho)=~\im \beta_0.
\end{equation}

To prove $S=0$, we adapt an argument of Maurischat (see \cite[Cor.~6.4]{Maurischat19a}).  For any $\mathfrak{C} = (\mathfrak{c}_{i,j}) \in \Cent_{\GL_r(\bA_\fp)}(\mathbf{K}_\rho)=~\im \beta_0$, consider the polynomial $W_\mathfrak{C}(Y) \in \bk_\fp[Y]$ created from $S$ by making the following assignments to the variables
    \begin{align*}
   \pd_t^{\alpha}((X_0)_{v_1,v_2}) &=\pd_t^{\alpha}(\mathfrak{c}_{v_1, v_2})+Y,\\
   \pd_t^{\alpha}((X_0)_{\gamma_1,\gamma_2}) &=\pd_t^{\alpha}(\mathfrak{c}_{\gamma_1, \gamma_2}), \\
   \pd_t^{\ell}((X_0)_{i,j}) &=\pd_t^{\ell}(\mathfrak{c}_{i,j})
    \end{align*}
    for $v_1< \gamma_1\leq r, v_2< \gamma_2\leq r, 1\leq i,j\leq r$, and $\alpha< \ell\leq m$. Note that for $\mathfrak{C}+\mathfrak{G}$ in \eqref{E:CplusD}
\begin{align*}
\pd_t^\alpha (\mathfrak{e}_{v_1,v_2}) &=\pd_t^\alpha (\mathfrak{c}_{v_1,v_2} + t^\alpha \cdot \mathfrak{f}) = \pd_t^\alpha (\mathfrak{c}_{v_1,v_2})+ \mathfrak{f},\\
\pd_t^\alpha (\mathfrak{e}_{\gamma_1,\gamma_2}) &=\pd_t^\alpha (\mathfrak{c}_{\gamma_1,\gamma_2} + t^{\alpha-1} \cdot \mathfrak{f}) = \pd_t^\alpha (\mathfrak{c}_{\gamma_1,\gamma_2})
\end{align*}
for $v_1< \gamma_1\leq r, v_2< \gamma_2\leq r$, and
\[\pd_t^\ell (\mathfrak{e}_{i,j}) =\pd_t^\ell(\mathfrak{c}_{i,j} + t^{\alpha-1} \cdot \mathfrak{f}) = \pd_t^\ell (\mathfrak{c}_{i,j}),
\]
for $\alpha< \ell\leq m$ and $1\leq i,j\leq r$ such that $(i,j) \neq (\vartheta_u, \omega_u)$ where $u=1, \dots, r^2-r^2/s$. Thus, since the variables $\pd_t^{\ell}\left((X_0)_{\vartheta_u, \omega_u}\right)$ do not occur in $S$, we see that $W_{\mathfrak{C}}(\mathfrak{f})$ is equal to the evaluation of $S$ at the element $\mathfrak{C} + \mathfrak{G} \in \im \beta_0$ and so,
\[
W_{\mathfrak{C}}(\mathfrak{f})=0.
\]
This implies that for all $\mathfrak{C} \in \mathfrak{I}(\im\beta_0)$, the single variable polynomial $W_\mathfrak{C}(Y)$ has infinitely many solutions $\mathfrak{f} \in \bA_\fp^{q^\mathrm{v}}$ of the form \eqref{E:elementformim} and so, $W_\mathfrak{C}(Y)$ is identically $0$. Note that $W_\mathfrak{C}(\pd_t^\alpha((X_0)_{v_1, v_2})-\pd_t^\alpha(\mathfrak{c}_{v_1,v_2}))$ is simply the polynomial in the variable $\pd_t^{\alpha}((X_0)_{v_1,v_2})$ obtained from $S$ by letting
   \begin{align*}
   \pd_t^{\alpha}((X_0)_{\gamma_1,\gamma_2}) =\pd_t^{\alpha}(\mathfrak{c}_{\gamma_1, \gamma_2}), \, \, 
   \pd_t^{\ell}((X_0)_{i,j}) =\pd_t^{\ell}(\mathfrak{c}_{i,j})
    \end{align*}
    for $v_1< \gamma_1\leq r, v_2< \gamma_2\leq r, 1\leq i,j\leq r$, and $\alpha< \ell\leq m$. Since, for all $\mathfrak{C} \in \im\beta_0$,
\begin{equation*}
\begin{split}
0 = W_\mathfrak{C}(\pd_t^\alpha((X_0)_{v_1, v_2})-\pd_t^\alpha(\mathfrak{c}_{v_1,v_2})),
\end{split}
\end{equation*}
this implies that the coefficients of $\pd_t^\alpha((X_0)_{v_1, v_2})$ in the polynomial $S$ also lie in $\mathfrak{I}(\im\beta_0)$. If $S'$ denotes such a coefficient and if $\pd_t^{\varkappa}((X_0)_{a_1, a_2})$ is the smallest variable with respect to the monomial order above occurring in $S'$, then applying to $S'$ the same process above, the coefficients of $\pd_t^{\varkappa}((X_0)_{a_1, a_2})$ in the polynomial $S'$ also lie in $\mathfrak{I}(\im\beta_0)$. Continuing like this, there is a non-zero element of $\bk_\fp$ which is an element of $\mathfrak{I}(\im\beta_0)$, which gives a contradiction to $\im \beta_0 \neq \emptyset$. Thus, $S=0$.
  \end{proof}

Set
		\[
		T :=\mathfrak{R}(\bBB \cdot \vect(\bX_0), \vect(\pd_t^1(\bX_0)-(\bX_1)), \vect(\pd_t^2(\bX_0)-(\bX_2)), \dots, \vect(\pd_t^n(\bX_0)-(\bX_n)))
		\] 
 to be  the radical $\pd_t$-ideal in $\bk_\fp\{\bX_0, \dots, \bX_n,  1/\det \bX_0\}$ generated by the entries of $\bBB \cdot \vect(\bX_0), \vect(\pd_t^1(\bX_0)-(\bX_1)), \vect(\pd_t^2(\bX_0)-(\bX_2)), \dots, \vect(\pd_t^n(\bX_0)-(\bX_n))$, which are homogeneous degree one $\pd_t$-polynomials. Then, Proposition~\ref{P:raddiff} implies that 
		\begin{equation}\label{E:TDdefiningideal}
			T=\mathfrak{D}(\bBB \cdot \vect(\bX_0), \vect(\pd_t^1(\bX_0)-(\bX_1)), \vect(\pd_t^2(\bX_0)-(\bX_2)), \dots, \vect(\pd_t^n(\bX_0)-(\bX_n))),
		\end{equation}
		the $\pd_t$-ideal in $\bk_\fp\{\bX_0, \dots, \bX_n,  1/\det \bX_0\}$ generated by the set of homogeneous degree one $\pd_t$-polynomials given by the entries of $\bBB \cdot \vect(\bX_0), \vect(\pd_t^1(\bX_0)-(\bX_1)), \vect(\pd_t^2(\bX_0)-(\bX_2)), \dots, \vect(\pd_t^n(\bX_0)-(\bX_n))$. 
  
		Let $\mathfrak{I}(\im \beta_n)$ denote the defining $\bk_\fp$-$\pd_t$-ideal of $\im \beta_n$ in $\bk_\fp\{\bX_0,\dots, \bX_n, 1/\det \bX_0\}$. 
  
		\begin{claim1}\label{C:Claim1Diff}
			We claim that $T= \mathfrak{I}(\im \beta_n)$.
		\end{claim1}
		
		\begin{proof}[Proof of Claim~\ref{C:Claim1Diff}]
 By Theorem~\ref{T:GrHom}, clearly $T\subseteq \mathfrak{I}(\im \beta_n)$. 
			To show $\mathfrak{I}(\im \beta_n) \subseteq T$,
	let $F \in \mathfrak{I}(\im \beta_n) \subseteq \bk_\fp\{\bX_0, \dots, \bX_n,  1/\det \bX_0\}$. Note that for $1 \leq h \leq n$, we have $\pd_t^\ell(\pd_t^h((X_0)_{i,j})) < \pd_t^\ell((X_h)_{i,j})$ and so the leading monomial of each $\pd_t^\ell(\pd_t^h((X_0)_{i,j})-(X_h)_{i,j})$ is $\pd_t^\ell((X_h)_{i,j})$. Then, by using the division algorithm \cite[Prop.~1.9]{IimaYoshino} we see that 
			\begin{equation}\label{E:Divalg}
				F= \sum\limits_{i,j=1}^r\sum\limits_{h=1}^n \sum\limits_{\ell=0}^{m_{h,i,j}}  \pd_t^\ell\left(\pd_t^h ((X_0)_{i,j}) - (X_h)_{i,j}\right)\cdot (w_{h,\ell})_{i,j} + H,
			\end{equation}
			where $m_{h,i,j}$ is the largest number such that $\pd_t^{m_{h,i,j}}((X_h)_{i,j})$ occurs as a variable in $F$, each $(w_{h,\ell})_{i,j} \in \bk_\fp\{\bX_0, \dots, \bX_n,  1/\det \bX_0\}$, and the remainder $H=H(\bX_0)$ is an element of $\bk_\fp\{\bX_0,  1/\det \bX_0\}$. Note that for $g_\epsilon$, $\im \beta_n$, and $\im \beta_0$ as in Theorem~\ref{T:GrHom}, there is a surjective map 
			\[
			{\im \beta_n} \twoheadrightarrow {\im \beta_0}
			\]
			given by $d_{t, n+1}[g_\epsilon] \mapsto g_\epsilon$. Moreover we have $F(d_{t, n+1}[g_\epsilon]) = 0$. Since $T\subseteq \mathfrak{I}(\im \beta_n)$ and $\sum_{i,j=1}^r\sum_{h=1}^n \sum_{\ell=0}^{m_{h,i,j}}  \pd_t^\ell\left(\pd_t^h ((X_0)_{i,j}) - (X_h)_{i,j}\right)\cdot (w_{h,\ell})_{i,j} \in~T$, we obtain from \eqref{E:Divalg} that $H(g_\epsilon)=0$. Thus, $H(\bX_0)$ is an element of $\mathfrak{I}(\im \beta_0) = \mathfrak{D}(\bBB \cdot \vect(\bX_0))$.
This proves our claim. Therefore, $\mathfrak{I}(\im \beta_n)= T$. 
\end{proof}

		We are now ready to compute $\overline{\im \beta_n}^Z$. 
		\begin{claim1}\label{C:Claim2Diff}
			The defining equations of $\overline{\im \beta_n}^Z$ are given by
			\begin{equation}\label{E:dimell}
				d_{t,n+1}[\bBB] \cdot \vect([\bX_n, \dots, \bX_0]^\tr) = \mathbf{0}.
			\end{equation}
		\end{claim1}
		
		\begin{proof}[Proof of Claim~\ref{C:Claim2Diff}]
			Based on Lemma~\ref{L:ZariskiKolchin}, we can find the defining equations of $\overline{\im \beta_n}^Z$ if we determine
			\begin{equation}\label{E:idealdelnnodel}
				\bT:= T \cap \bk_\fp[\bX_0, \bX_1, \dots, \bX_n, 1/\det \bX_0].
			\end{equation}
			
			\noindent By the preceding arguments, an element of $F \in T= \mathfrak{I}(\overline{\im \beta_n}^\pd)$ is of the form \eqref{E:Divalg}, where $H \in \mathfrak{D}(\bBB \cdot \vect(\bX_0))\subseteq \bk_\fp\{\bX_0,  1/\det \bX_0\}$. Suppose
			\[
			H = \sum\limits_{i,j=1}^r \sum\limits_{u=1}^{r^2-r^2/\bss}\sum\limits_{\varkappa=0}^{v_u}   c_{u,\varkappa}\cdot \pd_t^{\varkappa}((b_{u})_{i,j}(X_0)_{i,j}),
			\]
			where $v_u \geq 0$ and $c_{u,\varkappa} \in \bk_\fp\{\bX_0, 1/\det \bX_0\}$ for each $1 \leq u \leq r^2-r^2/s$. By the product rule of hyperderivatives, we have $\pd_t^{\varkappa}((b_{u})_{i,j}(X_0)_{i,j}) = \sum_{\alpha=0}^{\varkappa}\pd_t^{\varkappa-\alpha}((b_{u})_{i,j})\cdot \pd_t^{\alpha}((X_0)_{i,j}))$, and so rewriting $F$ we have
		\begin{align*}
				\begin{split}\label{eq:1}
				F=
				&	\sum\limits_{i,j=1}^r \Bigg(\sum\limits_{h=1}^n\sum\limits_{\ell=0}^{m_{h,i,j}}(w_{h,\ell})_{i,j}\cdot \pd_t^\ell\big(\pd_t^h ((X_0)_{i,j}) - (X_h)_{i,j}\big)  \\
					&+\sum\limits_{u=1}^{r^2-r^2/\bss} \Big(\sum\limits_{\varkappa=0}^{v_u}c_{u,\varkappa} \cdot \pd_t^{\varkappa}((b_{u})_{i,j})\cdot (X_0)_{i,j}+\sum\limits_{\varkappa=1}^{v_u}\sum_{\alpha=1}^{\varkappa} c_{u,\varkappa} \cdot \pd_t^{\varkappa-\alpha}((b_{u})_{i,j})\cdot \pd_t^{\alpha}((X_0)_{i,j})\Big)\Bigg),
				\end{split}
			\end{align*}
			where $(w_{h,\ell})_{i,j} \in \bk_\fp\{\bX_0, \dots, \bX_n,  1/\det \bX_0\}$ and $m_{h,i,j} \in \ZZ_{\geq0}$ for $1 \leq h \leq n$, $1 \leq i,j\leq r$. 
   
			Suppose that $F \in \bT \subseteq \bk_\fp[\bX_0, \dots, \bX_n, 1/\det \bX_0]$. Then, since $H \in \bk_\fp\{\bX_0, 1/\det \bX_0\}$, we obtain
					\[
  m_{h,i,j} = 0.
		\]
		Additionally, for each $1\leq i,j\leq r$ we have 
		\[
		\sum_{u=1}^{r^2-r^2/\bss} \sum\limits_{\varkappa=1}^{v_u}\sum_{\alpha=1}^{\varkappa}  c_{u,\varkappa}\cdot \pd_t^{\varkappa-\alpha}((b_{u})_{i,j})\cdot \pd_t^{\alpha}((X_0)_{i,j})) +\sum_{h=1}^n(w_{h,0})_{i,j}\cdot \pd_t^h ((X_0)_{i,j})=0.
		\]
From this, we see that $v_u \leq n$, and for $h > v_u$ we have $(w_{h,0})_{i,j}=0$. Moreover, since $\sum_{\varkappa=1}^{v_u}\sum_{\alpha=1}^{\varkappa}  c_{u,\varkappa}\cdot \pd_t^{\varkappa-\alpha}((b_{u})_{i,j})\cdot \pd_t^{\alpha}((X_0)_{i,j})) = \sum_{h=1}^{v_u} \sum_{\gamma=h}^{v_u} c_{u,\gamma} \cdot \pd_t^{\gamma-h}((b_u)_{i,j}) \cdot \pd_t^h((X_0)_{i,j})$ we have for $1 \leq h \leq v_u$,
		\[(w_{h,0})_{i,j} =-	\sum_{u=1}^{r^2-r^2/\bss}  \sum_{\gamma=h}^{v_u} c_{u,\gamma} \cdot \pd_t^{\gamma-h}((b_u)_{i,j}).
		\]
		Thus, $F \in \bT$ is of the form 
			\[
			\begin{split}
			    F& =\sum_{i,j=1}^r \sum\limits_{u=1}^{r^2-r^2/\bss} \Big(\sum\limits_{\varkappa=0}^{v_u}c_{u,\varkappa} \cdot \pd_t^{\varkappa}((b_{u})_{i,j})\cdot (X_0)_{i,j}+\sum_{h=1}^{v_u} \sum_{\gamma=h}^{v_u} c_{u,\gamma} \cdot \pd_t^{\gamma-h}((b_u)_{i,j}) \cdot (X_h)_{i,j}\Big)\\
			    & = \sum\limits_{u=1}^{r^2-r^2/\bss} \Big(\sum\limits_{\varkappa=0}^{v_u}c_{u,\varkappa} \cdot \pd_t^{\varkappa}(B_u)\cdot \vect(\bX_0) +\sum_{h=1}^{v_u} \sum_{\gamma=h}^{v_u} c_{u,\gamma} \cdot \pd_t^{\gamma-h}(B_u) \cdot \vect(\bX_h)\Big), 				\end{split}
			\] 
			where $B_u$ is the $u$-th row of $\bBB$ and each $c_{u,\varkappa}\in \bk_\fp[\bX_0, 1/\det \bX_0]$. Varying $u$ from $1$ to $r^2-r^2/\bss$ and varying each $v_u$ from $0$ to $n$, we see that the ideal in $\bk_\fp[\bX_0, \dots, \bX_n, 1/\det \bX_0]$ generated by \eqref{E:idealdelnnodel} is the same as the ideal generated by \[\left\{\sum_{h=0}^n\pd_t^{n-h}(B_{u})\cdot \vect(\bX_h), \quad u=1, \dots, r^2-r^2/\bss
			\right\},\]
			which can be written as 
			\begin{equation*}\label{E: XXX}
				d_{t,n+1}[\bBB] \cdot \vect([\bX_n, \dots, \bX_0]^\tr),
			\end{equation*}
			\noindent where we define $\vect([\bX_n, \dots, \bX_0]^\tr) := [(\vect{\bX_n})^\tr, \dots, (\vect{\bX_0})^\tr]^\tr$. Since, by its definition, $d_{t, n+1}[\bBB]$ is a block upper triangular matrix with all diagonal blocks equal to $\bBB$, we have that $\rank d_{t, n+1}[\bBB] \geq (n+1) \cdot \rank \bBB = (n+1) \cdot (r^2-r^2/\bss)$. Also, since $d_{t, n+1}[\bBB]$ is an $(n+1) \cdot (r^2-r^2/\bss) \times (n+1) \cdot r^2$ matrix, we have that $\rank d_{t, n+1}[\bBB] \leq (n+1) \cdot (r^2-r^2/\bss)$ and so $\rank d_{t,n+1}[\bBB] = (n+1) \cdot (r^2-r^2/\bss)$. Since $\rank d_{t,n+1}[\bBB]$ is full, we see that 
			\begin{equation*}\label{E:dimell1}
				d_{t,n+1}[\bBB] \cdot \vect([\bX_n, \dots, \bX_0]^\tr) = \mathbf{0}
			\end{equation*}
			are the defining equations of $\overline{\im \beta_n}^Z$. 
		\end{proof}
		
		Since each $(b_u)_{ij}$ is an element of $\bk$, we see that each entry of $d_{t,n+1}[\bBB]$ is an element of $\bk$ and so, $\overline{\im \beta_n}^Z$ is defined over $\bk$. Moreover, \begin{equation}\label{E:dimzar}
			\dim \overline{\im \beta_n}^Z =(n+1)\cdot r^2-\rank d_{t,n+1}[\bBB]=(n+1)\cdot r^2-(n+~1) \cdot~(r^2-~r^2/\bss)=~(n+1) \cdot r^2/\bss,
		\end{equation}
		which gives the desired result. 
	\end{proof}

	\subsection{Upper bound on the dimension of \texorpdfstring{$\Gamma_{{\rP_n M_\rho}}$}{GPn}}\label{S:Dimension}
	
	Recall from Theorem~\ref{T:Element} that for any $\bk$-algebra $\rR$ and $n \geq 1$, 
	an element $\mu_n$ of $\Gamma_{\rP_n M_{\rho}}(\rR)$ is of the form as in \eqref{E:elementn}.

	Note that by \eqref{E:SESschemes11gen}, we have a short exact sequence of affine group schemes over $\bk$,
	\begin{equation}\label{E:SESschemes11}
		1 \rightarrow Q_n \rightarrow \Gamma_{\rP_n M_\rho} \xrightarrow{\boldsymbol{\pi}_{n-1}} \Gamma_{\rP_{n-1}M_\rho} \rightarrow 1,
	\end{equation}
	where, by Theorem~\ref{T:Element}, $\boldsymbol{\pi}_{n-1}^{(\rR)}: \Gamma_{\rP_n M_\rho}(\rR) \twoheadrightarrow \Gamma_{\rP_{n-1}M_\rho}(\rR)$ maps $\mu_n$ to the matrix $\mu_{n-1}$ formed by the upper left $rn \times rn$ square. Consider 
 \begin{equation}\label{E:Actionprolo1}
		\nu= \begin{pmatrix}
			\Iden_r & 0 & \dots & 0 &  \bv\\
			& \Iden_r & 0 & \dots & 0\\
			&      &  \ddots &\ddots&\vdots\\
			&&&\ddots&0 \\
			&&&&\Iden_r
		\end{pmatrix} \in \GL_{(n+1)r}(\rR),
	\end{equation}
 where $\bv \in \Mat_r(\rR)$. 
Then, an element of $Q_n(\rR)$ is of the form \eqref{E:Actionprolo1}. It can easily be checked that 
	\begin{equation}\label{E:conjaction}
		\mu_n\nu\mu_n^{-1} =  \begin{pmatrix}
			\Iden_r & 0 & \dots & 0 &  \gamma_0\bv\gamma_0^{-1}\\
			& \Iden_r & 0 & \dots & 0\\
			&      &  \ddots &\ddots&\vdots\\
			&&&\ddots&0 \\
			&&&&\Iden_r
		\end{pmatrix}.
	\end{equation}
	Note that $\rP_0 M_\rho$ is simply $M_\rho$ via the map $D_0 m \mapsto m$ for all $m \in M_\rho$ and so, $M_\rho$ is a sub-$t$-motive of $\rP_n M_\rho$. Thus, similarly, by \eqref{E:SESschemes11gen} there is a surjective map of affine group schemes over $\bk$,
	\[
	\boldsymbol{\pi}_0:\Gamma_{\rP_n M_\rho} \twoheadrightarrow \Gamma_{M_\rho},
	\]
	where, by Theorem~\ref{T:Element}, $\boldsymbol{\pi}_0^{(\rR)}:\Gamma_{\rP_n M_\rho}(\rR) \rightarrow \Gamma_{M_\rho}(\rR)$ is the map 
	given by $\mu_n \mapsto \gamma_0$. Thus, via conjugation there is a left action of $\Gamma_{M_\rho}$ on $Q_n$ given by \eqref{E:conjaction}. 

  Set $\bK_\rho := \End_\cT(M_\rho)$ and for a $\bk$-algebra $\rR$, define  
  \[
	\Cent_{\Mat_r/\bk}(\bK_\rho)(\rR):= \{\gamma \in \Mat_r(\rR) \mid \gamma g = g \gamma \, \, \textup{for all} \, \, \\g\in\rR\otimes_\bk \bK_\rho \subseteq \Mat_r(\rR)\}.
	\]
\begin{lemma}\label{L:KerC}
For $n\geq 1$, let $\nu \in Q_n(\rR)$ be as in \eqref{E:Actionprolo1}. Then, 
	\begin{equation*}\label{E:KerC}
		\bv \in \Cent_{\Mat_r/\bk}(\bK_\rho)(\rR).
	\end{equation*}
	 \end{lemma}

  \begin{proof}
		The entries of $\bsu_n = d_{t,n+1}[\Psi_\rho]^{-1} \bsD_n \bsm$ form a $\bk$-basis of $(\rP_n M_\rho)^B$ (see \cite[Prop. 3.3.9]{P08}). Recall the action of $\Gamma_{\rP_n M_\rho}(\rR)$ on $\rR~\otimes_\bk~(\rP_n M_\rho)^B$ from \cite[\S4.5]{P08} (see also \eqref{E:action2}) as follows: for any $\mu_n \in \Gamma_{\rP_n M_\rho}(\rR)$ and any $a_i \in \Mat_{1\times r}(\rR)$, $0 \leq i \leq n$, the action of $\mu_n$ on $(a_0, \dots, a_n) \cdot \bsu_n \in \rR \otimes_{\bk} (\rP_n M_\rho)^B$ is 
		\begin{equation}\label{E:action201}
			\varpi_n^{\rR}(\mu_n):(a_0, \dots, a_n) \cdot d_{t,n+1}[\Psi_\rho]^{-1} \bsD_n \bsm\mapsto (a_0, \dots, a_n) \cdot \mu_n^{-1}d_{t,n+1}[\Psi_\rho]^{-1} \bsD_n \bsm.
		\end{equation}
Given $f \in \bK_\rho$, let $\rF \in \Mat_r(\ok(t))$ satisfy $f(\bsm) = \rF \bsm$. By Proposition~\ref{P:End}(a), for $n \geq 1$ there exists $g \in \End_\cT(\rP_n M_\rho)$ such that $g(\bsD_n \bsm) = d_{t,n+1}[\rF]\bsD_n \bsm$ and so, $d_{t,n+1}[\Psi_\rho]^{-1}d_{t,n+1}[\rF] d_{t,n+1}[\Psi_\rho]=d_{t,n+1}[\Psi_\rho^{-1}\rF\Psi_\rho]$ is an element in the image of the natural embedding \eqref{E:End}.  
		\noindent
  Then by \eqref{E:action201} and the commutative diagram \eqref{E:Com}, we have
  \[
d_{t,n+1}[\Psi_\rho^{-1}\rF\Psi_{\rho}]\nu = \nu d_{t,n+1}[\Psi_\rho^{-1}\rF\Psi_{\rho}]
  \]
  This gives
  \[
 \Psi_\rho^{-1}\rF\Psi_{\rho}\bv =  
 \bv\Psi_\rho^{-1}\rF\Psi_{\rho}
  \]
and the desired result follows.
  \end{proof}

	\begin{theorem}\label{T:1LeqGamma}
		Let $\rho$ be a Drinfeld $\bA$-module of rank~$r$ defined over $k^{\textup{sep}}$ and for $n \geq 1$, let $\rP_{n}\rho$ be its associated $n$-th prolongation $t$-module. Let $M_\rho$ and $\rP_{n} M_\rho$ be the $t$-motives corresponding to $\rho$ and $\rP_{n}\rho$ respectively. Let $K_{\rho}$ be the fraction field of $\End(\rho)$ defined as in \eqref{E:Endr} and suppose that $[K_{\rho}:k]=\bss$. Then $\dim \Gamma_{\rP_{n} M_\rho} \leq (n+1)\cdot r^2/\bss$.
	\end{theorem}

 \begin{remark}
     The author thanks the referee for sharing the ideas of the following proof which is an improvement on the ideas used in a previous proof the author obtained. The author's previous proof required a lemma proving smoothness of $Q_n$. This is no longer required and  has been removed.
 \end{remark}
	
	\begin{proof}
 By Proposition \ref{P:0thpro} and Remark~\ref{R:0thpro}, we see that $[\bK_\rho:\bk]=\bss$ and so, $\Cent_{\Mat_r/\bk}(\bK_\rho)$ is an additive group scheme of dimension $r^2/\bss$ over $\bk$  \cite[Thm.~3.15(3)]{FarbDennis}. 

As $Q_n$ is defined as the kernel in \eqref{E:SESschemes11}, $Q_n$ is a closed subgroup of $\Gamma_{\rP_n M_\rho}$. Consider the closed immersion $\Mat_{r}/\bk \lhook\joinrel\xrightarrow{}  \GL_{(n+1)r}/\bk$ defined by $\bv \mapsto \nu$, where $\nu$ is of the form \eqref{E:Actionprolo1}. Note that $Q_n \subseteq \GL_{(n+1)r}/\bk$ is  isomorphic to its preimage under this closed immersion. Thus, $Q_n$ is closed in $\Mat_{r}/\bk$, and hence closed in $\Cent_{\Mat_{r}/\bk}(\bK_\rho)$  by Lemma~\ref{L:KerC}. This implies that $\dim Q_n \leq \dim \Cent_{\Mat_{r}/\bk}(\bK_\rho) =r^2/\bss$. 
		
		Now, by \eqref{E:SESschemes11} our task is to prove that $\dim Q_n + \dim \Gamma_{\rP_{n-1} M_\rho}\leq (n+1) \cdot r^2/\bss$, which we  show by induction. For the base case $n=1$, since $\dim \Gamma_{M_\rho}=r^2/\bss$ \cite[Thm.~3.5.4]{CP12} we see that $\dim Q_1 + \dim \Gamma_{M_\rho}\leq \dim \Cent_{\Mat_r/\bk}(\bK_\rho)+ \dim \Gamma_{M_\rho}= 2\cdot r^2/\bss$. Suppose we have shown that $\dim \Gamma_{\rP_{n-1} M_\rho} \leq n \cdot r^2/\bss$. By the same argument as in the base case, we obtain $ \dim Q_n + \dim \Gamma_{\rP_{n-1} M_\rho} \leq \dim \Cent_{\Mat_r/\bk}(\bK_\rho) +\dim \Gamma_{\rP_{n-1} M_\rho}= (n+1)\cdot r^2/\bss$.
	\end{proof}
	
	\begin{corollary}\label{T:LeqGamma}
		Let $\rho$ be a Drinfeld $\bA$-module of rank~$r$ defined over $k^{\textup{sep}}$ and for $n \geq 1$, let $\rP_{n}\rho$ be its associated $n$-th prolongation $t$-module. Let $M_\rho$ and $\rP_{n} M_\rho$ be the $t$-motives corresponding to $\rho$ and $\rP_{n}\rho$ respectively. Let $\overline{\im \beta_{n}}^Z$ be the Zariski closure of $\im \beta_{n}$, where $\beta_{n}$ is as in Theorem~\ref{T:GrHom}. Let $K_{\rho}$ be the fraction field of $\End(\rho)$ defined as in \eqref{E:Endr} and suppose that $[K_{\rho}:k]=\bss$. Then $\dim \Gamma_{\rP_{n} M_\rho} = (n+1)\cdot r^2/\bss$ and 
		\[\overline{\im \beta_{n}}^Z/\bk = \Gamma_{\rP_{n} M_\rho}.\]
	\end{corollary}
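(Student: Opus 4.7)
The plan is to derive the corollary directly by assembling the two preceding theorems together with a dimension-plus-irreducibility argument. The inequality $\dim \Gamma_{\rP_n M_\rho} \leq (n+1)r^2/\bss$ is exactly the conclusion of Theorem~\ref{T:1LeqGamma} under the assumed $\bk$-smoothness of $Q_n$. For the reverse inequality, I would invoke Theorem~\ref{T:GrHom}: together with the identification $\Gamma_{\Psi_{\rP_n\rho}} \cong \Gamma_{\rP_n M_\rho}$ from Theorem~\ref{Thm.:diffP}(c), it shows that $\im\beta_n$ lies in $\Gamma_{\rP_n M_\rho}(\bk_\fp)$, so its Zariski closure $\overline{\im\beta_n}^Z$ is a closed subscheme of $\Gamma_{\rP_n M_\rho}$. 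The computation in Theorem~\ref{T:GeqGamma} (specifically equation \eqref{E:dimzar}) yields $\dim \overline{\im\beta_n}^Z = (n+1)r^2/\bss$, which forces
\[
(n+1)r^2/\bss \;=\; \dim \overline{\im\beta_n}^Z \;\leq\; \dim \Gamma_{\rP_n M_\rho} \;\leq\; (n+1)r^2/\bss,
\]
so equality holds throughout and the first assertion follows.

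For the second assertion, I would argue that a closed subgroup scheme of the same dimension as an absolutely irreducible group scheme must coincide with it. The proof of Theorem~\ref{T:GeqGamma} (Claim~2, via Lemma~\ref{L:ZariskiKolchin}) shows that the defining equations of $\overline{\im\beta_n}^Z$ are already over $\bk$, hence $\overline{\im\beta_n}^Z$ descends to a closed $\bk$-subgroup scheme of $\Gamma_{\rP_n M_\rho}$. Since $\im\beta_n$ is the image of a group homomorphism, $\overline{\im\beta_n}^Z$ is a closed $\bk$-subgroup scheme. By Theorem~\ref{Thm.:diffP}(b), $\Gamma_{\rP_n M_\rho}$ is absolutely irreducible and smooth over $\overline{\bk}$, and in particular it is irreducible over $\bk$. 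A closed subscheme of an irreducible scheme having the same dimension equals the whole scheme, which gives $\overline{\im\beta_n}^Z/\bk = \Gamma_{\rP_n M_\rho}$.

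There is essentially no obstacle here, since the hard work has already been carried out in Theorems~\ref{T:GeqGamma} and \ref{T:1LeqGamma}; the only point requiring slight care is the descent of the Zariski closure to $\bk$ and invoking absolute irreducibility so that the dimension comparison promotes a subscheme inclusion to an equality of schemes rather than merely an equality on geometric points.
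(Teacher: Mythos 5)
Your proposal is correct and follows essentially the same route as the paper: combine Theorems~\ref{T:GeqGamma} and~\ref{T:1LeqGamma} for the dimension, read off $\dim\overline{\im\beta_n}^Z$ from \eqref{E:dimzar}, and then promote the inclusion $\overline{\im\beta_n}^Z \subseteq \Gamma_{\rP_n M_\rho}$ of equal-dimensional schemes to an equality. The only cosmetic difference is in the last step: the paper's stated justification is that $\overline{\im\beta_n}^Z$ is connected because its defining equations \eqref{E:dimell} are degree one, whereas you invoke absolute irreducibility of $\Gamma_{\rP_n M_\rho}$ from Theorem~\ref{Thm.:diffP}(b) (which, together with reducedness of the Zariski closure, immediately forces equality); both observations are available and give the same conclusion.
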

	\begin{proof}
		We obtain $\dim \Gamma_{\rP_{n} M_\rho} = (n+1)\cdot r^2/\bss$ by combining Theorem~\ref{T:GeqGamma} and Theorem~\ref{T:1LeqGamma}. By \eqref{E:dimzar} we see that $\dim \overline{\im \beta_n}^Z = \dim \Gamma_{\rP_n M_\rho}$. Then, since $\Gamma_{\rP_n M_\rho}$ is connected and smooth by Theorem~\ref{Thm.:diffP}(b), we have $\overline{\im \beta_n}^Z/\bk = \Gamma_{\rP_n M_\rho}$.
	\end{proof}

 \begin{remark}
     By Corollary~\ref{T:LeqGamma}, we see that $\dim Q_n = \dim \Cent_{\Mat_{r}/\bk}(\bK_\rho)$. Since the defining polynomials of $\Cent_{\Mat_{r}/\bk}(\bK_\rho)$ are degree one polynomials, it is connected and smooth. Thus, $Q_n = \Cent_{\Mat_{r}/\bk}(\bK_\rho)$.
 \end{remark}

	\subsection{Algebraic Independence of periods and quasi-periods}
	The following result proves Theorem~\ref{T:Main01}. 
	
	\begin{theorem}\label{T:Main1}
		Fix $n \geq 1$. Let $\rho$ be a Drinfeld $\bA$-module of rank~$r$ defined over $k^{\textup{sep}}$ and $\rP_n \rho$ be its associated $n$-th prolongation $t$-module. Let $K_{\rho}$ be the fraction field of $\End(\rho)$ defined as in \eqref{E:Endr} and suppose that $K_\rho$ is separable over $k$. Let $M_\rho$ and $\rP_n M_\rho$ be the $t$-motives corresponding to $\rho$ and $\rP_n \rho$ respectively. Then, $\trdeg_{\ok} \ok(\Psi_{\rP_n \rho}(\theta)) = (n+1) \cdot r^2/\bss$, where $\bss = [K_\rho:k]$. In particular, 
		\[
		\trdeg_{\ok} \, \ok \bigg(\bigcup\limits_{s=1}^{n} \bigcup\limits_{i=1}^{r-1}\bigcup\limits_{j=1}^r \{\lambda_j, F_{\tau^i}(\lambda_j), \pd_\theta^s(\lambda_j), \pd_\theta^s(F_{\tau^i}(\lambda_j))\}\bigg) = (n+1) \cdot r^2/\bss.
		\]
	\end{theorem}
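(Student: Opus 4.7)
The plan is to chain three already-established ingredients of the paper: Papanikolas's Tannakian transcendence theorem (Theorem~\ref{T:Tannakian}), the exact dimension of the Galois group $\Gamma_{\rP_n M_\rho}$ obtained in Corollary~\ref{T:LeqGamma}, and the dictionary between entries of the specialization of the rigid analytic trivialization and hyperderivatives of periods/quasi-periods provided by \eqref{E:hyperquasiper11} in Theorem~\ref{P:rigidhyper}.

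First I would verify that $\rP_n M_\rho$ satisfies the hypotheses of Theorem~\ref{T:Tannakian}. By \eqref{E:PhiPro}, multiplication by $\sigma$ on the prolongation basis $\bsD_n \bsm$ is represented by $d_{t,n+1}[\Phi_\rho] \in \GL_{(n+1)r}(\ok(t)) \cap \Mat_{(n+1)r}(\ok[t])$; its determinant equals $(\det \Phi_\rho)^{n+1}$, a nonzero $\ok^{\times}$-multiple of $(t-\theta)^{(n+1)r}$; and by \eqref{E:ratpro} the matrix $\Psi_{\rP_n\rho} = d_{t,n+1}[\Psi_\rho] \in \GL_{(n+1)r}(\TT)$ is a rigid analytic trivialization. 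Theorem~\ref{T:Tannakian} therefore yields
\[
\trdeg_{\ok} \ok\bigl(\Psi_{\rP_n\rho}|_{t=\theta}\bigr) = \dim \Gamma_{\rP_n M_\rho}.
\]
The separability hypothesis on $K_\rho$ enters through Lemma~\ref{L:defined}, which guarantees that the group scheme $Q_n$ in the short exact sequence \eqref{E:SESschemes11} is $\bk$-smooth; this is precisely the hypothesis needed in Corollary~\ref{T:LeqGamma} to conclude $\dim \Gamma_{\rP_n M_\rho} = (n+1)\,r^2/\bss$. Combining these two identities yields the first assertion $\trdeg_{\ok} \ok(\Psi_{\rP_n\rho}|_{t=\theta}) = (n+1)\,r^2/\bss$.

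For the in-particular statement, I would observe that specialization at $t=\theta$ commutes with matrix inversion (the specialization is invertible since $\det \Psi_{\rP_n\rho}|_{t=\theta} \neq 0$), so the field $\ok(\Psi_{\rP_n\rho}|_{t=\theta})$ coincides with $\ok(d_{t,n+1}[\Psi_\rho]^{-1}|_{t=\theta})$. The $\ok$-span equality \eqref{E:hyperquasiper11} of Theorem~\ref{P:rigidhyper} identifies the $\ok$-span of the entries of this specialization with the $\ok$-span of $\bigcup_{s=0}^n \bigcup_{i=1}^{r-1} \bigcup_{j=1}^r \{\pd_\theta^s(\lambda_j), \pd_\theta^s(\rF_{\tau^i}(\lambda_j))\}$, which is the same set (for $s \geq 1$ combined with $\lambda_j$, $\rF_{\tau^i}(\lambda_j)$ at $s=0$) appearing in the theorem statement. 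Since two finite subsets of $k_\infty^\sep$ with equal $\ok$-span generate the same field over $\ok$, their transcendence degrees over $\ok$ agree, and the in-particular conclusion follows.

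The substantive work has been absorbed into Corollary~\ref{T:LeqGamma}: the hard part is the dimension computation, where the lower bound required the Kolchin-topology analysis of the Galois representation on the $\fp$-adic Tate module of $\rP_n\rho$ (Theorems~\ref{T:GrHom} and \ref{T:GeqGamma}), and the upper bound rested on controlling the centralizer $\Cent_{\Mat_r/\bk}(\bK_\rho)$ together with the smoothness of $Q_n$ provided by Lemma~\ref{L:defined}. The present theorem is essentially the concrete transcendental translation of that group-theoretic identity, so no new obstacle arises at this final step beyond a clean invocation of Theorem~\ref{P:rigidhyper}.
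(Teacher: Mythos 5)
Your proposal is correct and takes essentially the same route as the paper: apply Theorem~\ref{T:Tannakian} to equate the transcendence degree of $\ok(\Psi_{\rP_n\rho}|_{t=\theta})$ with $\dim\Gamma_{\rP_n M_\rho}$, invoke Lemma~\ref{L:defined} and Corollary~\ref{T:LeqGamma} to compute this dimension as $(n+1)r^2/\bss$, and then translate via \eqref{E:hyperquasiper11} of Theorem~\ref{P:rigidhyper}. The paper's own proof is a one-line invocation of exactly these ingredients; your write-up supplies the small routine checks (that the hypotheses of Theorem~\ref{T:Tannakian} hold for the prolongation, and that passing between a matrix and its inverse does not change the generated field), which are correct and harmless to spell out.
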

	
	\begin{proof}
	By Theorem~\ref{T:Tannakian}, we have $\dim \Gamma_{\rP_n M_\rho} = \trdeg_{\ok} \ok(\Psi_{\rP_n \rho}|_{t=\theta})$. Since $\Psi_{\rP_n \rho} = d_{t,n+1}[\Psi_{\rho}]$, the result follows from Theorem~\ref{P:rigidhyper} and Corollary~\ref{T:LeqGamma}.
	\end{proof}

	\section{Hyperderivatives of logarithms and quasi-logarithms}
	\label{S:Hyperlogquasilog}
	
	In this section, we prove Theorem~\ref{T:Main02} (restated as Theorem~\ref{T:Main2}) and Corollary~\ref{C:Main02}. We fix a Drinfeld $\bA$-module $\rho$ of rank $r$ defined over $k^\sep$ and an $\bA$-basis $\{\lambda_1, \dots, \lambda_r\}$ of $\Lambda_\rho$ as in $\S$\ref{S:Produal}. Let $M_\rho$ be the $t$-motive associated to $\rho$ along with a fixed $\ok(t)$-basis $\{m_1, \dots, m_r\} \subseteq M_\rho$, multiplication by $\sigma$ given by $\Phi_\rho$ as in \eqref{E:Phi}, and rigid analytic trivialization $\Psi_\rho$ as in \eqref{E:ratD}. For each $n \geq 0$, let $\rP_n M_\rho$ be the $t$-motive corresponding to the $n$-th prolongation $\rP_n \rho$ of $\rho$ as in $\S$\ref{S:Produal}. Note that $\rP_0 M_\rho$ is simply $M_\rho$ via the map $D_0 m \mapsto m$ for all $m \in M_\rho$. If $\bsm = (m_1, \dots, m_r)^{\tr}$, then a $\ok(t)$-basis of $\rP_n M_\rho$ is given by the entries of $\bsD_n \bsm \in \Mat_{(n+1)r\times 1}(\rP_n M_\rho)$ (see \eqref{E:basisM}) such that multiplication by $\sigma$ is given by $\Phi_{\rP_n \rho} = d_{t,n+1}[\Phi_\rho]$ (see \eqref{E:PhiPro}) with rigid analytic trivialization $\Psi_{\rP_n \rho} = d_{t,n+1}[\Psi_\rho]$ (see \eqref{E:ratpro}). We also set $\bK_\rho:= \End_\cT(M_\rho)$ and let $K_\rho$ denote the fraction field of $\End(\rho)$.
	
	\subsection{\texorpdfstring{$t$}{t}-motives and quasi-logarithms}\label{S:quasilogtmotive}
	Given $u \in \KK$ such that $\Exp_\rho(u) = \alpha \in k^\sep$, let $f_u(t)$ be the Anderson generating function of $\rho$ with respect to $u$ given as in \eqref{E:AGFG}. Then, for $n \geq 1$ we see that the Anderson generating function of $\rP_n\rho$ with respect to $\bsu_n := [u, 0, \dots, 0]^{\tr} \in \KK^{n+1}$ is $\cG_{\bsu_n}(t)= [f_u(t), \pd_t^1(f_u(t)), \dots, \pd_t^{n}(f_u(t))]^{\tr}$ (see~\eqref{E:AGFPro}). Moreover, by \eqref{E:exppro}, $\Exp_{\rP_n\rho}(\bsu_n)~=~[\Exp_\rho(u), 0 \dots, 0]^{\tr}~=~[\alpha, 0 \dots, 0]^{\tr}~\in~(k^\sep)^{n+1}$. We define 
	\[
	\bs_\alpha := \begin{pmatrix}
		-(\kappa_1f_u^{(1)}(t) + \dots + \kappa_{r-1}f_u^{(r-1)}(t) + \kappa_rf_u^{(r)}(t)) \\
		-(\kappa_2^{(-1)}f_u^{(1)}(t) + \dots + \kappa_{r-1}^{(-1)}f_u^{(r-2)}(t) + \kappa_r^{(-1)}f_u^{(r-1)}(t)) \\
		-(\kappa_3^{(-2)}f_u^{(1)}(t) + \dots + \kappa_{r-1}^{(-2)}f_u^{(r-3)}(t) + \kappa_r^{(-2)}f_u^{(r-2)}(t)) \\
		\vdots \\
		-\kappa_r^{(-r+1)}f_u^{(1)}(t)
	\end{pmatrix}^{\tr} \in \Mat_{1 \times r}(\TT),
	\]
	and let $\bh_{\alpha, n} := (\alpha, 0, \dots, 0)  \in \Mat_{1\times (n+1)r}(k^\sep)$. Let $\rF_{\delta}$ be the quasi-periodic function associated to $\rho$-biderivation $\delta$, where $\delta_t= \kappa_1\tau+\dots +\kappa_{r-1}\tau^{r-1}+\kappa_r\tau^r = \rho_t-\theta$. Then, by \cite[Prop. 3.2.2]{BP02} (see also \cite[Prop.~4.3.5(a)]{NPapanikolas21}) we obtain
	\begin{equation}\label{E:RAT111}
	-u+\alpha=\rF_{\delta}(u)=\kappa_1f_u^{(1)}(\theta) + \dots + \kappa_{r-1}f_u^{(r-1)}(\theta) + \kappa_r f_u^{(r)}(\theta).
		\end{equation}

 We now define the pre-$t$-motive $Y_{\alpha,n}$ of dimension $(n+1)r+1$ over $\ok(t)$ such that multiplication by $\sigma$ is given by $\Phi_{\alpha,n} := \begin{psmallmatrix}
		\Phi_{\rP_n \rho} & \mathbf{0} \\
		\bh_{\alpha, n} & 1
	\end{psmallmatrix} \in \Mat_{(n+1)r+1}(\ok[t])$. If we set $\bg_{\alpha, n} := (\bs_\alpha, \pd_t^1(\bs_\alpha), \dots, \pd_t^{n}(\bs_\alpha))$, where the hyperderivatives are taken entry-wise, then we have
	$ \bg_{\alpha, n}^{(-1)} \Phi_{\rP_n \rho} = \bg_{\alpha, n} + \bh_{\alpha, n}$.
	We set $\Psi_{\alpha,n} := \begin{psmallmatrix}
		\Psi_{\rP_n \rho} & \mathbf{0} \\
		\bg_{\alpha,n} \Psi_{\rP_n \rho} & 1
	\end{psmallmatrix} \in \Mat_{(n+1)r+1}(\TT)$ to obtain $\Psi_{\alpha,n}^{(-1)} = \Phi_{\alpha,n} \Psi_{\alpha, n}$. Thus, $Y_{\alpha,n}$ is rigid analytically trivial. The reader may consult \cite[Lem.~5.3.25]{NPapanikolas21} for motivation behind the construction of $\bg_{\alpha,n}$ and $\bh_{\alpha,n}$.
	
	\begin{proposition}[{cf.~\cite[Prop.~6.1.3]{P08}}]
	The pre-$t$-motive $Y_{\alpha,n}$ is a $t$-motive.
	\end{proposition}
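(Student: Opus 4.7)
The plan is to realize $Y_{\alpha,n}$ as the pre-$t$-motive associated to a rigid analytically trivial $\bA$-finite dual $t$-motive $\cY_{\alpha,n}$, following the pattern of Papanikolas's Proposition~6.1.3 in \cite{P08}. Since $\cT$ is by definition the strictly full Tannakian subcategory of $\cR$ generated by the essential image of rigid analytically trivial $\bA$-finite dual $t$-motives under $\cM \mapsto \ok(t)\otimes_{\ok[t]}\cM$, this will place $Y_{\alpha,n}$ in $\cT$.

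To construct $\cY_{\alpha,n}$, I would take the free $\ok[t]$-module with basis the entries of $\bsD_n\bsm$ together with one new generator $\mathbf{y}$, so that $\cY_{\alpha,n}$ sits in a short exact sequence of $\ok[t]$-modules
\[
0 \to \rP_n\cM_\rho \to \cY_{\alpha,n} \to \ok[t]\cdot \mathbf{y} \to 0.
\]
Extend the $\sigma$-action from $\rP_n\cM_\rho$ by declaring $\sigma\mathbf{y} := \alpha\, D_n m_1 + \mathbf{y}$ and enforcing the commutation $\sigma f = f^{(-1)}\sigma$ for $f \in \ok[t]$. Multiplication by $\sigma$ on the ordered basis $(\bsD_n\bsm^\tr, \mathbf{y})^\tr$ is then represented by $\Phi_{\alpha,n}$, from which $\ok(t) \otimes_{\ok[t]} \cY_{\alpha,n} \cong Y_{\alpha,n}$ as pre-$t$-motives.

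The substantive checks are the dual $t$-motive axioms for $\cY_{\alpha,n}$. $\bA$-finiteness over $\ok[t]$ is immediate from the construction (rank $(n+1)r+1$). For the divisibility condition $(t-\theta)^s\cY_{\alpha,n} \subseteq \sigma\cY_{\alpha,n}$ with $s$ large, the $\rP_n\cM_\rho$ part is inherited; for $\mathbf{y}$ I would use the identity $(t-\theta)^s\sigma = \sigma(t-\theta^q)^s$ (which follows from $\sigma\theta^q = \theta\sigma$) to compute
\[
(t-\theta)^s\mathbf{y} = (t-\theta)^s\sigma\mathbf{y} - \alpha(t-\theta)^s D_n m_1 = \sigma\bigl((t-\theta^q)^s\mathbf{y} - \alpha\eta\bigr),
\]
where $\eta \in \rP_n\cM_\rho$ realizes $(t-\theta)^s D_n m_1 = \sigma\eta$ for $s$ sufficiently large, using the dual $t$-motive property of $\rP_n\cM_\rho$. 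The main technical hurdle I anticipate is establishing that $\cY_{\alpha,n}$ is free and finitely generated over $\ok[\sigma]$ (of rank $n+2$, corresponding to a $t$-module of that dimension extending $\rP_n\rho$); this can be accomplished by combining an $\ok[\sigma]$-basis of $\rP_n\cM_\rho$ with the class of $\mathbf{y}$ and reducing any hypothetical $\ok[\sigma]$-linear relation to one inside $\rP_n\cM_\rho$ via the identity $\mathbf{y} = \sigma\mathbf{y} - \alpha D_n m_1$.

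Rigid analytic triviality of $\cY_{\alpha,n}$ is witnessed by the matrix $\Psi_{\alpha,n}$ already displayed preceding the proposition: the equation $\Psi_{\alpha,n}^{(-1)} = \Phi_{\alpha,n}\Psi_{\alpha,n}$ reduces block-wise to the identity $\bg_{\alpha,n}^{(-1)}\Phi_{\rP_n\rho} = \bg_{\alpha,n} + \bh_{\alpha,n}$, which in turn follows from the Anderson generating function functional equation for $f_u$ (giving the $n=0$ case through \eqref{E:RAT111}) by applying $\pd_t^i$ and using the commutation of hyperderivatives with Frobenius twists. Once all of this is in place, $Y_{\alpha,n}$ lies in the essential image of rigid analytically trivial $\bA$-finite dual $t$-motives under $\cM \mapsto \ok(t)\otimes_{\ok[t]}\cM$, and so $Y_{\alpha,n} \in \cT$.
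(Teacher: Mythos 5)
There is a genuine gap. You propose to give the free $\ok[t]$-module on the entries of $\bsD_n\bsm$ together with one new generator $\mathbf{y}$ a left $\ok[t,\sigma]$-module structure with $\sigma$-matrix $\Phi_{\alpha,n}$, and then to verify the dual-$t$-motive axioms directly. But the resulting module is \emph{not} finitely generated over $\ok[\sigma]$, so it is not a dual $t$-motive. Your $\sigma$-action makes the quotient by $\rP_n\cM_\rho$ isomorphic to $\ok[t]$ with $\sigma f = f^{(-1)}$, i.e.\ the trivial object $\mathbf{1}$, and that object is visibly not $\ok[\sigma]$-finite: for any finite set of generators $g_1,\dots,g_k$, the elements $c\,\sigma^j\cdot g_i = c\,g_i^{(-j)}$ all have $t$-degree bounded by $\max_i \deg_t g_i$, since Frobenius twisting does not change the degree in $t$, so high powers $t^N$ are missed. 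A quotient of a finitely generated $\ok[\sigma]$-module is finitely generated, so your candidate module cannot be $\ok[\sigma]$-finite either. Your sketched remedy for the hurdle --- reducing any hypothetical $\ok[\sigma]$-linear relation via $\mathbf{y} = \sigma\mathbf{y} - \alpha D_n m_1$ --- addresses freeness (absence of relations), whereas the obstruction is about spanning: $t\mathbf{y}$, $t^2\mathbf{y}$, $\dots$ do not lie in the $\ok[\sigma]$-span of $\mathbf{y}$ together with any finite set of $\ok[\sigma]$-generators of $\rP_n\cM_\rho$.

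The paper circumvents this, as does \cite[Prop.~6.1.3]{P08} which you cite, by inserting a Carlitz twist: one works with $\cN := \Mat_{1\times ((n+1)r+1)}(\ok[t])$ and $\sigma$-action $\sigma\bse := (t-\theta)\Phi_{\alpha,n}\bse$ rather than $\Phi_{\alpha,n}\bse$. The extra factor $(t-\theta)$ turns the quotient $\cN/(\cC\otimes_{\ok[t]}\rP_n\cM_\rho)$ into the Carlitz dual $t$-motive $\cC$, which \emph{is} $\ok[\sigma]$-free of rank one, and then \cite[Prop.~4.3.2]{ABP04} yields $\ok[\sigma]$-finiteness of $\cN$ from the two ends of the short exact sequence; the divisibility $(t-\theta)^v\cN\subseteq\sigma\cN$ is then checked essentially as you envisaged. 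Since $\ok(t)\otimes_{\ok[t]}\cN$ is the Carlitz twist $C\otimes Y_{\alpha,n}$, the discussion in \cite[$\S$3.4.10]{P08} lets one untwist to conclude $Y_{\alpha,n}\in\cT$. Your verification of the rigid analytic trivialization $\Psi_{\alpha,n}^{(-1)}=\Phi_{\alpha,n}\Psi_{\alpha,n}$ via the Anderson generating function functional equation and \eqref{E:RAT111} is fine and unaffected; it is only the realization as an $\bA$-finite dual $t$-motive that requires the twist.
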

	
	\begin{proof}
Set $\cN:= \Mat_{1\times (n+1)r+1}(\ok[t])$ and let $\bse := [\bse_1, \dots, \bse_{(n+1)r+1}]^\tr$ be its standard $\ok[t]$-basis. We give $\cN$ a left $\ok[t, \sigma]$-module structure by setting $\sigma \bse = (t-\theta)\Phi_{\alpha,n}\bse$. Let $\cC$ be the $\bA$-finite dual $t$-motive associated to the Carlitz module $\mathfrak{C}$ (rank 1 Drinfeld $\bA$-module) given by $\mathfrak{C}_t = \theta + \tau$ and let $C := \ok(t) \otimes_{\ok[t]} \cC$ be the corresponding pre-$t$-motive. We obtain the following short exact sequence of $\ok[t, \sigma]$-modules:
		\begin{equation}\label{SEStensor}
			0\rightarrow \cC \otimes_{\ok[t]} \rP_n\cM_\rho\rightarrow \cN \rightarrow \cC \rightarrow 0.
		\end{equation}
		Since $\cC$ and $\cC \otimes_{\ok[t]} \rP_n\cM_\rho$ are finitely generated left $\ok[\sigma]$-modules, it follows from \cite[Prop.~4.3.2]{ABP04} that $\cN$ is free and finitely generated as a left $\ok[\sigma]$-module. Since $\cC\otimes_{\ok[t]}\rP_n\cM_\rho$ is an $\bA$-finite dual $t$-motive, we have $(t-\theta)^{v_1}(\cC\otimes_{\ok[t]}\rP_n\cM_\rho) \subseteq \sigma(\cC\otimes_{\ok[t]}\rP_n\cM_\rho)$ for  some $v_1 \in \NN$. Moreover, $(t-\theta)\cC=\sigma \cC$ and so, by \eqref{SEStensor} we obtain $(t-\theta)^{v_2}\cN \subseteq \sigma\cN$ for $v_2 \in \NN$ sufficiently large. Thus, we see that $\cN$ is an $\bA$-finite dual $t$-motive. Then, it follows from the discussion in \cite[$\S$3.4.10]{P08} that $Y_{\alpha,n}$ is a $t$-motive.
	\end{proof}
	
	\subsection{Non-triviality in \texorpdfstring{$\Ext_{\cT}^1(\mathbf{1}, \rP_n M_\rho)$}{EPn}}\label{S:Ext} 
	We continue with the $t$-motive $Y_{\alpha,n}$ from the previous subsection. Let $\mathbf{1}$ denote the trivial object of the category $\cT$ from $\S$\ref{S:motives}. Note that $Y_{\alpha,n}$ represents a class in $\Ext_\cT^1(\mathbf{1},\rP_n M_\rho)$. Suppose $e \in \End_\cT(M_\rho)$ and let $\rE \in \Mat_r(\ok(t))$ be such that $e(\bsm) = \rE \bsm$. If we set 
	\begin{equation}\label{E:Maprho}
		\bE := \begin{psmallmatrix} \mathbf{0} &\dots &\mathbf{0}& \rE \\
			& \ddots & \ddots&\mathbf{0} \\
			&&\ddots&\vdots \\
			&&&\mathbf{0}
		\end{psmallmatrix} \in \Mat_{(n+1)r}(\ok(t)),
	\end{equation}
	then one checks easily that $\bE$ represents an element $\be$ of $\End_\cT(\rP_n M_\rho)$. For classes $Y_1$ and $Y_2$ in $\Ext_\cT^1(\mathbf{1},\rP_n M_\rho)$, if multiplication by $\sigma$ on suitable $\ok(t)$-bases are represented by $\begin{psmallmatrix}\Phi_{\rP_n \rho} & \mathbf{0}\\ \bv_1 & 1\end{psmallmatrix}$ and $\begin{psmallmatrix}\Phi_{\rP_n \rho} & \mathbf{0}\\ \bv_2 & 1\end{psmallmatrix}$ respectively, then their Baer sum in $\Ext_\cT^1(\mathbf{1},\rP_n M_\rho)$ is achieved by the matrix $\begin{psmallmatrix}\Phi_{\rP_n \rho} & \mathbf{0}\\ \bv_1+\bv_2 & 1\end{psmallmatrix}$. Moreover, we see that multiplication by $\sigma$ on a $\ok(t)$-basis of the pushout $\be_*Y_1$ is represented by $\begin{psmallmatrix}\Phi_{\rP_n \rho} & \mathbf{0}\\ \bv_1 \bE & 1\end{psmallmatrix}$. 

 Note that if $[K_\rho:k]=\bss$, then $\{\lambda_1, \dots , \lambda_r\}$ span a $K_\rho$-vector space of dimension $r/\bss$.
	
	\begin{theorem}\label{T:Trivial}
		Suppose $u_1, \dots, u_w \in \KK$ such that $\Exp_\rho(u_i) = \alpha_i \in k^\sep$ for each $1\leq i\leq w$ and $\dim_{K_\rho} \Span_{K_\rho}(\lambda_1, \dots, \lambda_r, u_1, \dots, u_w) = r/\bss+w$, where $[K_\rho:k]=\bss$. For $n \geq 1$, we let $Y_{i, n}:=Y_{\alpha_i,n}$ be defined as in $\S$\ref{S:quasilogtmotive}. Then, for $e_1, \dots, e_w \in \bK_\rho$, not all zero,  $S:= \be_{1*}Y_{1,n} + \dots + \be_{w*}Y_{w,n}$ is non-trivial in $\Ext_\cT^1(\mathbf{1},\rP_n M_\rho)$, where each $\be_{i} \in \End_\cT(\rP_n M_\rho)$ corresponds to $e_i$ as in \eqref{E:Maprho}.
	\end{theorem}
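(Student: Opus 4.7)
The plan is to argue by contradiction: if $S$ were trivial in $\Ext^1_\cT(\mathbf{1}, \rP_n M_\rho)$, then the companion class $S_0 := \sum_{i=1}^w e_{i*}Y_{\alpha_i, 0} \in \Ext^1_\cT(\mathbf{1}, M_\rho)$ would also be trivial, contradicting the $\End_\cT(M_\rho)$-linear independence statement of Chang--Papanikolas~\cite[Thm.~4.2.2]{CP12}. The mechanism is a blockwise analysis of the coboundary equation for $S$, exploiting that $\bE_i$ has its unique nonzero $r\times r$ block in position $(1,n+1)$ and $\bh_{\alpha_i,n}$ has its unique nonzero entry in the first coordinate.

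First I would compute the cocycle of $S$ explicitly. By the pushout formula $\bv \mapsto \bv\bE$ recalled just before the theorem, the class $\be_{i*}Y_{i,n}$ has cocycle $\bh_{\alpha_i,n}\bE_i$. Using the block shape of $\bE_i$ from~\eqref{E:Maprho} and that $\bh_{\alpha_i,n}=(\alpha_i,0,\dots,0)$, a direct multiplication gives $\bh_{\alpha_i,n}\bE_i = (\mathbf{0},\dots,\mathbf{0},\alpha_i(\rE_i)_{1,*})$, a row vector of length $(n+1)r$ whose only nonzero $r$-block is the last one. Summing over $i$, the cocycle of $S$ is
\[
\bv_S \;=\; (\mathbf{0},\dots,\mathbf{0},\bc), \qquad \bc \;=\; \sum_{i=1}^w \alpha_i(\rE_i)_{1,*} \in \Mat_{1\times r}(\ok(t)).
\]
Note that $\bc$ is precisely the cocycle of $S_0$, since in the $n=0$ case $e_{i*}Y_{\alpha_i,0}$ has cocycle $(\alpha_i,0,\dots,0)\rE_i = \alpha_i(\rE_i)_{1,*}$.

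Now assume $[S]=0$, so $\bv_S = \bu^{(-1)}\Phi_{\rP_n\rho} - \bu$ for some $\bu=(\bu_1,\dots,\bu_{n+1}) \in \Mat_{1\times(n+1)r}(\ok(t))$ decomposed into blocks of size $r$. Because $\Phi_{\rP_n\rho} = d_{t,n+1}[\Phi_\rho]$ is block upper triangular with $\Phi_\rho$ on the main diagonal and $\pd_t^k(\Phi_\rho)$ on the $k$-th super-diagonal, the block-$j_2$ component of the coboundary equation reads
\[
\sum_{j_1\leq j_2} \bu_{j_1}^{(-1)} \pd_t^{j_2-j_1}(\Phi_\rho) \;-\; \bu_{j_2} \;=\; (\bv_S)_{j_2}.
\]
For $j_2=1,\dots,n$ the right-hand side vanishes; inductively on $j_2$, once $\bu_1=\dots=\bu_{j_2-1}=0$ the equation reduces to $\bu_{j_2}^{(-1)}\Phi_\rho = \bu_{j_2}$, which says that $\bu_{j_2}\bsm \in M_\rho$ is $\sigma$-invariant. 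Since $M_\rho$ is simple over $\ok(t)[\sigma,\sigma^{-1}]$ by Proposition~\ref{P:0thpro} and $M_\rho \not\cong \mathbf{1}$ (as $\Phi_\rho$ in~\eqref{E:Phi} is nonconstant), we have $\Hom_\cT(\mathbf{1},M_\rho)=0$ and therefore $\bu_{j_2}=0$.

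Substituting $\bu_1=\dots=\bu_n=0$ into the block-$(n+1)$ equation yields $\bu_{n+1}^{(-1)}\Phi_\rho - \bu_{n+1} = \bc$, exhibiting $\bc$ as a coboundary in the extension theory of $M_\rho$; thus $[S_0]=0$ in $\Ext^1_\cT(\mathbf{1},M_\rho)$. However, under the hypothesis $\dim_{K_\rho}\Span_{K_\rho}(\lambda_1,\dots,\lambda_r,u_1,\dots,u_w) = r/\bss + w$, \cite[Thm.~4.2.2]{CP12} asserts that $[Y_{\alpha_1,0}],\dots,[Y_{\alpha_w,0}]$ are $\bK_\rho$-linearly independent in $\Ext^1_\cT(\mathbf{1},M_\rho)$, so $[S_0] = \sum_i e_i\cdot[Y_{\alpha_i,0}] \neq 0$, the required contradiction. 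The main bookkeeping challenge is tracking the block decomposition carefully; conceptually, the argument establishes that the inclusion $M_\rho \hookrightarrow \rP_n M_\rho$ via $m\mapsto D_0 m$ induces an injective map on the relevant Ext classes.
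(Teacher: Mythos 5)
Your proposal is correct, but it takes a genuinely different route from the paper. The paper re-runs the Chang--Papanikolas argument at level $n$: starting from the triviality equation $\gamma^{(-1)}\Phi_S=(\Phi_{\rP_n\rho}\oplus(1))\gamma$, it extracts the entries in positions $((n+1)r+1,(n-j)r+1)$, applies the alternating operators $(-1)^j\pd_t^j$ and telescopes, then specializes at $t=\theta$ both this identity and the companion identity $\gamma\Psi_S=(\Psi_{\rP_n\rho}\oplus(1))\delta$ to produce an explicit $K_\rho$-linear relation $\sum_m(\cdots)\lambda_m+\sum_i(\rE_i)_{11}(\theta)u_i=0$, contradicting the span hypothesis directly. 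You instead observe that the cocycle of $S$ is supported entirely in the $D_0$-block, so that $[S]=\iota_*[S_0]$ for the inclusion $\iota:M_\rho\cong\rP_0M_\rho\hookrightarrow\rP_nM_\rho$ and the level-zero class $[S_0]=\sum_i e_{i*}Y_{\alpha_i,0}$; the block-triangularity of $d_{t,n+1}[\Phi_\rho]$ together with $\Hom_\cT(\mathbf{1},M_\rho)=0$ then forces $\gamma_0=\dots=\gamma_{n-1}=0$ and exhibits the level-zero cocycle as a coboundary, so nontriviality follows by quoting \cite[Thm.~4.2.2]{CP12} as a black box. Your route is shorter, avoids all specialization at $t=\theta$ and the auxiliary matrix $\delta$, and makes transparent that the content is concentrated at level zero (equivalently, that $\iota_*$ is injective because every Jordan--H\"older constituent of $\rP_{n-1}M_\rho$ is a copy of $M_\rho\not\cong\mathbf{1}$); the paper's route is self-contained in that it re-derives the linear relation rather than citing the $n=0$ case. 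Two small points to tighten: your justification ``$\Phi_\rho$ is nonconstant'' for $M_\rho\not\cong\mathbf{1}$ is too quick --- for $r\geq 2$ use a dimension count, and for $r=1$ note that $\Phi_\rho=c/c^{(-1)}$ is impossible for $c\in\ok(t)^\times$ by comparing divisors (or because $\dim\Gamma_{M_\rho}=r^2/\bss\geq 1$); and your coboundary equation has the opposite sign to the correct one, $\bv_S=\bu-\bu^{(-1)}\Phi_{\rP_n\rho}$, which is harmless for the argument.
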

	
	\begin{proof}
		We adapt the ideas of the proof of \cite[Thm.~4.2.2]{CP12}. For each $1\leq i\leq w$, we let $\bh_{i,n}:=\bh_{\alpha_i, n}$ and $\bg_{i,n}:=\bg_{\alpha_i, n}$. Fix $\rE_i \in \Mat_r(\ok(t))$ so that $e_i(\bsm) = \rE_i \bsm$ for each $1\leq i\leq w$. Then $\be_i(\bsD_n \bsm) = \bE_i \cdot \bsD_n \bsm$, where $\bE_i$ is as in \eqref{E:Maprho} with $\rE_i=\rE$. By choosing an appropriate $\ok(t)$-basis~$\bs$ for $S$, multiplication by $\sigma$ on $\bs$ is represented by \[\Phi_S~:=~\begin{pmatrix}
			\Phi_{\rP_n \rho} & \mathbf{0} \\
			\sum_{i=1}^w\bh_{i,n} \bE_i & 1
		\end{pmatrix} \in \GL_{(n+1)r+1}(\ok(t)), \] and a corresponding rigid analytic trivialization is represented by \[\Psi_S~:=~\begin{pmatrix}
			\Psi_{\rP_n \rho} & \mathbf{0} \\
			\sum_{i=1}^w\bg_{i, n} \bE_i \Psi_{\rP_n \rho}& 1
		\end{pmatrix} \in \GL_{(n+1) r+1}(\LL).\]
		Suppose on the contrary that $S$ is trivial in $\Ext_\cT^1(\mathbf{1},\rP_n M_\rho)$. Then, there exists another $\ok(t)$-basis $\bs'$ of $S$ such that $\sigma \bs'= (\Phi_{\rP_n\rho} \oplus (1))\bs'$, where $\Phi_{\rP_n\rho} \oplus (1)$ is the block diagonal matrix with $\Phi_{\rP_n\rho}$ and $1$ in the diagonal blocks and all other entries are zero. If we let $\gamma = \begin{psmallmatrix} \Iden_{(n+1)r} & \mathbf{0}\\ \gamma_0 \dots \gamma_n&1\end{psmallmatrix} \in \GL_{(n+1)r+1}(\ok(t))$,
		where $\gamma_j := (\gamma_{j1},\dots ,\gamma_{jr})$ for each $0\leq j \leq n$ be the matrix such that $\bs' := \gamma\bs$, then we obtain 
		\begin{equation}\label{E:Spe1}
			\gamma^{(-1)}\Phi_S = (\Phi_{\rP_n\rho} \oplus (1)) \gamma.
		\end{equation}
		Note from \cite[Proof of Prop.~3.4.5]{P08} that all denominators of entries of $\gamma$ are in $\bA$ and so in particular, for each $0\leq j \leq n$ the entries of  $\gamma_{j}$ are regular at $t = \theta, \theta^q, \theta^{q^2}, \dots$\,. Using $\Phi_{\rP_n\rho} = d_{t,n+1}[\Phi_\rho]$, the $((n+1)r+1,(n-j)r+1)$-th entry of \eqref{E:Spe1} for each $1 \leq j \leq n$ is 
		\begin{equation*}\label{E:entry}
			\sum\limits_{h=0}^{n-j}\gamma_{h, r}^{(-1)} \pd_t^{n-j-h}\left((t-\theta)/\kappa_r^{(-r)}\right) = \gamma_{n-j,1}, 
		\end{equation*}
		and the $((n+1)r+1,n r+1)$-th entry is 
		\begin{equation*}\label{E:EntryL}
			\sum\limits_{h=0}^n \gamma_{h, r}^{(-1)} \pd_{t}^{n-h}\left((t-\theta)/\kappa_{r}^{(-r)}\right) +\sum\limits_{i=1}^w \alpha_i (\rE_i)_{11} = \gamma_{n,1}.
		\end{equation*}
		For each $0 \leq j \leq n$, applying $(-1)^j \pd_t^j(\cdot)$ to the $((n+1)r+1,(n-j)r+1)$-th entry and then adding them (we also use the product rule of hyperderivatives and the property $\pd_t^v\pd_t^w (f(t)) = \binom{v+w}{v} \pd_t^{v+w}(f(t))$), we obtain
		\begin{equation}\label{E:pdgammadelta}
			\sum_{j=0}^n(-1)^j\pd_t^j(\gamma_{n-j,r})^{(-1)}(t-\theta)/\kappa_r^{(-r)} + \sum_{i=1}^w\alpha_i(\rE_i)_{11}=\sum_{j=0}^n(-1)^j\pd_t^j(\gamma_{n-j,1}).
		\end{equation}
		Specializing both sides of this equation at $t=\theta$, we obtain 
		\begin{equation}\label{E:hyperE11}
			\sum_{j=0}^n(-1)^j\pd_t^j(\gamma_{n-j,1})(\theta)=\sum_{i=1}^w\alpha_i(\rE_i)_{11}(\theta).
		\end{equation}
		By \eqref{E:Spe1}, we also have $(\gamma \Psi_S)^{(-1)} = (\Phi_{\rP_n\rho} \oplus (1)) (\gamma \Psi_S)$ and so by \cite[$\S$4.1.6]{P08}, for some 
		$\delta = \begin{psmallmatrix} \Iden_{(n+1)r} & \mathbf{0}\\ \delta_0 \dots \delta_n&1\end{psmallmatrix}~\in~\GL_{(n+1)r+1}(\bk)$
		where $\delta_j~:=~(\delta_{j1},\dots ,\delta_{jr})$ for each $0\leq j \leq n$,  we have 
		\begin{equation}\label{E:gammadeltapro}
			\gamma \Psi_S = (\Psi_{\rP_n\rho} \oplus (1))\delta.
		\end{equation}
		Since $\Psi_{\rP_n\rho} = d_{t,n+1}[\Psi_\rho]$, by applying to \eqref{E:gammadeltapro} the same methods applied on \eqref{E:Spe1} to obtain \eqref{E:pdgammadelta}, it follows that 
		\begin{equation}\label{ExtProof2}
			\sum_{j=0}^n(-1)^j\pd_t^j(\gamma_{n-j}) + \sum_{i=1}^w\bs_i \rE_i = \sum_{j=0}^n(-1)^j\pd_t^j(\delta_{n-j})\Psi_\rho^{-1},
		\end{equation}
	where for $\pd_t^j(\gamma_{n-j})$ and $\pd_t^j(\delta_{n-j})$, the hyperderivatives are taken entry-wise. Since for each $1\leq i \leq w$ the first entry of $\bs_{i}(\theta)$ is $u_i-\alpha_i$ by \eqref{E:RAT111}, using \cite[Prop.~4.1.1(b)]{CP12} and specializing both sides of \eqref{ExtProof2} at $t=\theta$, we see that 
		\begin{equation*}\label{E:Spe}
			\sum\limits_{j=0}^n(-1)^j\pd_t^j(\gamma_{n-j,1})(\theta) + \sum_{i=1}^w(u_i-\alpha_i)(\rE_i)_{11}(\theta) =-\sum\limits_{m=1}^r \sum\limits_{j=0}^n(-1)^j\pd_t^j(\delta_{n-j,m})(\theta)\lambda_m,
		\end{equation*}
		and so from \eqref{E:hyperE11} we have
		\[
		\sum\limits_{m=1}^r \sum\limits_{j=0}^n(-1)^j\pd_t^j(\delta_{n-j,m})(\theta)\lambda_m+ \sum_{i=1}^w(\rE_i)_{11}(\theta)u_i=0.
		\]
		Since $e_1, \dots, e_w$ are not all zero, $\rE_i$ is nonzero for some $1 \leq i \leq w$. Moreover, by Proposition \ref{P:0thpro} we see that $\bK_\rho \cong K_\rho$ and so $\rE_i$ is invertible. By \cite[Prop.~4.1.1(b),(c)]{CP12} we get $(\rE_i)_{11}(\theta)\in K_\rho^\times$ and thus we get a contradiction to the assumption that $\{u_1, \dots, u_w\}$ is $K_\rho$-linearly independent from each other and is $K_\rho$-linearly independent from $\{\lambda_1, \dots, \lambda_r\}$.
	\end{proof}

	\subsection{Construction of the \texorpdfstring{$t$}{t}-motives \texorpdfstring{$Y$}{Y} and \texorpdfstring{$N_n$}{N_n}}\label{S:YandN}
	
	In this subsection, we construct a $t$-motive that is suitable for the investigation of the hyperderivatives of logarithms and quasi-logarithms of the Drinfeld $\bA$-module $\rho$, and the study of its Galois group. Suppose that we have $u_1, \dots, u_w  \in \KK$ with $\Exp_\rho(u_i) = \alpha_i \in k^\sep$ for each $1 \leq i \leq w$. For $n \geq 0$, we let $\bh_{\alpha_i}:=\bh_{\alpha_i, n}, \bg_{\alpha_i}:=\bg_{\alpha_i, n}, Y_{i, n}:=Y_{\alpha_i,n}, \Phi_{i, n}:=\Phi_{\alpha_i,n}$ and $\Psi_{i, n}:=\Psi_{\alpha_i,n}$ defined as in $\S$\ref{S:quasilogtmotive}. The matrix $\Psi_n := \oplus_{i=1}^w \Psi_{i, n}$ is a rigid analytic trivialization for $Y_n := \oplus_{i=1}^w Y_{i, n}$. \par

	Define the $t$-motive $N_n$ such that multiplication by $\sigma$ on a $\ok(t)$-basis is given by $\Phi_{N_n} \in \GL_{(n+1)rw+1}(\ok(t))$ along with rigid analytic trivialization $\Psi_{N_n} \in \GL_{(n+1)rw+1}(\TT)$ such that
	\begin{equation}\label{E:PhiPsiN}
		\Phi_{N_n} := \begin{pmatrix} \Phi_{\rP_n \rho} & & &\\ & \ddots &&\\ &&\Phi_{\rP_n \rho} &\\ \bh_{\alpha_1} & \dots & \bh_{\alpha_w} & 1 \end{pmatrix}, \quad \text{and} \quad \Psi_{N_n} := \begin{pmatrix} \Psi_{\rP_n \rho} & & &\\ & \ddots &&\\ &&\Psi_{\rP_n \rho} &\\ \bg_{\alpha_1}\Psi_{\rP_n \rho} & \dots & \bg_{\alpha_w}\Psi_{\rP_n \rho} & 1 \end{pmatrix}.
	\end{equation}

Similar to $n=0$ case \cite[\S5.1]{CP12}, $N_n$ is an extension of $\mathbf{1}$ by $(\rP_n M_\rho)^w$ which is a pullback of the surjective map $Y_n \twoheadrightarrow \mathbf{1}^w$ and the diagonal map $\mathbf{1} \rightarrow \mathbf{1}^w$. Thus, the two $t$-motives $Y_n$ and $N_n$ generate the same Tannakian subcategory of $\cT$ and hence the Galois groups $\Gamma_{Y_n}$ and $\Gamma_{N_n}$ are isomorphic. For any $\bk$-algebra $\rR$, an element of $\Gamma_{N_n}(\rR)$ is of the form 
	\begin{equation}\label{E:emelemtNn}
	\nu = \begin{pmatrix} \mu &&& \\ & \ddots && \\ && \mu &\\ \bv_1 &\dots & \bv_w & 1\end{pmatrix},
	\end{equation}
	where $\mu \in \Gamma_{\rP_n M_\rho}(\rR)$ and for each $1 \leq i \leq w$, we have $\bv_i = (\bv_{i,1}, \dots, \bv_{i,n+1})$ such that $\bv_{i,h} \in \GG_a^r(\rR)= \Mat_{1\times r}(\rR)$ for each $0 \leq h \leq n$. Since $(\rP_n M_\rho)^w$ is a sub-$t$-motive of $N_n$, we have the following short exact sequence of affine group schemes over $\bk$,
	\begin{equation}\label{E:SESlog}
		1 \rightarrow X_n \rightarrow \Gamma_{N_n} \xrightarrow{\pi_n} \Gamma_{\rP_n M_\rho} \rightarrow 1,
	\end{equation}
	where $\pi_n^{(\rR)}:\Gamma_{N_n}(\rR)\rightarrow \Gamma_{\rP_nM_\rho}(\rR)$ is the map $\nu \mapsto \mu$ (cf.~\cite[p.138]{CP12}). 
	It can be checked directly that via conjugation, \eqref{E:SESlog} gives an action of any $\mu \in \Gamma_{\rP_n M_\rho}(\rR)$ on 
	\[\bv = \left( \begin{array}{cccc} \Iden_{(n+1)r} & &&\\ & \ddots & &\\ &&\Iden_{(n+1)r} & \\ \bu_1 & \dots & \bu_w & 1 \end{array} \right) \in X_n(\rR)
	\]
	given by
	\begin{equation}\label{E:Actionlog}
		\nu \bv \nu^{-1}=\left( \begin{array}{cccc} \Iden_{(n+1)r} & &&\\ & \ddots & &\\ &&\Iden_{(n+1)r} & \\ \bu_1\mu^{-1} & \dots & \bu_w \mu^{-1}  & 1 \end{array} \right).
	\end{equation}

 For $n \geq 0$, recall from \eqref{E:basisM} that if the entries of $\bsm \in \Mat_{r\times 1}(M_\rho)$ form a $\ok(t)$-basis of $M_\rho$, then the entries of $\bsD_n \bsm$ form a $\ok(t)$-basis of $\rP_n M_\rho$. Let $[\bsD_n \bsm^\tr, y]^\tr$ be a $\ok(t)$-basis of $N_n$. Then, the entries of $\Psi_{N_n}^{-1}[\bsD_n \bsm^\tr, y]^\tr$ form a $\FF_q(t)$-basis of $N_n^B$ \cite[Prop. 3.3.9]{P08}. By construction, $\rP_jM_\rho$ is a sub-$t$-motive of $N_n$ for each $0\leq j \leq n-1$ and we have a short exact sequence of $t$-motives
		\begin{equation}\label{E:ProSESLog}
			0 \rightarrow \rP_jM_\rho \xrightarrow{\iota} N_n \xrightarrow{\boldsymbol{\mathrm{Pr}}_{n-j-1}} N_{n-j-1} \rightarrow 0,
		\end{equation}
		where $\boldsymbol{\mathrm{Pr}}_{n-j-1}(D_hm) := D_{h-j-1}m$ for $h > j$, $\boldsymbol{\mathrm{Pr}}_{n-j-1}(D_hm) := 0$ for $h \leq j$ and $m \in M_\rho$, and $\boldsymbol{\mathrm{Pr}}_{n-j-1}(x) := x$ for $x \in N_n / \rP_n M_\rho$. Thus, as $t$-motives 
		$	N_n / \rP_jM_\rho \cong N_{n-j-1}$
		and so, $N_{n-j-1}$ is an object in the Tannakian category $\cT_{N_n}$. Therefore, we have a surjective map of affine group schemes $\Gamma_{N_n} \twoheadrightarrow \Gamma_{N_{n-j-1}}$. We now determine this surjective map. For any $\bk$-algebra $\rR$, we recall the action of $\Gamma_{N_n}(\rR)$ on $\rR \otimes_\bk N_n^B$ from \cite[\S4.5]{P08} as follows: for any $\nu_n \in \Gamma_{N_n}(\rR)$, $b \in \rR$ and $a_h \in \Mat_{1\times r}(\rR)$ where $0 \leq h \leq n$, the action of $\nu_n$ on $(a_0, \dots, a_n, b) \cdot \Psi_{N_n}^{-1}[\bsD_n \bsm^\tr, y]^\tr \in R \otimes_{\bk} N_n^B$ is 
		\begin{equation}\label{E:action22}
			(a_0, \dots, a_n, b) \cdot \Psi_{N_n}^{-1}[\bsD_n \bsm^\tr, y]^\tr \mapsto (a_0, \dots, a_n, b) \cdot \nu_n^{-1} \Psi_{N_n}^{-1}[\bsD_n \bsm^\tr, y]^\tr.
		\end{equation}
		\noindent
		Note that $\Psi_{N_n}^{-1}[\bsD_n \bsm^\tr, y]^\tr =[(d_{t,n+1}[\Psi_\rho]^{-1} \bsD_n \bsm)^\tr, -\bg_{\alpha_1} D_n\bsm +y]^\tr$ by the definition of $\Psi_{N_n}$ (see \eqref{E:PhiPsiN}). We restrict the action of $\nu_n$ to $\rR~\otimes_\bk~N_{n-j-1}^B$ via the map $\boldsymbol{\mathrm{Pr}}_{n-j-1}$ in \eqref{E:ProSESLog}. Note that an element of $\Gamma_{N_{n}}(\rR)$ is of the form $\begin{psmallmatrix}\mu_n & {\bf 0}\\ \bsw_n & 1\end{psmallmatrix}$, where $\mu_n \in \Gamma_{\rP_n M_\rho}(\rR)$ and $\bsw_n =(w_0, \dots, w_n)$ such that each $w_h \in \GG_a^{r}(\rR)= \Mat_{1\times r}(\rR)$. Through $\boldsymbol{\mathrm{Pr}}_{n-j-1}$, we see that $\nu_n$ leaves $N_{n-j-1}^B$ invariant and so for $\nu_n = \begin{psmallmatrix}\mu_n & {\bf 0}\\ \bsw_n & 1\end{psmallmatrix} \in \Gamma_{N_n}(\rR)$, we obtain 
  \begin{equation}\label{E:NntoN}
		\nu_{n-j-1} = \begin{pmatrix}\mu_{n-j-1} & {\bf 0}\\ \bsw_{n-j-1} & 1\end{pmatrix} \in \Gamma_{N_{n-j-1}}(\rR),
		\end{equation}
		where $\mu_{n-j-1}$ is the matrix formed by the upper left $r(n-j)\times r(n-j)$ square of $\mu_n$ and $\bsw_{n-j-1} =(w_0, \dots, w_{n-j-1})$. Note that by Theorem~\ref{T:Element}, we have $\mu_{n-j-1} \in \Gamma_{\rP_{n-j-1} M_\rho}(\rR)$. 
		Thus, the surjective map $\Xi_{n-j-1}:\Gamma_{N_n} \rightarrow \Gamma_{N_{n-j-1}}$ is given by (cf.~\cite[Prop.~3.1.2]{CP11}) \begin{equation}\label{E:mapNtoN_0}
			\Xi_{n-j-1}^{(\rR)}:\nu_n \mapsto \nu_{n-j-1}.
		\end{equation}
	
	\begin{lemma}\label{L:defined1}
		Let $n \geq 1$. If $K_\rho$ is separable over $k$, then $X_n$ in \eqref{E:SESlog} is $\bk$-smooth. 
	\end{lemma}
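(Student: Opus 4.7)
The plan is to mirror the strategy of Lemma~\ref{L:defined}: by \cite[Cor.~12.1.3]{Springer98} it suffices to prove that the induced tangent map $\rd\pi_n$ at the identity surjects onto $\Lie \Gamma_{\rP_n M_\rho}$, from which the $\bk$-smoothness of $X_n$ follows (using that $\Gamma_{\rP_n M_\rho}$ is itself $\bk$-smooth by Lemma~\ref{L:defined}). First I would base-change everything to $\overline{\bk}$ and conjugate $\Gamma_{N_n} \times_\bk \overline{\bk}$ by the block-diagonal matrix $\diag(d_{t,n+1}[\rA], \ldots, d_{t,n+1}[\rA], 1) \in \GL_{(n+1)rw+1}(\bk^\sep)$, where $\rA \in \GL_r(\bk^\sep)$ is the matrix from the proof of Lemma~\ref{L:defined} (obtained from the separability of $K_\rho/k$ together with Proposition~\ref{P:0thpro} and \cite[Cor.~3.5.6]{CP12}) that simultaneously diagonalizes $\Gamma_{M_\rho} \times_\bk \overline{\bk}$ into $\prod_{i=1}^\bss (\GL_{r/\bss}/\overline{\bk})_i$. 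This change of basis is compatible with $\pi_n$ and with the explicit description of $\overline{\Gamma}_{\rP_n M_\rho}$ given in \eqref{E:Gammaellsep}.

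I would then proceed by induction on $n$. For the base case $n=0$, the $\bk$-smoothness of $X_0$ is the separable case treated in \cite[Lem.~5.1.3]{CP12} (in the preliminary arXiv:1005.5120v1 version), which yields the surjectivity of $\rd\pi_0$ onto $\Lie \overline{\Gamma}_{M_\rho} = \prod_{i=1}^\bss \Mat_{r/\bss}(\overline{\bk})_i$. For the inductive step, given that $X_{n-1}$ is $\bk$-smooth, for any Toeplitz-block element $\overline{\xi} \in \Lie \overline{\Gamma}_{\rP_n M_\rho}$ of the shape dictated by \eqref{E:Gammaellsep}, I would construct an explicit lift in $\Lie \overline{\Gamma}_{N_n}$ by combining the inductive lift of the projection onto $\Lie \overline{\Gamma}_{\rP_{n-1} M_\rho}$ with a base-case lift for the newly introduced outermost Toeplitz diagonal. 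The block-by-block structure of both sides parallels exactly the construction in the proof of Lemma~\ref{L:defined}.

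The main obstacle is verifying that such a blockwise lift actually satisfies the additional defining equations of $\overline{\Gamma}_{N_n}$ beyond those of $\overline{\Gamma}_{\rP_n M_\rho}^w$, namely the equations coming from the bottom row $\bg_{\alpha_i, n}\Psi_{\rP_n\rho}$ of $\Psi_{N_n}$. The key observation is that $\bg_{\alpha_i, n} = (\bs_{\alpha_i}, \pd_t^1(\bs_{\alpha_i}), \dots, \pd_t^n(\bs_{\alpha_i}))$ is by construction a vector of hyperderivatives, and hyperdifferentiation intertwines the $d$-matrix change of basis in precisely the manner recorded in \eqref{E:XandY}. Consequently, these extra equations are themselves hyperderivatives of the base-case equations, and the surjectivity of $\rd\pi_n$ reduces to that of $\rd\pi_0$ combined with the Toeplitz-block structure from \eqref{E:Gammaellsep}, yielding the $\bk$-smoothness of $X_n$.
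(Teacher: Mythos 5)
Your setup is on track: reducing to surjectivity of $\rd\pi_n$ via \cite[Cor.~12.1.3]{Springer98}, conjugating by the block-diagonal $d$-matrix of $\rA$ (from separability), and using the Toeplitz description \eqref{E:Gammaellsep} of $\overline{\Gamma}_{\rP_n M_\rho}$ all agree with what the paper does. The gap is in your central step. You propose to "construct an explicit lift by combining the inductive lift of the projection onto $\Lie\overline{\Gamma}_{\rP_{n-1}M_\rho}$ with a base-case lift for the newly introduced outermost Toeplitz diagonal," and you justify this by claiming that the extra defining equations of $\overline{\Gamma}_{N_n}$ (those arising from the rows $\bg_{\alpha_i,n}\Psi_{\rP_n\rho}$) are "themselves hyperderivatives of the base-case equations." This is not justified and is essentially circular: the defining ideal of $\Gamma_{N_n}$ is what the whole argument is trying to control, and nothing established to this point tells you that it is generated by hyperderivatives of the defining ideal of $\Gamma_{N_0}$. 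The identity \eqref{E:XandY} is about how conjugation by $d_{t,n+1}[\rA]$ acts on the coordinates of $\Gamma_{\rP_n M_\rho}$, a group whose ideal was explicitly computed (Corollary~\ref{T:LeqGamma}); it says nothing a priori about the ideal of $\Gamma_{N_n}$. And "combining" a lift in $\Lie\overline{\Gamma}_{N_{n-1}}$ with a lift in $\Lie\overline{\Gamma}_{N_0}$ is not a defined operation: these live in different ambient groups with no map between them that would produce an element of $\Lie\overline{\Gamma}_{N_n}$.

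What the paper's proof actually supplies, and your proposal omits, is the existence argument. The paper works entirely inside $\GL_{(n+1)rw+1}$, builds the Zariski closure $\overline{X}_{n,i}$ of a concretely generated subgroup of $\overline{\Gamma}_{N_n}$, and then deals with the obstruction head-on: if $\overline{X}_{n,i}$ has full dimension, lifting is trivial; if not, one gets a kernel $Q_{n,i}$ inside an additive group of extra coordinates, and the paper proves $\dim Q_{n,i}=0$ by a commutator/permutation argument (Claims~\ref{CL:CLaim1} and~\ref{CL:Claim3}). Only after $Q_{n,i}$ is shown to vanish does it become possible to construct one-dimensional subgroups $T'_{uv}, U'_{uv}$ projecting isomorphically onto the generators $T_{uv}, U_{uv}$ of $\Lie(\overline{\Gamma}_{\rP_nM_\rho}/\overline{\bk})_i$, and hence to conclude surjectivity of the tangent map. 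Your proposal never addresses the possibility that $\overline{\pi}_n$ might fail to split, which is exactly the danger the $Q_{n,i}=0$ claim rules out; without some replacement for that step, the proof does not go through.
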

	
	\begin{proof}
	We adapt the ideas of the proof of \cite[Prop.~4.1.2]{CP11} and the proof of a lemma from a preliminary version of \cite{CP12} (Lemma~5.1.3: arXiv:1005.5120v1). By \cite[Cor. 12.1.3]{Springer98} it suffices to show that for $n \geq 1$, the induced tangent map $\rd\pi_n$ at the identity is surjective onto $\Lie \Gamma_{\rP_{n} M_\rho}$. We prove this for $w=1$ as the argument used in this case can be applied in a straightforward manner to prove the arbitrary $w$ case. We leave this task to the reader. Since $\bK_\rho$ is separable over $\bk$ (by hypothesis, Proposition~\ref{P:0thpro}, and  Remark~\ref{R:0thpro}), we see from \cite[Cor.~3.5.6]{CP12} and \cite[p.61 Problem~14]{Waterhouse} that through conjugation by some $\rJ \in \GL_r({\bk^\sep})$, we have an isomorphism
		\[
		\Gamma_{M_\rho} \times_{\bk} \bK_\rho \xrightarrow{\cong} \prod_{i=1}^{\bss} (\GL_{r/\bss}/\bK_\rho)_i,
		\]
		where 
		\[
		\prod_{i=1}^{\bss} (\GL_{r/\bss}/\bK_\rho)_i:= \left\{ \begin{pmatrix} \GL_{r/\bss} &&\\ & \ddots&\\ &&\GL_{r/\bss} \end{pmatrix} \right\} ,
		\]
		and $(\GL_{r/\bss}/\bK_\rho)_i$ is the canonical embedding of $\GL_{r/\bss}/\bK_\rho$ into the $i$-th diagonal block matrix of $\GL_{r}/\bK_\rho$. Making a change of basis, we obtain
		\[
		\Gamma_{M_\rho} \times_{\bk} \overline{\bk} \xrightarrow{\cong} \prod_{i=1}^{\bss} (\GL_{r/\bss}/\overline{\bk})_i.
		\] 
  For $n\geq 1$, it follows that via conjugation by $d_{t,n+1}[\rJ] \in \GL_{(n+1)r}({\bk^\sep})$ on $\Gamma_{\rP_{n} M_\rho}$, we obtain $\overline{\Gamma}_{\rP_n M_\rho}$, an algebraic subgroup of $\GL_{(n+1)r}/\overline{\bk}$, such that there is an isomorphism 
		\begin{equation}\label{E:abar}
   \Gamma_{\rP_n M_\rho} \times_{{\bk}}~\overline{\bk}\xrightarrow{\cong} \overline{\Gamma}_{\rP_n M_\rho}.
		\end{equation}
 		Let $\left(\oplus_{i=1}^w d_{t,n+1}[\rJ]\right) \oplus (1) \in \GL_{(n+1)rw+1}(\bk^\sep)$ be the block diagonal matrix with $d_{t,n+1}[\rJ]$ in the first $w$ diagonal blocks and $1$ in the last diagonal, and all other entries are zero. Then, via conjugation by $\left(\oplus_{i=1}^w d_{t,n+1}[\rJ]\right) \oplus (1)$ on $\Gamma_{N_n}$ we obtain $\overline{\Gamma}_{N_n}$ such that we have an isomorphism $\Gamma_{N_n} \times_\bk \overline{\bk} \cong \overline{\Gamma}_{N_n}$. Moreover, $\overline{\Gamma}_{N_n}$ is an algebraic subgroup of $\GL_{(n+1)rw+1}/\overline{\bk}$ such that $\overline{\pi}_n: \overline{\Gamma}_{N_n} \rightarrow \overline{\Gamma}_{\rP_n M_\rho}$ induced by $\pi_n$ in \eqref{E:SESlog} is surjective. Thus, we are reduced to proving that the induced tangent map $\rd\overline{\pi}_n: \Lie\overline{\Gamma}_{N_n} \rightarrow \Lie\overline{\Gamma}_{\rP_n M_\rho}$ is surjective. 
		
First we determine $\overline{\Gamma}_{\rP_n M_\rho}$. Recall $\bX$, the coordinates of $\Gamma_{\rP_n M_\rho}$ from \eqref{E:CoordinatesX}. Since $d_{t,n+1}[\rJ]$ and its inverse are block upper triangular matrices, similar to $\bX$ we make the choice to let the coordinates of $\overline{\Gamma}_{\rP_n M_\rho}$ be
		\[
		\bY := \begin{pmatrix}
			\bY_0 & \bY_1 & \dots&\bY_n \\
			        & \bY_0&\ddots&\vdots\\
			&&\ddots&\bY_1\\
			&&&\bY_0
		\end{pmatrix},
		\]
		where $\bY_h := ((Y_h)_{ij})$, an $r \times r$ matrix for $0 \leq h \leq n$. Then, by construction we have $\bX = d_{t, n+1}[\rJ] \bY d_{t, n+1}[\rJ]^{-1}$ and so for each $0 \leq w \leq n$, we obtain 
		\[\bX_w=  \sum_{\substack{w_1+w_2=w \\ w_1, w_2 \geq 0}} \sum\limits_{h=0}^{w_1} \pd_t^{w_{1}-h}(\rJ)\cdot \bY_{w_2}\cdot (\pd_t^{h}(\rJ))^{-1},
		\]
		where the hyperderivatives are taken entry-wise. Then, we have 
		\begin{equation*}
			\begin{split}
				\vect(\bX_w) &= \sum_{\substack{w_1+w_2=w \\ w_1, w_2 \geq 0}} \sum\limits_{h=0}^{w_1} \left([(\pd_t^{h}(\rJ))^{-1}]^\tr \otimes  \pd_t^{w_1-h}(\rJ)\right) \cdot \vect(\bY_{w_2})\\
				&=  \sum_{\substack{w_1+w_2=w \\ w_1, w_2 \geq 0}} \pd_t^{w_1}\left((\rJ^{-1})^\tr \otimes \rJ\right) \cdot \vect(\bY_{w_2}),
			\end{split}
		\end{equation*}
		where we obtain the first equality by using properties of the Kronecker product and the second equality by further applying the product rule for hyperderivatives. This implies
		\begin{equation}\label{E:XandY}
			\vect( [\bX_n, \dots, \bX_0]^\tr) = d_{t, n+1}[(\rJ^{-1})^\tr \otimes \rJ] \cdot \vect( [\bY_n, \dots, \bY_0]^\tr) 		\end{equation}
		where we set $\vect( [\bX_n, \dots, \bX_0]^\tr) := [(\vect{\bX_n})^\tr, \dots, (\vect{\bX_0})^\tr]^\tr$,
  and we further define $\vect( [\bY_n, \dots, \bY_0]^\tr)$ similarly. For $0\leq i\leq n$, let $\overline{\bk}[\bY_0, \dots, \bY_i, 1/\det \bY_0]$ denote the localization of $\overline{\bk}[\bY_0, \dots, \bY_i]$ at $\det \bY_0$. Then, by \eqref{E:dimell}, Corollary~\ref{T:LeqGamma}, and \eqref{E:XandY}, the defining ideal of 
  $\overline{\Gamma}_{\rP_n M_\rho}$ 
  via the isomorphism \eqref{E:abar}  is the ideal in $\overline{\bk}[\bY_0, \dots, \bY_n, 1/\det \bY_0]$ generated by the entries of
		\begin{equation}\label{E:defeqsep}
			d_{t, n+1}[\bBB\cdot \left((\rJ^{-1})^\tr \otimes \rJ\right)] \cdot \vect( [\bY_n, \dots, \bY_0]^\tr). 
	\end{equation}

   		It is clear by observing $\prod_{i=1}^{\bss} (\GL_{r/\bss}/\overline{\bk})_i$ that for $\bY_0= ((Y_0)_{i,j})$, the defining ideal of $\prod_{i=1}^{\bss} (\GL_{r/\bss}/\overline{\bk})_i$ is the ideal in $\overline{\bk}[\bY_0, 1/\det \bY_0]$ generated by 
			\begin{equation}\label{E:dim00}
				\left\{(Y_0)_{i,j} \, \mid \,  (i,j) \neq (ur/\bss+v_1, ur/\bss +v_2),  0 \leq u \leq \bss-1 \, \textup{and} \, 1 \leq v_1, v_2 \leq r/\bss\right\}.
			\end{equation}
   		\noindent Moreover, by \eqref{E:dim0} and \eqref{E:XandY}, the defining ideal of $\prod_{i=1}^{\bss} (\GL_{r/\bss}/\overline{\bk})_i$ is also generated by the entries of 
			\begin{equation}\label{E:dim0sep}
				(\bBB\cdot ((\rJ^{-1})^\tr \otimes \rJ)) \cdot \vect(\bY_0). 
			\end{equation}
      \noindent By \eqref{E:dim00}, in the defining ideal of $\prod_{i=1}^{\bss} (\GL_{r/\bss}/\overline{\bk})_i$, there are no linear relations among 
   \begin{equation}\label{E:dim00not}
			\left\{(Y_0)_{i,j}  \, \mid \,  (i,j) = (ur/\bss+v_1, ur/\bss +v_2), 0 \leq u \leq {\bss}-1 \, \textup{and} \, 1 \leq v_1, v_2 \leq  r/\bss\right\}.
			\end{equation}
			Since \eqref{E:dim0sep} also generate the defining ideal of $\prod_{i=1}^{\bss} (\GL_{r/\bss}/\overline{\bk})_i$, 
we see that the entries of $\bBB\cdot ((\rJ^{-1})^\tr \otimes \rJ)$ that give linear relations among the variables in \eqref{E:dim00not} are all zero. Therefore, the hyperderivatives of these entries are also all zero. Using this and using \eqref{E:dim00}, for $\gamma \in \prod_{i=1}^{\bss} (\GL_{r/\bss}/\overline{\bk})_i$ and for $0\leq \ell \leq n$, we see that 
   
			\begin{equation}\label{E:hyperelementpro1}
				\pd_t^\ell\left(\bBB\cdot ((\rJ^{-1})^\tr \otimes \rJ)\right)\cdot \gamma 
				= {\bf{0}}.
			\end{equation}
			
			\noindent Moreover, by \eqref{E:dim0sep}, for $1\leq h \leq n$, the defining ideal of $\prod_{i=1}^s (\Mat_{r/\bss}/\overline{\bk})_i$ is the ideal in $\overline{\bk}[\bY_h]$ generated by the entries of
			\begin{equation*}\label{E:dim0sep11}
				(\bBB\cdot ((\rJ^{-1})^\tr \otimes \rJ)) \cdot \vect(\bY_h),
			\end{equation*}
   and similar to \eqref{E:hyperelementpro1}, for $\gamma' \in \prod_{i=1}^{\bss} (\Mat_{r/\bss}/\overline{\bk})_i$ and for $0\leq \ell \leq n$, we see that
   \[
   \pd_t^\ell\left(\bBB\cdot ((\rJ^{-1})^\tr \otimes \rJ)\right)\cdot \gamma' 
				= {\bf{0}}.
   \]
			Therefore, for all $\gamma_0\in \prod\limits_{i=1}^s(\GL_{r/\bss}/\overline{\bk})_i$ and $\gamma_h \in \prod\limits_{i=1}^s(\Mat_{r/\bss}/\overline{\bk})_i$ where $1\leq h \leq n$, we have 
			\begin{equation*}
				d_{t,n+1}[\bBB\cdot ((\rJ^{-1})^\tr \otimes \rJ)] \cdot \left( [\gamma_n, \dots, \gamma_0]^\tr\right) ={\bf{0}}.
			\end{equation*}
			Thus, by \eqref{E:defeqsep} we have
			\begin{equation}\label{E:Gammasepn}
				\overline{\Gamma}_{\rP_n M_\rho} = 
   \left\{ 
			\begin{pmatrix}
				\begin{array}{c|c|c|c} 
					\gamma_0 & \gamma_1 & \dots & \gamma_{n}\\
					\hline 
					& \gamma_0 &\ddots &\vdots\\
					\hline 
					&&\ddots&\gamma_1\\
					\hline 
					&&&\gamma_0
				\end{array} 
			\end{pmatrix}: \begin{aligned} \gamma_0 \in \prod\limits_{i=1}^s(\GL_{r/\bss}/\overline{\bk})_i, \, &\gamma_h \in \prod\limits_{i=1}^s(\Mat_{r/\bss}/\overline{\bk})_i,\\  & 1 \leq h \leq n
			\end{aligned}
			\right\}, 
		\end{equation}
  where for each $i$, $(\GL_{r/\bss}/\overline{\bk})_i$ and $(\Mat_{r/\bss}/\overline{\bk})_i$ are the canonical embeddings of $\GL_{r/\bss}/\overline{\bk}$ and $\Mat_{r/\bss}/\overline{\bk}$ respectively into the $i$-th diagonal block matrices of $\GL_{r}/\overline{\bk}$ and $\Mat_{r}/\overline{\bk}$.

We are now ready to prove that the induced tangent map $\rd\overline{\pi}_n: \Lie\overline{\Gamma}_{N_n} \rightarrow \Lie\overline{\Gamma}_{\rP_n M_\rho}$ is surjective. Let $w=1$ and consider the short exact sequence of linear algebraic groups
		\begin{equation}\label{E:overlineSES1}
			1\rightarrow \overline{X}_n\rightarrow \overline{\Gamma}_{N_n} \xrightarrow{\overline{\pi}_n} \overline{\Gamma}_{\rP_n M_\rho} \rightarrow 1.
		\end{equation}
  First suppose $n =1$. Then, by \eqref{E:Gammasepn}, 
 \begin{equation}\label{E:Gammasep1}
			\overline{\Gamma}_{\rP_1M_\rho} = \left\{ 
			\begin{pmatrix}
					\gamma_0 & \gamma_1\\
					{\bf{0}} & \gamma_0
			\end{pmatrix} \, : \, \gamma_0 \in \prod\limits_{i=1}^s(\GL_{r/\bss}/\overline{\bk})_i, \, \gamma_1 \in \prod\limits_{i=1}^s(\Mat_{r/\bss}/\overline{\bk})_i
			\right\}, 
		\end{equation}
  and by \eqref{E:emelemtNn},
\begin{equation}\label{E:GammasepNn1}
  \overline{\Gamma}_{N_1} \subseteq \left\{ \begin{pmatrix}
\gamma_0 & \gamma_1 & {\bf{0}}\\
{\bf{0}} &\gamma_0 & {\bf{0}}\\
\bz_0 & \bz_1 &1 \\ 
\end{pmatrix}  \, : \, \begin{pmatrix}
					\gamma_0 & \gamma_1\\
					{\bf{0}} & \gamma_0
			\end{pmatrix} \in \overline{\Gamma}_{\rP_1M_\rho}, \, \bz_{0}, \bz_1 \in \GG_a^r
				\right\}.
  \end{equation}
From $\overline{\pi}_1$, we see that $\overline{X}_{1}$ is contained in the $2r$-dimensional additive group
		\[
		G:= \left\{ 
		\begin{pmatrix}
			\Iden_{r/\bss} &   &  &\\ 
			&\ddots &  & \\ 
			&&\Iden_{r/\bss} &\\
			\bv_1 &\dots &\bv_{2\bss}&1
		\end{pmatrix} \, : \, \bv_i \in \GG_a^{r/\bss}
		\right\},
		\]
		where we call $\bv_1, \dots, \bv_{2\bss}$ the coordinates of $G$. We see that via conjugation,  $\overline{X}_1(\overline{\bk})$ has a $\overline{\Gamma}_{\rP_1 M_\rho}(\overline{\bk})$-module structure coming from \eqref{E:overlineSES1} (see \eqref{E:Actionlog}). Using \eqref{E:Gammasep1} and this module structure, one checks easily that there is a natural decomposition $\overline{X}_1(\overline{\bk}) = \prod_{i=1}^{2\bss}W_i$ such that each $W_i$ is either zero or $\overline{\bk}^{r/\bss}$. Fix any $1 \leq i \leq \bss$. For any $\xi_i \in \GL_{r/\bss}(\overline{\bk})$, we let 
		\[
		\overline{\xi}_i = \begin{psmallmatrix} 
			\Iden_{r/\bss} &   &   &   && \mathbf{0}  &   &&&& \\ 
			&\ddots & &&  &&\ddots&&&&\\ 
			&& \xi_i &&& && \mathbf{0} &&&\\
			&& &\ddots&& &&&\ddots&&\\
			&&&&\Iden_{r/\bss}& &&&&\mathbf{0} &\\
			\hline
			&&&&&\Iden_{r/\bss} &&&& &\\ 
			&&&&&& \ddots &&& &\\ 
			&&&&&   & &\xi_i&& &\\ 
			&&&&&    &&& \ddots & &\\ 
			&&&&& && & &\Iden_{r/\bss}&\\
			\bu_1&\dots & \bu_i& \dots&\bu_s& \bu_{s+1} & \dots& \bu_{s+i} &\dots&\bu_{2s}&1
		\end{psmallmatrix}
		\in \overline{\Gamma}_{N_1}(\overline{\bk})
		\]
		be an arbitrary element, which by \eqref{E:overlineSES1}  for $n=1$ and \eqref{E:Gammasep1} is a pre-image of the matrix formed by the upper left $2r\times 2r$ square of $\overline{\xi_i}$ under the map $\overline{\pi}_1$. For each $j\neq i$ with $1 \leq j \leq \bss$, we claim that if $\bu_j \neq {\bf{0}}$ and $\bu_{\bss+j} \neq {\bf{0}}$, then $W_j = W_{\bss+j}= \overline{\bk}^{r/\bss}$. To prove this claim, assuming that $\bu_j \neq {\bf{0}}$ and $\bu_{\bss+j} \neq {\bf{0}}$ we pick $\delta_j \in \GL_{r/\bss}(\overline{\bk})$ so that $\bu_j\delta_j-\bu_j \neq {\bf{0}}$ and $\bu_{\bss+j}\delta_j-\bu_{\bss+j} \neq {\bf{0}}$, and let $\overline{\delta}_j \in \overline{\Gamma}_{N_1}(\overline{\bk})$ be such that 
		\[
		\overline{\pi}_1(\overline{\delta}_j) = \begin{psmallmatrix} 
			\Iden_{r/\bss} &   &   &   && \mathbf{0}  &   &&& \\ 
			&\ddots & &&  &&\ddots&&&\\ 
			&& \delta_j &&& && \mathbf{0} &&\\
			&& &\ddots&& &&&\ddots&\\
			&&&&\Iden_{r/\bss}& &&&&\mathbf{0} \\
			\hline
			&&&&&\Iden_{r/\bss} &&&& \\ 
			&&&&&& \ddots &&& \\ 
			&&&&&   & &\delta_j&& \\ 
			&&&&&    &&& \ddots & \\ 
			&&&&& && & &\Iden_{r/\bss}
		\end{psmallmatrix}
		\in \overline{\Gamma}_{\rP_1 M_\rho}(\overline{\bk}).\]
		Then one checks directly that $\overline{\delta}_j^{-1}\overline{\xi}_i\overline{\delta}_j\overline{\xi}_i^{-1}$ is an element of $\overline{X}_1(\overline{\bk})$ and its $\bv_j$ and  $\bv_{\bss+j} $ coordinate vectors respectively are $\bu_j\delta_j-\bu_j$ and $\bu_{\bss+j}\delta_j-\bu_{\bss+j}$, and so it follows that $W_j =W_{\bss+j}= \overline{\bk}^{r/\bss}$. Therefore, multiplying $\overline{\xi}_i$ by a suitable element of $\overline{X}_1(\overline{\bk})$ we get an element of the form
		\begin{equation}\label{E:MatrixGammaprime}
		\overline{\xi}_i'= \begin{psmallmatrix} 
			\Iden_{r/\bss} &   &   &   && {\bf 0}  &   &&&& \\ 
			&\ddots & &&  &&\ddots&&&&\\ 
			&& \xi_i &&& && \mathbf{0} &&&\\
			&& &\ddots&& &&&\ddots&&\\
			&&&&\Iden_{r/\bss}& &&&&{\bf 0} &\\
			\hline
			&&&&&\Iden_{r/\bss} &&&& &\\ 
			&&&&&& \ddots &&& &\\ 
			&&&&&   & &\xi_i&& &\\ 
			&&&&&    &&& \ddots & &\\ 
			&&&&& && & &\Iden_{r/\bss}&\\
			\mathbf{0}&\dots & \bu_i& \dots&\mathbf{0}&\mathbf{0} & \dots& \bu_{s+i} &\dots&\mathbf{0}&1
		\end{psmallmatrix}
		\in \overline{\Gamma}_{N_1}(\overline{\bk}).
		\end{equation}
		For any $\bbb_i \in \Mat_{r/\bss}(\overline{\bk})$, by using a similar method as above where we take an element of the form $\overline{\delta}_j$, we obtain an element of the form
		\begin{equation}\label{E:Matrixbprime}
		\overline{\bbb}_i'= \begin{psmallmatrix} 
			\Iden_{r/\bss} &   &   &   && {\bf 0}  &   &&&& \\ 
			&\ddots & &&  &&\ddots&&&&\\ 
			&& \Iden_{r/\bss} &&& && \bbb_i &&&\\
			&& &\ddots&& &&&\ddots&&\\
			&&&&\Iden_{r/\bss}& &&&&{\bf 0} &\\
			\hline
			&&&&&\Iden_{r/\bss} &&&& &\\ 
			&&&&&& \ddots &&& &\\ 
			&&&&&   & &\Iden_{r/\bss}&& &\\ 
			&&&&&    &&& \ddots & &\\ 
			&&&&& && & &\Iden_{r/\bss}&\\
			\mathbf{0}&\dots & \bw_i& \dots&\mathbf{0}&\mathbf{0} & \dots& \bw_{s+i} &\dots&\mathbf{0}&1
		\end{psmallmatrix}
		\in \overline{\Gamma}_{N_1}(\overline{\bk}),
		\end{equation}
		which is a pre-image of the matrix formed by the upper left $2r \times 2r$ square of $\overline{\bbb}_i'$ under the map $\overline{\pi}_1$. Let $\overline{H}_{1,i}$ be the Zariski closure inside $\overline{\Gamma}_{N_1}$ of the subgroup generated by all $ \overline{\xi}_i'$ with $\xi_i$ running over all elements of $\GL_{r/\bss}(\overline{\bk})$ and all $\overline{\bbb}_i'$ with $\bbb_i$ running over all elements of $\Mat_{r/\bss}(\overline{\bk})$. For each $1\leq i \leq \bss$, let
  \begin{equation}\label{E:GammaP1i}
  \left(\overline{\Gamma}_{\rP_1 M_\rho}/\overline{\bk}\right)_i := \left\{ \begin{pmatrix} \gamma_0 & \gamma_1 \\ \mathbf{0} & \gamma_0\end{pmatrix} \, : \, \gamma_0 \in (\GL_{r/\bss}/\overline{\bk})_i; \, \gamma_1 \in (\Mat_{r/\bss}/\overline{\bk})_i \right\}.
  \end{equation}
  Note that $\dim \overline{H}_{1,i}\leq 2r^2/\bss^2+2r/\bss$. 
		
		First suppose that $\dim \overline{H}_{1,i}= 2r^2/\bss^2+2r/\bss$. Then, we could simply take $\overline{\xi}_i'$ and $\overline{\bbb}_i'$ so that $\bu_i, \bu_{\bss+i}, \bw_i$ and $\bw_{\bss+i}$ are zero. Taking the Zariski closure $\overline{S}_{1,i}$ inside $\overline{\Gamma}_{N_1}$ of the subgroup generated by all such $\overline{\xi}_i'$ and $\bbb_i'$ with $\xi_i$ and $\bbb_i$ running over all elements of $\GL_{r/\bss}(\overline{\bk})$ and $\Mat_{r/\bss}(\overline{\bk})$ respectively, we obtain  
		\begin{equation}\label{E:Xsplitsmoothlog}
			\overline{S}_{1,i} = \left\{ \begin{pmatrix} \nu_i & \mathbf{0} \\ \mathbf{0} & 1 \end{pmatrix} \, : \,\nu_i \in (\overline{\Gamma}_{\rP_1 M_\rho}/\overline{\bk})_i \right\}.  	\end{equation}
   Thus, $\overline{\Gamma}_{N_1}$ contains a copy of $(\overline{\Gamma}_{\rP_1 M_\rho}/\overline{\bk})_i$ and so, restricting $\rd \overline{\pi}_1$ to $\Lie \overline{S}_{1,i}$, we obtain a surjection onto $\Lie (\overline{\Gamma}_{\rP_1 M_\rho}/\overline{\bk})_i$. As we vary all $1\leq i\leq \bss$, the surjection of $\rd \overline{{\pi}}_{1}$ follows. \par

		Next, suppose that $\dim \overline{H}_{1,i}< 2r^2/\bss^2+2r/\bss$. Then, via $\overline{{\pi}}_1$ we have a short exact sequence
		\[
		1 \rightarrow \overline{Q}_{1,i} \rightarrow \overline{H}_{1,i} \xrightarrow{{\overline{\pi}}_{1,i}} (\overline{\Gamma}_{\rP_1M_\rho}/\overline{\bk})_i \rightarrow 1,
		\] 
		where $\dim \overline{Q}_{1,i}< 2r/\bss$ and $\overline{Q}_{1,i}$ is contained in an additive subgroup of $G$ whose $\bv_j$ coordinate vector is zero for all $j\neq i, \bss+i$, that is, 
  \begin{equation}\label{E:LQ1i}
\overline{Q}_{1,i} \subseteq 
			\left\{{\begin{psmallmatrix} 
				\Iden_{r/\bss} &   &   &   && \mathbf{0}  &   &&&& \\ 
				&\ddots & &&  &&\ddots&&&&\\ 
				&& \Iden_{r/\bss} &&& && \mathbf{0} &&&\\
				&& &\ddots&& &&&\ddots&&\\
				&&&&\Iden_{r/\bss}& &&&&\mathbf{0} &\\
				\hline
				&&&&&\Iden_{r/\bss} &&&& &\\ 
				&&&&&& \ddots &&& &\\ 
				&&&&&   & &\Iden_{r/\bss}&& &\\ 
				&&&&&    &&& \ddots & &\\ 
				&&&&& && & &\Iden_{r/\bss}&\\
				{\bf{0}}&\dots & \bv_i& \dots&{\bf{0}}& {\bf{0}} & \dots& \bv_{\bss+i} &\dots&{\bf{0}}&1
			\end{psmallmatrix}} : \begin{aligned} 
				&\bv_i, \bv_{\bss+i} \in \GG_a^{r/\bss}, \, \\
				& \bv_i = (\bv_{i,1}, \dots, \bv_{i,r/\bss}),\, \\
								& \bv_{\bss+i} = (\bv_{\bss+i,1}, \dots, \bv_{\bss+i,r/\bss})
			\end{aligned}
			\right\}.
			\end{equation}

	\begin{claim3}\label{CL:CLaim1}
  For $\overline{Q}_{1,i}$  if some entry of the $\bv_{i}$ coordinate vector is non-zero or $\dim \overline{Q}_{1,i}\neq r/\bss$,
			 then $\dim \overline{Q}_{1,i} = 0$.
		\end{claim3}
		
		\begin{proof}[Proof of Claim~\ref{CL:CLaim1}]
			We follow the argument of the proof of \cite[Lem. 4.1.1]{CP11}. Suppose $\dim \overline{Q}_{1,i} = m$, where $1\leq m < 2r/\bss$. Note that $\overline{Q}_{1,i}$ is a vector group. 
If $\bv_{i,j}, \bv_{\bss+i,j}\neq 0$ for all $1 \leq j \leq r/\bss$, let $\mu \in \overline{Q}_{1,i}(\overline{\bk})$ such that all the entries of $\mu$ in the $\bv_i$ coordinate vector are non-zero. For $a \in \overline{\bk}$, $a\neq 0,1$ and $1\leq \ell \leq r/\bss$, pick $\eta_{\ell}, \varkappa_{\ell} \in \overline{H}_{1,i}(\overline{\bk})$ such that $\overline{\pi}_{1,i}(\eta_\ell), \overline{\pi}_{1,i}(\varkappa_\ell) \in (\overline{\Gamma}_{\rP_1 M_\rho}(\overline{\bk}))_i$ where
			\begin{equation}\label{E:Matrixelement}
			\overline{\pi}_{1,i}(\eta_\ell) =
			{\begin{psmallmatrix} 
				\Iden_{r/\bss} &   &   &   && \mathbf{0}  &   &&& \\ 
				&\ddots & &&  &&\ddots&&&\\ 
				&& \mathfrak{a}_{\ell} &&& && \mathbf{0} &&\\
				&& &\ddots&& &&&\ddots&\\
				&&&&\Iden_{r/\bss}& &&&&\mathbf{0} \\
				\hline
				&&&&&\Iden_{r/\bss} &&&& \\ 
				&&&&&& \ddots &&& \\ 
				&&&&&   & &\mathfrak{a}_{\ell}&& \\ 
				&&&&&    &&& \ddots & \\ 
				&&&&& && & &\Iden_{r/\bss}\\
			\end{psmallmatrix}}, \, \, 
			\text{for} \,  
		\mathfrak{a}_{\ell} := \begin{psmallmatrix}
				1 &&&&\\
				&\ddots &&&\\
				&&a &&\\
                && &\ddots&\\
				&&&&1\\
			\end{psmallmatrix},
			\end{equation}
   \[
			\overline{\pi}_{1,i}(\varkappa_{\ell}) =
			{\begin{psmallmatrix} 
				\Iden_{r/\bss} &   &   &   && \mathbf{0}  &   &&& \\ 
				&\ddots & &&  &&\ddots&&&\\ 
				&& \Iden_{r/\bss} &&& && \mathfrak{b}_{\ell} &&\\
				&& &\ddots&& &&&\ddots&\\
				&&&&\Iden_{r/\bss}& &&&&\mathbf{0} \\
				\hline
				&&&&&\Iden_{r/\bss} &&&& \\ 
				&&&&&& \ddots &&& \\ 
				&&&&&   & &\Iden_{r/\bss}&& \\ 
				&&&&&    &&& \ddots & \\ 
				&&&&& && & &\Iden_{r/\bss}\\
			\end{psmallmatrix}}, \, \, 
			\text{for} \,  
		\mathfrak{b}_{\ell} :=  \begin{psmallmatrix}
				0 &&&&\\
				&\ddots &&&\\
				&&a &&\\
                && &\ddots&\\
				&&&&0\\
			\end{psmallmatrix},
			\]
  where $\mathfrak{a}_{\ell}$ and $\mathfrak{b}_{\ell}$ are $r/\bss \times r/\bss$, and $a$ is in the $\ell$-th diagonal entries. One checks directly that the $2r/\bss$ vectors $\eta_\ell^{-1}\mu \eta_\ell$, $\varkappa_\ell^{-1}\mu \varkappa_\ell$ where $1\leq \ell \leq m$ are $\overline{\bk}$-linearly independent in $\overline{Q}_{1,i}(\overline{\bk})$, which contradicts the assumption $\dim \overline{Q}_{1,i}=m<2r/\bss$. Thus, $\bv_{i,u}=0$ for some $1\leq u \leq r/\bss$. Since $m \neq 0$, at least one of $\bv_{i,j}, \bv_{\bss+i,j}$ for some $1 \leq j \leq r/\bss$ is non-zero, say $\bv_{i,v}$ or $\bv_{\bss+i,v}$.

  Let $\textbf{P}_{u,v}$ be the permutation matrix obtained by switching the $((i-1)r/\bss+u)$-th column and the $((i-1)r/\bss+v)$-column of the $r\times r$ identity matrix. Pick $\gamma \in \overline{H}_{1,i}(\overline{\bk})$ such that
			\begin{equation}\label{E:permutaionuvmatrixG}
			\overline{\pi}_{1,i}(\gamma) = \begin{pmatrix}
				\textbf{P}_{u,v} & \\
				&\textbf{P}_{u,v}
			\end{pmatrix}\in (\overline{\Gamma}_{\rP_1 M_\rho}(\overline{\bk}))_i.
			\end{equation}

If $\bv_{i,v}$ is non-zero, then since $\gamma^{-1}\overline{Q}_{1,i}\gamma \subseteq \overline{Q}_{1,i}$ we get a contradiction to $\bv_{i,u}=0$. Therefore, $\dim \overline{Q}_{1,i}=0$.

Next suppose $\bv_{\bss+i,v}$ is non-zero but $\bv_{i,j}=0$ for all $1\leq j\leq r/\bss$. Then by hypothesis, $m<r/\bss$. If $\bv_{\bss+i,j}\neq 0$ for all $1 \leq j \leq r/\bss$, let $\vartheta \in \overline{Q}_{1,i}(\overline{\bk})$ such that all the entries of $\vartheta$ in the $\bv_{\bss+i}$ coordinate vector are non-zero. Then, one checks directly that for $\eta_{\ell}$ as in \eqref{E:Matrixelement}, the $r/\bss$ vectors $\eta_\ell^{-1}\vartheta \eta_\ell$ are $\overline{\bk}$-linearly independent in $\overline{Q}_{1,i}(\overline{\bk})$, which contradicts the assumption $\dim Q_{1,i}=m<r/\bss$. Thus, $\bv_{\bss+i,u}=0$ for some $1\leq u \leq r/\bss$. Then, since $\bv_{\bss+i,v}$ is non-zero and $\gamma^{-1}Q_{1,i}\gamma \subseteq \overline{Q}_{1,i}$ for $\gamma$ as in \eqref{E:permutaionuvmatrixG}, we get a contradiction to $\bv_{\bss+i,u}=0$. Therefore, $\dim \overline{Q}_{1,i}=0$.
		\end{proof}

		\begin{claim3}\label{CL:Claim2}
			If $\dim \overline{Q}_{1,i}=0$, then $\rd \overline{{\pi}}_{1,i}: \Lie \overline{H}_{1,i} \rightarrow \Lie (\overline{\Gamma}_{\rP_1M_\rho}/\overline{\bk})_i$ is surjective.
		\end{claim3}
		
		\begin{proof}[Proof of Claim~\ref{CL:Claim2}]
			To prove that $\rd \overline{{\pi}}_{1,i}$ is surjective, we follow the argument of the proof of \cite[Prop.~4.1.2]{CP11}. We let the coordinates of $\overline{H}_{1,i}$ 
			be as follows:
			\begin{equation}\label{E:coordinatesX1}
				\bZ_1 :=\begin{pmatrix}
					\mathcal{Z}_0 & \mathcal{Z}_1 &\bf{0}\\
					& \mathcal{Z}_0 &{\bf{0}}\\
					\mathcal{W}_0 & \mathcal{W}_1 &1
				\end{pmatrix},
			\end{equation}
			where 
			\[
			\mathcal{Z}_0 = \begin{psmallmatrix}
				\Iden_{r/\bss} &&&&\\
				& \ddots &&&\\
				&&(Z_0) &&\\
				&&&\ddots&\\
				&&&&\Iden_{r/\bss}
			\end{psmallmatrix},
			\quad
			\mathcal{Z}_1 = \begin{psmallmatrix}
				{\bf{0}} &&&&\\
				& \ddots &&&\\
				&&(Z_1) &&\\
				&&&\ddots&\\
				&&&&{\bf{0}} 
			\end{psmallmatrix},
			\]
			\noindent such that $(Z_0)$ and $(Z_1)$ are the coordinates of $\GL_{r/\bss}$ and $\Mat_{r/\bss}$ respectively. For each $h = 0,1$, we define $(Z_h)$ to be the $r/\bss \times r/\bss$ block $((Z_h)_{a,b})$ for $1 \leq a,b \leq r/\bss$ and $\mathcal{W}_h := (0, \dots, 0,(W_h),0, \dots 0)$, where we set $(W_h):= (W_{h,1}, \dots, W_{h,r/\bss})$. For $1\leq u, v \leq r/\bss$, we define the following one-dimensional subgroups of $\overline{\Gamma}_{\rP_1 M_\rho}$: 
			\begin{equation}\label{E:onedimgroup}
				T_{uv} := \left\{\begin{pmatrix}
					\mathcal{B}_{uv} & {\bf{0}}\\
					{\bf{0}} & \mathcal{B}_{uv}
				\end{pmatrix}\right\},
				\quad  U_{uv} := \left\{\begin{pmatrix}
					\Iden_r &\mathcal{C}_{uv}\\
					{\bf{0}}& \Iden_r
				\end{pmatrix}\right\},
			\end{equation}
			where we set
			\begin{equation}\label{E:mathcalBC}
				\mathcal{B}_{uv} := \begin{psmallmatrix}
					\Iden_{r/\bss} &&&&\\
					& \ddots &&&\\
					&&B_{uv} &&\\
					&&&\ddots &\\
					&&&&\Iden_{r/\bss}
				\end{psmallmatrix}, \quad \mathcal{C}_{uv} := \begin{psmallmatrix}
					{\bf{0}} &&&&\\
					& \ddots &&&\\
					&&C_{uv} &&\\
					&&&\ddots &\\
					&&&&{\bf{0}}
				\end{psmallmatrix}
			\end{equation}
			such that 
			\begin{equation*}\label{E:subonedimengrp}
				B_{vv} := \left\{ \begin{psmallmatrix}
					1 &&&& \\
					&\ddots&&&\\
					&&*&&\\
					&&&\ddots & \\
					&&& &1
				\end{psmallmatrix} \right\}, \,
				B_{uv} := \left\{ \begin{psmallmatrix}
					1&0&\dots&0 \\
					0&\ddots&*&\vdots \\
					\vdots&\ddots&\ddots&0  \\
					0&\dots&0  &1
				\end{psmallmatrix} \right\},
			\, \text{and} \, 
				C_{uv} := \left\{ \begin{psmallmatrix}
					0 &&&& \\
					&&\ddots&*&\\
					&&&\ddots & \\
					&&& &0
				\end{psmallmatrix} \right\},
			\end{equation*}
			where $*$ in $B_{uv}$ and $C_{uv}$ are in the $(u,v)$-coordinates. Note that the Lie algebras of the $2 \cdot r^2/\bss^2$ algebraic groups $T_{uv}$ and $U_{uv}$ span $\Lie (\overline{\Gamma}_{\rP_1 M_\rho}/\overline{\bk})_i$. In what follows, we construct one dimensional algebraic subgroups $T_{uv}'$ and $U'_{uv}$ of $\overline{H}_{1,i}$ so that $T_{uv}' \cong T_{uv}$ and $U_{uv}' \cong U_{uv}$.
			Then, since $\Lie(\cdot)$ is a left exact functor, it follows that $\Lie T_{uv}' \cong \Lie T_{uv}$  and $\Lie U_{uv}' \cong \Lie U_{uv}$, and so $\rd\overline{{\pi}}_{1,i}$ is surjective. 
			Since $\overline{Q}_{1,i}$ is a zero dimensional vector group, $\overline{\pi}_{1,i}$ is injective on points and so it follows by checking directly that 
			\begin{itemize}
				\item for $w \neq v$, all $W_{0,w}$ and $W_{1,w}$ coordinates of $\overline{{\pi}}_{1,i}^{-1}(T_{uv})$ are zero;
				\item all $(W_0)$ coordinates of $\overline{{\pi}}_{1,i}^{-1}(U_{uv})$ are zero, and for $w \neq  v$, all $W_{1,w}$ coordinates of $\overline{{\pi}}_{1,i}^{-1}(U_{uv})$ are zero.
			\end{itemize}
			\noindent To construct $T_{vv}'$, we let $a_{v} \in \overline{\bk}^\times\setminus \overline{\FF_q}^\times$ and pick element  $\gamma_{1,v} \in \overline{H}_{1,i}(\overline{\bk})$ so that 
			\begin{equation}\label{E:elementspecial1}
				\overline{{\pi}}_{1,i}(\gamma_{1,v}) = \begin{pmatrix} 
					\mathfrak{a}_v &\\
					&\mathfrak{a}_v
				\end{pmatrix},\enskip \textup{where} \enskip \mathfrak{a}_v = \begin{psmallmatrix}
					1 &&&&\\
					&\ddots &&&\\
					&&a_v&&\\
					&&&\ddots&\\
					&&&&1
				\end{psmallmatrix} \in (\GL_{r/\bss}(\overline{\bk}))_i,\end{equation}
			\noindent such that $a_v$ is in the $(i\cdot r/\bss +v)$-th diagonal entry of $\mathfrak{a}_v$. For $1\leq v \leq r/\bss$, we let $c_{0,v}$ and $c_{1,v}$ respectively be the $(2r+1,(i-1)\cdot r/\bss+v)$-th and the $(2r+1,\left(r+(i-1)\cdot r/\bss\right)+v)$-th the entry of $\gamma_{1,v}$. Let $T_{vv}'$ be the Zariski closure of the subgroup of $\overline{H}_{1,i}$ generated by $\gamma_{1,v}$, for each $1 \leq v \leq r/\bss$. Then, one checks directly that the defining equations of the one dimensional subgroup $T_{vv}'$ of $\overline{H}_{1,i}$ can be written as follows:
			\[ \begin{cases} 
				(a_{v}-1)W_{0,v}-c_{0,v}((Z_0)_{v,v}-1)=0, \enskip  1\leq v \leq r^2/\bss,\\
				(Z_0)_{w,w}=1, \enskip  w \neq v, \enskip 1\leq v \leq r^2/\bss,\\
				(Z_1)_{u,v}=0,  \enskip  1\leq u, v \leq r/\bss, \\ 
				W_{h,w} =0 \enskip  w \neq v; \enskip h=0,1, \enskip 1\leq v \leq r^2/\bss,\\
				W_{0,v} \cdot c_{1,v}- W_{1,v}\cdot c_{0,v}=0, \enskip 1\leq v \leq r^2/\bss.
			\end{cases}
			\]
			Then, we see that $T'_{vv} \cong T_{vv}$ via $\overline{\pi}_{1,i}$. To construct $T_{uv}'$ when $u \neq v$, we let $b_{u,v} \in T_{uv}(\bk)$ be a $\bk$-rational basis for the one dimensional vector group $T_{uv}$ and pick $b'_{u,v} \in \overline{H}_{1,i}(\overline{\bk})$ so that $\overline{\pi}_{1,i}(b'_{u,v}) = b_{u,v}$. We define $T_{uv}'$ to be the one dimensional vector group in $\overline{H}_{1,i}$ via the conjugations
			\[
			\eta_{v}^{-1}b'_{uv}\eta_{v}, \quad \text{for} \, \, \eta_v \in T'_{vv}, \, \, \, v=1,\dots,r/\bss.
			\]
			Then, we have $T'_{uv} \cong T_{uv}$ via $\overline{\pi}_{1,i}$. Similarly, we use the methods used for $T_{vv}'$ and conjugations as above to construct suitable one dimensional $U_{uv}'$ such that $U_{uv}'\cong U_{uv}$ for $1 \leq u, v \leq r/\bss$. The arguments are essentially the same as the ones used to construct $T_{vv}'$ and $T_{uv}'$, and so we omit the details and leave it to the reader. This proves our claim.
		\end{proof}
  
   \begin{claim3}\label{CL:Claim00}
   For $\overline{Q}_{1,i}$ if all entries of the $\bv_{i}$ coordinate vector are zero and $\dim \overline{Q}_{1,i}=r/\bss$, then $\rd \overline{{\pi}}_{1,i}: \Lie \overline{H}_{1,i} \rightarrow \Lie (\overline{\Gamma}_{\rP_1M_\rho}/\overline{\bk})_i$ is surjective.
\end{claim3}
   \begin{proof}[Proof of Claim~\ref{CL:Claim00}]
We have $\dim \overline{H}_{1,i}= 2r^2/\bss^2+r/\bss$  and by \eqref{E:LQ1i}, 
\[
\overline{Q}_{1,i}=\left\{ \begin{pmatrix}
\Iden_r & {\bf 0} & {\bf{0}}\\
{\bf{0}} &\Iden_r & {\bf{0}}\\
{\bf{0}} & \bz &1 \\ 
\end{pmatrix}  \, :  \, \bz = ({\bf{0}}, \dots, {\bf{0}},\bv_{\bss+i}, {\bf{0}},\dots, {\bf 0}) \in \GG_a^{r} \text{ where } \bv_{\bss+i} \in \GG_a^{r/s}
				\right\}.
\]
Note that $\overline{\Gamma}_{N_0}$ is an algebraic subgroup of $\GL_{r+1}/\overline{\bk}$ such that the surjective map $\overline{\Xi}_0: \overline{\Gamma}_{N_1} \rightarrow \overline{\Gamma}_{N_0}$ induced by $\Xi_0$ in \eqref{E:mapNtoN_0} is given by 
\begin{equation}\label{E:mapNntoN}
\begin{pmatrix}
\gamma_0 & \gamma_1 & {\bf 0}\\
{\bf 0} &\gamma_0 & {\bf 0}\\
\bz_0 & \bz_1 &1 \\ 
\end{pmatrix} \mapsto \begin{pmatrix} \gamma_0 & {\bf 0} \\ \bz_0 & 1\end{pmatrix}.
\end{equation}
Then, the elements of $\Ker \Xi_0^{(\overline{\bk})} \subseteq \overline{\Gamma}_{N_1}(\overline{\bk})$ are of the form 
\[
\begin{pmatrix}
\Iden_r & \gamma_1 & {\bf 0}\\
{\bf 0} &\Iden_r & {\bf 0}\\
{\bf 0} & \bz_1 &1 \\ 
\end{pmatrix}.
\]
From this, we see that for any $\bbb_i \in \Mat_{r/\bss}(\overline{\bk})$, elements of the form $\overline{\bbb}_i'$ in \eqref{E:Matrixbprime} with $\bw _{i}={\bf 0}$ are in $\overline{H}_{1,i}(\overline{\bk})$. Multiplying such $\overline{\bbb}_i'$ by suitable elements of $\overline{Q}_{1,i}(\overline{\bk})$, we have $\overline{\bbb}_i'$ of the form \eqref{E:Matrixbprime} where $\bw_i=\bw_{\bss+i}={\bf 0}$ in $\overline{H}_{1,i}(\overline{\bk})$. Let $\overline{P}_{1,i}$ be the Zariski closure inside $\overline{H}_{1,i}$ of the subgroup generated by all such $\overline{\bbb}_i'$ with $\bbb_i$ running over all elements of $\Mat_{r/\bss}(\overline{\bk})$. Then, clearly $\overline{P}_{1,i} \cong \Mat_{r/\bss}/\overline{\bk}$.  

For any $\xi_i \in \GL_{r/\bss}(\overline{\bk})$, multiplying the elements $\overline{\xi}_i' \in \overline{H}_{1,i}(\overline{\bk})$ of the form \eqref{E:MatrixGammaprime} by suitable elements of $Q_{1,i}(\overline{\bk})$, we obtain $\overline{\xi}_i'$ where $\bu_{\bss+i}={\bf 0}$. 

For all $\xi_i \in \GL_{r/\bss}(\overline{\bk})$, if there is a $\overline{\xi}_i'\in \overline{H}_{1,i}(\overline{\bk})$ with
$\bu _{i}={\bf 0}$,
then by using $\overline{P}_{1,i}$ and all such element $\overline{\xi}_i'$ for all $\xi_i \in \GL_{r/\bss}(\overline{\bk})$, we could simply construct $\overline{S}_{1,i}$ as in \eqref{E:Xsplitsmoothlog} and restrict $\rd \overline{\pi}_1$ to $\Lie \overline{S}_{1,i}$ to obtain a surjection onto $\Lie (\overline{\Gamma}_{\rP_1 M_\rho}/\overline{\bk})_i$.

Next suppose $\bu_i\neq {\bf 0}$. Consider the short exact sequence of linear algebraic groups (see \eqref{E:overlineSES1})
		\begin{equation*}
			1\rightarrow \overline{X}_0\rightarrow \overline{\Gamma}_{N_0} \xrightarrow{\overline{\pi}_0} \overline{\Gamma}_{M_\rho} \rightarrow 1.
		\end{equation*}
 Consider the one dimensional subgroups of $\overline{\Gamma}_{M_{\rho}}$ of the form $\mathcal{B}_{uv} \in (\GL_{r/\bss}/\overline{\bk})_i$ given in \eqref{E:mathcalBC}  for $1\leq u, v\leq r/\bss$. 
 The same methods used in Claim~\ref{CL:Claim2} to construct $T_{uv}'$ can be applied in a straightforward manner to construct one dimensional subgroups 
$\mathcal{B}_{uv}'$ of $\overline{\Gamma}_{N_0}$ so that $\mathcal{B}_{uv}' \cong \mathcal{B}_{uv}$. We leave this 
to the reader. For $\xi_i \in \GL_{r/\bss}(\overline{\bk})$, consider $\overline{\xi}_i' \in \overline{H}_{1,i}(\overline{\bk})$ of the form \eqref{E:MatrixGammaprime} with $\bu_{\bss+i}={\bf 0}$. Let $\overline{V}_{1,i}$ be the Zariski closure inside $\overline{H}_{1,i}$ of the subgroup generated by all such $\overline{\xi}_i'$ with $\xi_i$ running over all elements of $\GL_{r/\bss}(\overline{\bk})$. 
 Then, we can identify $\nu \in \overline{V}_{1,i}(\overline{\bk})$ with the image $\smash{\overline{\Xi}_0^{(\overline{\bk})}}(\nu) \in \overline{\Gamma}_{N_0}(\overline{\bk})$ where $\overline{\Xi}_0$ is the surjective map \eqref{E:mapNntoN}. Via this identification, each $\mathcal{B}_{uv}'$ for $1\leq u,v\leq r/\bss$ is a one dimensional subgroup of $\overline{V}_{1,i}$. The Lie algebras of the $r^2/\bss^2$ subgroups $\mathcal{B}_{uv}$ span $\Lie \GL_{r/\bss}/\overline{\bk}$. Thus, since $\overline{P}_{1,i} \cong \Mat_{r/\bss}/\overline{\bk}$, the Lie algebras of each $\mathcal{B}_{uv}$ and $\Lie \overline{P}_{1,i}$ span $\Lie(\overline{\Gamma}_{\rP_1 M_\rho}/\overline{\bk})_i$ by \eqref{E:GammaP1i}.
			Then, since $\Lie(\cdot)$ is a left exact, $\rd\overline{{\pi}}_{1,i}$ is surjective.
  \end{proof}
 
As we vary all $1\leq i\leq \bss$, the surjection of $\rd \overline{{\pi}}_{1}$ follows. Thus, for $n=1$ the proof of the lemma is complete. \par
		
		Now suppose $n >1$. We follow the methods used for $n =1$ to prove that the induced tangent map $\rd\overline{\pi}_n$ at the identity is surjective onto $\Lie\overline{\Gamma}_{\rP_n M_\rho}$. Recall $\overline{\Gamma}_{\rP_n M_\rho}$ from \eqref{E:Gammasepn}. Let $w=1$ and consider the short exact sequence \eqref{E:overlineSES1} of linear algebraic groups. Fix $1\leq i \leq \bss$. We follow the methods used for the construction of $\overline{H}_{1,i}$ above to construct the Zariski closure $\overline{H}_{n,i}$ inside $\overline{\Gamma}_{N_n}$ of the subgroup generated by suitably chosen elements of $\overline{\Gamma}_{N_n}$ such that $\overline{H}_{n,i}$ is contained in the $(n+1) r^2/\bss^2+(n+1) r^2/\bss$ dimensional group
		\begin{equation*}\label{E:GammasepellH}
			G_{n,i} := \left\{ 
			\begin{pmatrix}
				\begin{array}{c|c|c|c|c} 
					\eta_0 & \eta_1 & \dots & \eta_n&\bf{0}\\
					\hline 
					& \eta_0 &\ddots &\vdots&\vdots\\
					\hline 
					&&\ddots&\eta_1&\vdots\\
					\hline 
					&&&\eta_0&\bf{0}\\
					\hline
					\bs_0 & \bs_1 &\dots & \bs_n&1
				\end{array} 
			\end{pmatrix}:
			\begin{aligned}&\eta_0 \in (\GL_{r/\bss}/\overline{\bk})_i, \, \eta_j \in (\Mat_{r/\bss}/\overline{\bk})_i,   1 \leq j \leq n \\
				& \bs_h = ({\bf{0}}, \dots,{\bf{0}}, \bs_{h,i},{\bf{0}},\dots, {\bf{0}}), \quad \bs_{h,i} \in \GG_a^{r/\bss} \\
				& \, \, \text{for each} \, \, 0 \leq h \leq n
			\end{aligned}
			\right\}. 
		\end{equation*}
		Let 
		\begin{equation*}
			(\overline{\Gamma}_{\rP_n M_\rho}/\overline{\bk})_i := \left\{ 
			\begin{pmatrix}
				\begin{array}{c|c|c|c} 
					\gamma_0 & \gamma_1 & \dots & \gamma_n\\
					\hline 
					& \gamma_0 &\ddots &\vdots\\
					\hline 
					&&\ddots&\gamma_1\\
					\hline 
					&&&\gamma_0
				\end{array} 
			\end{pmatrix}: \begin{aligned} &\gamma_0 \in (\GL_{r/\bss}/\overline{\bk})_i, \, \gamma_j \in (\Mat_{r/\bss}/\overline{\bk})_i,\\
				&\, \, \text{where} \, \, 1 \leq j \leq n
			\end{aligned}
			\right\}. 
		\end{equation*}
		If $\dim \overline{H}_{n,i}= (n+1)\cdot r^2/\bss^2 + (n+1) \cdot r/\bss$, similar to $\overline{S}_{1,i}$ in \eqref{E:Xsplitsmoothlog} we simply construct
		\begin{equation}\label{E:Sni}
		\overline{S}_{n,i} = \left\{
		\begin{pmatrix}
			\vartheta_i & {\bf{0}}\\
			{\bf{0}} & 1
		\end{pmatrix} : \vartheta_i \in (\Gamma_{\rP_n M_\rho}(\overline{\bk}))_i
		\right\},
		\end{equation}
		and restrict $\rd \overline{\pi}_{n}$ to $\Lie \overline{S}_{n,i}$ to obtain a surjection onto $\Lie (\overline{\Gamma}_{\rP_n M_\rho}/\overline{\bk})_i$. As we vary all $1\leq i\leq \bss$, the surjection of $\rd \overline{{\pi}}_{n}$ follows. \par 
  
		Next, suppose $\dim \overline{H}_{n,i}< (n+1)\cdot r^2/\bss^2 + (n+1)\cdot r/\bss$. Then, via $\overline{{\pi}}_n$ we have a short exact sequence,
		\[
		1 \rightarrow \overline{Q}_{n,i} \rightarrow \overline{H}_{n,i} \xrightarrow{{\overline{\pi}}_{n,i}} (\overline{\Gamma}_{\rP_n M_\rho}/\overline{\bk})_i \rightarrow 1. 
		\] 
For $\overline{Q}_{n.i}$ if some entry of each $\bs_{h,i}$ coordinate vector where $0\leq h\leq n-\ell$ is non-zero or $\dim \overline{Q}_{n,i}\neq\ell r/\bss$, then the methods used in Claim~\ref{CL:CLaim1} to prove $\dim \overline{Q}_{1,i}=0$ can be applied in a straightforward manner to prove $\dim \overline{Q}_{n,i}=0$, which we leave to the reader. 

		\begin{claim3}\label{CL:Claim3}
			If $\dim \overline{Q}_{n,i}=0$, then $\rd \overline{{\pi}}_{n,i}: \Lie \overline{H}_{n,i} \rightarrow \Lie (\overline{\Gamma}_{\rP_nM_\rho}/\overline{\bk})_i$ is surjective.
		\end{claim3}
		
		\begin{proof}[Proof of Claim~\ref{CL:Claim3}]
			The proof follows the same line of argument as in the proof of Claim~\ref{CL:Claim2} ($n=1$ case) and so we include only a sketch. Similar to the coordinates $\bZ_1$ of $\overline{H}_{1,i}$ in \eqref{E:coordinatesX1}, we let the coordinates of $\overline{H}_{n,i}$ be as follows:
			\begin{equation*}
				\bZ_n = 
				\begin{pmatrix}
					\mathcal{Z}_0 & \mathcal{Z}_1 & \dots & \mathcal{Z}_n&\bf{0}\\
					& \mathcal{Z}_0 &\ddots &\vdots&\vdots\\
					&&\ddots&\mathcal{Z}_1&\vdots\\
					&&&\mathcal{Z}_0&\bf{0}\\
					\mathcal{W}_0 & \mathcal{W}_1 &\dots & \mathcal{W}_n&1
				\end{pmatrix},
			\end{equation*}
			where 
			\[
			\mathcal{Z}_0 = \begin{psmallmatrix}
				\Iden_{r/\bss} &&&&\\
				& \ddots &&&\\
				&&(Z_0) &&\\
				&&&\ddots&\\
				&&&&\Iden_{r/\bss}
			\end{psmallmatrix},
			\quad
			\mathcal{Z}_j = \begin{psmallmatrix}
				{\bf{0}} &&&&\\
				& \ddots &&&\\
				&&(Z_0) &&\\
				&&&\ddots&\\
				&&&&{\bf{0}} 
			\end{psmallmatrix},
			\]
			for each $1 \leq j \leq n$ such that $(Z_0)$ is as in \eqref{E:coordinatesX1} and $(Z_j)$ is the $r/\bss \times r/\bss$ block $((Z_j)_{a,b})$ for $1~\leq~a,b \leq~r/\bss$. Moreover, set $\mathcal{W}_h := (0, \dots, 0,(W_h),0, \dots 0)$, where we set $(W_h):= (W_{h,1}, \dots, W_{h,r/\bss})$ for each $0 \leq h \leq n$. \par 
			
			Now, we prove that $\rd \overline{{\pi}}_{n,i}: \Lie \overline{H}_{n,i} \rightarrow \Lie (\overline{\Gamma}_{\rP_nM_\rho}/\overline{\bk})_i$ is surjective. Similar to \eqref{E:onedimgroup}, we construct one-dimensional subgroups of $\overline{\Gamma}_{\rP_n M_\rho}$:
			\[
			T_{0,u,v} := \left\{\begin{psmallmatrix}
				\mathcal{B}_{uv} & {\bf{0}} & \dots & {\bf{0}}\\
				& \ddots &\ddots &\vdots \\
				&&\ddots&\vdots \\
				&&& \mathcal{B}_{uv}
			\end{psmallmatrix}\right\},
			\quad 
			U_{\ell,u,v} := \left\{\begin{psmallmatrix}
				\Iden_{r} & {\bf{0}} & \dots &\mathcal{C}_{uv}& \dots& {\bf{0}}\\
				& \ddots &\ddots &&&\vdots \\
				&&\ddots&\ddots&\ddots&\mathcal{C}_{uv}\\
				&&&\ddots&\ddots&\vdots \\
				&&& &\ddots&{\bf{0}}\\
				&&& &&\Iden_{r}
			\end{psmallmatrix}
			\right\}, \]
			such that $\mathcal{B}_{uv}$ and $\mathcal{C}_{uv}$ are as in \eqref{E:mathcalBC}, and for $1\leq \ell \leq n$ , $\mathcal{C}_{uv}$ is in the $\ell$-th superdiagonal block of $U_{\ell,u,v}$. Similar to $n=1$ case, note that the Lie algebras of the $(n+1) \cdot r^2/\bss^2$ algebraic groups $T_{0,u,v}$ and $U_{\ell,u,v}$ span $\Lie (\overline{\Gamma}_{\rP_n M_\rho}/\overline{\bk})_i$. In what follows, we construct one dimensional algebraic subgroups $T_{0,u,v}'$ and $U'_{\ell,u,v}$ of $\overline{H}_{n,i}$ so that $T_{0,u,v}' \cong T_{0,u,v}$ and $U_{\ell,u,v}' \cong U_{\ell,u,v}$. Then, since $\Lie(\cdot)$ is a left exact functor, it follows that $\Lie T_{0,u,v}' \cong \Lie T_{0,u,v}$ and $\Lie U_{\ell,u,v}' \cong \Lie U_{\ell,u,v}$, and so $\rd\overline{{\pi}}_{n,i}$ is surjective. Since $\overline{Q}_{n,i}$ is a zero dimensional vector group, $\overline{\pi}_{n,i}$ is injective on points and so it follows by checking directly that 
			\begin{itemize}
				\item for $w \neq v$ and $0 \leq h \leq n$, all $W_{h,w}$ coordinates of $\overline{{\pi}}_{n,i}^{-1}(T_{0,u,v})$ are zero;
				\item all $(W_0)$ coordinates of $\overline{{\pi}}_{n,i}^{-1}(U_{\ell,u,v})$ are zero, and for $w \neq  v$ and $1 \leq j \leq n$, all $W_{j,w}$ coordinates of $\overline{{\pi}}_{n,i}^{-1}(U_{\ell,u,v})$ are zero.
			\end{itemize}
			To construct $T_{0,v,v}'$, we let $a_{v} \in \overline{\bk}^\times\setminus \overline{\FF_q}^\times$ and pick elements  $\gamma_{n,v} \in \overline{H}_{n,i}(\overline{\bk})$ so that 
			\[
			\overline{{\pi}}_{n,i}(\gamma_{n,v}) =
			\begin{psmallmatrix} 
				\mathfrak{a}_v & & \\
				&\ddots &\\
				&&\mathfrak{a}_v
			\end{psmallmatrix},\]
			where $\mathfrak{a}_v$ is as in \eqref{E:elementspecial1}. For $1\leq v \leq r/\bss$ and $0\leq h \leq n$, we let $c_{h,v}$ be the $(n r+1,hr+(i-1)\cdot r/\bss+v)$-th the entry of $\gamma_{n,v}$. Let $T_{0,v,v}'$ be the Zariski closure of the subgroup of $\overline{H}_{n,i}$ generated by $\gamma_{n,v}$. Then, one checks directly  that the defining equations of the one dimensional subgroup $T_{0,v,v}'$ of $\overline{H}_{n,i}$ can be written as follows:
			\[ \begin{cases} 
				(a_{v}-1)W_{0,v}-c_{0,v}((Z_0)_{v,v}-1)=0, \enskip 1\leq v \leq r^2/\bss,\\
				(Z_0)_{w,w}=1, \enskip w \neq v, \enskip 1\leq v \leq r^2/\bss,\\
				(Z_j)_{u,v}=0,  \enskip 1 \leq j \leq \ell, \enskip 1\leq u, v \leq r/\bss,\\ 
				W_{h,w} =0, \enskip w \neq v; \enskip 0 \leq h \leq \ell, \enskip 1\leq v \leq r^2/\bss,\\
				W_{h_1,v} \cdot c_{h_2,v}- W_{h_2,v}\cdot c_{h_1,v}=0, \enskip 0 \leq h_1, h_2 \leq \ell, \enskip  1\leq v \leq r^2/\bss.
			\end{cases}
			\]
			Then, we see that $T'_{0,v,v} \cong T_{0,v,v}$ via $\overline{\pi}_{n,i}$. Similarly, we use the methods used for $T_{0,v,v}'$ and conjugations as in the $n=1$ case to construct $U_{\ell,u,v}'$ such that $U_{\ell,u,v}'\cong U_{\ell,u,v}$ for all $1 \leq u, v \leq r/\bss$, $1\leq \ell \leq n$, and $T_{0,u,v}'$ such that $T_{0,u,v}'\cong T_{0,u,v}$ for all $1 \leq u, v \leq r/\bss$, $u\neq v$. The arguments are essentially the same as the arguments used to construct $T_{uv}'$ and $U_{uv}'$ in the $n=1$ case and $T_{0,v,v}'$ above, and so we omit the details and leave it to the reader. This proves our claim.
		\end{proof}
  
   For $Q_{n,i}$ if $\bs_{h,i}={\bf{0}}$ for all $0\leq h\leq n-\ell$ and $\dim Q_{n,i}=\ell r/\bss$, then the  methods used in Claim~\ref{CL:Claim00} to prove that $\rd \overline{{\pi}}_{1,i}: \Lie \overline{H}_{1,i} \rightarrow \Lie (\overline{\Gamma}_{\rP_1M_\rho}/\overline{\bk})_i$  is surjective can be applied in a straightforward manner to prove that $\rd \overline{{\pi}}_{n,i}: \Lie \overline{H}_{n,i} \rightarrow \Lie (\overline{\Gamma}_{\rP_n M_\rho}/\overline{\bk})_i$ is surjective, which we leave to the reader.

		As we vary all $1\leq i\leq \bss$ the surjection of $\rd \overline{{\pi}}_{n}$ follows. Thus, for $n>1$ the proof of the lemma is complete.
	\end{proof}

	\subsection{Algebraic independence of logarithms and quasi-logarithms}  
	In this subsection, we prove Theorem~\ref{T:Main02} (restated as Theorem~\ref{T:Main2}) and Corollary~\ref{C:Main02}. Recall the short exact sequence \eqref{E:SESlog}:
 \begin{equation*}
		1 \rightarrow X_n \rightarrow \Gamma_{N_n} \xrightarrow{\pi_n} \Gamma_{\rP_n M_\rho} \rightarrow 1.
	\end{equation*}
 We will first show that $X_n$ can be identified with a $\Gamma_{\rP_n M_\rho}$-submodule of $((\rP_n M_\rho)^B)^w$.  Let $\bn \in \Mat_{((n+1)rw +1)\times 1}(N_n)$ be such that its entries form a $\ok(t)$-basis of $N_n$ and $\sigma \bn = \Phi_{N_n} \bn$. The entries of $\Psi_{N_n}^{-1}\bn$ form a $\bk$-basis of $N_n^B$ \cite[Prop. 3.3.9]{P08}. If we write $\bn = [\bn_1, \dots, \bn_w, y]^\tr$ where each $\bn_i \in \Mat_{(n+1)r \times 1} (N_n)$, then the entries of $[\bn_1, \dots, \bn_w]^\tr$ form a $\ok(t)$-basis of  $(\rP_n M_\rho)^w$ and the entries of $\bu := [\Psi_{\rP_n M_\rho}^{-1} \bn_1 , \dots, \Psi_{\rP_n M_\rho}^{-1} \bn_w]^\tr$ form a $\bk$-basis of $((\rP_n M_\rho)^B)^w$. Given any $\bk$-algebra $\rR$, we recall the action of $\Gamma_{\rP_n M_\rho}(\rR)$ on $\rR~\otimes_\bk~((\rP_n M_\rho)^B)^w$ from \cite[\S4.5]{P08} (see also \eqref{E:action2}) as follows: for any $\mu \in \Gamma_{\rP_n M_\rho}(\rR)$ and any $\bv_h \in \Mat_{1\times (n+1)r}(\rR)$, $0 \leq h \leq n$, the action of $\mu$ on $(\bv_1, \dots, \bv_w) \cdot \bu \in R \otimes_{\bk} ((\rP_n M_\rho)^B)^w$~is 
	\begin{equation*}\label{E:action3}
		(\bv_1, \dots, \bv_w) \cdot \bu \mapsto (\bv_1 \mu^{-1}, \dots, \bv_w \mu^{-1}) \cdot \bu.
	\end{equation*}
	\noindent Thus, by \eqref{E:Actionlog} the action of $\Gamma_{\rP_n M_\rho}$ on $((\rP_n M_\rho)^B)^w$ is compatible with the action of $\Gamma_{\rP_n M_\rho}$ on $X_n$. Then, when we regard $((\rP_n M_\rho)^B)^w$ as a vector group over $\bk$, by Lemma~\ref{L:defined1} we get the desired result. \par
	
	Now, note that since $X_n$ is a $\Gamma_{\rP_n M_\rho}$-submodule of $((\rP_n M_\rho)^w)^B$, by the equivalence of categories $\cT_{\rP_n M_\rho} \approx \Rep(\Gamma_{\rP_n M_\rho}, \bk)$, there exists a sub-$t$-motive $V_n$ of $(\rP_n M_\rho)^w$ such that as $\Gamma_{\rP_nM_\rho}$-modules
	\begin{equation}\label{E:Tmotiveandbetti}
		X_n \cong V_n^B.
	\end{equation}
	By \eqref{E:motivespro1}, we see that for any $n \geq 1$ and $0 \leq j \leq n-1$ we obtain a short exact sequence of $t$-motives
	\begin{equation}\label{E:motivespro1111}
		0 \rightarrow (\rP_j M_\rho)^w \xrightarrow{\iota} (\rP_{n}M_\rho)^w \xrightarrow{\boldsymbol{\rpr}_{w,{n-j-1}}} (\rP_{n-j-1}M_\rho)^w \rightarrow 0.
	\end{equation}

	\begin{lemma}\label{L:Equivvv}
		For $n \geq 1$, let $V_n$ be as in \eqref{E:Tmotiveandbetti}. Then, for $0 \leq j \leq n-1$ there is a surjective map of $t$-motives $\overline{\boldsymbol{\rpr}}_{w,{n-j-1}}: V_n \rightarrow V_{n-j-1}$ via the map $\boldsymbol{\rpr}_{w,{n-j-1}}$ in \eqref{E:motivespro1111}.
	\end{lemma}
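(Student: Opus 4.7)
The plan is to construct a surjective morphism of $t$-motives $\pi_n : N_n \twoheadrightarrow N_{n-j-1}$ that fits in a morphism of short exact sequences, inducing $\boldsymbol{\rpr}_w$ on the sub-object $(\rP_n M_\rho)^w$ and the identity on the quotient $\mathbf{1}$, and then to translate the conclusion via Tannakian formalism. First, I construct $\pi_n$ explicitly at the level of dual $t$-motives. Let $Q \in \Mat_{(n+1)r \times (n-j)r}(\ok)$ be the matrix of $\boldsymbol{\rpr} : \rP_n M_\rho \to \rP_{n-j-1} M_\rho$ in the bases $\bsD_n \bsm$ and $\bsD_{n-j-1} \bsm$; explicitly, $Q$ has the identity block $I_{(n-j)r}$ on top and zeros below. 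Define $\pi_n$ via the block-diagonal matrix $\diag(Q, \dots, Q, 1) \in \Mat_{((n+1)rw+1) \times ((n-j)rw+1)}(\ok)$ in the basis $\bn$ of $N_n$ and its analogue $\bn'$ of $N_{n-j-1}$. Using the block structure of $\Phi_{N_n}$ and $\Phi_{N_{n-j-1}}$ from \eqref{E:PhiPsiN}, the intertwining condition reduces to two identities: $\Phi_{\rP_n \rho} Q = Q^{(-1)} \Phi_{\rP_{n-j-1} \rho}$ (which holds since $\boldsymbol{\rpr}$ is a morphism of dual $t$-motives) and $\bh_{\alpha_i, n} Q = \bh_{\alpha_i, n-j-1}$ (immediate, since $\bh_{\alpha_i, n} = (\alpha_i, 0, \dots, 0)$ and $Q$ extracts the first $(n-j)r$ entries). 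Hence $\pi_n$ is a well-defined surjective morphism of $t$-motives fitting in a commutative diagram with the sequence \eqref{E:motivespro1111}.

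Next, Tannakian formalism produces a faithfully flat homomorphism $g : \Gamma_{N_n} \twoheadrightarrow \Gamma_{N_{n-j-1}}$ which, together with the surjection $h : \Gamma_{\rP_n M_\rho} \twoheadrightarrow \Gamma_{\rP_{n-j-1} M_\rho}$ (surjective by Theorem~\ref{T:Element}), fits in a commutative diagram with the short exact sequences \eqref{E:SESlog} for $n$ and $n-j-1$, yielding a map $f : X_n \to X_{n-j-1}$ on kernels. The identification $X_n \cong V_n^B$ embeds $X_n$ as a subspace of $((\rP_n M_\rho)^w)^B$ via the translation action of $X_n$ on a chosen lift $y \in N_n^B$ of the generator of $\mathbf{1}^B$. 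Taking the compatible lift $y' := \pi_n^B(y) \in N_{n-j-1}^B$, the direct computation $\pi_n^B(x \cdot y - y) = g(x) \cdot y' - y'$ for $x \in X_n$ shows that under the identifications $X_n \cong V_n^B$ and $X_{n-j-1} \cong V_{n-j-1}^B$ the map $f$ coincides with $\boldsymbol{\rpr}_w^B$ restricted to $V_n^B$. Hence $\boldsymbol{\rpr}_w^B(V_n^B) \subseteq V_{n-j-1}^B$, so by Tannakian equivalence $\boldsymbol{\rpr}_w(V_n) \subseteq V_{n-j-1}$ and the restriction $\overline{\boldsymbol{\rpr}}_w : V_n \to V_{n-j-1}$ is well-defined.

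For surjectivity I would invoke the minimality characterization of $V_{n-j-1}$ as the smallest sub-$t$-motive $V \subseteq (\rP_{n-j-1} M_\rho)^w$ such that $[N_{n-j-1}]$ lies in the image of $\Ext_\cT^1(\mathbf{1}, V) \to \Ext_\cT^1(\mathbf{1}, (\rP_{n-j-1} M_\rho)^w)$, a consequence of the Tannakian dictionary between $X_{n-j-1}$ and the extension class. Since $[N_n]$ analogously comes from a class $[\tilde{N}_n] \in \Ext_\cT^1(\mathbf{1}, V_n)$, pushing forward $[\tilde{N}_n]$ along $\boldsymbol{\rpr}_w|_{V_n} : V_n \to \boldsymbol{\rpr}_w(V_n)$ realizes $[N_{n-j-1}]$ as the pushforward of a class in $\Ext_\cT^1(\mathbf{1}, \boldsymbol{\rpr}_w(V_n))$, forcing $V_{n-j-1} \subseteq \boldsymbol{\rpr}_w(V_n)$ by minimality. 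Combined with the previous paragraph this gives $\boldsymbol{\rpr}_w(V_n) = V_{n-j-1}$ and the surjectivity of $\overline{\boldsymbol{\rpr}}_w$. The main obstacle will be the delicate Tannakian bookkeeping required for the two ingredients (i) identifying $f$ with $\boldsymbol{\rpr}_w^B|_{V_n^B}$ under the canonical identifications and (ii) establishing the minimality characterization of $V_{n-j-1}$, both of which rest on the dictionary between the kernel subgroup $X_n$ and the extension class $[N_n]$.
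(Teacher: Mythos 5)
Your first half — the explicit morphism $\pi_n : N_n \twoheadrightarrow N_{n-j-1}$ built from the truncation matrix, the induced surjection $g:\Gamma_{N_n}\twoheadrightarrow\Gamma_{N_{n-j-1}}$, the map $f:X_n\to X_{n-j-1}$ on kernels, and the cocycle computation identifying $f$ with $\boldsymbol{\rpr}_w^B|_{V_n^B}$ — is correct and is essentially the paper's argument (the paper phrases $\pi_n$ via the short exact sequence $0\to \rP_jM_\rho\to N_n\to N_{n-j-1}\to 0$ and reads off $g$ explicitly as truncation of both the $\mu$-block and the bottom row). This gives $\boldsymbol{\rpr}_w(V_n)\subseteq V_{n-j-1}$, i.e.\ well-definedness of $\overline{\boldsymbol{\rpr}}_w$.

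The surjectivity step has a genuine gap. Your argument needs two directions of the ``minimality'' dictionary: (i) that $[N_n]$ actually lies in the image of $\Ext_\cT^1(\mathbf{1},V_n)\to\Ext_\cT^1(\mathbf{1},(\rP_nM_\rho)^w)$, and (ii) that any sub-$t$-motive $W$ from which $[N_{n-j-1}]$ is induced must contain $V_{n-j-1}$. Direction (ii) is formal (a lift of the generator of $\mathbf{1}^B$ inside the sub-extension shows $X_{n-j-1}\subseteq W^B$). Direction (i) is not: it is equivalent to the splitness of $N_n/V_n$ as an extension of $\mathbf{1}$ by $(\rP_nM_\rho)^w/V_n$, and this is where complete reducibility of the ambient object normally enters (it amounts to an $\Ext^1$-vanishing in $\Rep(\Gamma_{(\rP_nM_\rho)^w},\bk)$). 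For $n\geq 1$ the object $(\rP_nM_\rho)^w$ is \emph{not} completely reducible and $\Gamma_{\rP_nM_\rho}$ is not reductive (it has the unipotent block-upper-triangular part of Theorem~\ref{T:Element}), so the splitness of $N_n/V_n$ cannot be invoked as ``a consequence of the Tannakian dictionary.'' Indeed, in the paper this splitness is only proved inside the induction of Theorem~\ref{T:Main2}, under the inductive hypothesis $V_{n-1}\cong(\rP_{n-1}M_\rho)^w$ and after reducing the action on $N_n^B/V_n^B$ to the reductive group $\Gamma_{M_\rho}$ — and that argument uses Lemma~\ref{L:Equivvv} itself, so your route is circular as well as incomplete. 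The paper avoids extension classes entirely here: it obtains surjectivity of $X_n\to X_{n-j-1}$ directly from the explicit truncation form of $\Gamma_{N_n}\twoheadrightarrow\Gamma_{N_{n-j-1}}$ together with the $\bk$-smoothness of $X_n$ and $X_{n-j-1}$ from Lemma~\ref{L:defined1}. To repair your proof you should replace the pushforward-of-extension-classes step by an argument of that kind.
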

	
	\begin{proof}
		We prove the result for $w = 1$. The following argument for $w = 1$ can be applied in
		a straightforward manner to prove the arbitrary $w$ case, which we leave to the reader. Let $w=1$. Recall from \eqref{E:NntoN} that for any $\bk$-algebra $\rR$, if $\nu_n = \begin{psmallmatrix}\mu_n & {\bf 0}\\ \bsw_n & 1\end{psmallmatrix} \in \Gamma_{N_n}(\rR)$, then
		\[
		\nu_{n-j-1} = \begin{pmatrix}\mu_{n-j-1} & {\bf 0}\\ \bsw_{n-j-1} & 1\end{pmatrix} \in \Gamma_{N_{n-j-1}}(\rR),
		\]
		where $\mu_{n-j-1}$ is the matrix formed by the $r(n-j)\times r(n-j)$ upper-left square of $\mu_n$ and $\bsw_{n-j-1} =(w_0, \dots, w_{n-j-1})$. 
		Also recall from \eqref{E:mapNtoN_0} that the surjective map of affine group schemes $\Gamma_{N_n} \twoheadrightarrow \Gamma_{N_{n-j-1}}$ is given by 
  \begin{equation*}\label{E:mapNtoN_01}
			\nu_n \mapsto \nu_{n-j-1}.
		\end{equation*}
		Since $X_n$ and $X_{n-j-1}$ are $\bk$-smooth by Lemma~\ref{L:defined1}, this map gives a surjective map of group schemes $X_n \rightarrow X_{n-j-1}$. By \eqref{E:Tmotiveandbetti}, this corresponds to a map of representations of $\Gamma_{\rP_n M_\rho}$ over $\bk$, $\overline{\boldsymbol{\rpr}}_{w,{n-j-1}}^B: V_n^B \rightarrow V_{n-j-1}^B$ via the map $\boldsymbol{\rpr}_{w,{n-j-1}}^B:((\rP_{n}M_\rho)^w)^B \rightarrow ((\rP_{n-j-1}M_\rho)^w)^B$, where $\boldsymbol{\rpr}_{w,{n-j-1}}$ is as in \eqref{E:motivespro1111}. By the equivalence of categories $\cT_{\rP_n M_\rho} \approx \Rep(\Gamma_{\rP_n M_\rho}, \bk)$, we obtain the required conclusion.
	\end{proof}
	
	\begin{theorem}\label{T:Main2}
		Let $\rho$ be a Drinfeld $\bA$-module of rank $r$ defined over $k^\sep$. Suppose that $K_\rho$ is separable over $k$ and $[K_{\rho}:k]=\bss$. Let $u_1, \dots, u_w \in \KK$ with $\Exp_\rho(u_i) = \alpha_i \in k^\sep$ for each $1\leq i \leq w$ and suppose that $\dim_{K_\rho} \Span_{K_\rho}(\lambda_1, \dots, \lambda_r, u_1, \dots, u_w) = r/\bss + w$. For $n \geq1$, let $N_n$ and $\Psi_{N_n}$ be defined as in \eqref{E:PhiPsiN}, and for each $1 \leq i \leq w$, let $Y_{i,n}:= Y_{u_i,n}$ be defined as in $\S$\ref{S:Ext}. Then, $\dim \Gamma_{N_n} = (n+1)\cdot r(r/\bss+w)$. In particular, 
		\[
		\trdeg_{\ok} \ok \bigg(\bigcup\limits_{s=0}^{n} \bigcup\limits_{i=1}^{r-1}\bigcup\limits_{m=1}^w \bigcup\limits_{j=1}^r \{\pd_\theta^s(\lambda_j), \pd_\theta^s(F_{\tau^i}(\lambda_j)),\pd_\theta^s(u_m), \pd_\theta^s(F_{\tau^i}(u_m))\}\bigg) = (n+1)(r^2/\bss
		+ rw).\]
	\end{theorem}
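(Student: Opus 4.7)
The strategy is to compute $\dim \Gamma_{N_n}$ and invoke Theorem~\ref{T:Tannakian}. Using the block structure of $\Psi_{N_n}$ from \eqref{E:PhiPsiN} together with Theorem~\ref{P:rigidhyper} applied both to $\rho$ (for the $\Psi_{\rP_n\rho}$ blocks, via $\Psi_{\rP_n\rho} = d_{t,n+1}[\Psi_\rho]$) and to each logarithm $u_m$ (for the $\bg_{\alpha_m}\Psi_{\rP_n\rho}$ blocks), the $\ok$-algebra generated by the entries of $\Psi_{N_n}|_{t=\theta}$ is, up to $\ok$-algebraic elements, the same as that generated by the hyperderivatives displayed in the statement. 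Hence the transcendence-degree claim reduces to showing $\dim \Gamma_{N_n} = (n+1)(r^2/\bss + rw)$.

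From the short exact sequence \eqref{E:SESlog} we have $\dim \Gamma_{N_n} = \dim X_n + \dim \Gamma_{\rP_n M_\rho}$, and by Theorem~\ref{T:Main1}, $\dim \Gamma_{\rP_n M_\rho} = (n+1)r^2/\bss$. Since $X_n \cong V_n^B$ for a sub-$t$-motive $V_n \subseteq (\rP_n M_\rho)^w$ by \eqref{E:Tmotiveandbetti}, the upper bound $\dim X_n \le (n+1)rw$ is immediate. The crux is therefore the lower bound, which amounts to proving $V_n = (\rP_n M_\rho)^w$; I would attack this by induction on $n$. The base case $n=0$ is essentially \cite[Thm.~5.1.5]{CP12}: under the $K_\rho$-linear independence hypothesis, the simplicity of $M_\rho$ (Proposition~\ref{P:0thpro}) and the fact that $\bK_\rho \cong K_\rho$ is a field force $V_0 = M_\rho^w$, as otherwise the non-zero quotient $M_\rho^w/V_0$ produces a non-zero $(e_1, \dots, e_w) \in \bK_\rho^w$ such that $\sum \be_{i*} Y_{i,0}$ is trivial in $\Ext^1_\cT(\mathbf{1},M_\rho)$, contradicting the $n=0$ instance of Theorem~\ref{T:Trivial}.

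For the inductive step, assume $V_\ell = (\rP_\ell M_\rho)^w$ for every $\ell < n$. Applying Lemma~\ref{L:Equivvv} with $j = 0$ yields a surjection $V_n \twoheadrightarrow V_{n-1} = (\rP_{n-1}M_\rho)^w$ whose kernel equals $V_n \cap M_\rho^w$, where $M_\rho^w = \ker(\boldsymbol{\rpr}_w)$ sits inside $(\rP_n M_\rho)^w$ as in \eqref{E:motivespro1111} with $j = 0$. Hence $V_n = (\rP_n M_\rho)^w$ if and only if $M_\rho^w \subseteq V_n$. Suppose for contradiction that $V_n \cap M_\rho^w \subsetneq M_\rho^w$; then $(\rP_n M_\rho)^w / V_n \cong M_\rho^w / (V_n \cap M_\rho^w)$ is a non-zero direct sum of copies of $M_\rho$, and picking a non-zero $\bK_\rho$-linear functional on this quotient furnishes a non-zero tuple $(e_1, \dots, e_w) \in \bK_\rho^w$ vanishing on $V_n \cap M_\rho^w$. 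Unraveling the definition \eqref{E:Maprho} of the corresponding endomorphism $\be_i$ of $\rP_n M_\rho$ (which factors as $\rP_n M_\rho \twoheadrightarrow M_\rho \xrightarrow{e_i} M_\rho \hookrightarrow \rP_n M_\rho$, through the projection $\boldsymbol{\rpr}$ of \eqref{E:motivespro} with $\ell = n-1$ and the $D_0$-inclusion from \eqref{E:motivespro} with $\ell = 0$), and using the characterization of $V_n$ as the smallest sub-$t$-motive of $(\rP_n M_\rho)^w$ through which the extension class $[N_n]$ factors, the pushout $\sum \be_{i*} Y_{i,n}$ becomes trivial in $\Ext^1_\cT(\mathbf{1}, \rP_n M_\rho)$. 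This contradicts Theorem~\ref{T:Trivial} applied at level $n$ with the chosen $(e_1, \dots, e_w)$, completing the induction.

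The main obstacle is the bookkeeping in the last step: rigorously translating a non-zero $\bK_\rho$-linear functional on $(\rP_n M_\rho)^w/V_n$ into an actual trivialization of $\sum \be_{i*} Y_{i,n}$, compatibly with the explicit pushout construction of \eqref{E:Maprho} and with the block-upper-triangular shape of $\Gamma_{\rP_n M_\rho}$ from Theorem~\ref{T:Element}. Once $V_n = (\rP_n M_\rho)^w$ is established, $\dim X_n = (n+1)rw$ and therefore $\dim \Gamma_{N_n} = (n+1)(r^2/\bss + rw)$, which yields Theorem~\ref{T:Main2}. Corollary~\ref{C:Main02} follows by subtraction: after reducing the hypothesis to the one of Theorem~\ref{T:Main2} by replacing the $u_m$ modulo endomorphism-twists of periods, Theorem~\ref{T:Main2} gives combined transcendence degree $(n+1)(r^2/\bss + rw)$ while Theorem~\ref{T:Main1} accounts for $(n+1)r^2/\bss$ coming from the periods and quasi-periods of $\rho$, so the $(n+1)rw$ hyperderivatives of the quasi-logarithms $\rF_{\delta_j}(u_m)$ must be algebraically independent over $\ok$; the change from $\{\tau^0, \dots, \tau^{r-1}\}$ to an arbitrary $k^\sep$-basis $\{\delta_1, \dots, \delta_r\}$ of $\rH^1_{\DR}(\rho)$ is over $k^\sep \subseteq \ok$ and preserves algebraic independence.
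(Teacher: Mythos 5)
Your proposal follows the paper's proof of Theorem~\ref{T:Main2} in its architecture: after the reductions via Theorem~\ref{P:rigidhyper}, Theorem~\ref{T:Tannakian}, and Theorem~\ref{T:Main1}, you reduce to showing $V_n = (\rP_n M_\rho)^w$ and run an induction on $n$ using the surjection from Lemma~\ref{L:Equivvv}, the complete reducibility of $M_\rho^w$, and a contradiction derived from Theorem~\ref{T:Trivial}. This is precisely the paper's strategy.

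However, the step you defer as ``the main obstacle'' --- passing from a nonzero $(e_1,\dots,e_w)\in\bK_\rho^w$ vanishing on $V_n\cap M_\rho^w$ to the triviality of $\sum\be_{i*}Y_{i,n}$ in $\Ext^1_\cT(\mathbf{1},\rP_nM_\rho)$ --- is not bookkeeping, and the ``characterization of $V_n$ as the smallest sub-$t$-motive through which $[N_n]$ factors'' that you invoke is not automatic. That characterization is available when the Tannakian Galois group acting on the coefficient object is reductive, but $\Gamma_{\rP_nM_\rho}$ is \emph{not} reductive for $n\geq 1$: by Theorem~\ref{T:Element} it has a nontrivial unipotent block-triangular part, so one cannot directly conclude that the extension $N_n/V_n$ splits. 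The paper bridges this with a dedicated claim inside the proof: assuming the induction hypothesis $V_{n-1}\cong(\rP_{n-1}M_\rho)^w$, it identifies $N_n^B/V_n^B$ with $N_0^B/(\ker\overline{\boldsymbol{\rpr}}_{n,w})^B$ via \eqref{E:ProSESLog}, then uses the block structure of Theorem~\ref{T:Element} to show that the $\Gamma_{\rP_nM_\rho}$-action on this quotient factors through the reductive quotient $\Gamma_{M_\rho}$, and only then invokes \cite[Cor.~3.5.7]{CP12} to obtain triviality of $N_n/V_n$ in $\Rep(\Gamma_{M_\rho},\bk)$. Your argument becomes complete once this claim is supplied; without it, the triviality of the pushout that you need to contradict Theorem~\ref{T:Trivial} does not follow.
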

	
	\begin{proof}
		From the construction of $\Psi_{N_n}$, by Theorem~\ref{P:rigidhyper} we have
		\[
		\ok(\Psi_{N_n}|_{t=\theta})=  \ok \bigg(\bigcup\limits_{s=0}^{n} \bigcup\limits_{i=1}^{r-1}\bigcup\limits_{m=1}^w \bigcup\limits_{j=1}^r \{\pd_\theta^s(\lambda_j), \pd_\theta^s(F_{\tau^i}(\lambda_j)),\pd_\theta^s(u_m), \pd_\theta^s(F_{\tau^i}(u_m)\}\bigg), 
		\]
		and by Theorem~\ref{T:Tannakian} and Theorem~\ref{T:Main1}, we have
		\begin{equation*}\label{E:TrdegLogGamma}
			\dim \Gamma_{{N_n}} = \trdeg_{\ok} \ok(\Psi_{N_n}|_{t=\theta}) \leq (n+1)\frac{r^2}{\bss} + (n+1)rw.
		\end{equation*}
		Thus, we need to prove that $\dim X_n = (n+1)rw$, where $X_n$ is as in \eqref{E:SESlog}. By \eqref{E:Tmotiveandbetti} it suffices to show that $V_n^B \cong ((\rP_n M_\rho)^w)^B$. To prove this, we adapt the arguments of the proof of \cite[Thm.~5.1.5]{CP12} (see also \cite[Lem.~1.2]{Hardouin}). \par
		
		Note from \eqref{E:motivespro1111} that for $n \geq 1$ we have a short exact sequence of $t$-motives
		\[
		0 \rightarrow (\rP_0 M_\rho)^w \xrightarrow{\iota} (\rP_n M_\rho)^w \xrightarrow{\boldsymbol{\rpr}_{w,n-1}} (\rP_{n-1} M_\rho )^w\rightarrow 0.
		\]
		\noindent By Lemma~\ref{L:Equivvv}, there is a surjective map $\overline{\boldsymbol{\rpr}}_{w,n-1}: V_n \rightarrow V_{n-1}$ via ${\boldsymbol{\rpr}}_{w,n-1}$. Then $\ker (\overline{\boldsymbol{\rpr}}_{w,{n-1}})$ is a sub-$t$-motive of $M_\rho^w$. \par
		
		We claim that if $V_{n-1} \cong (\rP_{n-1}M_\rho)^w$, then $N_n/V_n$ is trivial in $\Ext_\cT^1(\mathbf{1}, \rP_n M_\rho/ V_n)$. Since $X_n \cong V_n^B$, we see that $\Gamma_{N_n}$ acts on $N_n^B/V_n^B$ through $\Gamma_{N_n}/X_n \cong  \Gamma_{\rP_n M_\rho}$ via \eqref{E:SESlog}. Since $\overline{\boldsymbol{\rpr}}_{w,n-1}$ is surjective onto $V_{n-1} \cong (\rP_{n-1}M_\rho)^w$, by using \eqref{E:ProSESLog} one finds that $N_n^B/V_n^B\cong N_0^B/(\ker \overline{\boldsymbol{\rpr}}_{w,n-1})^B$. Recall that for any $\bk$-algebra $\rR$, an element of $\Gamma_{\rP_nM_\rho}(\rR)$ is of the form \eqref{E:elementn} such that $\gamma_0$ is an element of $\Gamma_{M_\rho}(\rR)$. Then, \eqref{E:action22} shows the action of $\Gamma_{\rP_n M_\rho}$ on $N_n^B/ V_n^B$ is the same as the action of $\Gamma_{M_\rho}$ on it. Thus, $N_n^B/V_n^B$ is an extension of $\bk$ by $((\rP_n M_\rho)^w)^B/ V_n^B$ in $\Rep(\Gamma_{M_\rho}, \bk)$. By \cite[Cor.~3.5.7]{CP12} and the equivalence of categories $\cT_{M_\rho} \approx \Rep(\Gamma_{M_\rho}, \bk)$, we get the required conclusion of the claim. 
		
		Now, we prove the main result by induction. For $n =1$ case, suppose on the contrary that $V_1^B \subsetneq ((\rP_1 M_\rho)^w)^B$. From \cite[Thm.~5.1.5]{CP12}, we have $M_\rho^w\cong V_0$ and so, since $M_\rho^w \cong (\rP_0 M_\rho)^w$ we have $\ker (\overline{\boldsymbol{\rpr}}_{w,n-1}) \subsetneq M_\rho^w$. Since  $M_\rho^w$ is completely reducible in $\cT_{M_\rho}$ by \cite[Cor.~3.3.3]{CP12} and $\ker (\overline{\boldsymbol{\rpr}}_{w,n-1})$ is a sub-$t$-motive of $M_\rho^w$, there exists a non-trivial morphism $\phi_1~\in~\Hom_\cT(M_\rho^w, M_\rho)$ so that $\ker (\overline{\boldsymbol{\rpr}}_{w,n-1}) \subseteq \ker \phi_1$. Moreover, the morphism $\phi_1$ factors through the map ${M}_\rho^w/\ker (\overline{\boldsymbol{\rpr}}_{w,n-1}) \rightarrow {M}_\rho^w/\left(\ker \phi_1\right)$. Since $\phi_1 \in \Hom_\cT(M_\rho^w, M_\rho)$, there exist $e_{i,1}\in\bK_\rho$ not all zero such that $\phi_1\left(n_1, \dots, n_w\right)=\sum_{i=1}^w e_{i,1}(n_i)$. For a $\ok(t)$-basis $\bsm \in \Mat_{r\times 1} (M_{\rho})$ of $M_{\rho}$, suppose that $\rE_{i,1} \in \Mat_r(\ok(t))$ satisfies $e_{i,1}(\bsm) = \rE_{i,1} \bsm$. Set 
		\[
		\bE_{i,1} := \begin{pmatrix} \mathbf{0} & \rE_{i,1} \\
			&\mathbf{0}
		\end{pmatrix} \in \Mat_{2r}(\ok(t)).
		\]
		Recall from $\S$\ref{S:Pro} that $\bsD_1 \bsm$ forms a $\ok(t)$-basis of $\rP_1M_\rho$. By \eqref{E:Maprho} there exists $\be_{i,1} \in \End_\cT((\rP_1 M)^w)$ such that $\be_{i,1}(\bsD_1 \bsm) = \bE_{i,1}  \bsD_1 \bsm$. Let $\psi_1 \in \Hom_\cT((\rP_1M_\rho)^w, \rP_1M_\rho)$ such that $\psi_1(D_j n_1, \dots, D_j n_w) = \sum_{i=1}^w \be_{i,1}(D_j n_i)$ for each $j=0, 1$. We see that $\ker \psi_1/M_\rho^w \cong \ker \phi_1$ and $(\rP_1 M_\rho)^w/\ker \psi_1 \cong M_\rho^w/\ker \phi_1\cong M_\rho$. Then the pushout $\psi_{1*} N_1 := \be_{1,1*} Y_{1,1} + \dots + \be_{w,1*} Y_{w,1}$ is a quotient of $N_1/V_1$. By using the claim above, it follows that $\psi_{1*}N_1$ is trivial in $\Ext_\cT^1(\mathbf{1}, \rP_1 M_\rho)$. However, by Theorem~\ref{T:Trivial}, this is a contradiction. \par

		Now suppose that we have shown the result for $n-1$, that is, $V_{n-1} \cong (\rP_{n-1}M_\rho)^w$. Suppose  that $V_n^B \subsetneq ((\rP_n M_\rho)^w)^B$. Then, $\ker (\overline{\boldsymbol{\rpr}}_{w,n-1}) \subsetneq M_\rho^w$. Since $M_\rho^w$ is completely reducible in $\cT_{M_\rho}$ by \cite[Cor.~3.3.3]{CP12} and $\ker (\overline{\boldsymbol{\rpr}}_{w,n-1})$ is a sub-$t$-motive of $M_\rho^w$, there exists a non-trivial morphism $\phi_n \in \Hom_\cT(M_\rho^w, M_\rho)$ so that $\ker (\overline{\boldsymbol{\rpr}}_{w,n-1}) \subseteq \ker \phi_n$. Moreover, the morphism $\phi_n$ factors through the map
		${M}_\rho^w/\ker (\overline{\boldsymbol{\rpr}}_{w,n-1}) \rightarrow {M}_\rho^w/\left(\ker \phi_n\right)$. Since $\phi_n \in \Hom_\cT(M_\rho^w, M_\rho)$, we can write $\phi_n(n_1, \dots, n_w) = \sum_{i=1}^w e_{i,n}(n_i)$ for some $e_{1,n}, \dots, e_{w,n} \in \bK_\rho$ not all zero. Suppose that $e_{i,n}(\bsm) = \rE_{i,n} \bsm$ where $\rE_{i,n} \in \Mat_r(\ok(t))$. Set 
		\[
		\bE_{i,n} := \begin{psmallmatrix} \mathbf{0} &\dots &\mathbf{0}& \rE_{i,n} \\
			& \ddots & \ddots&\mathbf{0} \\
			&&\ddots&\vdots \\
			&&&\mathbf{0}
		\end{psmallmatrix} \in \Mat_{(n+1)r}(\ok(t)).
		\]
		Recall also from $\S$\ref{S:Pro} that $\bsD_n \bsm$ forms a $\ok(t)$-basis of $\rP_n M_\rho$. By \eqref{E:Maprho} there exists $\be_{i,n} \in \End_\cT((\rP_n M)^w)$ such that $\be_{i,n}(\bsD_1 \bsm) = \bE_{i,n}  \bsD_1 \bsm$. Let $\psi_n \in \Hom_\cT((\rP_n M_\rho)^w, \rP_n M_\rho)$ such that $\psi_1(D_j n_1, \dots, D_j n_w) = \sum_{i=1}^w \be_{i,1}(D_j n_i)$ for each $0 \leq j \leq n$. Similar to the base case, we see that $\ker \psi_n/(\rP_{n-1} M_\rho)^w \cong \ker \phi_n$ and $(\rP_n M_\rho)^w/\ker \psi_n \cong M_\rho^w/\ker \phi_n\cong M_\rho$. Then the pushout $\psi_{n*} N_n := \be_{1,n*} Y_{1,n} + \dots + \be_{w,n*} Y_{w,n}$ is a quotient of $N_n/V_n$. By using the claim above, it follows that $\psi_{n*}N_n$ is trivial in $\Ext_\cT^1(\mathbf{1}, \rP_n M_\rho)$. However, by Theorem~\ref{T:Trivial}, this is a contradiction. 
	\end{proof}
	
	\begin{proof}[Proof of Corollary~\ref{C:Main02}]
		Let $\{\eta_1, \dots, \eta_\alpha\} \subseteq \{\lambda_1, \dots, \lambda_r, u_1, \dots, u_w\}$ be a maximal $K_\rho$-linearly independent set containing $\{u_1, \dots, u_w\}$. Clearly, $r/\bss \leq \alpha \leq r/\bss+w$.  
		Since the quasi-periodic functions $F_{\delta}$ are linear in $\delta$ and satisfy the difference equation \eqref{E:Fdeltafneq}, we have 
		\begin{equation*}
			\ok \bigg( \bigcup\limits_{i=1}^{r-1}\bigcup\limits_{m=1}^w \bigcup\limits_{j=1}^r \{\lambda_j, F_{\tau^i}(\lambda_j), u_m, F_{\tau^i}(u_m)\}\bigg)\\
			= \ok \bigg(\bigcup_{j=1}^{r} \bigcup\limits_{m=1}^\alpha \left\{F_{\delta_{j}}(\eta_m)\right\}\bigg).
		\end{equation*}
		Moreover, for any $1 \leq i_1, i_2\leq  r$, $1 \leq j_1, j_2 \leq \alpha$, $0\leq s\leq n$ and $v_1, v_2 \in K_\rho$, by the product rule of hyperderivatives we obtain 
		\begin{equation*}
			\pd_\theta^s\left(v_1 F_{\delta_{i_1}}(\eta_{j_1}) + v_2F_{\delta_{i_2}}(\eta_{j_2})\right) = \sum\limits_{h=0}^s\bigg(\pd_\theta^{s-h}(v_1)\pd_\theta^h\left(F_{\delta_{i_1}}(\eta_{j_1})\right) + \pd_\theta^{s-h}(v_2)\pd_\theta^h\left(F_{\delta_{i_1}}(\eta_{j_2})
			\right)\bigg).
		\end{equation*}
		Thus, 
		\begin{equation*}
			\begin{split}
				\ok \bigg(\bigcup\limits_{s=0}^{n} \bigcup\limits_{i=1}^{r-1}\bigcup\limits_{m=1}^w \bigcup\limits_{j=1}^r \{\pd_\theta^s(\lambda_j), \pd_\theta^s(F_{\tau^i}(\lambda_j)),\pd_\theta^s(u_m), &\pd_\theta^s(F_{\tau^i}(u_m))\}\bigg)\\
				&= \ok \bigg(\bigcup_{s=0}^{n}\bigcup_{j=1}^{r} \bigcup\limits_{m=1}^\alpha \left\{\pd_\theta^s(F_{\delta_{j}}(\eta_m))\right\}\bigg).
			\end{split}
		\end{equation*}
Then, the result follows by Theorem~\ref{T:Main2}.
	\end{proof}
	
	\appendix
	\section{Differential Algebraic Geometry}\label{DAG}
	We present a few topics from differential algebraic geometry in positive characteristic \cite{Okugawa} (cf.~ \cite{HardouinSS} for characteristic zero). For the most part, we follow the terminology of \cite{HardouinSS}. Even though the proofs of most of the results presented here are covered in \cite{Okugawa}, we present them here nevertheless for completeness. 
	
	\subsection{Differential algebraic geometry in positive characteristic}\label{S: Kolchin+}
	Let $R$ be a commutative ring with unity of characteristic $p>0$. A \textit{differential ring} or \textit{$\pd$-ring} is a pair $(R, \pd)$, where $\pd$ represents a sequence of additive maps $\pd^j: R\rightarrow R$ that satisfy
	\begin{enumerate}
		\item $\pd^0(a)=a$,
		\item $\pd^j(a+b)=\pd^j(a)+\pd^j(b)$,
		\item $\pd^j(ab) = \sum_{i=0}^j \pd^i(a)\pd^{j-i}(b)$,
		\item $\pd^k\pd^j(a)=\binom{k+j}{j}\pd^{k+j}(a)$,
	\end{enumerate}
	for all $a, b \in R$ and $j, k\geq 0$. If $R$ is a field, then we say that $(R, \pd)$ is a \textit{differential field} or a \textit{$\pd$-field}. When the context is clear, we shall write $R$ instead of $(R,\pd)$. Moreover, a {\emph{$\pd$-morphism}} between two $\pd$-rings $R$ and $S$ is a morphism of rings that commute with $\pd$. For a $\pd$-ring $R$, if we let $\mathfrak{I} \subseteq R$ be an ideal, then $\mathfrak{I}$ is called a \textit{$\pd$-ideal} if $\pd^j(\mathfrak{I}) \subseteq \mathfrak{I}$ for all $j \geq 1$. If, in addition, $\mathfrak{I}$ is a radical (respectively prime) ideal of the $\pd$-ring $R$ regarded as a ring, then we say that $\mathfrak{I}$ is a {\emph{radical}} (respectively {\emph{prime) $\pd$-ideal}} of the $\pd$-ring $R$. For a set $\Sigma \subseteq R$, the intersection of all $\pd$-ideals containing $\Sigma$ is a $\pd$-ideal of $R$, which we denote by $\mathfrak{D}(\Sigma)$ and it is the smallest $\pd$-ideal of $R$ containing $\Sigma$. We see that $\mathfrak{D}(\Sigma)$ is the ideal, generated $\{\pd^j(a) \mid j\geq0, a \in \Sigma\}$, of the $\pd$-ring $R$ regarded as a ring. We denote by $\mathfrak{R}(\mathfrak{D}(\Sigma))$ or $\mathfrak{R}(\Sigma)$ the radical of $\mathfrak{D}(\Sigma)$ in the $\pd$-ring $R$. 
	
	\begin{proposition}[{Okugawa~\cite[p.45,~Thm.~5]{Okugawa}}]\label{charp}
		Let $R$ be a $\pd$-ring of characteristic $p >0$ and let $\mathfrak{I} \subseteq R$ be a $\pd$-ideal of $R$. Then, the radical $\mathfrak{R}(\mathfrak{I})$ is a $\pd$-ideal of $R$. 
	\end{proposition}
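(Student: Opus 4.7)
The plan is to show that if $a\in\mathfrak{R}(\mathfrak{I})$, that is, $a^N\in\mathfrak{I}$ for some $N\geq 1$, then $\pd^j(a)\in\mathfrak{R}(\mathfrak{I})$ for every $j\geq 1$. The key ingredient is the positive-characteristic analogue of the ``hyperderivative of a $p$-power'' formula, valid in any $\pd$-ring (not only for fields):
\begin{equation*}
\pd^m(b^{p^k}) = \begin{cases} \bigl(\pd^e(b)\bigr)^{p^k} & \text{if } m=ep^k, \\ 0 & \text{if } p^k\nmid m,\end{cases}
\end{equation*}
for all $b\in R$, $k\geq 0$, $m\geq 0$.

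I would establish this identity first by considering the formal generating series $D_a(X):=\sum_{j\geq 0}\pd^j(a)X^j\in R[\![X]\!]$. The Leibniz-type axiom $\pd^j(ab)=\sum_{i=0}^{j}\pd^i(a)\pd^{j-i}(b)$ translates exactly into the statement that $a\mapsto D_a(X)$ is a ring homomorphism $R\to R[\![X]\!]$. Since $R$ has characteristic $p$, so does $R[\![X]\!]$, and therefore $D_{a^{p^k}}(X)=D_a(X)^{p^k}=\sum_{j\geq 0}\pd^j(a)^{p^k}X^{jp^k}$; comparing coefficients yields the displayed formula. This is a short, purely formal computation.

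With the formula in hand, the proof concludes quickly. Given $a\in\mathfrak{R}(\mathfrak{I})$ with $a^N\in\mathfrak{I}$, choose $k$ large enough that $p^k\geq N$, so that $a^{p^k}\in\mathfrak{I}$. Since $\mathfrak{I}$ is a $\pd$-ideal, for every $j\geq 1$ we have $\pd^{jp^k}(a^{p^k})\in\mathfrak{I}$, and by the formula above $\pd^{jp^k}(a^{p^k})=\bigl(\pd^j(a)\bigr)^{p^k}$. Hence $\pd^j(a)\in\mathfrak{R}(\mathfrak{I})$, proving that $\mathfrak{R}(\mathfrak{I})$ is closed under all $\pd^j$ with $j\geq 1$ (the case $j=0$ being trivial), and thus is a $\pd$-ideal of $R$.

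No step here is particularly hard; the only mildly subtle point is verifying the $p$-power formula in an arbitrary $\pd$-ring, which is where the characteristic hypothesis is essentially used. The generating-series reformulation makes this transparent and avoids a direct induction on $k$ with nested binomial identities. It is worth noting that this approach gives the result in the stated ring-theoretic generality (not just for $\pd$-fields), which is precisely what is needed in Section~\ref{S:Periodquasi} when applying the radical-$\pd$-ideal formalism to coordinate rings over $\bk_\fp$.
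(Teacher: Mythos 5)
Your proof is correct and follows essentially the same reduction as the paper: boost $a^N\in\mathfrak{I}$ to a $p$-power $a^{p^k}\in\mathfrak{I}$, apply $\pd^{jp^k}$ together with the identity $\pd^{jp^k}(a^{p^k})=(\pd^j(a))^{p^k}$, and use that $\mathfrak{I}$ is a $\pd$-ideal to conclude $(\pd^j(a))^{p^k}\in\mathfrak{I}$. The one place you diverge is in justifying that $p$-power identity: you prove it for an arbitrary $\pd$-ring of characteristic $p$ via the Hurwitz-series ring homomorphism $a\mapsto D_a(X)=\sum_{j\geq 0}\pd^j(a)X^j$ and the Frobenius in $R[\![X]\!]$, whereas the paper simply cites Proposition~\ref{P:hyperderprod}(a) (stated there only for $F(\theta)_v^{\sep}$) with the remark that ``$\pd$ satisfies the same properties as hyperderivatives.'' Your generating-series argument is self-contained and tighter at that step, since it makes explicit that the identity is a consequence of the $\pd$-ring axioms alone.
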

	\begin{proof}
		It suffices to prove that $\pd^j(\mathfrak{R}(\mathfrak{I})) \subseteq \mathfrak{R}(\mathfrak{I})$ for all $j \geq 1$. Let $a \in \mathfrak{R}(\mathfrak{I})$. Then $a^n \in \mathfrak{I}$ for some $n \geq 1$. For a sufficiently large $e \geq 1$, we see that $a^m\cdot a^n = a^{p^e}\in \mathfrak{I}$ for some $m \in \NN$. Note that Proposition~\ref{P:hyperderprod} applies here and so, for all $j\in \NN$ we see that $\pd^{jp^e}(a^{p^e}) = (\pd^j(a))^{p^e}$. Since $\mathfrak{I}$ is a $\pd$-ideal of $R$, we have $\pd^{jp^e}(a^{p^e}) \in \mathfrak{I}$ for all $j \geq 1$. Thus,  $(\pd^j(a))^{p^e} \in \mathfrak{I}$ and so $\pd^j(a) \in \mathfrak{R}(\mathfrak{I})$.  
	\end{proof}
	
	\begin{remark}
		The proof of Proposition~\ref{charp} does not work in characteristic $0$. See \cite[Prop.~2.19]{HardouinSS} for the proof of the characteristic $0$ case.
	\end{remark}
	
	The \textit{$\pd
		$-polynomial ring} denoted by $R\{y_1, \dots, y_m\}$ in the $\pd$-variables $(y_1, \dots, y_m)$ is the polynomial ring over a $\pd$-ring $R$ in the variables $\pd^j(y_i), \, j \geq 0, \,  1 \leq i \leq m$ made into a $\pd$-ring by setting 
	\begin{itemize}
		\item[(a)] $\pd^j(a) := \pd^j(a)$ for $a \in R$,
		\item[(b)] $ \pd^k(\pd^j(y_i)):= \binom{k+j}{j}\pd^{k+j}(y_i)$, $k \geq 0$. 
	\end{itemize}
	Here $y_1, \dots, y_m$ are called \emph{$\pd$-indeterminates}. \par

 Let $K$ be a $\pd$-field. A {\emph{$\pd$-extension field of $K$}} is a $\pd$-field $L$ which is an extension field of the $\pd$-field $K$. Note that $K$ and $L$ are fields. Let $\oK$ be an algebraic closure of the field $K$ and $K^\sep$ be the separable closure of $K$ in $\oK$. 
	
	\begin{proposition}
		There is a unique extension of $\pd^j:K\rightarrow K$ to $\pd^j:K^\sep\rightarrow K^\sep$, which satisfy all the rules of $\pd$.
	\end{proposition}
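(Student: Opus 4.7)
My plan is to reformulate a sequence of hyperderivations $(\pd^j)_{j \geq 0}$ on a ring $R$ of characteristic $p$ as a single ring homomorphism $\Phi_R : R \to \power{R}{T}$ defined by $\Phi_R(a) := \sum_{j \geq 0} \pd^j(a)\, T^j$. Axioms (1)--(3) translate exactly into $\Phi_R$ being a ring homomorphism satisfying $\Phi_R(a) \equiv a \pmod{T}$, while axiom (4) is equivalent to the iterativity identity
\begin{equation*}
\Phi_R^{(S)} \circ \Phi_R \;=\; \mathrm{sh} \circ \Phi_R \quad \text{as maps } R \to \power{R}{S,T},
\end{equation*}
where $\Phi_R^{(S)} : \power{R}{T} \to \power{R}{S,T}$ applies $\Phi_R$ (with dummy variable $S$) coefficient-wise in $T$, and $\mathrm{sh} : \power{R}{T} \to \power{R}{S,T}$ is the $R$-algebra map $T \mapsto S + T$. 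Extending $\pd$ to $K^\sep$ is therefore equivalent to producing a unique ring homomorphism $\Phi : K^\sep \to \power{K^\sep}{T}$ extending $\Phi_K$, reducing to the identity modulo $T$, and satisfying iterativity.

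For the finite separable step, let $L = K(\alpha)$ with minimal polynomial $f(X) = \sum a_i X^i \in K[X]$, so that $f'(\alpha) \neq 0$. Form $\widetilde{f}(X) := \sum \Phi_K(a_i) X^i \in \power{L}{T}[X]$. Since $\widetilde{f}(\alpha) \equiv f(\alpha) = 0 \pmod{T}$ and $\widetilde{f}'(\alpha) \equiv f'(\alpha) \not\equiv 0 \pmod{T}$, Hensel's lemma in the complete local ring $\power{L}{T}$ yields a unique $\beta \in \power{L}{T}$ with $\widetilde{f}(\beta) = 0$ and $\beta \equiv \alpha \pmod{T}$. Setting $\Phi_L(\alpha) := \beta$ and extending $K$-algebra-linearly via $L \cong K[X]/(f(X))$ produces a ring homomorphism $\Phi_L : L \to \power{L}{T}$ extending $\Phi_K$, and the uniqueness of $\beta$ forces $\Phi_L$ itself to be unique.

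The iterativity of $\Phi_L$ is then verified on the generator $\alpha$: both $\Phi_L^{(S)}(\Phi_L(\alpha))$ and $\mathrm{sh}(\Phi_L(\alpha))$ lie in $\power{L}{S,T}$, annihilate the common polynomial obtained from $f$ by applying the identity $\Phi_K^{(S)} \circ \Phi_K = \mathrm{sh} \circ \Phi_K$ (already known on $K$) to its coefficients, and both reduce to $\alpha$ modulo $(S,T)$; since the pertinent derivative remains $f'(\alpha) \not\equiv 0$, the uniqueness half of Hensel's lemma in $\power{L}{S,T}$ forces the two to coincide, transferring iterativity from $K$ to $L$. Finally, I would apply Zorn's lemma to the poset of extensions $(M, \Phi_M)$ with $K \subseteq M \subseteq K^\sep$, using direct limits to bound chains and the finite step for the induction; the uniqueness at each stage forces the maximal element to equal $K^\sep$. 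The main obstacle is the iterativity verification, which is precisely what compels one to introduce the second formal variable $S$ and apply Hensel's uniqueness a second time.
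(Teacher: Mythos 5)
Your proof is correct, and it is essentially the same argument the paper invokes by citing Conrad~\cite[Thm.~5]{Conrad00}: encode the hyperderivation as a ring homomorphism $\Phi : K \to \power{K}{T}$ with $\Phi(a)\equiv a \smod{T}$, lift a separable generator $\alpha$ to the Hensel root $\beta$ of $\widetilde f$ in $\power{L}{T}$ (using $f'(\alpha)\neq 0$) to get existence and uniqueness, verify the iterativity axiom~(4) on $\alpha$ by a second application of Hensel uniqueness in $\power{L}{S,T}$, and pass to $K^{\sep}$ as a filtered union of finite separable subextensions.
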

	\begin{proof}
		The proof follows the same argument as that for hyperderivatives \cite[Thm.~5]{Conrad00}. 
	\end{proof}
	Let $a \in \oK \setminus K^\sep$. We say that $\pd$ can be {\emph{extended to $a$}} if $\pd$ can be extended to some extension field of $K^\sep$ that contains $a$. The largest extension field $\oK^\pd$ of $K^\sep$ in $\oK$ that has an extension of $\pd$ is called the {\emph{$\pd$-closure}} of $K$ in $\oK$. \par

 For a set $X \subseteq (\oK^\pd)^m$, if we set 
	\[\mathfrak{I}(X):=\{P \in K\{y_1, \dots, y_m\} \mid P(a_1, \dots, a_m) = 0, \quad (a_1, \dots, a_m) \in X\},\] then $\mathfrak{I}(X)$ is a radical $\pd$-ideal in $R$, and we call it the \textit{defining $K$-$\pd$-ideal} of $X$.\par

	\begin{proposition}[{cf.~\cite[Prop.~3.8]{HardouinSS}}]
		Let $X_1, X_2 \subseteq (\oK^\pd)^m$. Then,
		\begin{enumerate}
			\item If $X_1 \subseteq X_2$, then $\mathfrak{I}(X_2) \subseteq \mathfrak{I}(X_1)$,
			\item $\mathfrak{I}(X_1\cup X_2)= \mathfrak{I}(X_1)\cap\mathfrak{I}(X_2)$.
		\end{enumerate}
	\end{proposition}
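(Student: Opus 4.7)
The plan is to prove both parts as direct formal consequences of the definition of $\mathfrak{I}(\cdot)$, mirroring exactly the standard argument for the analogous statement in the Zariski topology; the $\pd$-structure plays no role whatsoever. The underlying fact being used is that the pair $X \mapsto \mathfrak{I}(X)$ and $S \mapsto \mathfrak{Z}(S)$ forms a contravariant Galois connection between subsets of $(\oK^\pd)^m$ and subsets of $K\{y_1,\dots,y_m\}$, so inclusion-reversal and the standard join/meet identities are automatic.

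For part (1), I will take an arbitrary $P \in \mathfrak{I}(X_2)$, note that by definition $P$ vanishes at every point of $X_2$, and conclude from $X_1 \subseteq X_2$ that $P$ vanishes on every point of $X_1$, giving $P \in \mathfrak{I}(X_1)$.

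For part (2), I will prove the two inclusions separately. The inclusion $\mathfrak{I}(X_1 \cup X_2) \subseteq \mathfrak{I}(X_1) \cap \mathfrak{I}(X_2)$ follows immediately from applying part (1) to each of $X_1 \subseteq X_1 \cup X_2$ and $X_2 \subseteq X_1 \cup X_2$. For the reverse inclusion, I will take $P \in \mathfrak{I}(X_1) \cap \mathfrak{I}(X_2)$, observe that $P$ simultaneously vanishes on every point of $X_1$ and on every point of $X_2$, and conclude that $P$ vanishes on every point of $X_1 \cup X_2$, so $P \in \mathfrak{I}(X_1 \cup X_2)$.

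There is no real obstacle here; the proposition is purely formal and does not interact with the characteristic-$p$ subtleties (such as Proposition~\ref{charp}) that govern the less trivial parts of Appendix~\ref{DAG}. The only minor thing worth flagging in the write-up is that $\mathfrak{I}(X)$ is a radical $\pd$-ideal by construction, so the conclusions of (1) and (2) indeed hold in the category of $\pd$-ideals and not merely of plain ideals, but no separate verification of this is needed beyond the definition already recorded in the excerpt.
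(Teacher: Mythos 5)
Your proof is correct and is exactly what the paper intends: the paper's own proof simply remarks that ``the proofs follow the same line of argument as that for the Zariski topology,'' and you have spelled out that standard argument in detail. No issues.
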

	\begin{proof}
		The proofs follow the same line of argument as that for the Zariski topology.
	\end{proof}

	Given a set $X \subseteq ((\oK^\pd)^m, \pd)$, we consider the Zariski closure $\overline{X}^Z\subseteq \oK^m$ of $X$, the closure of $X$ as a subset of $(\oK^\pd)^m$ equipped with the Zariski topology. Let $S \subseteq K[y_1, \dots, y_m]$ be a set of polynomials. The {\emph{zero set}} of $S$ is defined as
	\[ \mathcal{Z}(S):= \{(a_1, \dots, a_m)\in \oK^m \mid f(a_1, \dots, a_m) =0, \quad  f \in S\}.
	\]
	
	\begin{lemma} [{cf.~\cite[Lem.~3.42]{HardouinSS}}] \label{L:ZariskiKolchin}
		Let $X\subseteq (\oK^\pd)^m$ and let $\mathfrak{I}(X) \subseteq K\{y_1, \dots, y_m\}$ be its defining $K$-$\pd$-ideal. Also, let $K[y_1, \dots, y_m]$ be the usual polynomial ring in the variables $y_1, \dots, y_m$ over the field $K$. Then its Zariski closure is the set 
		\[\overline{X}^Z= \mathcal{Z}(\mathfrak{I}(X)\cap K[y_1, \dots, y_m]), \]
		where $\mathfrak{I}(X)\cap K[y_1, \dots, y_m] \subseteq K[y_1, \dots, y_m]$. 
	\end{lemma}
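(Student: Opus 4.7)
The plan is to verify both inclusions directly from the definitions, exploiting the fact that the ordinary polynomial ring $K[y_1,\dots,y_m]$ sits naturally as a subring of the $\pd$-polynomial ring $K\{y_1,\dots,y_m\}$, and that evaluation of an ordinary polynomial at a point of $(\oK^\pd)^m$ does not depend on whether one views it as an element of $K[y_1,\dots,y_m]$ or as a $\pd$-polynomial. Set $I := \mathfrak{I}(X) \cap K[y_1,\dots,y_m]$, which is an ordinary ideal of $K[y_1,\dots,y_m]$ obtained by forgetting the $\pd$-structure on $\mathfrak{I}(X)$.

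For the first inclusion, observe that every $f \in I$ in particular vanishes at each point of $X$, so $X \subseteq \mathcal{Z}(I)$. Since $\mathcal{Z}(I) \subseteq \oK^m$ is Zariski-closed, being cut out by ordinary polynomial equations with coefficients in $K$, this immediately yields $\overline{X}^Z \subseteq \mathcal{Z}(I)$.

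For the reverse inclusion, let $J \subseteq K[y_1,\dots,y_m]$ denote the defining ideal of $\overline{X}^Z$, namely the collection of all ordinary polynomials in $K[y_1,\dots,y_m]$ that vanish on $X$. Any $g \in J$ may be regarded as a $\pd$-polynomial in $K\{y_1,\dots,y_m\}$; the condition $g(x)=0$ for all $x \in X$ is the same in both interpretations, so by the definition of $\mathfrak{I}(X)$ one has $g \in \mathfrak{I}(X)$, and hence $g \in \mathfrak{I}(X) \cap K[y_1,\dots,y_m] = I$. Therefore $J \subseteq I$, which gives $\mathcal{Z}(I) \subseteq \mathcal{Z}(J) = \overline{X}^Z$.

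Combining these two inclusions yields the desired equality $\overline{X}^Z = \mathcal{Z}(\mathfrak{I}(X) \cap K[y_1,\dots,y_m])$. No real obstacle arises here: the argument is essentially formal, because the evaluation map at a fixed point is identical on an ordinary polynomial whether one considers it as a member of $K[y_1,\dots,y_m]$ or of its canonical image in $K\{y_1,\dots,y_m\}$, so the ideal of ordinary polynomials vanishing on $X$ coincides with the intersection $\mathfrak{I}(X) \cap K[y_1,\dots,y_m]$. The only mild subtlety to flag is that the Zariski closure is being taken inside $\oK^m$ (rather than inside $(\oK^\pd)^m$), but since the defining polynomials in $I$ have coefficients in $K$ the resulting variety $\mathcal{Z}(I)$ is intrinsically defined over $K$, matching the usage of this lemma in the proof of Theorem~\ref{T:GeqGamma}.
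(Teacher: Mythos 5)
Your proof is correct, and it follows the same overall structure as the paper's proof (easy direction via closedness of $\mathcal{Z}(I)$, reverse direction by showing $\mathcal{Z}(I)$ is the smallest $K$-Zariski-closed set containing $X$). The notable difference is in the reverse direction: the paper fixes an arbitrary $S \subseteq K[y_1,\dots,y_m]$ with $X \subseteq \mathfrak{Z}(S)$ and then invokes the differential Nullstellensatz (Theorem~\ref{T:HNull}) together with Theorem~\ref{Thm.:diffP}-style radical manipulations to conclude $S \subseteq \mathfrak{R}(S) \cap K[y_1,\dots,y_m] \subseteq \mathfrak{I}(X) \cap K[y_1,\dots,y_m]$. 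You instead observe directly that the ideal $J$ of ordinary $K$-polynomials vanishing on $X$ (whose zero set is $\overline{X}^Z$ by the definition of Zariski closure, no Nullstellensatz required) is contained in $I = \mathfrak{I}(X) \cap K[y_1,\dots,y_m]$, simply because evaluation at a point of $(\oK^\pd)^m$ does not depend on whether a polynomial is regarded as ordinary or as a $\pd$-polynomial. This makes your argument more elementary: the paper's appeal to $\mathfrak{R}(S) = \mathfrak{I}(\mathfrak{Z}(S))$ is not actually needed, since the containment $S \subseteq \mathfrak{I}(X)$ follows immediately from $X \subseteq \mathfrak{Z}(S)$ and the definition of $\mathfrak{I}(X)$. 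Both routes are valid; yours is tighter.
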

	
	\begin{proof}
		We follow the outline of the proof of \cite[Lem.~3.42]{HardouinSS}. Since $\mathcal{Z}(\mathfrak{I}(X) \cap K[y_1, \dots, y_m])$ is Zariski closed, it is straightforward to see that 
		\[X \subseteq \overline{X}^Z \subseteq \mathcal{Z}(\mathfrak{I}(X) \cap K[y_1, \dots, y_m]).\]
		Conversely, if $S \subseteq K[y_1, \dots, y_m] \subseteq K\{y_1, \dots, y_m\}$ is such that $S \subseteq \mathfrak{I}(X)$, then clearly we have $\mathfrak{R}(S) \subseteq \mathfrak{I}(X)$. This implies that $S\subseteq \mathfrak{R}(S) \cap K[y_1, \dots, y_m] \subseteq \mathfrak{I}(X) \cap K[y_1, \dots, y_m]$. Thus, $\mathcal{Z}(\mathfrak{I}(X) \cap K[y_1, \dots, y_m])  \subseteq \mathcal{Z}(S)$. Since $S$ was chosen arbitrarily, we see that $\mathcal{Z}(\mathfrak{I}(X) \cap K[y_1, \dots, y_m])  \subseteq \overline{X}^Z$.
	\end{proof}
	
	If $f \in K\{y_1, \dots, y_m\}$ is a $\pd$-polynomial given by a linear combination over the $\pd$-field $K$ of $1$ and elements of the set $\{\pd^j(y_i) \mid j \geq 0, 1 \leq i \leq m\}$, then we say that $f$ is a {\emph{degree one $\pd$-polynomial}} in $K\{y_1, \dots, y_m\}$. Moreover if the coefficient of $1$ is $0$, then we say that such $f$ is a {\emph{homogeneous degree one $\pd$-polynomial}}. 
	
	\begin{proposition}[{Okugawa~\cite[p.74~Thm.~5]{Okugawa}}]\label{P:raddiff}
		Let $S\subseteq K\{y_1, \dots, y_m\}$ be a set of degree one $\pd$-polynomials. Then, $\mathfrak{R}(S) = \mathfrak{D}(S)$.
	\end{proposition}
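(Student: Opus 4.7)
The plan is to reduce Proposition~\ref{P:raddiff} to the purely algebraic fact that an ideal generated by polynomials of total degree $\leq 1$ in a polynomial ring over a field is always radical. Since $\mathfrak{R}(S) = \mathfrak{R}(\mathfrak{D}(S))$ and radicalness of $\mathfrak{D}(S)$ gives $\mathfrak{R}(\mathfrak{D}(S)) = \mathfrak{D}(S)$, proving $\mathfrak{D}(S)$ is radical suffices.

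First, I would unwind the definition: $\mathfrak{D}(S)$ is the ordinary ideal in the polynomial ring $R := K\{y_1,\dots,y_m\} = K[\pd^j(y_i) : j \geq 0,\ 1 \leq i \leq m]$ generated by $\{\pd^k(f) : f \in S,\ k \geq 0\}$. The key observation is that if $f = c_0 + \sum_{i,j} c_{i,j}\pd^j(y_i)$ is a degree one $\pd$-polynomial (with $c_0, c_{i,j}\in K$ and the sum finite), then by the Leibniz rule and the identity $\pd^v(\pd^j(y_i)) = \binom{v+j}{v}\pd^{v+j}(y_i)$ we have
\[
\pd^k(f) = \pd^k(c_0) + \sum_{i,j}\sum_{u+v=k} \pd^u(c_{i,j})\binom{v+j}{v}\pd^{v+j}(y_i),
\]
which is again a degree one $\pd$-polynomial. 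Hence the generators $\{\pd^k(f) : f \in S,\ k \geq 0\}$ of $\mathfrak{D}(S)$ in $R$ all have total degree $\leq 1$ in the variables $\pd^j(y_i)$.

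Next I would prove the following algebraic lemma: if $R = K[X_\lambda : \lambda \in \Lambda]$ is a polynomial ring over a field $K$ (for any index set $\Lambda$) and $I \subseteq R$ is an ideal generated by a set of polynomials of total degree $\leq 1$, then $I$ is radical. Let $W \subseteq R_{\leq 1}$ be the $K$-linear span of the generators. If $W \cap K^\times \neq \emptyset$, then $I = R$, which is radical. Otherwise, pick a $K$-basis $\{w_\beta\}_{\beta \in B}$ of the image $W_0$ of $W$ under the projection $R_{\leq 1} \twoheadrightarrow \bigoplus_\lambda KX_\lambda$ (this projection is injective on $W$ by assumption), so each basis vector lifts uniquely to an element $w_\beta + d_\beta \in W$ with $d_\beta \in K$. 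Extend $\{w_\beta\}$ to a $K$-basis $\{w_\beta\} \cup \{X'_\gamma\}_{\gamma \in C}$ of $\bigoplus_\lambda KX_\lambda$ via Zorn's lemma. Then $R = K[w_\beta, X'_\gamma : \beta \in B,\ \gamma \in C]$, $I$ is generated by $\{w_\beta + d_\beta\}_{\beta \in B}$, and
\[
R/I \;\cong\; K[X'_\gamma : \gamma \in C],
\]
which is an integral domain, so $I$ is prime, hence radical.

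Applying this lemma to $R = K\{y_1,\dots,y_m\}$ with variable set $\Lambda = \{(i,j) : 1 \leq i \leq m,\ j \geq 0\}$ and ideal $I = \mathfrak{D}(S)$ concludes that $\mathfrak{D}(S)$ is radical, and therefore
\[
\mathfrak{R}(S) \;=\; \mathfrak{R}(\mathfrak{D}(S)) \;=\; \mathfrak{D}(S).
\]
I expect the main obstacle to be bookkeeping in the infinite-variable case of the algebraic lemma, specifically justifying the change-of-basis via Zorn's lemma and verifying that the resulting identification $R/I \cong K[X'_\gamma]$ is well defined even though each generator involves only finitely many of the original variables. Everything else, including the fact that $\pd^k$ preserves the degree one property, is essentially formal once the hyperdifferential identities from Proposition~\ref{P:hyperderprod} are in hand.
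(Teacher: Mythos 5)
Your proof is correct and follows essentially the same route as the paper: both arguments reduce the statement to the algebraic fact that an ideal generated by polynomials of total degree at most one in a polynomial ring over a field is prime (or the whole ring), hence radical. The only differences are tactical. The paper reduces to finitely many indeterminates by noting that any relation $fg = \sum h_{i,\ell_j}\,\pd^{\ell_j}(L_i)$ involves only finitely many variables, and then invokes the finite-variable case as a known fact; you instead handle the infinite-variable polynomial ring directly via a Zorn's-lemma change of basis exhibiting $R/\mathfrak{D}(S)$ as a polynomial ring in the complementary variables. Your version is slightly more self-contained (you prove the key linear-algebra lemma rather than citing it), and it is also more careful about the degenerate case $\mathfrak{D}(S)=R$, which the paper's claim ``ideals generated by degree one polynomials are prime'' technically excludes, although in that case $\mathfrak{R}(S)=\mathfrak{D}(S)=R$ holds trivially anyway. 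You also make explicit, via the Leibniz rule and $\pd^v\pd^j(y_i)=\binom{v+j}{v}\pd^{v+j}(y_i)$, that $\pd^k$ preserves the degree-one property, which the paper uses implicitly.
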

	\begin{proof}
		It suffices to show that $\mathfrak{D}(S)$ is a prime ideal of the $\pd$-ring $K\{y_1, \dots, y_m\}$ regarded as a usual ring. By definition $\mathfrak{D}(S)$ is generated, as an ideal of the ring $K\{y_1, \dots, y_m\}$, by $\{\pd^j(L_i) \mid i, j\geq 0, L_i \in S\}$. Suppose that $f, g \notin \mathfrak{D}(S)$  such that $fg \in \mathfrak{D}(S)$. Then, 
		\[
		fg= \sum\limits_{\substack{L_i\in S, \, j\geq1}}h_{i,\ell_j} \pd^{\ell_j}(L_i),
		\]
		\noindent where $\ell_j \geq 0$, and $h_{i,\ell_j}\in K\{y_1, \dots, y_m\}$, and all but finitely many $h_{i,\ell_j}$ are zero. We see that $fg$ is a polynomial in a finite subset of the variables $\{\pd^j(y_i) \mid j \geq 0, \, 1 \leq i \leq m\}$ over the $\pd$-field $K$ regarded as a usual field. Let us denote this subset of variables by $\{x_1, \dots, x_n\}$ for some $n \geq 1$. Then, $L = (\{\pd^{\ell_j}(L_i)\})$ is an ideal in $K[x_1, \dots, x_n]$ such that $f,g \notin L$ and $fg \in L$ and so, $L$ is not a prime ideal. However, for a polynomial ring in finitely many indeterminates, ideals generated by degree one polynomials are prime ideals and thus, we obtain a contradiction.
	\end{proof}

\end{document}